\definecolor{ivory}{rgb}{1.0, 1.0, 0.94}
\newcommand{\Cref}[1]{\zcref{#1}}
\newtheorem{thm}{Theorem}[section]
\NewCommandCopy{\newtheoremcopy}{\newtheorem}
\RenewDocumentCommand{\newtheorem}{m O{thm} m}{ 
\newtheoremcopy{#1}[#2]{#3}
\AddToHook{env/#1/begin}{\zcsetup{countertype={thm=#1}}}
\zcRefTypeSetup{#1}{Name-sg = #3}}
\newtheorem{lem}[thm]{Lemma}
\newtheorem{prop}[thm]{Proposition}
\newtheorem{cor}[thm]{Corollary}
\newtheorem{qst}[thm]{Question}
\newtheorem{conj}[thm]{Conjecture}
\theoremstyle{definition}
\newtheorem{df}[thm]{Definition}
\newtheorem{rmk}[thm]{Remark}
\newtheorem{constr}[thm]{Construction}
\newtheorem{ex}[thm]{Example}
\begin{document}

\pagecolor{ivory}

\renewcommand{\AA}{\mathbb{A}}
  \newcommand{\BB}{\mathbb{B}}
  \newcommand{\CC}{\mathbb{C}}
  \newcommand{\DD}{\mathbb{D}}
  \newcommand{\EE}{\mathbb{E}}
  \newcommand{\FF}{\mathbb{F}}
  \newcommand{\HH}{\mathbb{H}}
  \newcommand{\KK}{\mathbb{K}}
  \newcommand{\MM}{\mathbb{M}}
  \newcommand{\NN}{\mathbb{N}}
  \newcommand{\OO}{\mathbb{O}}
  \newcommand{\PP}{\mathbb{P}}
  \newcommand{\QQ}{\mathbb{Q}}
  \newcommand{\RR}{\mathbb{R}}
  \renewcommand{\SS}{\mathbb{S}}
  \newcommand{\TT}{\mathbb{T}}
  \newcommand{\VV}{\mathbb{V}}
  \newcommand{\WW}{\mathbb{W}}
  \newcommand{\XX}{\mathbb{X}}
  \newcommand{\YY}{\mathbb{Y}}
  \newcommand{\ZZ}{\mathbb{Z}}
  \newcommand{\T}{\mathcal{T}}
  \newcommand{\mT}{\mathcal{T}}
\newcommand{\A}{\mathcal{A}}
\newcommand{\mA}{\mathcal{A}}
\newcommand{\mB}{\mathcal{B}}
\newcommand{\mD}{\mathcal{D}}
\newcommand{\I}{\mathcal{I}}
\newcommand{\mI}{\mathcal{I}}
\newcommand{\C}{\mathcal{C}}
\newcommand{\mC}{\mathcal{C}}
\newcommand{\mF}{\mathcal{F}}
\newcommand{\mG}{\mathcal{G}}
\newcommand{\G}{\Gamma}
\newcommand{\aut}{\textup{Aut}(F_r)}
\newcommand{\out}{\textup{Out}(F_r)}
\newcommand{\outt}{\textup{Out}(F_3)}
\newcommand{\os}{\textup{CV}_r}
\renewcommand{\phi}{\varphi} 
\newcommand{\vphi}{\varphi}
\newcommand{\veps}{\varepsilon}
\newcommand{\wtilde}{\widetilde}
\newcommand{\Av}{\mathcal{A}_{\vphi}}
\newcommand{\Tv}{T_{\vphi}^+}
\newcommand{\ft}{F_t}
\newcommand{\La}{\Lambda}

\newcommand{\teich}{Teichm\"{u}ller }

\newcommand{\from}{\colon}

\newcommand{\ind}{\mbox{ind}}
\newcommand{\vol}{\mathrm{vol}}

\newcommand{\gind}{\mbox{ind}_{\rm geom}}

\newcommand{\Ind}{\mbox{ind}}
\newcommand{\ol}{\overline}

\title{A ``cubist'' decomposition of the Handel-Mosher axis bundle}

\author[C. Pfaff]{Catherine Eva Pfaff}
\address{Queen's University}
\email{\tt catherine.pfaff@gmail.com}
\urladdr{https://mast.queensu.ca/~cpfaff/}

\author[C. C. Tsang]{Chi Cheuk Tsang}
\address{Université du Québec à Montréal}
\email{\tt tsang.chi\_cheuk@uqam.ca}
\urladdr{https://sites.google.com/view/chicheuktsang}

\begin{abstract}
We show that the axis bundle of a nongeometric fully irreducible outer automorphism admits a canonical ``cubist'' decomposition into branched cubes that fit together with special combinatorics. From this structure, we locate a canonical finite collection of periodic fold lines in each axis bundle. This can be considered as an analogue of results of Hamenstädt and Agol from the surface setting, which state that the set of trivalent train tracks carrying the unstable lamination of a pseudo-Anosov map can be given the structure of a CAT(0) cube complex, and that there is a canonical periodic fold line in this cube complex. This work also gives an answer to questions of Handel-Mosher and Bridson-Vogtmann regarding the geometry of the axis bundle and a solution of a new flavor to the fully irreducible conjugacy problem in $\mathrm{Out}(F_r)$.
\end{abstract}

\thanks{Both authors are grateful to Lee Mosher for inspiring conversations and interest in their work. The first author is supported by an NSERC discovery grant and is appreciative of the hospitality of the IAS (and Bob Moses fund for funding her membership) and CIRGET (under the generous direction of Steven Boyer and Vestislav Apostolov). 
This project was completed while the second author was a CRM-ISM postdoctoral fellow based at CIRGET. He would like to thank the center for its support.}  

\date{}
\maketitle

\section{Introduction}

Let $F_r$ denote the free group of rank $r \geq 2$ and let $\out$ be its outer automorphism group. 
The \textbf{outer space} associated to $F_r$, defined in \cite{cv86} and denoted here by $CV_r$, is the projectivized space of marked metric graphs with fundamental group isomorphic to $F_r$ (\S \ref{s:os}). 
Equivalently, $CV_r$ is the projectivized space of free, properly discontinuous $F_r$-actions on $\mathbb{R}$-trees. 
One can compactify $CV_r$ by taking $\overline{CV_r}$ to be the projectivized space of very small $F_r$-actions on $\mathbb{R}$-trees \cite{bf94,cl95}.

An element $\vphi\in\out$ is \textbf{fully irreducible} if no power preserves the conjugacy class of a nontrivial proper free factor, and \textbf{geometric} if induced by a surface homeomorphism. We focus on nongeometric fully irreducible $\vphi$. Each fully irreducible $\vphi$ acts on $\overline{CV_r}$ with north-south dynamics \cite{ll03}, i.e. there is an attracting tree $T_+ \in \partial CV_r$ and a repelling tree $T_- \in \partial CV_r$ such that $\phi^k(\overline{CV_r} \backslash T_-)$ converges to $T_+$, uniformly on compact sets, as $k \to \infty$. A \textbf{fold line} for $\phi$ is a bi-infinite path in $CV_r$ from $T_-$ to $T_+$ which, morally speaking, is an axis for the $\phi$-action. Fold lines can be defined purely in terms of graph combinatorics, without reference to the $CV_r$ metric.
The \textbf{axis bundle} of $\phi$, introduced in \cite{hm11} and denoted here by $\mathcal{A}_\phi$, is the union of its fold lines. 

Handel and Mosher, via questions in \cite{hm11}, and Bridson and Vogtmann \cite[Question 3]{bv06} more directly, ask: Describe the geometry of $\Av$ for a fully irreducible $\vphi$.
The coarse topology of $\mathcal{A}_\phi$ was classified in \cite{hm11}, where they show the inclusion of each fold line is a proper homotopy equivalence. The finer combinatorics of $\mathcal{A}_\phi$ has remained mysterious, including the impacts of the `tripod folds' and `singularity merging' introduced in \cite{PffAutomata} and \cite{stablestrata}. We initiate a new framework to study an $\mathcal{A}_\phi$ by showing it admits a specific combinatorial structure (details are in \S \ref{sec:axisbundlebranchedcubes}-\ref{sec:cubistcomplex}, with specific examples given in \Cref{sec:ex}):

\begin{thm} \label{thm:introcubistcomplex}
Let $\phi$ be a nongeometric fully irreducible outer automorphism. Then $\mathcal{A}_\phi$ admits a canonical structure of a cubist complex.
From the cubist complex structure, there is a canonically defined directed graph $\mathfrak{c}_{\mathcal{A}_\phi}$ embedded in $\mathcal{A}_\phi$, we call the \textbf{cardiovascular system}, satisfying the following properties:
\begin{enumerate}[label=(\roman*)]
    \item There is a finite set of bi-infinite directed lines (called \textbf{arteries}) on which $\phi$ acts periodically.
    \item Each vertex of $\mathfrak{c}_{\mathcal{A}_\phi}$ has a unique outgoing edge, thus has a well-defined forward trajectory. Each forward trajectory eventually enters an artery.
    \item Any two arteries are related by sweeping across finitely many 2-dimensional branched cubes.
\end{enumerate}
\end{thm}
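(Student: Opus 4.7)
The plan is to build the cubist complex structure locally from the combinatorics of available folds at each point of $\mathcal{A}_\phi$, then to extract the cardiovascular system as a canonical directed subgraph, and finally to compare arteries via $2$-dimensional cubes. A point of $\mathcal{A}_\phi$ corresponds to a marked metric graph $G$ which, as a point on a fold line, carries a train track structure and a preferred collection of outgoing folds. The coordinates of a local cube should arise from the finite set of simultaneously performable folds available at $G$: two folds are declared independent when they involve disjoint edge germs and can be done in either order with isometric outcomes, so each collection of pairwise independent folds gives a coordinate subcube. Branching comes in at graphs where a single illegal turn admits multiple combinatorially distinct folding choices, as in the tripod folds of \cite{PffAutomata}, producing identifications along cube facets. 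I would verify that these charts are canonical, independent of the choice of fold line through the point, and that across adjacent cubes the folds commute in the way required to produce a cubist complex in the sense of \S\ref{sec:cubistcomplex}.

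Next I would define $\mathfrak{c}_{\mathcal{A}_\phi}$ by selecting at each vertex $G$ a unique canonical outgoing fold dictated by the attracting lamination $\Lambda_\phi^+$, whose leaves pick out a preferred illegal turn to fold first; this immediately gives (ii). For (i), I would pass to the quotient $\mathcal{A}_\phi/\langle\phi\rangle$, which is compact by \cite{hm11}; cocompactness of the $\phi$-action on $\mathfrak{c}_{\mathcal{A}_\phi}$ then forces finitely many $\phi$-orbits of vertices, and each $\phi$-periodic directed line descends to an oriented closed loop in the quotient, giving finiteness of arteries. That every forward trajectory eventually enters an artery then follows from the fact that in the finite quotient each vertex has out-degree one, so the forward trajectory is eventually periodic by pigeonhole.

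The main obstacle, which is property (iii), is to relate arteries by sweeps across $2$-dimensional branched cubes. I would aim to show that any two arteries $\alpha_0, \alpha_1$ can be joined by a finite chain of arteries in which consecutive arteries cobound a single $2$-dimensional branched cube along opposite edges. The existence of such bridging $2$-cubes should come from analyzing the elementary moves that differentiate periodic fold lines --- namely the tripod folds and singularity merging events of \cite{PffAutomata,stablestrata} --- and showing that at the cubist level each such move corresponds to precisely a branched $2$-cube. Finiteness of the chain should then follow from cocompactness in the quotient $\mathcal{A}_\phi/\langle\phi\rangle$ together with a connectedness argument for the subgraph of arteries under $2$-cube adjacency. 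The subtle point, and the hardest part of the proof, is controlling how tripod folds and singularity mergings interact globally and establishing that the $2$-cube adjacency graph on the finite set of arteries is actually connected, rather than splitting the arteries into disconnected equivalence classes.
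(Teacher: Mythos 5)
There are genuine gaps, and the most serious one is the mechanism you use to orient the cardiovascular system. You define the unique outgoing edge at a vertex by having the attracting lamination ``pick out a preferred illegal turn to fold first.'' No such canonical choice is available: when several illegal turns share a gate there is no preferred one, and the difficulty of making a lamination-weighted choice of fold is precisely the obstruction discussed around \Cref{qst:maxsplit} (Agol-style maximal splitting does not na\"ively transfer because maximal-weight branches need not be splittable or disjoint). The paper orients the edges the \emph{other} way, in the splitting direction: each $0$-cube $v$ is sent to the splitting vertex $S(v)$ of the unique maximal branched cube whose folding face contains $v$ in its interior, and uniqueness of that maximal cube is a nontrivial combinatorial fact (\Cref{prop:uniquesuccessor}) resting on the analysis of how branched cubes intersect (\Cref{prop:foldfacevertex}, \Cref{prop:branchedcubedisjointint}). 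This directionality is also what makes trajectories \emph{converge} together (many incoming edges, one outgoing), which is the dynamical source of the finitely many canonical arteries; with a fold-direction choice, even if one existed, you would not get this convergence for free. Relatedly, your local construction of the cubist structure uses the wrong independence notion: in the paper two folds are independent when the corresponding edges form no cycle in the gate graph, so turns sharing a direction (e.g.\ $\{d_1,d_2\}$ and $\{d_2,d_3\}$) are allowed, and it is exactly this that produces branched cubes and the Zenotic refinement; ``disjoint edge germs that commute'' would give an ordinary cube complex and miss the structure. You also need the $0$-cubes to be the fully preprincipal weak train tracks, with canonicity coming from the rightmost $\Lambda$-isometry, and the covering statement (\Cref{prop:branchedcubescoveraxisbundle}) requires producing fully preprincipal representatives realizing every local decomposition (\Cref{p:CutDecompositions}) together with \Cref{P:5.4}; nothing in your chart-based sketch plays these roles.

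For item (iii) you yourself flag the key step --- connectedness of the artery set under $2$-cube adjacency --- and do not supply it; appealing to tripod folds and singularity mergings is a heuristic, not an argument. The paper's route is different and concrete: introduce the combinatorial distance on vertices, show that distance-one vertices flow forward to vertices at distance at most one (\Cref{cor:combindistoneflowforward}), conclude that any two arteries are joined by a chain of arteries at combinatorial distance one (using \Cref{prop:rayseventuallyperiodic} and connectedness of $\mathcal{A}_\phi$, which itself needs \Cref{prop:stableaxisbundleconnected}), prove that distance-one arteries share an average period and a common simple factorization (\Cref{lem:distoneartery}), and finally use the uniqueness-up-to-sweeps statement for directed edge paths inside a generalized branched cube (\Cref{lemma:edgepathunique}) to relate any two simple factorizations by sweeping across finitely many $2$-dimensional branched cubes (\Cref{prop:arteryunique}). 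Your pigeonhole argument for eventual periodicity of forward trajectories and the use of cocompactness are in the right spirit (they correspond to \Cref{prop:rayseventuallyperiodic} and parts of \Cref{prop:arteryexist}), but finiteness of arteries also needs the order/period bookkeeping of \Cref{prop:arteryexist}, and the whole package only makes sense once the cubist structure, the unique-successor property, and the splitting-direction cardiovascular system are in place.
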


Here, the notion of a cubist complex is based on that of a cube complex, with three key differences being:\\
1.) A cube complex decomposes into cubes, whereas a cubist complex decomposes into \textbf{branched cubes}, i.e. unions of cubes glued along certain affine slices. The figure below shows a 2-dimensional branched cube formed by gluing three 2-dimensional cubes.\\
    \parpic[r]{\selectfont\fontsize{8pt}{8pt} 
\begingroup%
  \makeatletter%
  \providecommand\color[2][]{%
    \errmessage{(Inkscape) Color is used for the text in Inkscape, but the package 'color.sty' is not loaded}%
    \renewcommand\color[2][]{}%
  }%
  \providecommand\transparent[1]{%
    \errmessage{(Inkscape) Transparency is used (non-zero) for the text in Inkscape, but the package 'transparent.sty' is not loaded}%
    \renewcommand\transparent[1]{}%
  }%
  \providecommand\rotatebox[2]{#2}%
  \newcommand*\fsize{\dimexpr\f@size pt\relax}%
  \newcommand*\lineheight[1]{\fontsize{\fsize}{#1\fsize}\selectfont}%
  \ifx\svgwidth\undefined%
    \setlength{\unitlength}{224.99102495bp}%
    \ifx\svgscale\undefined%
      \relax%
    \else%
      \setlength{\unitlength}{\unitlength * \real{\svgscale}}%
    \fi%
  \else%
    \setlength{\unitlength}{\svgwidth}%
  \fi%
  \global\let\svgwidth\undefined%
  \global\let\svgscale\undefined%
  \makeatother%
  \begin{picture}(1,0.34096408)%
    \lineheight{1}%
    \setlength\tabcolsep{0pt}%
    \put(0,0){\includegraphics[width=\unitlength,page=1]{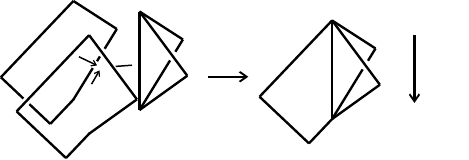}}%
    \put(0.60506706,0.31857067){\color[rgb]{0,0,0}\makebox(0,0)[lt]{\lineheight{1.25}\smash{\begin{tabular}[t]{l}splitting vertex\end{tabular}}}}%
    \put(0.55212479,0.0055081){\color[rgb]{0,0,0}\makebox(0,0)[lt]{\lineheight{1.25}\smash{\begin{tabular}[t]{l}folding vertex\end{tabular}}}}%
    \put(0.78664496,0.28077341){\color[rgb]{0,0,0}\makebox(0,0)[lt]{\lineheight{1.25}\smash{\begin{tabular}[t]{l}folding direction\end{tabular}}}}%
    \put(0,0){\includegraphics[width=\unitlength,page=2]{introbranchedcube.pdf}}%
  \end{picture}%
\endgroup%
}
\noindent 2.) A cubist complex is inherently directed: Each branched cube is equipped with a splitting vertex and `antipodal' folding vertex. Intuitively, the \textbf{folding direction} runs from the splitting vertex to the folding vertex, whereas the \textbf{splitting direction} is oppositely directed. The branched cubes must intersect in a manner preserving this directionality.\\
3.) When two cubes of a cube complex intersect, their intersection is a (complete) face of each of the two cubes, whereas when two branched cubes intersect in a cubist complex, their intersection is a (complete) face in the splitting side of one branched cube and a \emph{subset} of a face in the folding side of the other branched cube. Morally, this causes the branched cubes to get `finer', resembling a `Zenotic' division, in the folding direction. The black squares of \Cref{fig:introcubisteg} provide an example.

Property (2) follows directly from how we defined the branched cubes. 
For property (1), the branching of
 the branched cubes arises when the turns share directions. For example, if 3 directions lie in a single gate, then there are three edges in the branched cube, coming from the three ways to choose 2 of the 3 directions to fold. Further, folding two of the pairs of directions will force the remaining pair to be folded as well. The result is a branched cube, as depicted in the image accompanying (2) above.

\parpic[r]{\selectfont\fontsize{8pt}{8pt} \input{{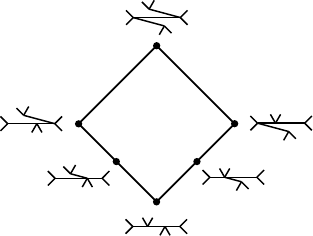_tex}}}
For property (3), the Zenotic division in the folding direction arises if folding some of the pairs of directions causes vertices to appear in the path of the remaining folds (some folds are longer than others), so that in the process of carrying out these remaining folds, one passes through multiple fully preprincipal train tracks. To the right is an example.

We outline how the cubist complex structure arises for an $\mathcal{A}_\phi$:

For each $T \in \mathcal{A}_\phi$, there is a map $T \to T_+$ that respects the $F_r$-actions and restricts to an isometry on each leaf of the \textbf{attracting lamination} $\Lambda$ \cite{bfh97} determined by $T_-$. 
In general, there is an interval's worth of such maps, but we associate a canonical \textbf{$\Lambda$-isometry} to each $T$. 
A pair of directions at a vertex of $T$ is an \textbf{illegal turn} if it is identified in $T_+$ by the $\Lambda$-isometry.
A \textbf{gate} is an equivalence class of directions, under the equivalence relation generated by illegal turns.

\begin{figure}[ht!]
    \centering
    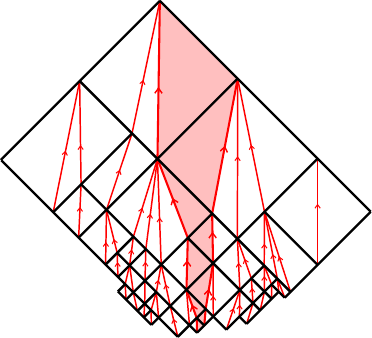
    \caption{A cubist complex (in black) and its cardiovascular system (in red).}
    \label{fig:introcubisteg}
\end{figure}

The branched cubes in the cubist complex structure are as follows.
The nodes, i.e. 0-dimensional cubes, in $\mathcal{A}_\phi$ are the \textbf{fully preprincipal weak train tracks} introduced in \cite{PffAutomata}. These are the graphs in $CV_r$ with at least three gates at each vertex. The edges, i.e. 1-dimensional cubes, are obtained by folding illegal turns. The higher dimensional branched cubes are obtained by folding illegal turns by various amounts.

From this cubist structure, the cardiovascular system $\mathfrak{c}_{\mathcal{A}_\phi}$ is defined by connecting, via directed segments, the nodes of $\mathcal{A}_\phi$ in the splitting direction.
See the \Cref{fig:introcubisteg} red graph, for an example, and see \S \ref{subsec:cardiovascularsystem} for the precise definition.
The refinement of $X$ in the folding direction implies that each node in $\mathfrak{c}_{\mathcal{A}_\phi}$ has precisely one outgoing edge but possibly many incoming edges. Thus each node has a unique forward trajectory, and intuitively these trajectories (which go in the splitting direction) tend to converge together.
The portion of a forward trajectory from the point it converges into a translate on then lies on an artery.
This shows \Cref{thm:introcubistcomplex}(i)-(ii).
Item (iii) follows from a more intricate analysis of the combinatorics of cubist complexes.

The nodes and edges of each artery determine a periodic folding sequence for $\phi$.
Varying over the arteries determines a canonical finite collection $\mathfrak{s}_\phi$ of such folding sequences.
If there are multiple arteries, then \Cref{thm:introcubistcomplex}(iii) states that the different sequences in this collection differ in a highly controlled way. 
However, we have reason to believe that having a unique artery is a generic property (being nongeometric fully irreducible is proven generic in \cite{r08} or \cite{randomout}, for example).

\begin{conj} \label{conj:uniquearterygeneric}
For a reasonable random walk on $\out$, having that $\mathcal{A}_\phi$ has a unique artery is generic.
\end{conj}

Regardless of whether \Cref{conj:uniquearterygeneric} is true, we can use the arteries to provide a new solution to the conjugacy problem for nongeometric fully irreducible outer automorphisms:
\begin{thm} \label{thm:introconjproblem}
Two nongeometric fully irreducible  $\phi, \phi' \in \out$ are conjugate if and only if the collections $\mathfrak{s}_\phi$ and $\mathfrak{s}_{\phi'}$ share a common periodic folding sequence (up to a translation on the indexing). 
\end{thm}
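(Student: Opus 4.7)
I prove each direction separately. Throughout I interpret ``sharing a common periodic folding sequence (up to a translation on the indexing)'' as meaning that, after possibly conjugating $\phi'$ by some $\psi \in \out$, there is a literal common periodic folding sequence lying in both $\mathfrak{s}_\phi$ and $\mathfrak{s}_{\phi'}$ up to reindexing (equivalently, the sequences of abstract folded graphs agree).

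The forward direction is a canonicality check. Suppose $\phi' = \psi\phi\psi^{-1}$ for some $\psi\in\out$. The plan is to observe that every ingredient produced by \Cref{thm:introcubistcomplex} transports via $\psi$: the attracting and repelling trees $T_\pm$, the attracting lamination $\La$, the canonical $\La$-isometries, the gate structure and illegal turns, the fully preprincipal weak train tracks, the branched cubes, the cubist complex, the cardiovascular system, and the finite collection of arteries. Each of these is constructed naturally from $\phi$ and is therefore $\out$-equivariant, so $\psi$ carries $\mathcal{A}_\phi$ and all of its cubist structure onto $\mathcal{A}_{\phi'}$. Consequently $\psi\cdot\mathfrak{s}_\phi = \mathfrak{s}_{\phi'}$, and any artery of $\mathcal{A}_\phi$ together with its $\psi$-translate in $\mathcal{A}_{\phi'}$ witnesses a shared periodic folding sequence in the sense above.

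For the backward direction, after an initial conjugation of $\phi'$ I may assume that a periodic folding sequence $(\tau_n, h_n)_{n\in\ZZ}$ literally lies on an artery of both $\mathcal{A}_\phi$ and $\mathcal{A}_{\phi'}$. I plan the following steps. (1) Since the sequence is a fold line, its forward and backward limits in $\overline{CV_r}$ compute the attracting and repelling trees of any outer automorphism admitting it as an artery, so $T_\pm^\phi = T_\pm^{\phi'}$. (2) The projective stabilizer of $T_+^\phi$ in $\out$ is virtually cyclic with a unique primitive fully irreducible generator $\phi_0$, hence $\phi = \phi_0^a$ and $\phi' = \phi_0^b$ for positive integers $a,b$. (3) Writing $\tau_{n+P}=\phi^N\tau_n$ and $\tau_{n+P'}=(\phi')^{N'}\tau_n$ with $(P,N),(P',N')$ minimal, minimality on the same indexed sequence forces $P=P'$, and then $\phi^N$ and $(\phi')^{N'}$ agree on an infinite set of marked graphs limiting to $T_+^\phi$, giving $\phi^N = (\phi')^{N'}$ in $\out$. (4) The cubist structure records an expansion-factor invariant of the generating outer automorphism in the combinatorial length across one period of the artery, which forces $a=b$, whence $\phi=\phi'$; conjugacy then follows by undoing the initial conjugation.

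I expect step (4) to be the core obstacle: the backward direction must prevent $\phi$ and $\phi^k$ (for $k\geq 2$) from producing the same canonical periodic folding sequence data, i.e. it must disentangle $\phi$ from its proper powers inside the virtually cyclic stabilizer of $T_+^\phi$ once $(P,N)$ is fixed. The natural invariant is the Perron--Frobenius stretch factor of the transition matrix read off from one period of the artery, and making the argument rigorous will amount to showing that the minimality of $(P,N)$ forced by the cubist structure matches that of the primitive $\phi_0$, so that $a$ is recovered uniquely from the artery.
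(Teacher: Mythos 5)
The paper never actually writes down a proof of this theorem: it is stated in the introduction as a consequence of \Cref{thm:introcubistcomplex}, the point being that $\mathfrak{s}_\phi$ is a canonically defined finite collection, and no argument for either implication appears in the body. So your proposal has to be judged on its own terms. Your forward direction is the expected naturality argument (every ingredient --- $T_\pm$, $\La$, right-most $\La$-isometries, gates, fully preprincipal weak train tracks, branched cubes, cardiovascular system, arteries --- is natural, so conjugation transports $\mathfrak{s}_\phi$ to $\mathfrak{s}_{\phi'}$), and that part is fine.

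The backward direction is where the content lies, and as written it has genuine gaps. First, step (2) asserts that the stabilizer of $T_+$ is virtually cyclic \emph{with a unique primitive fully irreducible generator} $\phi_0$, so that $\phi=\phi_0^a$ and $\phi'=\phi_0^b$. Virtually cyclic does not mean cyclic: such a stabilizer can contain a nontrivial finite normal subgroup $K$ (think of a finite-order graph symmetry commuting with a train track representative and fixing the whole artery pointwise), and an infinite-order element need not be a power of any single root. Your invariants in steps (3)--(4) (translation of the artery over one period, the Perron--Frobenius stretch of the transition matrix) compute $\lambda(\phi^N)$ and hence only see the image of $\phi$ in the infinite cyclic quotient; they cannot distinguish $\phi$ from $\kappa\phi$ with $\kappa\in K$ nontrivial, which is exactly the residual ambiguity after your steps (1)--(3), and nothing in the outline addresses it --- you yourself flag step (4) as the core obstacle, so the hard half is a plan rather than a proof. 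Second, in step (3) the claim that agreeing on an infinite set of marked graphs forces $\phi^N=(\phi')^{N'}$ in $\out$ is not immediate: an outer automorphism fixing points of $CV_r$ is only constrained to lie in their finite (but possibly nontrivial) stabilizers. Likewise ``minimality forces $P=P'$'' is false: $\phi$ and $\phi^2$ have the same axis bundle, the same cardiovascular system, and literally the same arteries, yet their minimal periodicity data differ; this example also shows that the pair $(P,N)$ must be treated as part of the shared sequence (otherwise the criterion would wrongly declare $\phi$ and $\phi^2$ conjugate), a point your interpretation of the statement glosses over and your argument must use explicitly. Finally, note that the appeal to virtual cyclicity of the stabilizer of $T_+$ imports an external result the paper never invokes; the route suggested by the paper's framework is to read off a train track representative of $\phi^N$ from one period of the shared sequence, but that route runs into the same root/finite-order ambiguity, which is precisely what a complete proof of the ``if'' direction must resolve.
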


While the $Out(F_r)$ conjugacy problem has other solutions, see \cite{sela95isomorphism}, \cite{los96}, \cite{dahmani16}, our solution is of a different nature: The crux of these previous solutions is an algorithm that searches for possible conjugating elements. Even with a linear bound on the word length of such elements, the runtime of such an algorithm would be (at least) exponential. Meanwhile, our solution involves computing a combinatorial object for each of the given outer automorphisms and then comparing them, generalizing the approach in \cite{loneaxes} for lone axis fully irreducibles. In this direction we ask the following question.

\begin{qst}
Is there an algorithm to compute the axis bundle of a given nongeometric fully irreducible $\phi$ as a cubist complex, which runs in polynomial time in the word length and/or the stretch factor of $\phi$?
\end{qst}

In \Cref{sec:ex}, we compute examples of axis bundles and arteries using a rather straightforward algorithm, we have not tried to bound the run time of our process.
The focus, by showcasing a variety of phenomena regarding the structure of an $\mathcal{A}_\phi$ and its arteries, is on the questions of Handel-Mosher and Bridson-Vogtmann. In particular, \Cref{ex:bottleneck} is an example where a `tripod fold', as defined in \cite{PffAutomata}, causes the local dimension of the axis bundle to be nonconstant. This appears to be the first time this phenomenon is observed.



\subsection{Geometry of the axis bundle}\label{S:ABGeometry}
We compare the $\os$ situation to that in a hyperbolic space $\mathbb{H}^n$.

\begin{figure}[H]
    \centering
    \selectfont\fontsize{10pt}{10pt}
\begingroup%
  \makeatletter%
  \providecommand\color[2][]{%
    \errmessage{(Inkscape) Color is used for the text in Inkscape, but the package 'color.sty' is not loaded}%
    \renewcommand\color[2][]{}%
  }%
  \providecommand\transparent[1]{%
    \errmessage{(Inkscape) Transparency is used (non-zero) for the text in Inkscape, but the package 'transparent.sty' is not loaded}%
    \renewcommand\transparent[1]{}%
  }%
  \providecommand\rotatebox[2]{#2}%
  \newcommand*\fsize{\dimexpr\f@size pt\relax}%
  \newcommand*\lineheight[1]{\fontsize{\fsize}{#1\fsize}\selectfont}%
  \ifx\svgwidth\undefined%
    \setlength{\unitlength}{322.14044286bp}%
    \ifx\svgscale\undefined%
      \relax%
    \else%
      \setlength{\unitlength}{\unitlength * \real{\svgscale}}%
    \fi%
  \else%
    \setlength{\unitlength}{\svgwidth}%
  \fi%
  \global\let\svgwidth\undefined%
  \global\let\svgscale\undefined%
  \makeatother%
  \begin{picture}(1,0.52903277)%
    \lineheight{1}%
    \setlength\tabcolsep{0pt}%
    \put(0,0){\includegraphics[width=\unitlength,page=1]{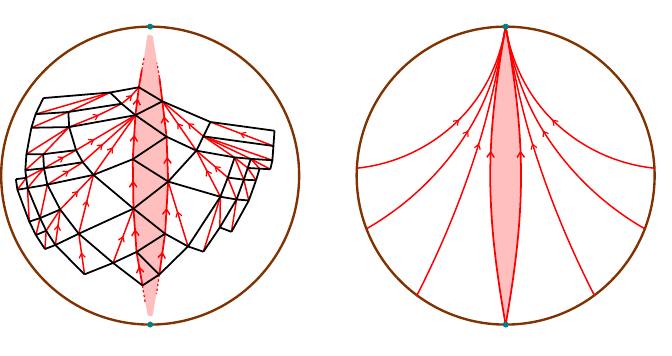}}%
    \put(0.21543403,0.51469995){\color[rgb]{0,0.50196078,0.50196078}\makebox(0,0)[lt]{\lineheight{1.25}\smash{\begin{tabular}[t]{l}\small $T_-$\end{tabular}}}}%
    \put(0.21545175,0.0032857){\color[rgb]{0,0.50196078,0.50196078}\makebox(0,0)[lt]{\lineheight{1.25}\smash{\begin{tabular}[t]{l}\small $T_+$\end{tabular}}}}%
    \put(0.74509918,0.51469754){\color[rgb]{0,0.50196078,0.50196078}\makebox(0,0)[lt]{\lineheight{1.25}\smash{\begin{tabular}[t]{l}\small $\xi_-$\end{tabular}}}}%
    \put(0.7451169,0.00328315){\color[rgb]{0,0.50196078,0.50196078}\makebox(0,0)[lt]{\lineheight{1.25}\smash{\begin{tabular}[t]{l}\small $\xi_+$\end{tabular}}}}%
    \put(0.37840726,0.48997075){\color[rgb]{0.50196078,0.2,0}\makebox(0,0)[lt]{\lineheight{1.25}\smash{\begin{tabular}[t]{l}\huge\\$\Av$\end{tabular}}}}%
    \put(0.91413012,0.48997363){\color[rgb]{0.50196078,0.2,0}\makebox(0,0)[lt]{\lineheight{1.25}\smash{\begin{tabular}[t]{l}\huge\\$\HH^n$\end{tabular}}}}%
  \end{picture}%
\endgroup%

    \caption{The dynamics of the cardiovascular system resembles that of geodesics in hyperbolic space that limit onto a fixed boundary point $\xi_-$, possibly with the invariant geodesic being blown-up into a flat Euclidean strip (region in red).}
    \label{fig:introhypanalogue}
\end{figure}

Each node of the cardiovascular system $\mathfrak{c}_{\mathcal{A}_\phi}$ determines a directed ray.
These directed rays converge into a finite collection of periodic bi-infinite lines. We compare this situation with the dynamics of geodesics in $\mathbb{H}^n$: For a fixed $\xi \in \partial \mathbb{H}^n$, the collection of geodesics limiting onto $\xi$ converge together in forward time. If $\phi$ is a loxodromic isometry of $\mathbb{H}^n$ fixing $\xi$, one can extract from this collection a geodesic invariant under $\phi$.

This is, however, not a perfect analogy since the axis bundle can have multiple arteries. 
In the case of multiple arteries, \Cref{thm:introcubistcomplex}(iii) states that any two arteries are related by sweeping across finitely many 2-dimensional branched cubes. One can morally regard the union of the 2-dimensional branched cubes as a flat strip, see the shaded region of \Cref{fig:introhypanalogue} left, and we can revise the analogy by `blowing up' the invariant geodesic in $\mathbb{H}^n$ by inserting a Euclidean strip, see \Cref{fig:introhypanalogue} right.

\subsection{Motivation from surfaces}

Recall that a \textbf{train track} on a surface $S$ is an embedded graph $\tau$ where the half-edges at each vertex are tangent to a common tangent line.
In \cite{Ham09}, Hamenst\"adt showed that the set of trivalent train tracks on $S$ can be realized as the vertex set of a cube complex $\mathcal{TT}_S$ by taking the edges to be splitting moves, and the higher-dimensional cubes to be spanned by commuting splitting moves.
We regard $\mathcal{TT}_S$ as an analogue of outer space in the surface setting.

Let $f$ be a pseudo-Anosov map on $S$, and let $\ell_+$ be the attracting lamination of $f$.
The results of \cite{Ham09} imply that the set of trivalent train tracks that carry $\ell_+$ determines a CAT(0) subcomplex of $\mathcal{TT}_S$.
We denote this subcomplex as $\mathcal{A}_f$ and regard it as an analogue of the axis bundle. This analogy is justified by the fact that the axis bundle $\mathcal{A}_\phi$ admits a similar description as the set of graphs $\Gamma$ that carry the expanding lamination $\Lambda$, in the sense that there is a $\Lambda$-isometry from $\widetilde{\Gamma}$ to the attracting tree $T_+$. 

In this perspective, the first statement in \Cref{thm:introcubistcomplex}, that $\mathcal{A}_\phi$ is a cubist complex, is an analogue of the fact that $\mathcal{A}_f$ is a cube complex.
However, the vertices of $\mathcal{A}_f$ are trivalent train tracks, which are the `most generic' type of train track, in the sense that they form an open subset of the set of all train tracks carrying $\ell_+$.
On the other hand, the vertices of $\mathcal{A}_\phi$ are fully preprincipal train tracks, which are the `least generic' type of train track since they are the cubes with the highest possible codimension.
Hence it might be more accurate to say that our cubist complex structure on $\mathcal{A}_\phi$ is an analogue of a `dual complex' of $\mathcal{A}_f$. 

We also mention an unfinished monograph of Mosher \cite{Mos03} that explores similar ideas of constructing complexes out of train tracks and splitting moves (albeit of a more general type than those in \cite{Ham09}).

In \cite{Ago11}, Agol showed there is a canonical axis of the action of $f$ on $\mathcal{A}_f$ and the corresponding periodic splitting sequence gives a complete conjugacy invariant of the pseudo-Anosov map $f$.
\Cref{thm:introcubistcomplex}(i) and \Cref{thm:introconjproblem} can be considered as analogues of these facts. On the other hand, our methods differ from those of Agol in \cite{Ago11}: He uses the \textbf{maximal splitting} operation, which means splitting the branches of a train track that have maximal weight with respect to the transverse measure on $\ell_+$, while the operation underlying a cardiovascular system is closer to performing all possible splits. 

\begin{qst} \label{qst:maxsplit}
Is there a meaningful version of Agol's maximal splitting operation in the free group setting?
\end{qst}

There are some inherent difficulties in a na\"ive generalization. For example, while a natural choice of weights can be assigned to branches via studying the incidence matrices, there is no guarantee that a branch with maximal weight can be split or that branches of maximal weight are disjoint. The latter fact is in part due to our train tracks not being trivalent -- some arguments in \cite{Ago11} break down because of this. Nevertheless, a positive answer to \Cref{qst:maxsplit} would suggest a positive answer to the following question:

\begin{qst} \label{qst:singleaxis}
Is it possible to upgrade \Cref{thm:introcubistcomplex}(i) from a finite canonical collection of fold lines to a single canonical fold line?
\end{qst}

One motivation for \Cref{qst:singleaxis} is that pseudo-Anosov maps have a unique axis in Teichm\"uller space. By contrast, fully irreducible outer automorphisms have, in general, uncountably many fold lines. \Cref{thm:introcubistcomplex}(i) extracts a finite canonical collection of fold lines out of these, but it would be more satisfying if one could take things a step further and extract a single canonical fold line. 






\subsection{Generalization to cut decomposition axis bundles}

A \textbf{(weak) periodic Nielsen path (PNP)} in a tree $T \in \mathcal{A}_\phi$ is a homotopically nontrivial path in $T$ whose endpoints are principal points with the same image in $T_+$.
Each PNP can be written as a concatenation of indivisible PNPs, which are paths of the form $\alpha_1^{-1} * \alpha_2$, with $\alpha_1$ and $\alpha_2$ mapped to the same interval in $T_+$.
We refer to \cite{bh92,hm11} for details. 

Intuitively, a PNP represents some redundancy of $T$: Given an indivisible PNP $\alpha_1^{-1} * \alpha_2$ in $T$, one can fold $T$ by identifying $\alpha_1$ and $\alpha_2$ and get a `reduced' element $\overline{T}$ of $\mathcal{A}_\phi$ with a naturally induced $\Lambda$-isometry $\overline{T} \to T_+$. For this reason, it is sometimes convenient to consider instead the \textbf{stable axis bundle} $\mathcal{SA}_\phi$, which is the subset of $\mathcal{A}_\phi$ that consists of elements without PNPs.
See, for example, \cite{hm11,loneaxes}. 
The machinery developed in this paper fully carries over to this stable category. 

To be more general yet, we define the \textbf{cut decomposition axis bundles}: Handel and Mosher define a finite graph, $IW(\phi)$, that records how lamination leaves pass through singularities. The possible PNPs in trees in $\mathcal{A}_\phi$ correspond to the cut vertices of $IW(\phi)$, see \cite[\S 4]{hm11}. 
Fixing a subset of possible PNPs is equivalent to choosing a cut decomposition $\mathcal{G}$ of $IW(\phi)$.
For each cut decomposition $\mathcal{G}$, we define the \textbf{$\mathcal{G}$-axis bundle} $\mG\Av$ as the subset of $\mathcal{A}_\phi$ that consists of elements whose PNPs determine the cut decomposition $\mG$.

\begin{thm} \label{thm:intropartialstableaxis}
Let $\phi$ be a nongeometric fully irreducible outer automorphism. 
Let $\mG$ be a cut decomposition of $IW(\phi)$. The $\mG$-axis bundle $\mG\Av$ of $\phi$ admits a canonical structure of a cubist complex which makes it a subcubist complex of $\mathcal{A}_\phi$.
From the cubist complex structure, there is a canonically defined directed graph $\mathfrak{c}_{\mG\Av}$ embedded in $\mG\Av$, which we call the \textbf{cardiovascular system}, satisfying the following properties:
\begin{enumerate}[label=(\roman*)]
    \item There is a finite set of bi-infinite directed lines (called \textbf{arteries}) on which $\phi$ acts periodically.
    \item Each vertex of $\mathfrak{c}_{\mG\Av}$ has a unique outgoing edge, thus has a well-defined forward trajectory. Each forward trajectory eventually enters an artery.
    \item Any two arteries are related by sweeping across finitely many 2-dimensional branched cubes.
\end{enumerate}
\end{thm}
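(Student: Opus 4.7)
The plan is to derive Theorem \ref{thm:intropartialstableaxis} by restricting the structures from Theorem \ref{thm:introcubistcomplex} to the stratum $\mG\Av \subset \mathcal{A}_\phi$, and verifying that the restriction inherits a subcubist complex structure together with a compatible cardiovascular system. The key idea is that the cut decomposition $\mG$ imposes a constraint on which folds are allowed: a fold is permitted within $\mG\Av$ precisely when it does not collapse an indivisible PNP listed by $\mG$.

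First, I would establish that $\mG\Av$ is preserved by folding and unfolding along folds compatible with $\mG$. Since the possible PNPs in a tree $T \in \mathcal{A}_\phi$ correspond to cut vertices of $IW(\phi)$, each illegal turn at a vertex of $T$ either lies in the germ of an indivisible PNP listed by $\mG$ or does not. Folds of the first type coarsen the cut decomposition and move $T$ out of $\mG\Av$, while folds of the second type, which I call \emph{$\mG$-preserving}, do not affect the PNP structure and keep $T$ in $\mG\Av$. The branched cubes of $\mathcal{A}_\phi$ whose folding directions consist entirely of $\mG$-preserving folds are thus exactly the branched cubes contained in $\mG\Av$, and I would check that they assemble into a subcubist complex by inheriting the splitting/folding vertex and intersection axioms from $\mathcal{A}_\phi$.

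Next, I would define $\mathfrak{c}_{\mG\Av}$ as the subgraph of $\mathfrak{c}_{\mathcal{A}_\phi}$ consisting of nodes lying in $\mG\Av$, together with the splitting segments that remain within $\mG\Av$. Since $\phi$ permutes PNPs, it preserves the stratification by cut decompositions and in particular preserves $\mG\Av$. Properties (i) and (ii) then follow from the corresponding properties in Theorem \ref{thm:introcubistcomplex} by restriction: every node of $\mathfrak{c}_{\mG\Av}$ has a unique outgoing splitting edge inherited from $\mathfrak{c}_{\mathcal{A}_\phi}$ which lies in $\mG\Av$, because splitting (the inverse of folding) cannot create new PNPs and therefore cannot change the cut decomposition, and the forward trajectory eventually enters one of the finitely many $\phi$-periodic arteries. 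Those arteries that meet $\mG\Av$ are $\phi$-invariant and hence lie entirely in $\mG\Av$, yielding the desired finite collection.

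The main obstacle will be property (iii). A priori, two arteries lying in $\mG\Av$ might only be sweep-equivalent in $\mathcal{A}_\phi$ via 2-dimensional branched cubes that exit $\mG\Av$ through $\mG$-altering folds. I expect to address this via a commutation argument: a $\mG$-preserving fold and a $\mG$-altering fold at disjoint illegal turns span a 2-dimensional branched cube in $\mathcal{A}_\phi$, so any 2-dimensional sweep between two arteries in $\mathcal{A}_\phi$ can be factored as a composition of $\mG$-preserving and $\mG$-altering sweeps. Since both endpoints lie in $\mG\Av$, the $\mG$-altering part must cancel out along the sweep, leaving a $\mG$-preserving sweep that realizes the equivalence within $\mG\Av$. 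Making this factorization precise — especially verifying that the induced 2-cubes are genuine branched cubes of the restricted cubist complex rather than only faces of ambient branched cubes — is where the cut-decomposition combinatorics of $IW(\phi)$ from \cite{hm11} will do the real work.
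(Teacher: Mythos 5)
Your strategy of obtaining \Cref{thm:intropartialstableaxis} by restricting the structures of $\Av$ to $\mG\Av$ breaks down at the cardiovascular system, and the failure point is your claim that splitting ``cannot create new PNPs and therefore cannot change the cut decomposition.'' That claim is backwards: what the paper actually uses (\Cref{lem:branchedcubesubsetaxisbundle}) is that \emph{folding} cannot increase the number of PNPs, so splitting, being the reverse of folding, can perfectly well create PNPs and change the local decomposition --- in \Cref{ex:2dimbundle1dimstable} a fully preprincipal representative carrying an iNP folds onto one with no PNPs, so moving in the splitting direction from the latter creates a PNP. Consequently the outgoing edge of $\mathfrak{c}_{\mathcal{A}_\phi}$ at a node $v \in \mG\Av$ need not remain in $\mG\Av$: the successor $S(v)$ is the splitting vertex of the \emph{maximal} branched cube having $v$ in the interior of a folding face, and the maximal cube taken in the full collection $\mathcal{B}_\phi$ is in general strictly larger than the maximal one in $\mG\mathcal{B}_\phi$, so the two notions of successor genuinely differ. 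For the same reason your argument for (i)--(ii), which identifies the arteries of $\mG\Av$ with the arteries of $\Av$ that meet $\mG\Av$, is false: the paper explicitly warns that the arteries of $\mG\Av$ and of $\Av$ ``can be completely different,'' and \Cref{ex:2dimbundle1dimstable} exhibits a stable axis bundle whose artery is not an artery of the full axis bundle. Your commutation argument for (iii) inherits this defect, since it presupposes that the arteries to be compared are arteries of $\Av$. A smaller inaccuracy: the subcomplex is not cut out by admitting only ``$\mG$-preserving'' folding directions; by \Cref{lem:branchedcubesubsetaxisbundle} folding never leaves $\mG\Av$, so \emph{every} branched cube based at a fully preprincipal point of $\mG\Av$ lies entirely in $\mG\Av$, and $\mG\mathcal{B}_\phi$ is indexed by the local decomposition of the basepoint, not by a constraint on the folds.

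The paper's route is to build the structure on $\mG\Av$ intrinsically, in parallel with $\Av$, and then quote the abstract theory: \Cref{p:CutDecompositions} produces fully preprincipal train track representatives realizing the given cut decomposition, \Cref{prop:branchedcubescoveraxisbundle} shows the branched cubes based at fully preprincipal points realizing $\mG$ cover $\mG\Av$, and \Cref{t:CubistDecomp} verifies the cubist axioms together with periodicity of the $\phi$-action (connectedness from \Cref{prop:stableaxisbundleconnected}, freeness from \Cref{P:6.2}, cocompactness from \Cref{lem:fpplocallyfinite}). Properties (i)--(iii) then follow by applying the general results on periodic cubist complexes (\Cref{prop:rayseventuallyperiodic}, \Cref{prop:raysarerays}, \Cref{prop:arteryexist}, \Cref{prop:arteryunique}) directly to $(\mG\Av,\phi)$; in particular (iii) is a statement internal to the periodic cubist complex $\mG\Av$ and requires no factorization of sweeps inside $\Av$. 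To repair your write-up you would need to define the cardiovascular system of $\mG\Av$ using maximal branched cubes in $\mG\mathcal{B}_\phi$ rather than in $\mathcal{B}_\phi$, at which point the restriction framing disappears and you are reproducing the paper's intrinsic argument.
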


We expect that the cubist complex machinery can be used to study the following problem.

\begin{qst}
How do the cut decomposition axis bundles $\mG\Av$ sit inside the full axis bundle $\mathcal{A}_\phi$?
\end{qst}

As a final remark, we note that even though $\mG\Av$ is a subcubist complex of $\mathcal{A}_\phi$, the arteries in $\mG\Av$ and $\mathcal{A}_\phi$ can be completely different. We demonstrate an example of this in \Cref{ex:2dimbundle1dimstable}.

\section{Preliminary definitions and notation}{\label{S:Prelims}}

Throughout this paper, $F_r$ is the rank $r$ free group and $\out$ its outer automorphism group. 
We further assume throughout that $r \geq 3$.
Finally, $\vphi \in \out$ will denote a nongeometric fully irreducible outer automorphism. Recall from the introduction that this means no power of $\phi$ preserves the conjugacy class of a nontrivial proper free factor and $\phi$ is not induced by a surface homeomorphism. 

Expository overlap may occur between this section and previous papers of the authors.

\subsection{Culler-Vogtmann Outer Space}\label{s:os} 

Outer space was introduced by Culler and Vogtmann in \cite{cv86} as an $\out$ analogue to \teich space.

Let $R_r$ be the $r$-petaled rose, i.e. the graph with precisely $r$ edges and one vertex. Recall from \cite{bh92} that a \textbf{marked graph} of rank $r$ is a connected finite graph $\Gamma$, with no valence $1$ or $2$ vertices, together with an isomorphism $\pi_1(\Gamma) \cong F_r$ defined via a homotopy equivalence (called the \textbf{marking}) $\rho \colon \Gamma \to R_r$. Marked graphs $\rho \colon \Gamma \to R_r$ and $\rho' \colon \Gamma' \to R_r$ are considered equivalent when there exists a homeomorphism $h \colon \Gamma \to \Gamma'$ such that $\rho' \circ h$ is homotopic to $\rho$. 
We denote the edge set by $E\G$ and the vertex set by $V\G$.

A \textbf{metric} on $\Gamma$ is the path metric determined by choosing for each edge $e$ of $\Gamma$ a length $\ell(e)$ and a characteristic map $j_e \colon [0,\ell(e)] \to e$, in the sense of CW complexes. 
A metric is determined, up to homeomorphism isotopic to the identity, by the assignment of lengths $\ell(e)$.
The \textbf{volume} of $\Gamma$ is defined as $\vol(\Gamma) := \sum\limits_{e \in E(\Gamma)} \ell(e)$. 

The \textbf{unprojectivized outer space} $\widehat{CV_r}$ is the space of all metric marked graphs of rank $r$ modulo marking-preserving isometry.
The \textbf{outer space} $CV_r$ itself is the projectivization of $\widehat{CV_r}$, i.e. the quotient of $\widehat{CV_r}$ by homothety. By instead viewing the points in $CV_r$ as those $\G$ with $\vol(\Gamma)=1$, one obtains its decomposition into disjoint open simplices, one for each marked graph.

Lifting to universal covers yields an alternative definition of $CV_r$: 
Given a marked graph $(\G,m)$ in $\os$,  the universal cover is a simplicial tree with a free $\pi_1(\G) \cong F_r$-action by deck transformations. 
The \textbf{compactified outer space} $\overline{CV_r} = CV_r \cup \partial CV_r$ is the space of minimal, very small $F_r$-actions on $\mathbb{R}$-trees, known as \textbf{$F_r$-trees}, modulo $F_r$-equivariant homothety. We may pass between viewing $CV_r$ as a space of marked graphs or a space of $F_r$-trees without comment.

A \textbf{direction} at a point $p$ in an $\mathbb{R}$-tree $T$ is a connected component of $T\backslash \{p\}$, and if there are $\geq 3$ directions at $p$ it is a \textbf{branch point}. A \textbf{turn} at $p$ is an unordered pair of directions at $p$. 
Let $\mD_pT$ denote the set of directions at $p$, or $\mD(p)$ if $T$ is clear. Define $\mD T:=\cup_{p\in T}\mD_pT$.
Given a locally injective continuous map $f\from T\to T'$ of $\mathbb{R}$-trees, define a \textbf{direction map} $Df\from\mD_pT\to\mD_{f(p)}T'$ by sending a direction to its $f$-image. 

\subsection{Train track representatives} \label{subsec:ttrep}

A homotopy equivalence $g \colon \Gamma \to \Gamma$ of a marked graph $\Gamma$ is a \textbf{train track representative} for $\varphi \in Out(F_r)$ if it maps vertices to vertices, $\varphi = g_{\ast} \colon \pi_1(\Gamma) \to \pi_1(\Gamma)$, and $g^k \mid _{int(e)}$ is locally injective for each $e\in E\G$ and $k>0$. Many of the definitions and notation for discussing train track representatives were established in \cite{bh92} and \cite{bfh00}. We recall some here. 

A \textbf{direction} at a vertex $v\in V\G$ is a germ of initial segments of directed edges emanating from $v$. The set of directions at $v$ is denoted $\mD(v)$ and $Dg$ denotes the direction map induced by $g$. A point $v$ \textbf{periodic} if there exists a $j \geq 1$ such that $g^j (v) = v$ and a direction $d$ at a periodic point $v$ \textbf{periodic} if $Dg^k(d)=d$ for some $k>0$. We call an unordered pair of directions $\{d_i, d_j\}$, based at the same point, a \textbf{turn}.

We call a locally injective path \textbf{tight}.
Recall from \cite{bh92} that a nontrivial tight path $\rho$ in $\Gamma$ is a \textbf{periodic Nielsen path (PNP)} for $g$ if $g^k(\rho)\simeq\rho$ rel endpoints for some $k\in\ZZ_{>0}$, a \textbf{Nielsen path (NP)} if the period is one, and an \textbf{indivisible Nielsen path (iNP)} if it further cannot be written as a concatenation $\rho = \rho_1\rho_2$, where $\rho_1$ and $\rho_2$ are also NPs for $g$. \cite{bh92} gives an algorithm for finding a representative with the minimal number of Nielsen paths, such a representative is called a \textbf{stable} representative.

As in \cite{hm11}, we call a periodic point $v \in \Gamma$ \textbf{principal} that either has at least three periodic directions or is an endpoint of a periodic Nielsen path. A train track representative is called \textbf{rotationless} if every principal point is fixed and every periodic direction at each principal point is fixed. 
We use from \cite[Corollary 4.43]{fh11} that rotationless powers exist, depend only on the rank $r$, and fix all PNPs.

\subsection{Attracting tree $T_+$ and lamination $\Lambda$} \label{subsec:attractingtreeandlam}

Each fully irreducible $\vphi\in\out$ acts on $\overline{CV_r}$ with an attracting tree $\Tv \in \partial CV_r$ and a repelling tree $T_{\vphi}^{-} \in \partial CV_r$ \cite{ll03}. Dual to $\Tv$ and $T_{\vphi}^{-}$, we have the repelling and attracting laminations respectively. 
In this paper we only concern ourselves with the attracting tree and lamination, which we thus write succinctly as $T_+$ and $\Lambda$ when $\vphi$ is clear. In this subsection, we provide a description of these objects in terms of train track representatives.

\begin{constr}[Attracting tree $T_+$] \label{constr:attractingtree}
Let $g\colon \Gamma \to \Gamma$ be a train track representative of $\varphi$ and $\widetilde{\Gamma}$ the universal cover of $\Gamma$ equipped with a distance function $\widetilde{d}$ lifted from $\Gamma$. Then $\pi_1(\G)\cong F_r$ acts by deck transformations, hence isometries, on $\widetilde{\Gamma}$. A lift $\widetilde{g}$ of $g$ corresponds to a unique automorphism $\Phi$ representing $\varphi$. For each $w \in F_r$ and $x \in \widetilde{\Gamma}$, we have $\Phi(w)\widetilde{g}(x)=\widetilde{g}(wx)$. Define the pseudo-distance $d_{\infty}$, 
for each $x,y \in \widetilde{\Gamma}$, by
$d_{\infty}(x,y)=\lim_{k \to \infty}\frac{1}{\lambda^k}d(\widetilde{g}^k(x),\widetilde{g}^k(y))$. Then $T_+$ is the quotient of $\widetilde{\Gamma}$ under $x\sim y$ when $d_{\infty}(x,y)=0$.
\end{constr}

To describe the attracting lamination we need the following:
Let $\Gamma$ be a marked graph with universal cover $\widetilde{\Gamma}$ and projection map $p \colon \widetilde{\Gamma} \to \Gamma$. 
By a \textbf{line} in $\widetilde{\Gamma}$ we mean a proper embedding of the real line $\widetilde{\lambda} \colon \mathbb{R} \to \widetilde{\Gamma}$, modulo reparametrization. 
We denote by $\widetilde{\mathcal{B}}(\Gamma)$ the space of lines in $\widetilde{\Gamma}$ with the compact-open topology (generated by the open sets $\widetilde{\mathcal{U}}(\widetilde{\gamma}):=\{L \in \widetilde{\mathcal{B}}(\Gamma) \mid \widetilde{\gamma}$ is a finite subpath of $L\}$). 
A \textbf{line} in $\Gamma$ is then the projection $p \circ \widetilde{\lambda} \colon \mathbb{R} \to \Gamma$ of a line $\widetilde{\lambda}$ in $\widetilde{\Gamma}$.
We denote by $\mathcal{B}(\Gamma)$ the space of lines in $\Gamma$ with the quotient topology induced by the natural projection map from $\widetilde{\mathcal{B}}(\Gamma)$ to $\mathcal{B}(\Gamma)$. One can then define $\mathcal{U}(\gamma):=\{L \in \mathcal{B}(\Gamma) \mid \gamma$ is a finite subpath of $L\}$, these sets generate the topology on $\mathcal{B}$. 
For a marked graph $\Gamma$, a line $\gamma$ in $\Gamma$ is \textbf{birecurrent} if each finite subpath of $\gamma$ occurs infinitely often as an unoriented subpath in each end of $\gamma$.

\begin{constr}[Attracting lamination $\Lambda$] \label{constr:attractinglam}
Fix a fully irreducible $\phi \in Out(F_r)$ and any train track representative $g \colon \Gamma \to \Gamma$ for $\varphi$. Given any edge $e$ in $\Gamma$, there is a $k>0$ such that the following is a sequence of nested open sets:
$\mathcal{U}(e) \supset \mathcal{U}(g^k(e)) \supset \mathcal{U}(g^{2k}(e)) \dots$
The \textbf{attracting lamination} $\Lambda$ is the set of birecurrent lines in the intersection. We may use the same notation for the total lift $\widetilde{\Lambda}$ of $\Lambda$ to the universal cover.
\end{constr}

\begin{rmk}[Viewing $\Lambda$ in trees $T\in\os$]\label{r:ViewingLambda}
The definition of $\Lambda$ is well-defined, independent of the choice of train track representative; see \cite[Lemma 1.12]{bfh97} for proof. Once a basepoint lift is chosen in $\widetilde{\Gamma}$, one can identify $\partial \widetilde{\Gamma}$ with the hyperbolic boundary $\partial F_r$ of $F_r$. 
This allows identification of $\widetilde{\Lambda}$ with a set of unordered pairs of points in $\partial F_r$, so $\widetilde{\Lambda}$ is also well-defined. Then define the realization of $\Lambda$ in a general point of $CV_r$ represented by a marked graph $\Gamma'$ with universal cover $\widetilde{\Gamma'}$ and a chosen basepoint in $\widetilde{\Gamma'}$:
Use the identifications $\partial \widetilde{\Gamma} \cong \partial F_r \cong \partial \widetilde{\Gamma'}$, to obtain $\widetilde{\mathcal{B}}(\Gamma) \cong \widetilde{\mathcal{B}}(\Gamma')$, identifying $\widetilde{\Lambda} \subset \widetilde{\mathcal{B}}(\Gamma)$ with a subset of $\widetilde{\mathcal{B}}(\Gamma')$ called the \textbf{realization} of $\widetilde \Lambda$ in $\widetilde \Gamma'$. Via the projection $\widetilde{\mathcal{B}}(\Gamma') \to \mathcal{B}(\Gamma')$, we obtain the \textbf{realization} of $\widetilde{\Lambda}$ in $\Gamma'$.
\end{rmk}

\subsection{Folds and splits} \label{subsec:foldsandsplits}

Throughout this subsection, $I$ will denote a (possibly infinite) interval.

A \textbf{fold path} in $CV_r$ is a continuous, injective, proper function $\mF\from I \to CV_r$ defined by\\
1. a continuous 1-parameter family of marked graphs $t \to \Gamma_t$ and \\
2. a family of homotopy equivalences $h_{ts} \colon \Gamma_s \to \Gamma_t$ defined for $s \leq t \in I$, each marking-preserving,
satisfying:\\ 
\indent $\bullet$ \textbf{train track property}: $h_{ts}$ is a local isometry on each edge for all $s \leq t \in I$ and\\
\indent $\bullet$ \textbf{semiflow property}: $h_{ut} \circ h_{ts} = h_{us}$ for all $s \leq t \leq u \in I$ and $h_{ss} \colon \Gamma_s \to \Gamma_s$ is the identity for each $s$.

We call $\mF$ a \textbf{fold line} when $I=\RR$. When $I=[a,b]$ for some $a<b$ and  $h_{sa}(\G_a)$ is homeomorphic to $h_{ta}(\G_a)$ for each $a<s<t<b$, we call $\mF$ a \textbf{fold}.
In an abuse of notation, one sometimes uses the same terminology for the quotient map $h_{ba}$ as for $\mF$. A fold with only 2 edges in the support is \textbf{simple}.

For a free, simplicial $F_r$-tree $T$, a \textbf{$\Lambda$-isometry} on $T$ is an $F_r$-equivariant map $\ft \colon T \to T_+$ such that, for each leaf $L$ of $\Lambda$ realized in $T$, the restriction of $\ft$ to $L$ is an isometry onto a bi-infinite geodesic in $T_+$. Since $\ft$ is continuous, there is a well-defined map of directions $D\ft$, with a restriction $D_p\ft$ to the set of directions at $p$ for each $p\in T$.
A fold $\mF$ is \textbf{$\Lambda$-legal} if $h_{ts}$ is a $\Lambda$-isometry for each $t,s$.
Note that a $\Lambda$-legal fold cannot identify the directions in a $\Lambda$-legal turn.

Viewed as a quotient map, a fold induces a map on directions and a gate structure in which \textbf{gates} are defined as equivalence classes of directions identified by the fold.
We call this structure \textbf{weighted}, as each turn $\{d_1,d_2\}$ in a gate has an associated \textbf{length} $\ell(\{d_1,d_2\})$ that is the length of the initial segments of the edges $e_1$ and $e_2$, in the directions $d_1$ and $d_2$, identified by the fold. 
In particular $\ell(\{d_1,d_2\})\leq \ell(e_1),\ell(e_2)$.

\begin{lem} \label{lem:foldlengthconstraint}
Suppose that $\ft$ is a $\Lambda$-isometry.
Let $d_0,...,d_m$ be directions in a common gate. Then
\begin{equation} \label{eq:foldlengthconstraint}
\min_{i=1,...,m} \{\ell(\{d_{i-1},d_i\}) \} \leq \ell(\{d_0,d_m\}).
\end{equation}
\end{lem}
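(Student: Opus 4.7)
The plan is to prove the inequality by directly chasing what identifications the $\Lambda$-isometry $\ft$ induces on the initial segments of the relevant edges in $T_+$. The key observation is that if $d_i$ and $d_j$ are directions in the same gate, meeting at a common vertex $p$, then the weight $\ell(\{d_i,d_j\})$ measures the length of the maximal initial arc in $T_+$, starting at $\ft(p)$, on which the edges in directions $d_i$ and $d_j$ are carried to the same arc by $\ft$. Consequently, for every $t\le\ell(\{d_i,d_j\})$, the length-$t$ initial segments of these two edges are sent by $\ft$ to the identical length-$t$ arc in $T_+$.

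Having this interpretation, the plan is to set $L:=\min_{i=1,\dots,m}\ell(\{d_{i-1},d_i\})$. For each $i\in\{1,\dots,m\}$ the inequality $L\le\ell(\{d_{i-1},d_i\})$ implies that the length-$L$ initial segments of the edges in directions $d_{i-1}$ and $d_i$ are sent to the same length-$L$ arc in $T_+$. Chaining these identifications through $i=1,2,\dots,m$, the length-$L$ initial segment of the edge in direction $d_0$ and that in direction $d_m$ are both sent by $\ft$ to the same arc in $T_+$. By the definition of $\ell(\{d_0,d_m\})$, this yields $\ell(\{d_0,d_m\})\ge L$, which is precisely \eqref{eq:foldlengthconstraint}.

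I do not anticipate a real obstacle. The only point that warrants care is verifying that the fold-theoretic definition of $\ell$ coincides with the $T_+$-theoretic interpretation above; this follows from the $\Lambda$-legality of the folds implicit in the definition of weighted gates and the fact that $\ft$ is locally isometric along any direction lying in a leaf of $\Lambda$. Once that interpretation is fixed, the lemma is essentially a transitivity statement for the relation ``the length-$L$ initial segments are sent to the same arc in $T_+$,'' and the chain of equalities telescopes immediately.
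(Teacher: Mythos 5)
Your argument is correct and is essentially the paper's proof: the paper reduces to $m=2$ by induction and then uses exactly your transitivity observation (initial segments of length $\min\{\ell(\{d_0,d_1\}),\ell(\{d_1,d_2\})\}$ of $d_0,d_1$ and of $d_1,d_2$ have the same $\ft$-image, hence so do those of $d_0,d_2$), while you simply telescope the chain in one step. The interpretation of $\ell$ as the length of the maximal initial segments identified under $\ft$ is indeed how the paper reads the weighted gate structure, so there is no gap.
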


\begin{proof}
By induction, it suffices to show the lemma when $m=2$. In this case, the initial segments of $d_0$ and $d_1$ of length $\ell(\{d_0,d_1\}) \leq \min\{ \ell(\{d_0,d_1\}), \ell(\{d_1,d_2\}) \}$ have the same $\ft$-image, and the initial segments of $d_1$ and $d_2$ of length $\ell(\{d_1,d_2\}) \leq \min\{ \ell(\{d_0,d_1\}), \ell(\{d_1,d_2\}) \}$ have the same $\ft$-image. Hence, the initial segments of $d_0$ and $d_2$ of at least length $\min\{ \ell(\{d_0,d_1\}), \ell(\{d_1,d_2\}) \}$ have the same $\ft$-image.
\end{proof}

We call a continuous, injective, proper function $\mF'\from I \to CV_r$ defined by a continuous 1-parameter family of marked graphs $t \to \Gamma_t$ a \textbf{split path} if $\mF(t)=\mF'(a+b-t)$ is a fold path. We call $\mF'$ a \textbf{split} if $\mF$ is a fold, \textbf{$\Lambda$-legal} if $\mF$ is, and \textbf{simple} if $\mF$ is. Note that, since a $\Lambda$-legal fold must be a $\Lambda$-isometry, a $\Lambda$-legal split must also restrict to an isometry on each leaf $L$ of $\Lambda$.

\subsection{Weak train tracks} \label{subsec:weaktraintrack}

A \textbf{normalized weak train track} for $\varphi$ is a $T \in \widehat{CV_r}$ on which a $\Lambda$-isometry exists. A \textbf{weak train track} is an element of $CV_r$ represented by a normalized weak train track.

As explained in \cite[Theorem 5.8]{hm11}, the choices of a $\Lambda$-isometry on a fixed normalized weak train track $T$ can be nonunique, and are parametrized by a closed interval (that is possibly a single point).
In \cite[\S 6.2]{hm11}, Handel and Mosher define a \textbf{right-most isometry} $k_T^+$ and \cite[Lemma 6.3]{hm11} provides that this assignment of $k_T^+$ is continuous (further explanation is provided in the remark following the lemma in \cite{hm11}).
In this paper, we always equip a normalized weak train track with its right-most $\Lambda$-isometry.

Having a canonical choice of a $\Lambda$-isometry ensures having a unique fold between elements of the axis bundle where one exists.

\begin{lem} \label{lem:axisbundleinteriorrightmostfold}
Let $T, T'$ be normalized weak train tracks. If there is a fold $f:T \to T'$ such that $k_{T'}^+ f = k_T^+$, then $f$ is unique.
\end{lem}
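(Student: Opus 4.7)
The plan is to argue that the combinatorial and metric data of the fold is forced by the pair $(T, T')$ together with the compatibility condition $k_{T'}^+ \circ f = k_T^+$. Suppose $f_1, f_2 \colon T \to T'$ are two such folds; the goal is to show they coincide.

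First I would note that outside the folded region, each $f_i$ is an isometric embedding into $T'$, since it is locally isometric and injective away from the folded subgraph. The folded region of $f_i$ consists of a finite union of initial segments of edges of $T$ being identified at common vertices. For the quotient to equal $T'$ as a marked metric graph, these folded regions must account for the volume difference $\vol(T) - \vol(T')$, and the hypothesis $k_{T'}^+ \circ f_i = k_T^+$ forces the identifications to be among directions in a common gate of $k_T^+$ (so that $k_T^+$ factors through $f_i$).

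Next I would localize to each vertex of $T$ and argue that the fold data there is rigidly determined. At a vertex $v \in T$, the fold $f_i$ produces a distinguished branch point --- the ``folding vertex'' in the terminology of \Cref{fig:introcubisteg} --- in $T'$ where the identified initial segments end and the unfolded tails separate. Although in principle this folding vertex is not inherently canonical as a vertex of $T'$, the marking rigidity in $CV_r$ combined with the combinatorial bookkeeping of which edges of $T'$ correspond to which edges of $T$ pins it down, and therefore pins down the fold length and the turn being folded. Hence the local fold data at each vertex of $T$ agrees for $f_1$ and $f_2$, giving $f_1 = f_2$.

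The main obstacle will be handling degenerate configurations: multiple turns being folded at the same vertex (as occurs for branched cubes), or complete folds where the folding vertex collides with another vertex of $T'$. Here the right-most property of $k_{T'}^+$ becomes essential: by \cite[Theorem 5.8]{hm11} the $\Lambda$-isometries on $T'$ form a closed interval, and the requirement that the resulting quotient admit $k_{T'}^+$ as its right-most $\Lambda$-isometry --- together with continuity of this assignment \cite[Lemma 6.3]{hm11} --- should rule out the combinatorial coincidences that could otherwise allow two distinct folds to yield the same marked $T'$.
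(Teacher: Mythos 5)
There is a genuine gap. Your argument's decisive step---that the local fold data at each vertex of $T$ is ``pinned down'' by ``marking rigidity in $CV_r$ combined with the combinatorial bookkeeping of which edges of $T'$ correspond to which edges of $T$''---is circular: which segments of $T'$ are the images of which segments of $T$ is precisely the data of the fold map, i.e.\ exactly what must be shown to be unique. A priori two distinct folds (folding different turns in the same gate, or the same turns by different amounts in a symmetric configuration) could produce the same marked metric graph $T'$, and nothing in your volume/gate bookkeeping excludes this. You acknowledge this yourself in the last paragraph, but the resolution offered there is only that the right-most property and the interval structure of $\Lambda$-isometries from \cite[Theorem 5.8, Lemma 6.3]{hm11} ``should rule out'' the coincidences; that is a hope, not an argument, and it points at the wrong mechanism: the right-mostness and the continuity of $T \mapsto k_T^+$ play no role in the uniqueness of $f$.

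What actually closes the gap is a pointwise argument using only that $k_T^+$ and $k_{T'}^+$ are $\Lambda$-isometries. If $f, f'$ both satisfy $k_{T'}^+ f = k_T^+ = k_{T'}^+ f'$, then $f$ and $f'$ are $\Lambda$-legal, so each carries the realization of any leaf $L$ of $\Lambda$ in $T$ onto the realization of $L$ in $T'$. Given $x \in T$, pick a leaf $L$ through $x$; then $f(x)$ and $f'(x)$ both lie on the realization of $L$ in $T'$, and they have the same image $k_T^+(x)$ under $k_{T'}^+$. Since $k_{T'}^+$ restricts to an isometry of that realized leaf onto a geodesic of $T_+$, it is injective on $L$, forcing $f(x) = f'(x)$ for every $x$. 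This is the paper's proof; your localize-at-vertices strategy never exploits the injectivity of the $\Lambda$-isometry along leaves, which is the only place the hypothesis $k_{T'}^+ f = k_T^+$ actually does its work.
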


\begin{proof}
Suppose $f':T \to T'$ is another fold such that $k_{T'}^+ f' = k_T^+$. 
Since $k_T^+$ and $k_{T'}^+$ are $\Lambda$-isometries, $f$ and $f'$ must be $\Lambda$-legal.
This implies that for each leaf $L$ of $\Lambda$, we have $f(L) = f'(L) = L$, as realized in $T'$.

Now let $x$ be a point in $T$ and $L$ a leaf of $\Lambda$ passing through $x$. Then $f(x)$ and $f'(x)$ lie on the realization of $L$ in $T'$. But $f(x)$ and $f'(x)$ map to the same point $k_T^+(x)$ under $k_{T'}^+$, so we must have $f(x) = f'(x)$ otherwise $k_{T'}^+$ would not be a $\Lambda$-isometry.
\end{proof}

As with folds, there is a \textbf{weighted induced gate structure} on $T$ for each $\Lambda$-isometry $\ft \colon T \to T_+$. 

\begin{df}[Fully preprincipal]
A (normalized) weak train track is \textbf{fully preprincipal} if, in the gate structure induced by its rightmost $\Lambda$-isometry, each vertex has $\geq 3$ gates. 
This generalizes the \cite{PffAutomata} notion, leaving out PNPs restrictions.
\end{df}

\subsection{Axis bundle} \label{subsec:axisbundle}

The axis bundle $\mathcal{A}_{\varphi}$ for a nongeometric fully irreducible $\varphi \in Out(F_r)$ was first introduced in \cite{hm11} via 3 equivalent definitions  (proof of their equivalence is in \cite[Theorem 1.1]{hm11}). Further description is given in \cite{loneaxes}. We give here the one definition we use: Fix a normalization of $T_{+}$. Then:
$$\widehat{\mathcal{A}_{\varphi}} = \{\text{free simplicial } F_r \text{-trees } T \in \widehat{CV_r} \mid \exists~\Lambda\text{-isometry } f_T \colon T \to T_{+}\}.$$
In other words, $\widehat{\mathcal{A}_{\varphi}}$ is the set of normalized weak train tracks in $\widehat{CV_r}$. The \textbf{axis bundle} $\mathcal{A}_{\varphi}$ is the set of weak train tracks in $CV_r$ for $\varphi$, i.e. $\mathcal{A}_{\varphi}$ is the image of $\widehat{\mathcal{A}_{\varphi}}$ under the projectivization of $\widehat{CV_r}$.

By \cite[Lemma 5.1]{hm11}, each weak train track in $CV_r$ is represented by a unique 	normalized weak train track in $\widehat{CV_r}$; equivalently, the projection $\widehat{CV_r} \to CV_r$ restricts to a bijection $\widehat{\Av} \to \Av$. As such, we may occasionally blur the distinction between weak train tracks and normalized weak train tracks.

We note that $\mathcal{A}_{\varphi}$ is also the union of the images of all fold lines $\mathcal{F} \colon \mathbb{R} \to CV_r$ such that $\mathcal{F}$(t) converges in $\overline{CV_r}$ to $T_{-}^{\varphi}$ as $t \to -\infty$ and to $T_{+}^{\varphi}$ as $t \to +\infty$. An important example of such a fold line is a periodic fold line for a \textbf{Stallings fold decomposition} of a train track representative $g\from \Gamma  \to \Gamma$: At an illegal turn for $g$, fold two maximal initial segments with the same image to obtain a map $\mathfrak{g}_1 \from \Gamma_1 \to \G$ of the quotient graph $\Gamma_1$. Repeat the process for $\mathfrak{g}_1$ and recursively. If some $\mathfrak{g}_k \from \Gamma_{k-1} \to \Gamma$ has no illegal turn, then $\mathfrak{g}_k$ is a homeomorphism and the fold sequence is complete. Taking a rotationless power avoids the homeomorphism.

Several crucial properties of the axis bundle are recorded in \cite[Theorem 6.1, Lemma 6.2]{hm11}. We summarize a few here as \Cref{P:6.2}. 

\begin{prop}[\cite{hm11}] \label{P:6.2} 
Let $\varphi \in \out$ be nongeometric fully irreducible. 
Then the map $\vol \colon \widehat{\mathcal{A}_{\varphi}} \to (0, \infty)$ is a surjective, $\varphi$-equivariant homotopy equivalence, where $\varphi$ acts on $(0, +\infty)$ by multiplication by $\frac{1}{\lambda}$.
\end{prop}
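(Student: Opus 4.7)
The plan is to establish the three assertions separately, handling equivariance and surjectivity first before tackling the homotopy equivalence, which is the main content.

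For $\vphi$-equivariance, the axis bundle $\widehat{\mathcal{A}_{\vphi}}$ is $\vphi$-invariant because its defining condition of admitting a $\Lambda$-isometry to $T_+$ is preserved by $\vphi$, since $\Lambda$ and $T_+$ are themselves $\vphi$-invariant objects in the appropriate sense on $\widehat{CV_r}$. The scaling $\vol(\vphi \cdot T) = \vol(T)/\lambda$ then follows by tracking the $\vphi$-action through a train track representative $g \colon \Gamma \to \Gamma$ with stretch factor $\lambda$: pulling edge lengths back through $g$ rescales total volume by the factor $1/\lambda$.

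For surjectivity, I would invoke the periodic fold line $\mathcal{L} \subset \widehat{\mathcal{A}_{\vphi}}$ arising from the Stallings fold decomposition of a rotationless train track representative of $\vphi$, as recalled in the paragraph just before the statement. The function $\vol$ is continuous and strictly decreasing along $\mathcal{L}$ since each fold strictly reduces volume. By the $\vphi$-equivariance already established, one period of $\mathcal{L}$ sweeps out a half-open interval of the form $[v_0/\lambda, v_0)$ for some $v_0 > 0$, so translating by $\vphi^n$ for $n \in \ZZ$ covers all of $(0, \infty)$.

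For the homotopy equivalence, since the fold line $\mathcal{L} \cong \RR$ provides a continuous section of $\vol$, it suffices to show $\mathcal{L} \hookrightarrow \widehat{\mathcal{A}_{\vphi}}$ is itself a homotopy equivalence --- equivalently, to construct a deformation retraction of $\widehat{\mathcal{A}_{\vphi}}$ onto $\mathcal{L}$. The plan is to use the canonical right-most $\Lambda$-isometry $k_T^+$, which depends continuously on $T$ per the discussion in \S \ref{subsec:weaktraintrack}, to produce such a retraction. At each $T$ in the axis bundle, the induced gate structure selects canonical illegal turns, giving a canonical direction of $\Lambda$-legal folding; flowing $T$ in this direction produces a continuous one-parameter family of weak train tracks that should converge onto $\mathcal{L}$.

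The principal obstacle will be making this flow globally well-defined --- verifying continuity across boundaries between simplicial strata of $\widehat{CV_r}$ (where the gate structure and hence the folding direction can change discontinuously in the naive sense), ensuring convergence onto $\mathcal{L}$ within a reparametrization depending continuously on $T$, and checking that restricting the flow to $\mathcal{L}$ gives the identity. Handling the combinatorial transitions between strata will require some care, likely following the same perturbative framework used by Handel and Mosher for their analysis of the coarse topology of the axis bundle.
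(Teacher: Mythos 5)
This proposition is not proved in the paper at all: it is imported verbatim from Handel--Mosher, as the attribution \cite[Theorem 6.1, Lemma 6.2]{hm11} indicates, so there is no internal argument to compare yours against. Judged on its own terms, your proposal has a genuine gap in the only part that carries real content. Equivariance and surjectivity (via a periodic Stallings fold line and the $\vphi$-translates of one fundamental domain of $\vol$) are fine, but the homotopy equivalence is exactly the deep theorem of the Handel--Mosher memoir, and your sketch reduces it to constructing a deformation retraction of $\widehat{\Av}$ onto a single fold line $\mathcal{L}$ by ``flowing in the canonical folding direction.'' You never construct this flow, and the obstacles you list in the last paragraph (global well-definedness, continuity across strata, convergence onto $\mathcal{L}$ up to continuously varying reparametrization, restriction to $\mathcal{L}$ being the identity) are not technical loose ends to be ``handled with care'' --- they are the entire difficulty, and deferring them to ``the same perturbative framework used by Handel and Mosher'' is circular, since that framework is precisely the proof of the statement being cited.

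Moreover, the specific mechanism you propose is suspect on mathematical grounds, not just on grounds of incompleteness. Folding forward from an arbitrary $T$ converges to $T_+$ in $\overline{CV_r}$, not onto a prescribed fold line inside $\widehat{\Av}$, and the axis bundle typically contains uncountably many fold lines; indeed the results of the present paper (multiple arteries, as in \Cref{ex:2dimbundlemultiartery}) show that canonical trajectories built from the rightmost-$\Lambda$-isometry combinatorics can limit onto several distinct $\phi$-periodic lines, none of which need be your chosen $\mathcal{L}$. So ``the flow should converge onto $\mathcal{L}$'' is not merely unverified --- as stated it is likely false without substantial modification (e.g.\ replacing convergence onto $\mathcal{L}$ by a proper-homotopy argument in the style of \cite{hm11}, which does not proceed by retracting onto one line). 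As it stands, the proposal establishes the easy assertions and gestures at, but does not prove, the homotopy equivalence.
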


Further, the $\vphi$-action gives a means to decompose $\Av$ into compact fundamental domains:

\begin{lem} \label{lem:fpplocallyfinite}
Suppose that $\vphi\in\out$ is nongeometric fully irreducible. Then each fundamental domain of the $\vphi$-action on $\Av$ contains only finitely many fully preprincipal train tracks.
\end{lem}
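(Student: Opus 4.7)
The plan is to combine compactness of any fundamental domain with local finiteness of fully preprincipal weak train tracks within each open simplex of outer space.

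First, I would observe that every fundamental domain is relatively compact. By \Cref{P:6.2}, the volume map $\vol\colon \widehat{\mathcal{A}_\varphi} \to (0,\infty)$ is a proper, $\varphi$-equivariant homotopy equivalence with $\varphi$ acting on $(0,\infty)$ by multiplication by $1/\lambda$. The preimage $F := \vol^{-1}([1,\lambda])$ is therefore a compact fundamental domain for the $\varphi$-action on $\widehat{\mathcal{A}_\varphi}$, which identifies with a compact fundamental domain in $\mathcal{A}_\varphi$ via the bijection of \cite[Lemma 5.1]{hm11}. It suffices to show that $F$ contains only finitely many fully preprincipal weak train tracks.

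Next, since the $\out$-action on $CV_r$ is properly discontinuous and $CV_r/\out$ is a finite simplicial complex, every compact subset of $\widehat{CV_r}$ meets only finitely many open simplices. Applied to $F$, this reduces the problem to showing that in each open simplex $\sigma$ of $\widehat{CV_r}$, the set of fully preprincipal weak train tracks is locally finite.

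For this final step, I would fix $\sigma$ with underlying marked graph $\Gamma$, so points $T \in \sigma$ are parametrized by positive edge lengths. By \cite[Lemma 6.3]{hm11}, the rightmost $\Lambda$-isometry $k_T^+$ varies continuously in $T$, and hence so does the induced weighted gate structure. Since there are only finitely many possible partitions of the finitely many directions at each vertex, $\sigma$ decomposes into finitely many locally closed strata labeled by gate structure. The fully preprincipal condition restricts to the union of those strata in which every vertex has at least three gates, and the key claim is that inside $\sigma \cap \widehat{\mathcal{A}_\varphi}$ this union is $0$-dimensional. This is the manifestation of fully preprincipal train tracks being the ``highest-codimension cubes'' of the cubist structure in \Cref{thm:introcubistcomplex}: any edge-length perturbation of a fully preprincipal $T$ staying inside the axis bundle should either merge directions at some vertex (dropping the gate count below three) or leave the axis bundle entirely. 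The main obstacle is verifying this rigidity, which requires tracking how the rightmost $\Lambda$-isometry and its direction-identification pattern deform under edge-length perturbations, using both the ``rightmost'' characterization of $k_T^+$ and the branching structure of $T_+$. Once the rigidity claim is in hand, combining it with the compactness of $F$ and the finiteness of simplices meeting $F$ yields the desired global finiteness.
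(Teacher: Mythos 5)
Your reduction (compact fundamental domain, hence only finitely many open simplices are met, hence it suffices to control fully preprincipal elements simplex by simplex) is exactly the paper's first step. But the heart of the lemma is the per-simplex statement, and there your argument stops at an unproven ``rigidity claim'': you assert that inside each simplex the fully preprincipal locus should be $0$-dimensional because such train tracks are the ``highest-codimension cubes'' of the cubist structure of \Cref{thm:introcubistcomplex}. That appeal is circular: \Cref{lem:fpplocallyfinite} is an input to the cubist decomposition (it is used for the termination argument in \Cref{prop:branchedcubescoveraxisbundle} and for cocompactness in \Cref{t:CubistDecomp}), so you cannot invoke the cubist structure to prove it. You yourself flag the verification of the deformation rigidity of the rightmost $\Lambda$-isometry as ``the main obstacle,'' and no argument is supplied; that is precisely the missing content. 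Moreover, even if you had the rigidity, ``locally finite in each open simplex'' (or $0$-dimensional) is not by itself enough to conclude: a discrete subset of a compact set can still be infinite if it accumulates at a point of a lower-dimensional face lying in a \emph{different} open simplex, so you would still need closedness or a uniform per-simplex bound before compactness finishes the job.

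The paper's per-simplex argument is short and direct, and it is the idea your proposal is missing: if $T$ is fully preprincipal, then every vertex of $T$ has $\geq 3$ gates, so its image under the rightmost $\Lambda$-isometry $\ft\from T\to T_+$ has $\geq 3$ directions, i.e.\ each vertex maps to a branch point of $T_+$. Since $\ft$ is an isometry on each edge, every edge length of $T$ is realized as the length of a segment of $T_+$ joining branch points, and because $T_+$ has only finitely many $F_r$-orbits of branch points (and the marking/equivariance pins down which orbits and translates can occur within a fixed simplex with bounded volume), this leaves only a finite list of possible edge-length vectors, hence only finitely many fully preprincipal $T$ in each simplex. No stratification by gate structure or continuity of $T\mapsto k_T^+$ is needed, and the conclusion is outright finiteness per simplex rather than local finiteness, which is what makes the compactness step conclude immediately.
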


\begin{proof}
First note that, since the fundamental domain is compact, it can intersect only finitely many simplices. So it suffices to show that each simplex contains only finitely many preprincipal train tracks of $\vphi$.

Since each vertex of each preprincipal train track $T$ contains $\geq 3$ gates, each vertex must be mapped by the $\Lambda$-isometry $\ft$ to a branch point. Further, $\ft$, as a $\Lambda$-isometry, is an isometry on each edge. Since $T_+$ has only finitely many orbits of branch points, this gives a finite list of possible edge lengths in $T$. Together with the domain containing only finitely many simplices, this gives only finitely many possibilities for $T$.
\end{proof}

We use the following construction, allowing us to connect the axis bundle to train track representatives. 

\begin{constr}[Train tracks] \label{constr:tttoweaktt}
Weak train tracks can be constructed from train track representatives: Let $g \colon \Gamma \to \Gamma$ be a train track representative of a nongeometric fully irreducible $\varphi \in Out(F_r)$. 
Recall \Cref{constr:attractingtree} and let $T_k$ denote the simplicial $F_r$-tree obtained from $\widetilde{\Gamma}$ by assigning the metric $d_{k}(x,y)=\frac{1}{\lambda^k}d(\widetilde{g}^k(x),\widetilde{g}^k(y))$, identifying each $x,y \in \widetilde{\Gamma}$ with $d_k(x,y)=0$, and then equipping the quotient graph with the metric induced by $d_k$. Then, for each $i$, a basepoint-preserving lift of $g$ induces a basepoint-preserving $F_r$-equivariant map $\widetilde{g}_{i+1,i} \colon T_i \to T_{i+1}$ restricting to an isometry on each edge. Define a direct system $\widetilde{g}_{j,i} \colon T_i \to T_j$ inductively by $\widetilde{g}_{j,i}=\widetilde{g}_{j,j-1} \circ \widetilde{g}_{j-1,i}$. Then $\widetilde{\Gamma}$ is a weak train track where the $\Lambda$-isometry $g_{\infty} \colon \widetilde{\Gamma} \to T_+$ is the direct limit map. $\widetilde{\Gamma}$ is called a \textbf{train track}. $TT(\vphi)$ denotes the set of train tracks for $\vphi$.

\begin{figure}[ht!]
\[
\xymatrix{\widetilde{\Gamma}=T_0 \ar[r]_{\widetilde{g}_{1,0}} \ar@/^4pc/[rrrr]_{g_{\infty}} & T_1 \ar[r]_{\widetilde{g}_{2,1}} \ar@/^3pc/[rrr]_{g_{1,\infty}} & T_2 \ar[r]_{\widetilde{g}_{3,2}} \ar@/^2pc/[rr]_{g_{2,\infty}} & \dots   & T_+ \\}
\]
\end{figure}
\end{constr}

\subsection{Ideal Whitehead graphs} \label{subsec:IWG}

The ideal Whitehead graph of a nongeometric fully irreducible $\vphi\in\out$ is first defined in \cite{hm11}. One can reference \cite{Thesis} and \cite{hm11} for alternative definitions and its outer automorphism invariance. Further explanation yet can be found in \cite[\S 2.8, \S 2.10]{loneaxes}.

\begin{df}[Ideal Whitehead graph $IW(\varphi)$]\label{d:IWG}
Let $\varphi \in Out(F_r)$ be nongeometric fully irreducible with lifted attracting lamination $\widetilde\Lambda$ realized in $T_+$.
$\widetilde{IW(\varphi)}$ is the union of the components with at least three vertices of the graph that has a vertex for each distinct leaf endpoint and an edge connecting the vertices for the endpoints of each leaf. $F_r$ acts freely, properly discontinuously, and cocompactly in such a way that the restriction to each component of $\widetilde{{IW}(\varphi)}$ has trivial stabilizer. The \textbf{ideal Whitehead graph} ${IW}(\varphi)$ is the quotient under this action. It has only finitely many components.

Using \Cref{r:ViewingLambda}, one can view $\widetilde{IW(\varphi)}$ in any $F_r$-tree $T \in TT(\varphi^k)$.
\end{df}

\begin{df}[Principal points]\label{d:Principal}
Given a branch point $b$ of $T_+$, the lifted ideal Whitehead graph $\widetilde{{IW}}(\varphi)$ has one component, denoted $\widetilde{IW_b}(\varphi)$, whose edges, realized as lines in $T_+$, all contain $b$. This relationship gives a one-to-one correspondence between components of $\widetilde{{IW}}(\varphi)$ and branch points of $T_+$. Given a branch point $b$ of $T_+$, let $\widetilde{{IW}_b}(\varphi;T)$ denote the realization of $\widetilde{{IW}_b}(\varphi)$ in $T$. This makes sense by viewing the ideal Whitehead graph in terms of the lamination leaves, as in \Cref{d:IWG}. We call a point $v$ in $T$ \textbf{principal} for $f$ if there exists a branch point $b$ of $T_+$ such that $f(v)=b$ and $v$ lies in some leaf of $\widetilde{{IW}_b}(\varphi;T)$.

It is shown in \cite{hm11}, and can be ascertained from the alternative $IW(\vphi)$ definitions given there (and in \cite{Thesis}) that a principal point downstairs either has 3 periodic directions or is the endpoint of a PNP.
\end{df}

\subsection{Cut Decompositions}{\label{subsec:cutdecomp}}

Suppose $\vphi\in\out$ is nongeometric fully irreducible. We describe here a methodology of Handel and Mosher \cite[\S 4]{hm11} for using cut vertices in $IW(\vphi)$ to obtain train track representatives with varied numbers of PNPs. The methodology is used in \Cref{p:CutDecompositions} to construct fully preprincipal train track representatives of $\vphi$ that realize all possible ``cut decompositions'' of $IW(\vphi)$.

\parpic[r]{\selectfont\fontsize{12pt}{12pt} \resizebox{!}{3.75cm}{
\begingroup%
  \makeatletter%
  \providecommand\color[2][]{%
    \errmessage{(Inkscape) Color is used for the text in Inkscape, but the package 'color.sty' is not loaded}%
    \renewcommand\color[2][]{}%
  }%
  \providecommand\transparent[1]{%
    \errmessage{(Inkscape) Transparency is used (non-zero) for the text in Inkscape, but the package 'transparent.sty' is not loaded}%
    \renewcommand\transparent[1]{}%
  }%
  \providecommand\rotatebox[2]{#2}%
  \newcommand*\fsize{\dimexpr\f@size pt\relax}%
  \newcommand*\lineheight[1]{\fontsize{\fsize}{#1\fsize}\selectfont}%
  \ifx\svgwidth\undefined%
    \setlength{\unitlength}{168.46702636bp}%
    \ifx\svgscale\undefined%
      \relax%
    \else%
      \setlength{\unitlength}{\unitlength * \real{\svgscale}}%
    \fi%
  \else%
    \setlength{\unitlength}{\svgwidth}%
  \fi%
  \global\let\svgwidth\undefined%
  \global\let\svgscale\undefined%
  \makeatother%
  \begin{picture}(1,0.88737131)%
    \lineheight{1}%
    \setlength\tabcolsep{0pt}%
    \put(0,0){\includegraphics[width=\unitlength,page=1]{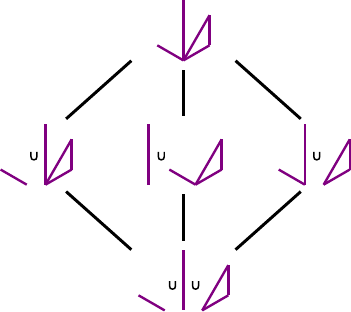}}%
  \end{picture}%
\endgroup%
}}
Refer to the right-hand image. Suppose $G$ is a graph that is a union of two nontrivial subgraphs of itself intersecting in a single vertex $v\in VG$, then we call $v$ a \textbf{cut vertex} of $G$. By a \textbf{cut decomposition} of $G$, we mean a collection of nontrivial connected subgraphs $\{G_1,\dots, G_k\}$ of $G$ satisfying:
\begin{enumerate}
\item $G=\cup ~G_j$ and
\item $G_i\cap G_j$ is either empty or a vertex for each $i\neq j$.
\end{enumerate}
As an example, we included the figure to the right, displaying the cut decompositions of the graph located at the top. The cut decompositions are arranged in the partial order of fineness.


The ideal Whitehead graph has another interpretation in terms of singular leaves of $\widetilde{\Lambda}$.
Here, a leaf of $\widetilde{\Lambda}$ is \textbf{singular} if it shares a ray with another leaf. 

\begin{df}[${LW}(\widetilde{v},T)$, ${SW}(\widetilde{v},T)$]{\label{d:WhiteheadGraphsWTT}}
Let $T$ be a weak train track and $\widetilde{v}$ a principal point of $T$. 
The \textbf{local Whitehead graph} ${LW}(\widetilde{v};T)$ has a vertex for each direction at $\widetilde{v}$ and an edge connecting the vertices representing a pair of directions $\{d_1,d_2\}$ precisely when the turn $\{d_1,d_2\}$ is taken by the realization in $T$ of a leaf of $\widetilde{\Lambda}$. The \textbf{stable Whitehead graph} ${SW}(\widetilde{v};T)$ at $\widetilde{v}$ is the subgraph of ${LW}(\widetilde{v};T)$ obtained by restricting to the \textbf{principal directions}, i.e. those containing singular rays emanating from $\widetilde{v}$. 
\end{df}

Each ${SW}(\widetilde{v};T)$ sits inside $\widetilde{{IW}}(\varphi)$ as follows: A vertex of ${SW}(\widetilde{v};T)$ corresponds to a singular leaf ray $\widetilde{R}$ emanating from $\widetilde{v}$. The endpoint of this ray corresponds to a vertex of $\widetilde{{IW}}(\varphi)$. 
An edge of ${SW}(\widetilde{v};T)$ corresponds to a singular leaf based at $\widetilde{v}$ (as in \Cref{d:Principal}). This leaf also gives an edge of $\widetilde{{IW}}(\varphi)$. 

The following is a restatement of \cite[Lemma 5.2]{hm11}, focused for our purposes.

\begin{lem}[\cite{hm11}] \label{lemma:SWGinIWG}
Suppose that $T$ is a weak train track and $F_T \colon T \to T_+$ a $\Lambda$-isometry. Suppose that $b$ is a branch point of $T_+$ and $\{\wtilde{w_i}\} \subset T$ is the set of principal vertices mapped by $F_T$ to $b$. Then
\begin{enumerate}
\item $\widetilde{{IW}_b}(\varphi;T) = \cup {SW}(\wtilde{w_i};T)$.
\item For each $i \neq j$, there is at most one vertex in ${SW}(\wtilde{w_i};T) \cap {SW}(\wtilde{w_j};T)$. If there is a vertex $P$ in the intersection, then $P$ is a cut point of $\widetilde{{IW}}(\varphi)$, separating ${SW}(\wtilde{w_i};T)$ from ${SW}(\wtilde{w_j};T)$ in $\widetilde{IW(\varphi)}$.
\end{enumerate}
\end{lem}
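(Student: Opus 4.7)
The plan is to verify (1) as a set equality of subgraphs of $\widetilde{IW_b}(\varphi)$ by checking vertices and edges in both directions, and then to deduce (2) from the key observation already built into the paper: $F_T$ restricts to an isometry on each leaf of $\widetilde{\Lambda}$, so for any leaf $L$ of $\widetilde{\Lambda}$ through $b$ in $T_+$, the intersection $L \cap F_T^{-1}(b)$ is a single point, which must be one of the $\widetilde{w_i}$'s.

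For (1), the inclusion $\bigcup SW(\widetilde{w_i};T) \subseteq \widetilde{IW_b}(\varphi;T)$ is immediate from the identifications in the preliminaries: a singular ray from $\widetilde{w_i}$ maps under $F_T$ to a singular ray emanating from $b = F_T(\widetilde{w_i})$ in $T_+$, and a leaf through $\widetilde{w_i}$ maps to a leaf through $b$. For the reverse inclusion, given an edge of $\widetilde{IW_b}(\varphi)$, namely a leaf $L$ through $b$ in $T_+$, its realization in $T$ intersects $F_T^{-1}(b)$ in a single point $\widetilde{w_i}$ (by the isometry argument above), so the turn of $L$ at $\widetilde{w_i}$ is an edge of $SW(\widetilde{w_i};T)$; for a vertex of $\widetilde{IW_b}(\varphi)$, an endpoint $\xi$ at infinity of a singular ray at $b$, apply the same argument to any leaf through $b$ with endpoint $\xi$ to obtain a principal direction at some $\widetilde{w_i}$ corresponding to $\xi$.

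For (2), I would first rule out shared edges: an edge in $SW(\widetilde{w_i};T) \cap SW(\widetilde{w_j};T)$ would be a leaf passing through both $\widetilde{w_i}$ and $\widetilde{w_j}$, contradicting the isometry of $F_T$ on that leaf since $F_T(\widetilde{w_i}) = F_T(\widetilde{w_j}) = b$. Next, to show at most one shared vertex, I would use a median argument. Supposing two shared vertices $\xi_1 \neq \xi_2$, let $R_i(\xi_k), R_j(\xi_k) \subset T$ be the singular rays to $\xi_k$ from $\widetilde{w_i}$ and $\widetilde{w_j}$, and let $p_k$ be the median of $\widetilde{w_i}, \widetilde{w_j}, \xi_k$ in $T$. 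Since $F_T$ is isometric on each of $R_i(\xi_k), R_j(\xi_k)$ and $F_T(R_i(\xi_k)) = F_T(R_j(\xi_k))$ (the unique ray from $b$ to $\xi_k$ in $T_+$), I get $d(\widetilde{w_i}, p_k) = d(\widetilde{w_j}, p_k)$, forcing $p_k$ to be the midpoint of $[\widetilde{w_i}, \widetilde{w_j}]$ for both $k$. Consequently $R_i(\xi_1)$ and $R_i(\xi_2)$ both emanate from $\widetilde{w_i}$ in the direction toward this midpoint, giving the same principal direction; invoking the correspondence principal direction $\leftrightarrow$ endpoint at infinity (inherent in the weak train track setup) then yields $\xi_1 = \xi_2$, contradiction.

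For the cut point property, let $P$ denote the unique shared vertex. Each edge of $\widetilde{IW_b}(\varphi)$ belongs to exactly one $SW(\widetilde{w_k};T)$ (again by the $F_T$-isometry on leaves), so any path in $\widetilde{IW_b}(\varphi)$ can transition between distinct $SW$'s only through shared vertices, and by the previous step each such shared vertex is unique for its pair of $SW$'s. I would then argue by tracing such a path that any route in $\widetilde{IW_b}(\varphi) \setminus \{P\}$ from $SW(\widetilde{w_i};T) \setminus \{P\}$ to $SW(\widetilde{w_j};T) \setminus \{P\}$ would have to transition through some $SW(\widetilde{w_k};T)$ via shared vertices avoiding $P$, and exploit the block-like structure of $\widetilde{IW_b}(\varphi)$ forced by the pairwise-single-vertex intersections to rule this out. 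The main obstacle I anticipate is this last combinatorial step in the cut point argument — controlling paths through all intermediate $SW(\widetilde{w_k};T)$'s — together with carefully justifying the singular-ray uniqueness per principal direction used in the median step, both of which require unpacking the precise structure of principal points and $\Lambda$-isometries.
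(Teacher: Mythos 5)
First, a point of comparison: the paper itself supplies no proof of this lemma --- it is stated explicitly as a restatement of \cite[Lemma 5.2]{hm11} and is used by citation --- so there is no internal argument to match your proposal against; what you have written is an attempt to reprove Handel--Mosher's lemma from scratch. Within that attempt, part (1), the ``no shared edges'' observation, and the median computation (both rays to a shared vertex $\xi$ meet the segment $[\widetilde{w_i},\widetilde{w_j}]$ at its midpoint, since $F_T$ is isometric on leaves) are reasonable and in the spirit of the source. But the two places you yourself flag as obstacles are genuine gaps, not routine bookkeeping, and as written the proof does not close.

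Concretely: (i) in the ``at most one shared vertex'' step, the contradiction rests on the assertion that a principal direction at $\widetilde{w_i}$ determines a unique endpoint at infinity. This is not ``inherent in the weak train track setup'': a single direction contains many leaf rays with distinct endpoints, and the fact that the singular rays it contains are all asymptotic (equivalently, that the identification of vertices of ${SW}(\widetilde{w_i};T)$ with vertices of $\widetilde{IW}(\varphi)$ is well defined and injective) is itself part of what \cite{hm11} establishes; invoking it here makes the argument circular at exactly the point where work is needed. (ii) More seriously, the cut-point statement cannot be extracted from ``pairwise intersections are at most one vertex'' by combinatorics alone. A triangle decomposed into its three edges satisfies everything your path-tracing argument uses --- the pieces cover, each edge lies in exactly one piece, and pairwise intersections are single vertices --- yet no vertex of the triangle is a cut point. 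So the ``block-like structure'' you hope to exploit is not forced; one needs genuine geometric input, e.g.\ that a shared vertex $P$ corresponds to a pair of segments $[\widetilde{w_i},m]$ and $[\widetilde{w_j},m]$ in the tree $T$ with the same $F_T$-image (a weak Nielsen-path configuration), and that because these configurations live in the tree $T$ the nerve of the decomposition $\widetilde{IW_b}(\varphi)=\bigcup {SW}(\widetilde{w_i};T)$ contains no cycles. Without such a step the separation claim in (2) is unproved. Either supply these two missing arguments or do what the paper does and quote \cite[Lemma 5.2]{hm11}.
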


\begin{df}[Local decomposition]
The cut decomposition in \Cref{lemma:SWGinIWG} is the \textbf{local decomposition} of $T$. Let $T,T'$ be weak train tracks. As in \cite{hm11}, one says $T$ \textbf{is split at least as much as} $T'$ if the local decomposition $\widetilde{{IW}}(\varphi) = \bigcup {SW}(v_j;T)$ is at least as fine as the local decomposition $\widetilde{{IW}}(\varphi) = \bigcup {SW}(w_i;T')$. That is, for each principal vertex $v_j$ of $T$, there exists a principal vertex $w_i$ of $T'$ such that ${SW}(v_j;T) \subset {SW}(w_i;T)$, where the inclusion takes place in $\widetilde{{IW}}(\varphi)$, realized as a decomposition, as above.
\end{df}

Suppose $\vphi\in\out$ is a rotationless nongeometric fully irreducible outer automorphism and $\mG$ is a cut decomposition of $IW(\vphi)$. 
\cite[Lemma 4.3]{hm11} provides a method to obtain a train track representative $g$ of $\vphi$ so that $\widetilde{\Gamma}$ has local decomposition $\mG$. 
More specifically, $g$ is obtained from a stable train track representative $h$ of $\vphi$ via iteratively ``splitting open'' at cut vertices of the stable Whitehead graphs:

\begin{constr}[Splitting open a cut vertex] \label{constr:splittingcutvertices}
Suppose $w$ is a cut vertex of a stable Whitehead graph $SW(f,u)$ of a train track representative $f\from\G\to\G$ of $\vphi$ and $G_1$, $G_2$ are nontrivial subgraphs of $SW(f,u)$ meeting only at $w$ and with $SW(f,u) = G_1 \cup G_2$.
Then $w$ is represented by a fixed direction $d_0$ at $u$, as well as a collection $d_1, \dots, d_N$ of directions mapped to $d_0$ by powers of $Df$. Let $E_0$ be the edge in the direction of $d_0$. 
See \Cref{fig:splittingcutvertex} left. In the top row of the figure, we have drawn the graph $\Gamma$. In the bottom row, we draw a `blown-up' view where we insert the local Whitehead graph ${LW}(v,T)$ at each vertex $v$, with the purple edges lying in ${SW}(v,T)$ and the red edges lying in ${LW}(v,T)$ but not ${SW}(v,T)$, as done in \cite{PffAutomata}.

We explain how to form an NP via splitting open at $E_0$. The same procedure should simultaneously be applied to the edge in the direction of each of $d_1, \dots, d_N$. 

$G_1$ and $G_2$ correspond to a bipartition of $\mD(u)\backslash\{d_0\}$ satisfying that the directions of each $f$-taken turn are in the same partition element.
Create from $\G$ a new graph $\G'$ where
\begin{itemize}
    \item $V\G' = (V\G\backslash\{u\}) \cup \{u_1, u_2\}$, and
    \item each edge $e_j= [v_j',u]$ is replaced with $[v_j',u_1]$, and
    \item each edge $e_j'= [v_j'',u]$ is replaced with $[v_j',u_2]$, and
    \item $E_0= [u,v]$ is replaced by the 2 edges $[u_1,v]$ and $[u_2,v]$, and
    \item all other edges remain the same.
\end{itemize}
See the middle image of \Cref{fig:splittingcutvertex}.

The map $f'$ is the same as $f$ but that $E_0$ is replaced by $E_1$ in the image of any edge when $u$ was passed through via $G_1$ and by $E_2$ when $u$ was passed through via $G_2$, and analogous alterations occur for the edges in the directions of $d_1, \dots, d_N$. The images of $E_1$ and $E_2$ are that of $E$, but that the image of $E_1$ now starts with $E_1$ and of $E_2$ with $E_2$. Note that $E_0$ was a fixed direction, as it was represented by a vertex in $SW(f,u)$, so this map of the $E_j$ makes sense. In the cases of $d_1, \dots, d_N$, instead of the image of $E_1, E_2$ starting with respectively $E_1, E_2$, the $f$-image of $e$ would start with $Df(e)$.

\begin{figure}
    \centering
    \selectfont\fontsize{6pt}{6pt}
    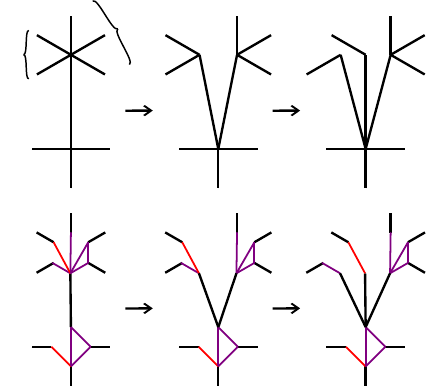
    \caption{Splitting open a cut vertex. In the top row, we have drawn the graph $\Gamma$. In the bottom row, we draw a `blown-up' view where we insert the local Whitehead graph ${LW}(v,T)$ at each vertex $v$, with the purple edges lying in the stable Whitehead graph ${SW}(v,T)$ and the red edges lying in ${LW}(v,T)$ but not ${SW}(v,T)$.}
    \label{fig:splittingcutvertex}
\end{figure}

We call this procedure \textbf{splitting $u$ open along $E$}.
\end{constr}

\begin{lem}\label{l:SplittingCutVertices} Suppose $g$ is a fully preprincipal train track representative of a rotationless nongeometric fully irreducible $\vphi\in\out$. Then splitting open at a cut vertex yields a train track representative of $\vphi$ that can be $\Lambda$-legal split to obtain a fully preprincipal train track representative of $\vphi$.
\end{lem}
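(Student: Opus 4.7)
The plan is to verify the two assertions of the lemma in sequence.

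For the first assertion---that splitting open yields a train track representative of $\varphi$---I would appeal to the construction in \Cref{constr:splittingcutvertices}, whose underlying combinatorics is \cite[Lemma 4.3]{hm11}. The graph $\Gamma'$ is a blow-up of $\Gamma$ along the direction $d_0$, namely the inverse of the fold $q \colon \Gamma' \to \Gamma$ identifying $E_1$ with $E_2$ and $u_1$ with $u_2$, so it is a valid marked graph with marking pulled back from $\Gamma$. The map $g'$ is well-defined because the bipartition of $\mathcal{D}(u) \setminus \{d_0\}$ is chosen so that every turn taken by any iterate of $g$ lies in a single partition element; hence the rerouting rule---through $u_1$ if the image turn uses $G_1$, else through $u_2$---is consistent on the image of each edge. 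The train track property for $g'$ follows since identifications of initial segments by iterates of $g'$ correspond via $q$ to identifications by the same iterate of $g$, each occurring at an illegal turn of $g$ at $u$ that lifts uniquely to an illegal turn at $u_1$ or at $u_2$, and never across the split.

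For the second assertion, I would first note that $\Gamma' \in \mathcal{A}_\varphi$: the composition $f_T \circ q \colon T' \to T_+$ is $F_r$-equivariant and is a $\Lambda$-isometry because each realized leaf of $\Lambda$ in $T'$ projects isometrically under $q$ (the rerouting ensures each leaf traverses at most one of $u_1, u_2$). Every vertex of $\Gamma'$ other than $u_1, u_2$ inherits its $\geq 3$-gate structure unchanged from the fully preprincipal train track $\Gamma$. If $u_i$ has fewer than $3$ gates under the rightmost $\Lambda$-isometry on $T'$, then some pair of initial segments of edges at $u_i$ is identified by that isometry, and undoing this identification yields a $\Lambda$-legal split of $\Gamma'$ that refines the gate structure at $u_i$. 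I would iterate this procedure, using the Stallings fold decomposition of a rotationless iterate $g^N$ as a $\Lambda$-legal fold sequence passing through fully preprincipal train tracks (which it does since $g$ is itself fully preprincipal) to extract a finite $\Lambda$-legal split sequence from $\Gamma'$ terminating at a fully preprincipal representative.

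The main obstacle I anticipate is the iterative step at $u_1, u_2$: namely, verifying that finitely many $\Lambda$-legal splits suffice to reach $\geq 3$ gates at each of $u_1, u_2$ without introducing new non-fully-preprincipal vertices elsewhere. This hinges on the locality of the splitting-open construction---so that splits near $u_i$ leave the principal structure at other vertices intact---together with the rotationless hypothesis on $\varphi$, which controls the behavior of periodic Nielsen paths under iteration and ensures that the $\Lambda$-legal fold path through $\Gamma'$ passes through a fully preprincipal representative within bounded distance.
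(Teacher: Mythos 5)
Your handling of the first assertion (that the splitting-open construction of \Cref{constr:splittingcutvertices} yields a train track representative) is consistent with what the paper takes from \cite[Lemma 4.3]{hm11}, and your observation that every vertex other than $u_1,u_2$ keeps its $\geq 3$ gates matches the paper's starting point. The gap is in the second assertion, which is the actual content of the lemma. You assert that if $u_i$ has fewer than $3$ gates, then ``undoing'' an identification of initial segments at $u_i$ gives a $\Lambda$-legal split that ``refines the gate structure at $u_i$,'' but this is not justified and is not true of an arbitrary split at an illegal turn: splitting one illegal turn at $u_i$ does not in general increase the number of gates at $u_i$, and need not land on a fully preprincipal graph. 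The argument the lemma needs is the specific one: if $u_1$ has only two gates, with directions $E_1,e_1,\dots,e_n$, then \emph{every} $\Lambda$-taken turn at $u_1$ is of the form $\{E_1,e_j\}$, and therefore one may perform the single explicit $\Lambda$-legal split replacing the edges $e_j$ at $u_1$ by $\bar{E_1}e_j$ (sliding all of them across $E_1$, which eliminates the problematic two-gate vertex), and likewise at $u_2$ if needed. That structural observation about taken turns at a two-gate $u_i$ is the crux, and it never appears in your proposal; you instead flag the resulting termination question as an ``anticipated obstacle,'' whereas in fact no iteration is required---at most one extra split at each of $u_1$ and $u_2$ suffices.

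Your fallback via the Stallings fold decomposition of a rotationless power does not repair this. The claim that such a fold sequence ``passes through fully preprincipal train tracks since $g$ is itself fully preprincipal'' is unsupported and false in general: intermediate graphs of a fold decomposition of a fully preprincipal representative need not be fully preprincipal (indeed the non-homogeneity of fold lines is part of what motivates the paper's cubist machinery), and it is also unclear how a fold sequence running between copies of $\Gamma$ would furnish a split sequence issuing from $\Gamma'$. So as written, the second assertion of the lemma remains unproved.
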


\begin{proof}
Suppose $g$ is fully preprincipal and $u$ was split open along $E=[u,v]$. Note that $v$ still has $\geq 3$ gates. Our concern is that either $u_1$ or $u_2$ has only 2 gates. Without loss of generality we assume $u_1$ has only 2 gates and that the directed edges at $u$ are $E_1, e_1, \dots, e_n$. Then the only possible $\Lambda$-taken turns at $u$ are of the form $\{E_1, e_j\}$. So it is possible to split open $u_1$ to replace $\{E_1, e_1, \dots, e_n\}$ with $\{\bar{E_1}e_1, \dots, \bar{E_1}e_n\}$. 
See \Cref{fig:splittingcutvertex} right.
The process can be repeated at $u_2$, if necessary.
\end{proof}

\begin{prop}\label{p:CutDecompositions}
Suppose $\vphi\in\out$ is rotationless nongeometric fully irreducible. Then each cut decomposition of $IW(\vphi)$ is realized by a fully preprincipal train track representative of $\vphi$. 
\end{prop}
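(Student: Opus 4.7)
The plan is to induct on the number of splitting-open moves needed to realize $\mathcal{G}$, starting from a stable representative and using Construction \ref{constr:splittingcutvertices} and Lemma \ref{l:SplittingCutVertices} as the inductive step. The stable representative realizes the coarsest cut decomposition $\mathcal{G}_0$ of $IW(\varphi)$: stable representatives minimize the number of iNPs, and by Lemma \ref{lemma:SWGinIWG} iNPs correspond to extra principal preimages of branch points of $T_+$, which in turn correspond to cut vertices appearing in the local decomposition. By the theory in \cite[Lemma 4.3]{hm11}, every other cut decomposition is a refinement of $\mathcal{G}_0$ reachable by iteratively splitting open at cut vertices.

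For the base case, I would take a stable rotationless representative $h$ of $\varphi$ (guaranteed by \cite{bh92} and \cite[Corollary 4.43]{fh11}) and make it fully preprincipal via $\Lambda$-legal splits at any vertex $u$ with fewer than 3 gates, following the recipe in the proof of Lemma \ref{l:SplittingCutVertices}: if $u$ has directed edges $E, e_1, \ldots, e_n$ with all $\Lambda$-taken turns of the form $\{E, e_j\}$, split open $u$ to replace these with $\{\bar E e_1, \ldots, \bar E e_n\}$. Such a split is $\Lambda$-legal, so it does not identify directions within a single gate and hence preserves the gate structure and the local decomposition $\mathcal{G}_0$. The result is a fully preprincipal $g_0$ realizing $\mathcal{G}_0$.

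For the inductive step, suppose a fully preprincipal $g'$ realizes $\mathcal{G}'$ and $\mathcal{G}$ is obtained from $\mathcal{G}'$ by cutting at one additional cut vertex $w$ of some stable Whitehead graph. I would apply Construction \ref{constr:splittingcutvertices} to $g'$ at $w$ to obtain a train track representative with local decomposition $\mathcal{G}$, then invoke Lemma \ref{l:SplittingCutVertices} to restore fully preprincipal-ness via further $\Lambda$-legal splits, which again preserve the local decomposition. Iterating over the finitely many cuts distinguishing $\mathcal{G}$ from $\mathcal{G}_0$ yields a fully preprincipal representative realizing any prescribed $\mathcal{G}$.

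The main obstacle would be rigorously justifying that the coarsest decomposition $\mathcal{G}_0$ realized by a stable representative is the coarsest in the abstract sense, so that every abstract cut decomposition of $IW(\varphi)$ is a refinement of $\mathcal{G}_0$; this should follow from Lemma \ref{lemma:SWGinIWG} together with the minimality of iNPs in stable representatives, but care is needed since in principle there could be abstract cut decompositions of $IW(\varphi)$ coarser than any realizable one. A secondary concern is ensuring that the preliminary $\Lambda$-legal splits in the base case do not accidentally alter the local decomposition, which follows from the gate-preservation property of $\Lambda$-legal splits noted above.
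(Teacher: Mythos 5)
Your inductive step is essentially the paper's argument: starting from a suitable fully preprincipal representative, repeatedly split open cut vertices via \Cref{constr:splittingcutvertices} and restore the fully preprincipal property with \Cref{l:SplittingCutVertices}. The gap is in your base case. The paper does not manufacture a fully preprincipal representative by hand from a stable one; it cites \cite[Proposition 7.1]{PffAutomata} for the existence of a fully preprincipal train track representative with the minimal number of PNPs (noting the same argument covers the parageometric case), and that existence statement is a genuinely nontrivial result. Your proposed substitute --- take a stable rotationless representative and remove low-gate vertices by the split used inside the proof of \Cref{l:SplittingCutVertices} --- does not go through as stated. That split is only justified in the very controlled situation produced by splitting open a cut vertex of an \emph{already} fully preprincipal representative: the offending vertex $u_1$ there has one gate consisting of the single fixed direction $E_1$, so every $\Lambda$-taken turn has the form $\{E_1,e_j\}$ and the replacement by $\{\bar{E_1}e_1,\dots,\bar{E_1}e_n\}$ makes sense. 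At an arbitrary vertex of a stable representative with fewer than three gates, both gates may contain several directions, there need not be a fixed (periodic) direction to split along, and you give no argument that a $\Lambda$-legal split exists, that it preserves the train track property and creates no new PNPs or low-gate vertices, or that the process terminates. In other words, the content you are missing is exactly the content the paper imports from \cite{PffAutomata}.

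Your worry about whether every abstract cut decomposition refines the realized one is resolved the same way: the representative from \cite[Proposition 7.1]{PffAutomata} has the minimal number of PNPs, hence realizes the coarsest local decomposition, and every cut decomposition of $IW(\vphi)$ is obtained from it by finitely many cuts, each implemented by \Cref{constr:splittingcutvertices} followed by \Cref{l:SplittingCutVertices}. So the fix is not more care with your ad hoc splits but replacing your base case with the citation (or with a full proof of that existence statement, which is substantially more work than the two-sentence argument you sketch).
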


\begin{proof}
By \cite[Proposition 7.1]{PffAutomata}, $\vphi$ has a fully preprincipal train track representative with the minimal number of PNPs (this is explicitly stated in the ageometric case, but follows the same argumentation in the parageometric case). Repeated application of \Cref{l:SplittingCutVertices} yields a train track representative whose stable Whitehead graphs give the desired cut decomposition.
\end{proof}

The stable axis bundle was introduced in \cite[\S 6.5]{hm11} as an object of interest and was used extensively in \cite{loneaxes}. We expand upon the notion of the stable axis bundle to define an axis bundle for each cut decomposition of the ideal Whitehead graph.

\begin{df}[$\mG$-axis bundle $\mG\Av$]
Suppose $\vphi\in\out$ is nongeometric fully irreducible and $\mG$ is a cut decomposition of $IW(\vphi)$.
The \textbf{$\mG$-axis bundle} $\mG\Av$ is the set of all weak train tracks whose local decomposition is at most as coarse as $\mG$.
Under this terminology, the \textbf{stable axis bundle} is the $\mG$-axis bundle where $\mG$ is the coarsest possible local decomposition $\{IW(\vphi)\}$.
\end{df}

The partial ordering of fineness plays an important role in \cite[Proposition 5.4]{hm11}, which we record here as \Cref{P:5.4}. We use this proposition to ensure each weak train track is contained in the branched cube determined by a fully preprincipal train track with local decomposition as split as its own (\Cref{prop:branchedcubescoveraxisbundle}).

\begin{prop}[\cite{hm11}]\label{P:5.4} 
Let $\varphi \in Out(F_r)$ be nongeometric fully irreducible. Then for any train track representative $g \colon \Gamma \to \Gamma$ for $\varphi$ with associated $\Lambda$-isometry $g_{\infty} \colon \widetilde{\Gamma} \to T_+$, there exists an $\varepsilon > 0$ satisfying that, if $f \colon T \to T_+$ is any $\Lambda$-isometry, if $g_{\infty}$ splits at least as much as $f$, and if $Len(T) \leq \varepsilon$, then there exists a unique equivariant edge-isometry $h \colon \widetilde{\Gamma} \to T$ such that $g_{\infty} = f \circ h$. Moreover, $h$ is a $\Lambda$-isometry. 
\end{prop}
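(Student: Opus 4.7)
The plan is to build $h$ canonically, first on vertices using the splitting hypothesis via \Cref{lemma:SWGinIWG}, then across edges by isometrically lifting $g_\infty|_e$ through $f$. Both uniqueness and the identity $g_\infty = f\circ h$ will be forced by the construction at every step, and the length hypothesis enters only to guarantee that the vertex-level assignment matches the edge-level lifting.

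First I would define $h$ on vertices. For each vertex $v$ of $\widetilde{\Gamma}$, set $b := g_\infty(v)$, which is a branch point of $T_+$. By \Cref{lemma:SWGinIWG}, the vertices of $T$ mapping to $b$ under $f$ induce the cut decomposition $\widetilde{IW_b}(\vphi;T) = \bigcup_i SW(w_i;T)$, with pieces meeting only at cut vertices of $\widetilde{IW_b}(\vphi)$. Since $g_\infty$ splits at least as much as $f$, the connected subgraph $SW(v;\widetilde{\Gamma})$ is contained in a unique piece $SW(w_i;T)$; set $h(v):= w_i$. Each direction $d$ at $v$ corresponds to a vertex of $SW(v;\widetilde{\Gamma}) \subseteq SW(h(v);T)$, which in turn lifts to a unique direction at $h(v)$ with $f$-image $Dg_\infty(d)$; this defines $Dh(d)$.

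Next I would extend $h$ across each edge $e = [v,v']$ of $\widetilde{\Gamma}$ by lifting $g_\infty|_e$ isometrically through $f$, starting at $h(v)$ in direction $Dh(d_e)$. Since $f$ is an edge-isometry, the lift is uniquely determined: whenever it encounters an interior vertex $w''$ of $T$, the continuation direction is forced by requiring the $f$-image to continue along $g_\infty(e)$, and such a direction exists because $e$ lies in some leaf of $\widetilde{\Lambda}$, on which $f$ restricts to a genuine isometry. The lift then has length $\ell(e)$ with $f$-image $g_\infty(e)$, so defining $h|_e$ to be this lift yields an edge-isometry satisfying $f\circ h|_e = g_\infty|_e$.

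The main obstacle, where the length hypothesis is essential, is showing that the lift terminates precisely at $h(v')$. For $\varepsilon$ chosen small relative to the minimum edge length in $\Gamma$ and the minimum separation in $T_+$ between branch points along a single leaf, the lift must end at a vertex $w'$ of $T$ with $f(w') = g_\infty(v')$, and the backward direction of $e$ at $v'$ corresponds under the lift to a direction at $w'$ in $SW(w';T)$. A parallel application of the splitting hypothesis at $v'$ forces $SW(v';\widetilde{\Gamma}) \subseteq SW(w';T)$, and uniqueness in the vertex construction forces $w' = h(v')$. Equivariance of $h$ follows because every step is canonically performed and $f,g_\infty$ are equivariant. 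Uniqueness of $h$ is immediate from vertex uniqueness together with the edge-isometry condition. Finally, $h$ is a $\Lambda$-isometry: for each leaf $L$ of $\widetilde{\Lambda}$ realized in $\widetilde{\Gamma}$, with realization $L_T$ in $T$, both $g_\infty|_L$ and $f|_{L_T}$ are isometries onto the same bi-infinite geodesic in $T_+$, so edge-by-edge comparison of the lifts forces $h(L) = L_T$ with $h|_L$ an isometry.
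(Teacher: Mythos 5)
You should first be aware that the paper does not prove this statement at all: it is quoted directly from \cite[Proposition 5.4]{hm11} and used as a black box, so there is no in-paper argument to match, and your attempt has to be judged on whether it would actually constitute a proof. Your overall strategy (pin down $h$ on vertices via the ``splits at least as much'' hypothesis and \Cref{lemma:SWGinIWG}, then extend across edges by lifting $g_\infty$ through $f$, with the $\Lambda$-isometry property forcing everything) is in the right spirit, but the places you treat as routine are precisely the content of the proposition, and several steps fail as written.

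Concretely: (i) the vertex step only makes sense at principal vertices --- for an arbitrary train track representative, $g_\infty(v)$ need not be a branch point of $T_+$, the stable Whitehead graph and the splitting hypothesis only control principal vertices, and only \emph{principal} directions are vertices of $SW(v;\widetilde{\Gamma})$, so your recipe does not define $Dh$ on all directions nor $h$ on all vertices. (ii) The claim that the lift of $g_\infty|_e$ through $f$ is ``uniquely determined'' because $f$ is an edge-isometry is false: $f$ is not locally injective (distinct directions in a single gate of $T$ have the same $f$-image), so when the lift reaches a vertex of $T$ the continuation is not forced; the well-posed move is to define $h$ leafwise as $(f|_{L_T})^{-1}\circ g_\infty|_L$ (both restrictions are isometries onto the same geodesic of $T_+$), and the genuine work is proving consistency where different leaves overlap and at vertices --- exactly where the splitting hypothesis and $\mathrm{Len}(T)\leq\varepsilon$ must enter, and where your argument only gestures (``a parallel application \ldots forces $w'=h(v')$''). (iii) Your proposed choice of $\varepsilon$ is vacuous: branch points of $T_+$ form finitely many $F_r$-orbits and orbits are dense, so ``the minimum separation in $T_+$ between branch points along a single leaf'' is $0$; thus the one place you invoke the length hypothesis does not actually do anything. (iv) Uniqueness is not ``immediate from vertex uniqueness'': a competing equivariant edge-isometry $h'$ with $f\circ h'=g_\infty$ is not a priori required to send vertices to the points your construction selects; one needs an argument in the style of \Cref{lem:axisbundleinteriorrightmostfold} (every point lies on a leaf, and the image of each leaf under any such map is forced). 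As it stands, the proposal reproduces the statement's architecture but leaves the quantitative and consistency steps --- the actual substance of \cite[Proposition 5.4]{hm11} --- unproved.
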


\begin{prop} \label{prop:stableaxisbundleconnected}
Let $\vphi\in\out$ be nongeometric fully irreducible and let $\mG$ be a cut decomposition of $IW(\vphi)$.
Then the $\mG$-axis bundle $\mG\Av$ is connected.
\end{prop}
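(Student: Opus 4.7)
The plan is to show that every $T\in\mG\Av$ can be joined inside $\mG\Av$ to a fixed basepoint $\wtilde{\G}$ obtained from a fully preprincipal representative of $\vphi$ realizing $\mG$. The connection will be built in two pieces: a $\vphi$-periodic fold line through $\wtilde{\G}$ supplied by a Stallings fold decomposition of this representative, together with, for each $T$, a fold path from $\wtilde{\G}$ to a sufficiently deep $\vphi$-iterate of $T$ furnished by the Handel--Mosher edge-isometry criterion \Cref{P:5.4}.

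After replacing $\vphi$ by a rotationless power (this affects neither $\Av$, $T_+$, $\Lambda$, nor the local decompositions, so $\mG\Av$ is unchanged), \Cref{p:CutDecompositions} supplies a fully preprincipal train track representative $g\from\G\to\G$ whose associated train track $\wtilde{\G}\in TT(\vphi)$ has local decomposition exactly $\mG$, so $\wtilde{\G}\in\mG\Av$. A Stallings fold decomposition of $g$ produces a bi-infinite $\vphi$-periodic fold line $\mF_g\subset\Av$ passing through $\wtilde{\G}$ and every iterate $\vphi^{n}\wtilde{\G}$. Every weak train track on $\mF_g$ arises from $\wtilde{\G}$ by a sequence of $\Lambda$-legal folds, and the key observation is that such a fold can only identify principal vertices mapping to a common branch point of $T_+$; by \Cref{lemma:SWGinIWG}(2) this glues their stable Whitehead graphs along a cut vertex of $\widetilde{IW}(\vphi)$, producing a \emph{coarser} local decomposition. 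Hence $\mF_g\subset\mG\Av$.

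For an arbitrary $T\in\mG\Av$, lift to $\widehat T\in\widehat{\mG\Av}$ and use the fact from \Cref{P:6.2} that $\vphi$ scales volumes by $1/\lambda$ to pick $k$ so large that $\vol(\vphi^{k}\widehat T)\le\veps$, where $\veps$ is the constant attached to $g$ in \Cref{P:5.4}. Because $T\in\mG\Av$, its local decomposition is coarser than or equal to $\mG$, so the rightmost $\Lambda$-isometry of $\vphi^{k}\widehat T$ splits no more than $g_\infty$ does; \Cref{P:5.4} then delivers an equivariant edge-isometry $h\from\wtilde{\G}\to\vphi^{k}\widehat T$ that is itself a $\Lambda$-isometry. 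Factoring $h$ as a composition of simple $\Lambda$-legal folds yields a fold path from $\wtilde{\G}$ to $\vphi^{k}\widehat T$ which, by the same principal-vertex-merging argument, stays entirely in $\widehat{\mG\Av}$. Projecting to $\Av$ and then applying $\vphi^{-k}$ (a homeomorphism preserving $\mG\Av$) produces a path in $\mG\Av$ from $T$ to $\vphi^{-k}\wtilde{\G}$; concatenating with the arc of $\mF_g$ from $\vphi^{-k}\wtilde{\G}$ to $\wtilde{\G}$ completes the connection.

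The main obstacle is the claim, used in both of the previous paragraphs, that $\Lambda$-legal folds only coarsen the local decomposition. At the fully preprincipal endpoints of each fold this follows cleanly from \Cref{lemma:SWGinIWG}, but one also has to verify that the \emph{interior} of a simple fold lies in $\mG\Av$, since the graph momentarily acquires new vertices at the endpoints of the portions being identified. The point is that such new vertices carry only two gates coming from the fold, so the principal vertex set — and hence the local decomposition — cannot become strictly finer than at the preceding preprincipal node. Once this bookkeeping is in place, the plan goes through as described.
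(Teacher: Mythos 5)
Your proposal is correct and follows essentially the same route as the paper: realize $\mG$ by a fully preprincipal representative via \Cref{p:CutDecompositions}, shrink volume using the $\vphi$-equivariance of \Cref{P:6.2}, and apply \Cref{P:5.4} to obtain fold paths, which remain in $\mG\Av$ because folding can only coarsen the local decomposition. The detour through a Stallings periodic fold line is harmless but not needed (the paper simply shifts both given points by a common power of $\vphi$ and connects each to the representative directly), while your explicit passage to a rotationless power and your check that intermediate points of the folds stay in $\mG\Av$ supply details the paper leaves implicit.
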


\begin{proof}
Suppose $S_1, S_2 \in \mG\Av$.
By \Cref{p:CutDecompositions}, there is a fully preprincipal $T \in \Av$ with local decomposition $\mG$.
Using \Cref{P:6.2}, we shift $S_1$ and $S_2$ so that the $\veps$ requirement in \Cref{P:5.4} is satisfied and then use \Cref{P:5.4} to know that both $S_1$ and $S_2$ can be obtained from $T$ by folding. In other words, both $S_1$ and $S_2$ can be connected to $T$ by a fold path, thus by a path.
\end{proof}

\section{Branched cubes in the axis bundle} \label{sec:axisbundlebranchedcubes}

In this section we start building  a cubist complex structure on the axis bundle $\mathcal{A}_\phi$ by describing the branched cubes. 
We then show some combinatorial properties of the interaction between these branched cubes.
Unless otherwise indicated, we assume throughout this section that $\vphi\in\out$ is nongeometric fully irreducible, $T \in \Av$ is a fully preprincipal weak train track endowed with the weighted gate structure induced by its rightmost $\Lambda$-isometry $\ft \from T \to T_+$, and $\mathcal{T}_0$ is the set of illegal turns in $T$.

\subsection{Description of the branched cubes} \label{subsec:axisbundlebranchedcubesdefn}

For each $(\ell_\tau) \in \prod_{\tau \in \mathcal{T}_0} [0,\ell(\tau)]$, let $T_{(\ell_\tau)} \in \widehat{\os}$ denote the metric graph obtained from $T$ by folding each turn $\tau \in \mathcal{T}_0$ along initial segments of length $\ell_\tau$.

\begin{df} \label{df:axisbundlebranchedcube}
Let $T \in \Av$ be fully preprincipal. The \textbf{branched cube} at $T$ is the set
$$B_T = \left\{T_{(\ell_\tau)} \mid (\ell_\tau) \in \prod_{\tau \in \mathcal{T}_0} [0,\ell(\tau)] \right\}\subset\widehat{\os}.$$
For future reference, note that for each $T_{(\ell_\tau)} \in B_T$ there is a canonical fold $h:T \to T_{(\ell_\tau)}$.
\end{df}

\begin{lem} \label{lem:branchedcubesubsetaxisbundle}
Each branched cube $B_T$ is a subset of $\Av$. Further, if $T \in \mG\Av$, then $B_T$ is a subset of $\mG\Av$.
\end{lem}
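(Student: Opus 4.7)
The plan is to build a $\Lambda$-isometry on $T_{(\ell_\tau)}$ by factoring the $\Lambda$-isometry $\ft:T\to T_+$ through the canonical fold $h:T\to T_{(\ell_\tau)}$, and then to track the combinatorics of the local decomposition to handle the $\mG$ statement.

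The first step is to show that $\ft$ descends to a well-defined continuous $F_r$-equivariant map $f:T_{(\ell_\tau)}\to T_+$ with $\ft=f\circ h$. By the definition of the weighted gate structure, each illegal turn $\tau=\{d_1,d_2\}$ has the property that $\ft$ identifies the initial segments of $d_1$ and $d_2$ of length $\ell(\tau)$; since we fold along segments of length $\ell_\tau\le\ell(\tau)$, every identification imposed by the fold is already imposed by $\ft$. When several illegal turns share a gate, \Cref{lem:foldlengthconstraint} is precisely what guarantees that the identifications composed transitively remain bounded above by the corresponding $\ft$-identifications. Hence the equivalence relation generated by the folds is contained in the equivalence relation $\{(x,y):\ft(x)=\ft(y)\}$, which yields the desired factorization.

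The second step is to verify that $f$ is a $\Lambda$-isometry. The key observation is that any leaf of $\Lambda$ realized in $T$ takes only $\Lambda$-legal turns (if it took an illegal turn $\{d_1,d_2\}$ at a vertex $v$, then $\ft$ would fold back on itself along the leaf, contradicting that $\ft$ is an isometry on leaves), whereas $h$ identifies only points lying along illegal turns. Therefore $h$ is injective on each leaf of $\Lambda$ in $T$; since $h$ is additionally a local isometry on each edge, $h|_L$ is a global isometry for each leaf $L$. For each leaf $L'$ in the realization of $\Lambda$ in $T_{(\ell_\tau)}$, one has $h(L)=L'$ for the marking-corresponding leaf $L$ in $T$, and $f|_{L'}=\ft|_L\circ(h|_L)^{-1}$ is then an isometry onto a bi-infinite geodesic of $T_+$. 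This shows $T_{(\ell_\tau)}\in\widehat{\Av}$, and projectivizing gives $T_{(\ell_\tau)}\in\Av$.

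For the $\mG$ portion, the plan is to check that folding within gates can only refine (or preserve) the local decomposition, never coarsen it. Each principal vertex $\widetilde v$ of $T$ pushes forward to a principal vertex $h(\widetilde v)$ of $T_{(\ell_\tau)}$ above the same branch point of $T_+$, with $SW(h(\widetilde v);T_{(\ell_\tau)})$ a subgraph of $SW(\widetilde v;T)$ inside the same component of $\widetilde{IW}(\vphi)$ (the vertices of these SWGs are determined by the endpoints of singular leaf rays in $T_+$, which are unaffected by the fold). New principal vertices in $T_{(\ell_\tau)}$ can arise only at endpoints of full folds where $\ell_\tau=\ell(\tau)$ and the image in $T_+$ is a branch point, and the stable Whitehead graphs of these new principal vertices naturally sit inside the stable Whitehead subgraphs of the neighboring principal vertices of $T$. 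Hence the local decomposition of $T_{(\ell_\tau)}$ refines that of $T$, which by hypothesis is at most as coarse as $\mG$, so $T_{(\ell_\tau)}\in\mG\Av$.

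I expect the third step to be the main technical obstacle, since it requires a careful case analysis of the vertex and direction combinatorics under partial versus full folds happening simultaneously at all illegal turns, and verification that the aggregated stable Whitehead data indeed gives a decomposition of $\widetilde{IW}(\vphi)$ which refines that of $T$.
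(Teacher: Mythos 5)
Your first two steps prove the first statement by essentially the same route as the paper: the canonical fold $h\from T\to T_{(\ell_\tau)}$ only identifies points that $\ft$ already identifies (with \Cref{lem:foldlengthconstraint} handling turns sharing a gate), so $\ft$ descends to a map $f$ with $\ft=f\circ h$, and since $h$ is injective and isometric on each realized leaf of $\La$, the quotient $f$ is a $\La$-isometry. That part is correct.

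The $\mG$-statement, however, contains a genuine error: your monotonicity claim is reversed. Folding can only \emph{coarsen} (or preserve) the local decomposition, never refine it. Indeed, every principal point of $T_{(\ell_\tau)}$ is the $h$-image of a principal point of $T$: leaves of $\widetilde{IW_b}(\vphi)$ realized in $T_{(\ell_\tau)}$ are the $h$-images of their realizations in $T$, and since $h$ is a $\La$-isometry it carries the point of such a leaf lying over $b$ to the point lying over $b$. Consequently $h$ maps ${SW}(\widetilde v;T)$ \emph{into} ${SW}(h(\widetilde v);T_{(\ell_\tau)})$ — the containment in your third paragraph goes the wrong way — and distinct principal points of $T$ over the same branch point can be identified by $h$, so their stable Whitehead graphs amalgamate (compare \Cref{lemma:SWGinIWG}). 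A concrete counterexample to ``folding refines'': fully folding the illegal turn $\{E_1,E_2\}$ produced by splitting open a cut vertex as in \Cref{constr:splittingcutvertices} re-merges $u_1$ and $u_2$, destroys the PNP, and coarsens the decomposition. If your claim were true, folding a PNP-free weak train track could create PNPs, and the stable axis bundle would not be a union of fold lines. The paper's one-line argument uses exactly the opposite monotonicity: folding cannot increase the number of PNPs, and $\mG\Av$ (whose extreme case, the stable axis bundle, is the PNP-free locus) is closed under passing to decompositions at least as coarse as that of $T$; your inference instead treats $\mG\Av$ as closed under refinement, so both the key claim and the closure property it feeds into need to be flipped for the argument to be valid.
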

\begin{proof}
Suppose $T' \in B_T$. For the first statement, it suffices to show there is a $\La$-isometry $F_{T'}\from T'\to T_+$. Consider the natural fold $f\from T\to T'$. Since $f$ folds along illegal turns, we know both that $f$ restricts to an isometry on each leaf of $\Lambda$ and that, if $f(x)=f(y)$, then $F_T(x)=F_T(y)$. Thus, we can define $F_{T'}$ as the quotient of $F_T$ induced by the quotient map $f$.

Now suppose in addition that $T \in \mG\Av$. The second statement follows from the fact that folding cannot increase the number of PNPs and a cut decomposition is determined by PNPs.
\end{proof}

One drawback to the notation $T_{(\ell_\tau)}$ is that we can have $T_{(\ell_\tau)} = T_{(\ell'_\tau)}$ for $(\ell_\tau) \neq (\ell'_\tau)$.
To overcome this, we define functions capturing how much folding we actually do, as opposed to how much folding we were instructed to do:
Let $\{d_1,d_2\}$ be a turn in $T$. Let $d_i$ denoted the initial direction of the oriented edge $e_i$. 
For each $T' \in B_T$, we define $x_{\{d_1,d_2\}}(T')$ to be the length of the largest initial segments in $e_1$ and $e_2$ identified by the fold $T \to T'$.
For example, if $d_1$ and $d_2$ do not lie in the same gate, then $x_{\{d_1,d_2\}}(T') = 0$.

\begin{lem}
Suppose $T\in \Av$ is fully preprincipal and $d_0,...,d_m$ is a sequence of directions in $T$. Then 
\begin{equation} \label{eq:coordinateconstraint}
\min_{i=1,...,m} \{x_{\{d_{i-1},d_i\}} \} \leq x_{\{d_0,d_m\}}.
\end{equation}
\end{lem}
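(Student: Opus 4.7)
The plan is to mirror the proof of \Cref{lem:foldlengthconstraint}, but with the $\Lambda$-isometry $f_T\from T\to T_+$ replaced by the canonical fold $h\from T\to T'$ for an arbitrary fixed $T'\in B_T$. The inequality \eqref{eq:coordinateconstraint} is then interpreted as a pointwise inequality of functions on $B_T$.

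By induction on $m$, it suffices to handle the case $m=2$. Indeed, once the case $m=2$ is known, the inductive step applies the inductive hypothesis to the sub-chain $d_0,\dots,d_{m-1}$ to get
\[
\min_{i=1,\dots,m-1}\{x_{\{d_{i-1},d_i\}}\} \leq x_{\{d_0,d_{m-1}\}},
\]
and then applies the $m=2$ case to the triple $d_0,d_{m-1},d_m$.

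For the case $m=2$, fix $T'\in B_T$ and the canonical fold $h\from T\to T'$. Unwinding the definition, $x_{\{d_0,d_1\}}(T')$ is the length of the longest initial segments of the oriented edges $e_0$ and $e_1$ that are pointwise identified by $h$; similarly for $x_{\{d_1,d_2\}}(T')$. Set $M=\min\{x_{\{d_0,d_1\}}(T'),\,x_{\{d_1,d_2\}}(T')\}$. Then the initial segment of $e_1$ of length $M$ is $h$-identified both with the initial segment of $e_0$ of length $M$ and with the initial segment of $e_2$ of length $M$. Because ``having the same $h$-image'' is a transitive relation (it is an equivalence relation on the fibers of a map), the initial segments of $e_0$ and $e_2$ of length $M$ lie in a common fiber of $h$, i.e.\ are $h$-identified. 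Maximality in the definition of $x_{\{d_0,d_2\}}(T')$ therefore gives $x_{\{d_0,d_2\}}(T')\geq M$, completing the base case.

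The only thing to be careful about is bookkeeping the definitions: one must check that $h$ genuinely identifies those initial segments of $e_0,e_1,e_2$ up to their full declared lengths, not merely up to some smaller prescribed fold parameter. Given \Cref{df:axisbundlebranchedcube} and the definition of $x$ as the \emph{actual} folded length (rather than the instructed amount $\ell_\tau$), this is immediate, and no genuine obstacle is expected.
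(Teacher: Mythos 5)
Your argument is correct and is essentially the paper's own proof: the paper simply says the proof is "exactly the same" as \Cref{lem:foldlengthconstraint}, i.e. the induction reducing to $m=2$ followed by the overlap/transitivity argument on identified initial segments, which is precisely what you carry out with the canonical fold $h\from T\to T'$ in place of the $\Lambda$-isometry.
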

\begin{proof}
The proof of this is exactly the same as that of \Cref{lem:foldlengthconstraint}.
\end{proof}

Together with a decomposition of $B_T$ into cubes, as will be described in \Cref{subsec:axisbundlecubesdefn}, the functions $x_\tau$ will serve as a kind of coordinate system.

\subsection{Examples of branched cubes} \label{subsec:branchedcubeseg}

Before analyzing the structure of the branched cubes in general, we take a moment to go through some examples. 
For simplicity, let us assume that only one gate, $G$, of $T$ has more than one direction.

\begin{ex} \label{eg:1dimbranchedcube}
Suppose $G$ has exactly 2 directions, which we denote as $d_1,d_2$.
The set of train tracks $B$ that can be obtained from $T$ by folding the unique turn $\{d_1,d_2\}$ in $G$ is homeomorphic to an interval. 
The coordinate $x_{\{d_1,d_2\}}$ on $B$ is the length folded.
\end{ex}

\begin{ex} \label{eg:2dimbranchedcube}
Suppose $G$ has exactly 3 directions, which we denote as $d_1,d_2,d_3$ (top left in the figure).
Then the set of illegal turns is $\mathcal{T}_0=\{\{d_1,d_2\},\{d_1,d_3\},\{d_2,d_3\}\}$.

\parpic[r]{\selectfont \fontsize{6pt}{6pt} 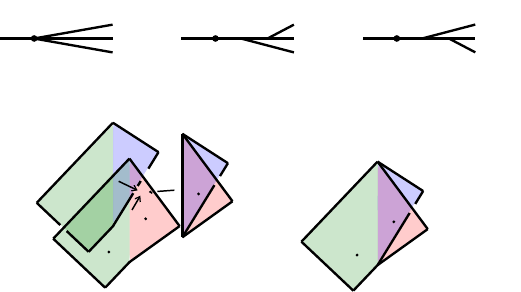}
For each subset $\mathcal{T} \subset \mathcal{T}_0$ containing at most two of the three turns, let $C_\mathcal{T}$ denote the set of train tracks obtained by folding $T$ along the turns in $\mathcal{T}$.
Then $C_{\mathcal{T}}$ is homeomorphic to a $|\mathcal{T}|$-dimensional cube for each $\mathcal{T}$.
The coordinates of the cube are the lengths $x_\tau$ we folded the turns $\tau \in \mathcal{T}$.
If $\mathcal{T}' \subset \mathcal{T}$, then $C_{\mathcal{T}'}$ is a \emph{splitting face} of $C_{\mathcal{T}}$ and consists of all points where the coordinates $x_\tau$ for $\tau \in \mathcal{T} ~\backslash~ \mathcal{T}'$ are zero.

The interaction between $C_{\mathcal{T}}$ and $C_{\mathcal{T}'}$ is more interesting when neither $\mathcal{T}$ nor $\mathcal{T}'$ is a subset of the other.
Let $T'$ be a train track obtained from $T$ by folding $\{d_1,d_2\}$ by $x_{12}$ and $\{d_2,d_3\}$ by $x_{23}$, where $x_{12} \geq x_{23}$.
Then, by definition, $T' \in C_{\{\{d_1,d_2\},\{d_2,d_3\}\}}$. 
However, $T'$ can also be obtained from $T$ by folding $\{d_1,d_2\}$ by $x_{12}$ and $\{d_1,d_3\}$ by $x_{13}$. This is because along the interval in which we are identifying $d_1$ and $d_3$, we have already identified $d_1$ and $d_2$.
Thus $T' \in C_{\{\{d_1,d_2\},\{d_1,d_3\}\}}$ as well.
Now let $T''$ be a train track obtained from $T$ by folding $\{d_1,d_2\}$ by $x_{12}$ and $\{d_2,d_3\}$ by $x_{23}$, where $x_{12} < x_{23}$.
Then $T'' \in C_{\{\{d_1,d_2\},\{d_2,d_3\}\}}$ but $T'' \not\in C_{\{\{d_1,d_2\},\{d_1,d_3\}\}}$. 
This argument reveals that $C_{\{\{d_1,d_2\},\{d_2,d_3\}\}}$ meets $C_{\{\{d_1,d_2\},\{d_1,d_3\}\}}$ along the \emph{slice} $\{(x_{12},x_{23}) \mid x_{12} \geq x_{23}\}$. 

A similar argument holds for any pair $C_{\mathcal{T}}$ and $C_{\mathcal{T}'}$ where $\mathcal{T}$ and $\mathcal{T}'$ are 2-element sets.
Thus the set of train tracks $B$ that can be obtained by folding $T$, being $C_{\{\{d_1,d_2\},\{d_1,d_3\}\}} \cup C_{\{\{d_1,d_2\},\{d_2,d_3\}\}} \cup C_{\{\{d_1,d_3\},\{d_2,d_3\}\}}$, is a branched cube as shown in the bottom right of the image accompanying this example.

\end{ex}

\begin{ex} \label{eg:higherdimbranchedcube}
In general, a subset $\mathcal{T}$ of $\mathcal{T}_0$ specifies a cube $C_\mathcal{T}$ of train tracks obtained by folding $T$ along the turns in $\mathcal{T}$ if and only if the turns in $\mathcal{T}$ can be folded independently of one another. 
For example, if $\mathcal{T}$ contains a triplet of the form $\{\{d_1,d_2\},\{d_1,d_3\},\{d_2,d_3\}\}$, then folding $\{d_1,d_2\}$ and $\{d_1,d_3\}$ will also force folding $\{d_2,d_3\}$. 
On the other hand, if $\mathcal{T}$ is of the form $\{\{d_1,d_2\},\{d_2,d_3\},\{d_3,d_4\}\}$, then independent folding is possible.
We formalize this idea later by considering the elements of $\mathcal{T}_0$ as edges of a graph on the elements of $G$.
The set of train tracks $B$ that can be obtained by folding $T$ is the union of the $C_\mathcal{T}$ as $\mathcal{T}$ ranges over such \emph{independent} subsets of $\mathcal{T}_0$.

The possible intersections between the cubes $C_{\mathcal{T}}$ also get more complicated as the number of elements in $G$ grows.
For example, suppose $\mathcal{T} = \{\{d_1,d_2\},\{d_2,d_3\},\{d_3,d_4\},\{d_4,d_5\}\}$ and $\mathcal{T}' = \{\{d_1,d_2\},\{d_3,d_4\},\{d_1,d_4\}\}$. 
Then, using the coordinates $(x_{12},x_{23},x_{34},x_{45})$ on $C_\mathcal{T}$ given by the fold lengths, we claim that $C_{\mathcal{T}}$ meets $C_{\mathcal{T}'}$ along the slice $S_{\mathcal{T},\mathcal{T}'} = \left\{(x_{12},x_{23},x_{34},x_{45}) \mid x_{23} \leq \min \{x_{12},x_{34}\} ~\&~ x_{45}=0 \right\}$.

For the $S_{\mathcal{T},\mathcal{T}'}\subseteq C_{\mathcal{T}}\cap C_{\mathcal{T}'}$ direction, consider a train track $T\in C_{\mathcal{T}}$ satisfying $x_{23} \leq \min \{x_{12},x_{34}\}$ and $x_{45}=0$. We can also reach $T$ by first folding $\{d_1,d_2\}$ for $x_{12}$ and $\{d_3,d_4\}$ for $x_{34}$, then folding $\{d_1,d_4\}$ for $x_{23}$. See \Cref{fig:eghigherdimbranchedcube}, top middle. The point is that since $x_{23} \leq \min \{x_{12},x_{34}\}$, along the folded segments of $d_1$ and $d_4$, we have $d_1$ is already folded with $d_2$, as is $d_3$ with $d_4$. So folding $\{d_1,d_4\}$ or $\{d_2,d_3\}$ is the same.

For $\supseteq$, consider a train track $T\in C_{\mathcal{T}}\cap C_{\mathcal{T}'}$. 
Then necessarily $x_{45}=0$ since the direction $d_5$ is not folded at all for train tracks in $C_{\mathcal{T}'}$.
Meanwhile, for all train tracks in $C_{\mathcal{T}'}$, the turn $\{d_2,d_3\}$ must be folded at most as much as $\{d_1,d_4\}$. Under the coordinates $(x_{12},x_{23},x_{34},x_{45})$, the amount of folding for the former is $x_{23}$ while that of the latter is $\min\{x_{12},x_{23},x_{34}\}$. 
Thus $x_{23} \leq \min\{x_{12},x_{23},x_{34}\}$, or equivalently, $x_{23} \leq \min\{x_{12},x_{34}\}$.

One can similarly verify that using the coordinates $(x_{12},x_{34},x_{14})$ on $C_{\mathcal{T}'}$ given by lengths of folding, $C_{\mathcal{T}'}$ meets $C_{\mathcal{T}}$ along the slice $S_{\mathcal{T}',\mathcal{T}} = \left\{(x_{12},x_{34},x_{14}) \mid x_{14} \leq \min \{x_{12},x_{34}\} \right\}$.

In general, the slices of intersection will be given by inequalities determined from a graph on the elements of $G$, with edges  $\mathcal{T}_0$, as described above.

\begin{figure}[H]
    \centering
    \selectfont\fontsize{7pt}{7pt}
    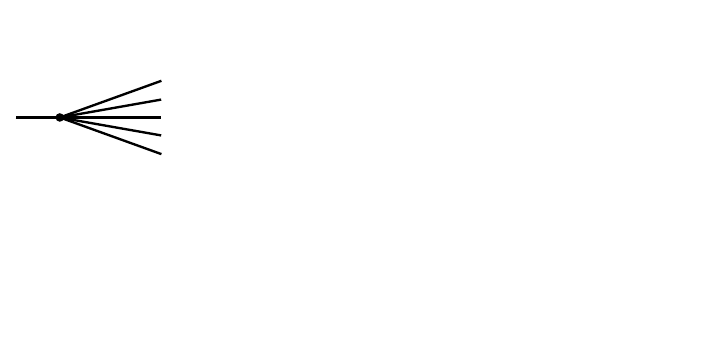
    \caption{An example where $T$ has exactly one gate $G$ of more than one direction and $G$ has exactly 5 directions. The folded train track $T'$ is in $C_{\mathcal{T}}\cap C_{\mathcal{T}'}$. The folded train track $T''$ is in $C_{\mathcal{T}}$ but not $C_{\mathcal{T}'}$. Bottom right is the graph for determining the relevant inequalities.}
    \label{fig:eghigherdimbranchedcube}
\end{figure}
\end{ex}

\subsection{The decomposition into cubes} \label{subsec:axisbundlecubesdefn} 
We view $\mathcal{T}_0$ as a set of possible edges in a (simple, undirected) graph whose vertex set is $\mD(T)$. 
In particular, each subset $\mathcal{T} \subset \mathcal{T}_0$ determines such a graph $\mathfrak{G}_\mT$ that is the graph with an edge for each turn in $\mT$.
We say $\mathcal{T}$ is \textbf{independent} if $\mathfrak{G}_\mT$ has no cycles.

\begin{df}[Cubes]
Let $\mathcal{T} \subset \mathcal{T}_0$ be an independent subset. 
The \textbf{cube} $C_{T,\mathcal{T}}$ is defined as
$$C_{T,\mathcal{T}} = \left\{ T_{(x_\tau,0)} \mid (x_\tau) \in \prod_{\tau \in \mathcal{T}} [0,\ell(\tau)] \right\},$$ 
where $(x_\tau,0)$ is the element of $\prod_{\tau \in \mathcal{T}_0} [0,\ell(\tau)]$ with coordinate $x_\tau$ for $\tau \in \mathcal{T}$ and $0$ for $\tau \in \mathcal{T}_0 \backslash \mathcal{T}$.
\end{df}

By definition, we have $C_{T,\mT} \subset B_T$.
Further, for each $\tau \in \mT$, we have $x_\tau(T_{(x_\tau,0)}) = x_\tau$.
The following lemma states that the functions $x_\tau$ parametrize $C_{T,\mT}$ as a cube.

\begin{lem} \label{lem:cubesarecubes}
The map $\prod_{\tau \in \mathcal{T}} [0,\ell(\tau)]\to C_{T,\mathcal{T}}$ defined by $(x_\tau) \mapsto T_{(x_\tau,0)}$ is a homeomorphism.
\end{lem}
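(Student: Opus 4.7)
The plan is to exhibit $F \colon (x_\tau) \mapsto T_{(x_\tau,0)}$ as a continuous bijection from the compact product $\prod_{\tau \in \mathcal{T}} [0,\ell(\tau)]$ onto $C_{T,\mathcal{T}} \subseteq \widehat{CV_r}$. Since $\widehat{CV_r}$ is Hausdorff, a continuous bijection from a compact space is automatically a homeomorphism, so I would not need to verify continuity of the inverse separately.

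Continuity of $F$ is standard: folding a marked metric graph along initial segments whose lengths vary continuously produces a continuously varying point in $\widehat{CV_r}$. Surjectivity onto $C_{T,\mathcal{T}}$ is immediate from the definition of that cube. The heart of the argument is injectivity, which is precisely where the independence hypothesis on $\mathcal{T}$ enters. I would show that for every coordinate vector $(x_\tau)$ and every $\tau = \{d,d'\} \in \mathcal{T}$ one has $x_\tau(T_{(x_\tau,0)}) = x_\tau$, i.e.\ the actual identified length between $d$ and $d'$ in the folded graph equals the prescribed amount. This recovers the coordinates and so gives a left inverse to $F$; together with surjectivity it yields the bijection. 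The inequality $x_\tau(T_{(x_\tau,0)}) \geq x_\tau$ is immediate from the fold prescription.

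For the reverse inequality I would give a walk-theoretic description of the fold quotient: the points $p_d(t)$ and $p_{d'}(t)$ at depth $t$ along directions $d$ and $d'$ become identified in $T_{(x_\tau,0)}$ if and only if there is a walk $d = v_0, v_1, \dots, v_k = d'$ in $\mathfrak{G}_{\mathcal{T}}$ every edge $\sigma = \{v_{i-1},v_i\}$ of which satisfies $x_\sigma \geq t$. One direction chains together the elementary identifications prescribed by the fold. The converse, that no further identifications arise from the transitive closure, follows from repeated application of inequality \eqref{eq:coordinateconstraint}, which forces the actual identified length along any chained pair of directions to be at most the minimum of the identified lengths along the intermediate edges of the walk. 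Because $\mathcal{T}$ is independent, $\mathfrak{G}_{\mathcal{T}}$ is a forest, so the unique simple path in $\mathfrak{G}_{\mathcal{T}}$ between $d$ and $d'$ is the single edge $\tau$; hence any such walk forces $t \leq x_\tau$, giving the desired bound.

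The main obstacle I anticipate is making the walk description of the quotient rigorous: a priori the equivalence relation generated on initial segments by the prescribed elementary identifications could be delicate, and it is exactly the forest hypothesis combined with \eqref{eq:coordinateconstraint} that prevents indirect identifications from extending the fold between $d$ and $d'$ beyond the prescribed $x_\tau$. The failure of this step for dependent $\mathcal{T}$ is already visible in \Cref{eg:higherdimbranchedcube}, where cycles in the direction-graph allow one fold to be forced by the others.
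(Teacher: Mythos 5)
Your topological frame (continuous bijection from a compact space onto a subspace of the Hausdorff space $\widehat{CV_r}$) matches the paper's, and your walk-theoretic argument that $x_\tau(T_{(x_\tau,0)})=x_\tau$ for $\tau\in\mathcal{T}$ is correct and in fact supplies details for a claim the paper only asserts before the lemma. But there is a genuine gap in the step ``this recovers the coordinates and so gives a left inverse.'' The quantity $x_\tau(T')$ is defined as the length of the largest initial segments identified by \emph{a fold} $T\to T'$; if two distinct coordinate vectors $(x_\tau)\neq(x'_\tau)$ produced the same point $T'$ of $\widehat{CV_r}$ (i.e.\ marking-preservingly isometric quotients), they would come with a priori \emph{different} fold maps, and nothing in your argument shows the identified lengths are an invariant of the point $T'$ rather than of the prescription. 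This is exactly the subtlety the paper flags when introducing the functions $x_\tau$, and its proof of injectivity hinges on it: the natural fold satisfies $F_{T'}\circ f = F_T$ (as in \Cref{lem:branchedcubesubsetaxisbundle}), and by \Cref{lem:axisbundleinteriorrightmostfold} a fold compatible with the $\Lambda$-isometries is \emph{unique}, so the two prescriptions yield the same map and hence the same identification data. Without this (or an equivalent rigidity statement), coordinate recovery is not well defined on $C_{T,\mathcal{T}}$ and injectivity does not follow from $x_\tau(T_{(x_\tau,0)})=x_\tau$ alone.

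A secondary point: your appeal to \Cref{eq:coordinateconstraint} for the upper bound is misdirected, since that inequality reads $\min_i x_{\{d_{i-1},d_i\}}\leq x_{\{d_0,d_m\}}$ and thus only bounds the chained identification from \emph{below}. The upper bound you need comes directly from your walk description: each elementary identification along an edge $\sigma$ of $\mathfrak{G}_{\mathcal{T}}$ only identifies points at depth at most $x_\sigma$, so a chained identification at depth $t$ forces $t\leq x_\sigma$ for every edge of the walk; since $\mathfrak{G}_{\mathcal{T}}$ is a forest, any walk from $d$ to $d'$ must traverse the edge $\tau=\{d,d'\}$, giving $t\leq x_\tau$. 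With that correction, and with the uniqueness-of-fold step imported from \Cref{lem:axisbundleinteriorrightmostfold}, your proof closes and essentially refines the paper's argument.
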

\begin{proof}
The map is continuous and surjective by definition. Since $\prod_{\tau \in \mathcal{T}} [0,\ell(\tau)]$ is compact, it suffices to show injectivity.
From the proof of \Cref{lem:branchedcubesubsetaxisbundle}, the natural fold $f\from T\to T'$ satisfies $F_{T'} f = F_T$. By \Cref{lem:axisbundleinteriorrightmostfold}, $f$ is the unique fold satisfying this property.
Injectivity thus follows since each $x_\tau$ is defined as the length of the largest initial segments with the same $f$-image.
\end{proof}

The functions $x_{\{d_1,d_2\}}$, for $\{d_1,d_2\} \not\in \mT$, are PL functions on $C_{T,\mT}$ in the coordinates $x_\tau$, for the $\tau \in \mT$:

\begin{lem}
Suppose $x \in C_{T,\mT}$ and $\{d_1,d_2\} \not\in \mT$.
Recall the graph $\mathfrak{G}_{\mT}$ corresponding to $\mT$. 
\begin{enumerate}[label=(\alph*)]
    \item If there is a path in $\mathfrak{G}_{\mT}$ connecting $d_1$ and $d_2$, let $p$ be the shortest such path. Then
    \begin{equation} \label{eq:xnotcoord}
        x_{\{d_1,d_2\}} = \min_{\tau \in p} x_\tau.
    \end{equation}
    \item If there is no path in $\mathfrak{G}_{\mT}$ connecting $d_1$ and $d_2$, then $x_{\{d_1,d_2\}}$ is identically zero.
\end{enumerate}
\end{lem}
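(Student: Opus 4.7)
The plan is to analyze the fold $h\colon T \to T_{(x_\tau,0)}$ as the quotient of $T$ by the equivalence relation generated by the elementary identifications coming from the turns in $\mT$. Since $\mT$ is independent by assumption, $\mathfrak{G}_\mT$ is a forest, so in case (a) the shortest path $p$ from $d_1$ to $d_2$ is in fact the unique path in $\mathfrak{G}_\mT$ connecting them.

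For the lower bound in (a), I would simply apply \Cref{eq:coordinateconstraint} to the sequence of directions along $p$: writing $p$ as $d_1 = f_0, f_1, \ldots, f_m = d_2$, the inequality $\min_i x_{\{f_{i-1},f_i\}} \leq x_{\{d_1,d_2\}}$ gives $\min_{\tau \in p} x_\tau \leq x_{\{d_1,d_2\}}$.

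The heart of the proof lies in the matching upper bound in (a) and the full statement of (b). For this, I would establish the following characterization: for directions $d, d'$ at a common vertex $v$ of $T$, the quantity $x_{\{d,d'\}}$ equals the maximum, over all chains $d = f_0, f_1, \ldots, f_m = d'$ with each $\{f_{i-1},f_i\} \in \mT$, of $\min_i x_{\{f_{i-1},f_i\}}$, with the convention that this maximum is $0$ if no such chain exists. This follows by decomposing $h$ into a sequence of elementary folds, one per $\tau \in \mT$ with $x_\tau > 0$, and arguing inductively that each elementary fold along $\tau = \{e,e'\}$ identifies precisely the initial segments of $e$ and $e'$ of length $x_\tau$, with further identifications arising only by propagating these through chains. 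Given this characterization, the forest structure of $\mathfrak{G}_\mT$ forces any such chain to traverse the unique path $p$ (in case (a)) or to not exist (in case (b)), yielding the stated formulas.

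The main obstacle is the chain characterization itself: one must argue that the elementary folds produce exactly the chain-based identifications and no spurious ones. This is essentially a bookkeeping argument in the style of Stallings folds, but special care is needed when some $x_\tau$ equals the length of an edge, since this may trigger secondary identifications in the quotient that must be carefully accounted for — possibly by working upstairs in $\widetilde{T}$, where the map to $\widetilde{T_{(x_\tau,0)}}$ is $F_r$-equivariantly assembled from elementary folds along lifts of the turns in $\mT$, and where each lift carries out its identification independently of the others.
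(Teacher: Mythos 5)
Your lower bound in (a) is fine and is essentially the paper's: applying \Cref{eq:coordinateconstraint} along $p$ gives $\min_{\tau \in p} x_\tau \leq x_{\{d_1,d_2\}}$. The issue is everything else. You route both (b) and the upper bound in (a) through your ``chain characterization'' of $x_{\{d,d'\}}$, but the proposal never proves that characterization: you yourself identify as the main obstacle the claim that the elementary folds create no identifications of initial segments beyond those propagated through chains in $\mT$, and the only remedy offered --- passing to $\widetilde{T}$, where ``each lift carries out its identification independently of the others'' --- is not an argument. The lifts are not independent in the relevant sense: the troublesome interactions come from folds based at the two endpoints of a common edge whose folded segments overlap (in particular when some $x_\tau$ exhausts an entire edge, so vertices merge and directions based at distinct vertices of $T$ become directions at one point of the quotient, or when earlier folds in your induction have already partially identified the turn currently being folded), and these interactions persist verbatim in the universal cover. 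So the crux of your argument is deferred rather than closed; as written this is a genuine gap.

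For comparison, the paper proves no global characterization at all. It disposes of (b) directly from how the points of $C_{T,\mT}$ are constructed (directions not joined in $\mathfrak{G}_{\mT}$ either lie in different gates or are simply not folded together), and for the upper bound in (a) it uses a reduction absent from your plan: choose $\tau_0 \in p$ realizing $\min_{\tau\in p} x_\tau$ and let $T'$ be the point with the same coordinates except $x_{\tau_0}=0$; then $T' \in C_{T,\mT \setminus \{\tau_0\}}$, and since $\mT$ is independent, deleting $\tau_0$ disconnects $d_1$ from $d_2$ in the graph, so (b) gives $x_{\{d_1,d_2\}}(T')=0$; finally the original point is obtained from $T'$ by folding the single turn $\tau_0$ by $x_{\tau_0}$, which can identify initial segments of $d_1$ and $d_2$ of length at most $x_{\tau_0}=\min_{\tau\in p}x_\tau$. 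This confines the delicate bookkeeping to one fold of one turn, instead of an induction over an arbitrary sequence of elementary folds. To repair your proposal you must either actually carry out the inductive no-spurious-identification argument, including the degenerate cases you flag, or adopt this zeroing-out reduction.
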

\begin{proof}
We first show (b). If there is no path in $\mathfrak{G}_{\mT}$ connecting $d_1$ and $d_2$, then either $d_1$ and $d_2$ are not directions at the same gate, or they lie in the same gate but are not folded. In both cases, $x_{\{d_1,d_2\}}=0$.

For (a), we have $x_{\{d_1,d_2\}} \geq \min_{\tau \in p} x_\tau$ since the initial segments of length $\min_{\tau \in p} x_\tau$ of each direction in $p$, including $d_1$ and $d_2$, are all identified together. Conversely, let $\tau_0 \in p \subset \mT$ be such that $\min_{\tau \in p} x_\tau = x_{\tau_0}$ and let $T'$ be the element of $C_{T,\mT}$ with the same coordinates $x_\tau$, for $\tau \in \mT$, as $x$ except $x_{\tau_0} = 0$. Then $T' \in C_{T,\mT'}$ for $\mT' = \mT \backslash \{\tau_0\}$. Since $\mT$ was independent, there is no path in $\mathfrak{G}_{\mT'}$ connecting $d_1$ and $d_2$. Hence, by (2), $x_{\{d_1,d_2\}}(T') = 0$. But $x$ is obtained from $T'$ by folding $\tau_0$ for $x_{\tau_0}$, so $x_{\{d_1,d_2\}} \leq x_{\tau_0} = \min_{\tau \in p} x_\tau$.
\end{proof}

The branched cube $B_T$ is the union of the cubes $C_{T,\mT}$ for the independent sets of turns $\mT\subset\mT_0$:

\begin{prop}
Let $T$ be a fully preprincipal train track. Then $$B_T = \bigcup_{\text{independent $\mathcal{T} \subset \mathcal{T}_0$}} C_{T,\mathcal{T}}.$$
\end{prop}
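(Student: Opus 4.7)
The plan is to prove both inclusions. The containment $\bigcup_{\mathcal{T}} C_{T,\mathcal{T}} \subseteq B_T$ is immediate from the definitions, since each $C_{T,\mathcal{T}}$ is by construction a subset of $B_T$ (obtained by setting the coordinates indexed by $\mathcal{T}_0 \setminus \mathcal{T}$ to zero). For the reverse inclusion, given any $T' \in B_T$, the task is to produce an independent $\mathcal{T} \subseteq \mathcal{T}_0$ with $T' \in C_{T,\mathcal{T}}$.

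First I would introduce the \emph{identification graph} $\mathfrak{H}$ on the vertex set $\mathcal{D}(T)$ whose edges are exactly those $\tau \in \mathcal{T}_0$ with $x_\tau(T') > 0$. Since directions in distinct gates of $T$ are never identified by any fold of $T$, $\mathfrak{H}$ splits as a disjoint union over gates. I would then let $\mathcal{T}$ be a maximum-weight spanning forest of $\mathfrak{H}$ with edge weights $x_\tau(T')$. By construction $\mathcal{T}$ is acyclic, hence independent in the sense defined above, and I take as candidate $T'' := T_{(y_\tau, 0)}$ with $y_\tau := x_\tau(T')$ for $\tau \in \mathcal{T}$, so that $T'' \in C_{T,\mathcal{T}}$ is automatic.

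It then remains to show $T'' = T'$. For this I would verify the coordinate identities $x_{\tau'}(T'') = x_{\tau'}(T')$ for every $\tau' \in \mathcal{T}_0$; since the fold $T \to T'$ is determined by its pattern of pairwise initial-segment identifications, and this pattern is precisely recorded by the values $x_{\tau'}(T')$, matching the coordinates forces the two folds to have the same quotient, giving $T'' = T'$. The case $\tau' \in \mathcal{T}$ is immediate from \Cref{lem:cubesarecubes}. If $\tau' = \{d_1,d_2\} \notin \mathcal{T}$ has $d_1,d_2$ in distinct components of $\mathfrak{H}$, then $x_{\tau'}(T') = 0$ by the definition of $\mathfrak{H}$, and $x_{\tau'}(T'') = 0$ by part (b) of the preceding lemma since $d_1,d_2$ are likewise disconnected in $\mathfrak{G}_\mathcal{T}$. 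If $d_1,d_2$ lie in a common component of $\mathfrak{H}$, let $p$ be the unique path in $\mathcal{T}$ joining them; equation \Cref{eq:xnotcoord} gives $x_{\tau'}(T'') = \min_{\tau'' \in p} x_{\tau''}(T')$, while \Cref{eq:coordinateconstraint} supplies the upper bound $\min_{\tau'' \in p} x_{\tau''}(T') \leq x_{\tau'}(T')$.

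The main obstacle, and the one place where the maximality of $\mathcal{T}$ is genuinely used, is establishing the matching lower bound $\min_{\tau'' \in p} x_{\tau''}(T') \geq x_{\tau'}(T')$. This is the classical cycle property of maximum-weight spanning forests: were some $\tau'' \in p$ to satisfy $x_{\tau''}(T') < x_{\tau'}(T')$, swapping $\tau''$ for $\tau'$ in $\mathcal{T}$ would produce another spanning forest of $\mathfrak{H}$ of strictly greater total weight, contradicting the maximality of $\mathcal{T}$. Combining the two inequalities yields the required equality and finishes the proof.
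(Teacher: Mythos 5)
Your proof is correct and takes essentially the same route as the paper's: the paper also builds the independent set greedily (equivalently, as a maximum-weight spanning forest, run on the prescribed fold lengths $\ell_\tau$ rather than the realized lengths $x_\tau(T')$) and closes with the same exchange/cycle-property inequality, concluding that the remaining folds are already subsumed. The only cosmetic difference is that you verify all coordinates of $T''$ and $T'$ agree and invoke the fact that a point of $B_T$ is determined by its pairwise identification amounts, whereas the paper phrases this final step as ``the amount of folding we have to do was already done, so there is no extra folding to be done.''
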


\begin{proof} 
Again, each $C_{T,\mT} \subset B_T$ by the definitions. 
So we consider $T_{(\ell_t)} \in B_T$ and show $T_{(\ell_t)}\in C_{T,\mT}$ for some $\mT$.
Define an independent $\mathcal{T}\subset\mT_0$ inductively as follows: 
Start with $\mathcal{T} = \varnothing$.
At each stage, consider the turns $\tau \in \mathcal{T}_0$ such that $\mathcal{T} \cup \{\tau\}$ is independent. Among such $\tau$, pick one such that $\ell_\tau$ is maximal, and add that turn to $\mathcal{T}$.
Since $\mathcal{T}_0$ is finite, this process terminates eventually and we have an independent subset $\mathcal{T}$.

We claim $T_{(\ell_\tau)} = T_{(\ell_\tau,0)}$.
First observe that we can obtain $T_{(\ell_\tau)}$ from $T$ by first folding the $\tau\in\mT$ by $\ell_\tau$, to get $T_{(\ell_\tau,0)}$, then folding the remaining $\tau \not\in \mT$ by $\ell_\tau$. 
But for each remaining turn $\{d_1,d_2\}$, we have $x_{\{d_1,d_2\}}(T_{(\ell_\tau,0)})$ is the minimum of the $\ell_\tau$ as $\tau$ ranges over edges in $\mT$ connecting $d_1$ to $d_2$. The value $\ell_{\{d_1,d_2\}}$ cannot exceed this value, or we would have chosen $\{d_1,d_2\}$ to be in $\mT$.
That is, the amount of folding we have to do on $\{d_1,d_2\}$ was already done in $T_{(\ell_\tau,0)}$, so there is no extra folding to be done, and $T_{(\ell_\tau)} = T_{(\ell_\tau,0)}$.
\end{proof}

Our next task is to compute the intersections between the cubes $C_{T,\mT}$ in $B_T$.
Let $\mT$ and $\mT'$ be two independent subsets of $\mT_0$.
Let the components of the graph $\mathfrak{G}_{\mathcal{T} \cap \mathcal{T}'}$ be $C_1,...,C_k$. Note that a single disconnected vertex is also a component.
We define the \textbf{slice} $S_{\mathcal{T},\mathcal{T}'}$ to be the subset of $C_\mathcal{T}$ consisting of all points satisfying the following inequalities for each $i,j=1,...,k$:
\begin{equation}\label{e:slice}
\begin{cases}  
x_{\{d_i,d_j\}} \leq \min\{x_{\{d_i,d'_i\}},x_{\{d_j,d'_j\}}\} & \text{if }  \exists~\{d_i,d_j\} \in \mathcal{T} \text{ and } \{d'_i,d'_j\} \in \mathcal{T}' \text{ connecting } C_i \text{ \& }C_j \\
x_{\{d_i,d_j\}} = 0 & \text{if} ~ \exists~\{d_i,d_j\} \in \mathcal{T} \text{ connecting } C_i  \text{ \& }C_j \text{, but no such }  \tau' \in \mathcal{T}'.
\end{cases}   
\end{equation}
One can verify that this gives the correct set of inequalities for an example in \Cref{eg:higherdimbranchedcube}, using the graph in \Cref{fig:eghigherdimbranchedcube} bottom right.

\begin{lem} \label{lem:branchedcubesintersectinslices}
Let $\mT$ and $\mT'$ be two independent subsets of $\mT_0$. The subset of points in $C_{T,\mT}$ that also lie in $C_{T,\mT'}$ is the slice $S_{\mathcal{T},\mathcal{T}'}$.
\end{lem}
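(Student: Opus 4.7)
The plan is to prove equality of these two subsets of $C_{T,\mT}$ by invoking the injectivity of the coordinate map (\Cref{lem:cubesarecubes}) together with the path formula from the preceding lemma, which expresses any actual fold length $x_{\{d_1, d_2\}}(T_1)$ in terms of either the $\mT$- or the $\mT'$-coordinates of $T_1$. The key underlying observation is that the fold $T \to T_1$ is intrinsic, so its identifications of initial segments (measured by the functions $x_{\{d, d'\}}(T_1)$) must be computable consistently from both coordinate systems.

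For the inclusion $S_{\mT, \mT'} \subseteq C_{T, \mT} \cap C_{T, \mT'}$, take $T_1 \in C_{T, \mT}$ with coordinates $(x_\tau)$ satisfying \eqref{e:slice}. Define $y_{\tau'} := x_{\tau'}(T_1)$ for each $\tau' \in \mT'$ via the $\mT$-path formula, and set $T_2 := T_{(y_{\tau'}, 0)} \in C_{T, \mT'}$. The goal is to show $T_1 = T_2$; since both lie in $C_{T, \mT}$, by \Cref{lem:cubesarecubes} it suffices to verify $x_\tau(T_2) = x_\tau$ for each $\tau \in \mT$. If $\tau = \{d_a, d_b\}$ has both endpoints in a common component of $\mathfrak{G}_{\mT \cap \mT'}$, then independence of $\mT$ forces $\tau \in \mT \cap \mT'$ and the check is immediate. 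If $\tau$ bridges distinct components $C_i \ni d_a, C_j \ni d_b$, the bound $x_\tau(T_2) \leq x_\tau$ follows from the gate inequality \eqref{eq:coordinateconstraint} applied to the $\mathfrak{G}_{\mT'}$-path used in the path formula. For the reverse bound, construct the explicit path $d_a \to \cdots \to d_a' \to d_b' \to \cdots \to d_b$ in $\mathfrak{G}_{\mT'}$ through the unique direct $\mT'$-bridge $\{d_a', d_b'\}$ between $C_i, C_j$ (whose existence is the hypothesis of slice condition 1), with intra-component sub-paths lying in $\mathfrak{G}_{\mT \cap \mT'}$, and use the slice inequalities together with the path formula to verify every edge along this path has $y$-value at least $x_\tau$. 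When no direct $\mT'$-bridge exists, the slice condition 2 forces $x_\tau = 0$, trivializing the check.

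For the reverse inclusion $C_{T, \mT} \cap C_{T, \mT'} \subseteq S_{\mT, \mT'}$, given $T_1$ in the intersection, slice condition 1 follows quickly: independence of $\mT'$ forces the unique $\mathfrak{G}_{\mT'}$-path from $d_a$ to $d_b$ to pass through the direct bridge $\{d_a', d_b'\}$, and expressing $x_\tau$ via the $\mT'$-path formula over the full path, and $x_{\{d_a, d_a'\}}(T_1)$ via its strict sub-path in $\mathfrak{G}_{\mT \cap \mT'}|_{C_i}$, yields $x_\tau \leq x_{\{d_a, d_a'\}}(T_1)$ (and similarly for $d_b, d_b'$). Slice condition 2 amounts to showing the $\mT'$-path formula gives $x_\tau = 0$ in the absence of a direct $\mT'$-bridge; this is immediate when $d_a, d_b$ are disconnected in $\mathfrak{G}_{\mT'}$, and in the remaining indirect case is established by combining the consistency of the $\mT$- and $\mT'$-path formulas to produce an intermediate $\mT' \setminus \mT$ bridge on the connecting path whose endpoints are disconnected in $\mathfrak{G}_\mT$, forcing its $y$-value to vanish.

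The main obstacle is the indirect-connection subcase of slice condition 2; the resolution requires a careful combinatorial analysis of the bridge forests of $\mT$ and $\mT'$ over the components of $\mathfrak{G}_{\mT \cap \mT'}$ and the propagation of fold-partition agreement at every scale $t$ down to the vanishing of the relevant bridge weights.
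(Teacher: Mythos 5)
There are two genuine gaps in your proposal. The first is in the inclusion $S_{\mT,\mT'} \subseteq C_{T,\mT}\cap C_{T,\mT'}$: you define $T_2 := T_{(y_{\tau'},0)} \in C_{T,\mT'}$ and propose to conclude $T_1=T_2$ by checking only the coordinates $x_\tau$ for $\tau\in\mT$, ``since both lie in $C_{T,\mT}$,'' citing \Cref{lem:cubesarecubes}. But $T_2$ is not known to lie in $C_{T,\mT}$ --- that is essentially what is being proved --- and \Cref{lem:cubesarecubes} only says the $\mT$-coordinates separate points \emph{of} $C_{T,\mT}$; two distinct points of $B_T$ can share all their $\mT$-coordinates. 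To identify $T_1$ with $T_2$ you must match the full profile $x_\sigma$ for every $\sigma\in\mT_0$ (equivalently, the radius-$t$ identification partitions for every $t$), not just the $\mT$-coordinates. The paper sidesteps this: it realizes a slice point by first performing the $\mT\cap\mT'$ folds and then observes that each remaining fold of $\{d_i,d_j\}\in\mT\setminus\mT'$ can be traded for a fold of the $\mT'$-bridge $\{d'_i,d'_j\}$, because the slice inequalities guarantee that along the folded length the segments at $d_i,d_j$ are already identified with those at $d'_i,d'_j$.

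The second gap is the one you yourself call the main obstacle, and your proposed mechanism for it is not correct. In the indirect subcase of condition (2) you claim the connecting $\mathfrak{G}_{\mT'}$-path contains a bridge in $\mT'\setminus\mT$ whose endpoints are disconnected in $\mathfrak{G}_{\mT}$, forcing its value to vanish. No such bridge need exist: take a single gate $\{d_1,\dots,d_6\}$ with $\mT\cap\mT'=\{\{d_1,d_2\},\{d_3,d_4\},\{d_5,d_6\}\}$, $\mT$ adding $\{d_2,d_3\},\{d_4,d_5\}$ and $\mT'$ adding $\{d_1,d_6\},\{d_3,d_6\}$; then $\mathfrak{G}_{\mT}$ is the connected path $d_1 d_2 d_3 d_4 d_5 d_6$, so every bridge of $\mT'$ has $\mathfrak{G}_{\mT}$-connected endpoints. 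Worse, in this configuration the vanishing you need is not forced by the combinatorics at all: with $x_{\{d_1,d_2\}}=x_{\{d_3,d_4\}}=x_{\{d_5,d_6\}}=c$ and $x_{\{d_2,d_3\}}=a\le x_{\{d_4,d_5\}}=b\le c$, the resulting quotient has the same radius-$t$ identification partitions as the $\mT'$-fold with $y_{\{d_1,d_6\}}=a$, $y_{\{d_3,d_6\}}=b$ and common coordinates $c$, hence lies in both cubes while $x_{\{d_2,d_3\}}=a>0$, violating condition (2) of \Cref{e:slice}. So the ``careful combinatorial analysis'' you defer to cannot be supplied by the path formulas alone, and your proof is incomplete exactly at its acknowledged crux. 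To be fair, you have put your finger on a real soft spot: the paper's own converse disposes of this case with an unargued ``$0$ otherwise,'' its displayed computation treating only the case where the $\mathfrak{G}_{\mT'}$-path from $d_i$ to $d_j$ crosses a single direct bridge; but flagging the difficulty is not the same as resolving it, and any complete argument (or corrected statement) must confront configurations of the above type.
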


\begin{proof}
Suppose $T' = (x_\tau) \in C_{T,\mT}$ lies in the slice $S_{\mathcal{T},\mathcal{T}'}$, i.e. the inequalities of \Cref{e:slice} are satisfied for each $i,j=1,...,k$.
Think of $T'$ as obtained from $T$ by (1) folding the $\tau\in\mathcal{T} \cap \mathcal{T}'$ for $x_\tau$, then (2) folding the $\{d_i,d_j\}\in\mathcal{T} \backslash \mathcal{T}'$ for $x_{\{d_i,d_j\}}$.
For the folds done in step (2), since $x_{\{d_i,d_j\}} = 0$ if there is no $\tau' \in \mathcal{T}'$ connecting $C_i$ to $C_j$, we only fold $\{d_i,d_j\}$ for values of $i,j$ for which there exists a $\{d'_i,d'_j\} \in \mathcal{T}'$ connecting $C_i$ to $C_j$. 

Since $x_{\{d_i,d_j\}} \leq \min\{x_{\{d_i,d'_i\}},x_{\{d_j,d'_j\}}\}$, by the time we do the step (2) folds, we are folding initial segments of $d_i$ and $d_j$ identified with $d'_i$ and $d'_j$, so we could have equivalently folded $\{d'_i,d'_j\}$ by $x_{\{d_i,d_j\}}$. That is, $T' \in C_{T,\mathcal{T}} \cap C_{T,\mathcal{T}'}$.

Conversely, suppose $T' \in C_{T,\mathcal{T}} \cap C_{T,\mathcal{T}'}$. Then for each $\{d_i,d_j\} \in \mathcal{T} \backslash \mathcal{T}'$, considering $T'$ as a point in $C_{T,\mathcal{T}'}$,
\begin{align*}
x_{\{d_i,d_j\}} &= 
\begin{cases}
\min \{x_{\{d_i,d'_i\}}, x_{\{d'_i,d'_j\}}, x_{\{d'_j,d_j\}} \} &  \text{if } \exists~ \{d'_i,d'_j\} \in \mathcal{T}' \text{ connecting } C_i  ~\&~ C_j \\
0 & \text{otherwise}
\end{cases} \\
&\leq  
\begin{cases}
\min \{x_{\{d_i,d'_i\}}, x_{\{d'_j,d_j\}} \} & \quad\quad\quad \text{if } \exists~ \{d'_i,d'_j\} \in \mathcal{T}' \text{ connecting } C_i  ~\&~ C_j \\
0 &\quad\quad\quad \text{otherwise}
\end{cases}.
\end{align*}
\end{proof}

\subsection{Splitting and folding faces} \label{subsec:splittingfoldingfaces}

In this subsection we define the splitting and folding faces of a branched cube. We use this terminology when analyzing the combinatorics of how branched cubes intersect each other.

Let $T$ be a fully preprincipal train track and $\mT \subset \mT_0$ an independent subset.
The \textbf{splitting face} of $C_{T,\mathcal{T}}$ associated to $\mathcal{T}' \subset \mathcal{T}$ is the subset of $C_{T,\mathcal{T}}$ defined by $x_\tau = 0$ for all $\tau \in \mathcal{T}'$. 
Note that this is the same set as $C_{T,\mathcal{T}'}$.
The \textbf{splitting vertex} of $C_{T,\mathcal{T}}$ is the splitting face associated to $\mathcal{T}$ itself, i.e. the point $\{T\}$.
The \textbf{folding face} of $C_{T,\mathcal{T}}$ associated to $\mathcal{T}' \subset \mathcal{T}$ is the subset of $C_{T,\mathcal{T}}$ defined by $x_\tau = \ell(\tau)$ for all $\tau \in \mathcal{T}'$.

The splitting/folding faces of $B_T$ will be unions of the splitting/folding faces of the $C_{T,\mathcal{T}}$:
The \textbf{splitting face} of $B_T$ associated to a choice of partition $G = \sqcup G_i$ for each gate $G$ of $T$ is the subspace of $B_T$ defined by $x_\tau = 0$ whenever $\tau \not\subset G_i$ for all $i$.
The \textbf{splitting vertex} of $B_T$ is the splitting face associated to the partition of each gate into one-element sets, so is defined by $x_\tau = 0$ for all $\tau$, and is just the point $\{T\}$.
A splitting face of $B_T$ is a proper subset of $B_T$ if and only if at least one of the gate partitions is a nontrivial partition.
The \textbf{folding face} of $B_T$ associated to a subset $\mathcal{T} \subset \mathcal{T}_0$ is the subspace of $B_T$ defined by $x_\tau = \ell(\tau)$ for each $\tau \in \mathcal{T}$.
Such a folding face is a proper subset of $B_T$ if and only if $\mathcal{T}$ is a nonempty subset of $\mathcal{T}_0$.

\begin{lem} \label{lemma:branchedcubeboundary}
Each branched cube $B_T$ is the union of its proper folding faces and the interiors of its splitting faces.
\end{lem}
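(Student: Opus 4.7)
The plan is to classify an arbitrary point $p \in B_T$ by its coordinate tuple $(x_\tau(p))_{\tau \in \mathcal{T}_0}$. Since each proper folding face and each splitting face of $B_T$ is a subset of $B_T$ by the definitions, I only need the nontrivial direction: every $p \in B_T$ lies in some proper folding face or in the (relative) interior of some splitting face.

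First I split on whether a fold has already ``completed'' at $p$. If there exists some $\tau_0 \in \mathcal{T}_0$ with $x_{\tau_0}(p) = \ell(\tau_0)$, then $p$ lies in the folding face of $B_T$ associated to the nonempty subset $\{\tau_0\} \subset \mathcal{T}_0$, which is a proper folding face, and we are done. So assume from now on that $x_\tau(p) < \ell(\tau)$ for every $\tau \in \mathcal{T}_0$.

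Next I cook up a splitting face whose interior contains $p$. For each gate $G$ of $T$, define a relation $\sim_p$ on the set of directions in $G$ by declaring $d \sim_p d'$ iff there is a chain $d = d_0, d_1, \dots, d_m = d'$ in $G$ with $x_{\{d_{i-1},d_i\}}(p) > 0$ for every $i$. This is visibly an equivalence relation; let $G = \bigsqcup_i G_i(p)$ be the induced partition, and let $S$ be the splitting face of $B_T$ associated to the resulting collection of gate partitions. I claim $p$ lies in the relative interior of $S$.

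The verification is a direct unwinding. For any turn $\tau = \{d,d'\}$ whose endpoints lie in different $\sim_p$-classes, $x_\tau(p) = 0$ by definition of $\sim_p$, which is exactly the defining equation of $S$. For $\tau = \{d,d'\}$ with both endpoints in a single block $G_i(p)$, a chain realizing $d \sim_p d'$ together with the coordinate inequality \eqref{eq:coordinateconstraint} yields $x_\tau(p) \geq \min_i x_{\{d_{i-1},d_i\}}(p) > 0$, so $p$ is strictly inside $S$ with respect to this coordinate. Finally, $x_\tau(p) < \ell(\tau)$ for every $\tau$ by the running assumption, so $p$ does not lie on any folding sub-face of $S$. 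The only step with real content is the use of \eqref{eq:coordinateconstraint} to promote positivity along a chain to positivity of the ``long'' coordinate $x_{\{d,d'\}}$; this is what prevents $p$ from accidentally landing on a smaller splitting sub-face of $S$, and is therefore the part of the argument I would be most careful to write out cleanly.
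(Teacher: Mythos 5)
Your proof is correct and follows essentially the same route as the paper: both handle the case $x_\tau = \ell(\tau)$ via a proper folding face, then partition each gate by positivity of the fold coordinates (you take the transitive closure of the relation $x_{\{d,d'\}}>0$, the paper proves that relation is already transitive — in both cases via the inequality \eqref{eq:coordinateconstraint}) and place the point in the interior of the associated splitting face. No gaps; the only difference is cosmetic packaging of the equivalence relation.
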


\begin{proof}
Suppose $x\in B_T$ is not in the interior of $B_T$, nor of any proper folding face of $B_T$. Then $x_\tau = 0$ for some $\tau\in\mT_0$.
Declare directions $d_1,d_2$ equivalent if $x_{\{d_1,d_2\}} > 0$.
This relation is transitive by \Cref{eq:coordinateconstraint}, and clearly reflexive and symmetric.
Further, equivalent directions must lie in the same gate.
Thus, the equivalence classes partition the gates. The partition is nontrivial since $x_\tau = 0$ for some $\tau$. By definition, $x_\tau = 0$ if and only if $\tau$ does not lie in an element of this partition. 
And $x_\tau < \ell(\tau)$ for each $\tau$, or $x$ would lie on a proper folding face of $B_T$.
Thus $x$ lies in the interior of the splitting face associated to this partition.  
\end{proof}

\subsection{Union of the branched cubes} \label{subsec:branchedcubescoveraxisbundle}

The goal now is to show the branched cubes $B_T$ cover $\Av$.
We first show the following lemma, providing that, as a cube, the vertices of each $C_{T,\mT}$ are fully preprincipal points.

\begin{lem} \label{lem:cubeverticesfpp}
The vertices of $C_{T,\mathcal{T}}$, i.e. the points where each $x_\tau$ is either $0$ or $\ell(\tau)$, are fully preprincipal.
\end{lem}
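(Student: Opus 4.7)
The plan is to show that at each vertex $v' \in V(T')$ of the weak train track $T' \in C_{T,\mathcal{T}}$ corresponding to a cube vertex (so $T' = T_{(x_\tau, 0)}$ with $x_\tau \in \{0, \ell(\tau)\}$ for each $\tau \in \mathcal{T}$), the induced $\Lambda$-isometry registers at least three gates. Setting $\mathcal{T}' = \{\tau \in \mathcal{T} : x_\tau = \ell(\tau)\}$, the train track $T'$ is obtained from $T$ by fully folding each $\tau \in \mathcal{T}'$. The canonical fold $f \from T \to T'$ is a composition of $\Lambda$-legal folds, so (as in the proof of \Cref{lem:branchedcubesubsetaxisbundle}) $F_T$ descends to a $\Lambda$-isometry $F_{T'} \from T' \to T_+$ satisfying $F_{T'} \circ f = F_T$. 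Because the gates at $v'$ are exactly the $DF_{T'}$-fibers on $\mathcal{D}_{v'}$, the task reduces to verifying $|DF_{T'}(\mathcal{D}_{v'})| \geq 3$ for every $v' \in V(T')$.

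I would then dichotomize on whether $v'$ lies in the image $f(V(T))$. In the \emph{first case}, $v' = f(v)$ for some $v \in V(T)$; then the direction map $Df \from \mathcal{D}_v \to \mathcal{D}_{v'}$ is well-defined, and the relation $DF_{T'} \circ Df = DF_T$ yields the inclusion $DF_T(\mathcal{D}_v) \subseteq DF_{T'}(\mathcal{D}_{v'})$. Full preprincipality of $T$ supplies $|DF_T(\mathcal{D}_v)| \geq 3$, closing this case. In the \emph{second case} $v'$ is a pure new vertex: every preimage in $f^{-1}(v')$ lies in the interior of some edge of $T$, so $v'$ can only arise from one or more folds $\tau = \{d_i, d_j\} \in \mathcal{T}'$ whose length $\ell(\tau)$ is strictly less than $\ell(e_i)$ and $\ell(e_j)$. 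Since $\ell(\tau)$ is the maximal length along which $F_T$ identifies the initial segments of $d_i$ and $d_j$, the images $F_T(e_i)$ and $F_T(e_j)$ must diverge at the branch point $F_{T'}(v')$ in $T_+$. Reading this back through $f$, $v'$ has a single ``back'' direction (the common retraction of the merged initial segments) and at least two distinct ``forward'' directions (the continuations of the diverging $e_i$'s past $v'$); these map to distinct directions at the branch point $F_{T'}(v')$, so $v'$ has at least three gates.

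The main obstacle is the bookkeeping in the pure-new-vertex case when several folds interact to produce a single $v'$, possibly arising through nested fold terminations along a subpath of $\mathfrak{G}_{\mathcal{T}'}$ rooted at the basepoint. The unifying principle is that $v'$ being a genuine vertex of $T'$ rather than a smoothed interior point forces at least two edges of $T$ to have $F_T$-images diverging at $F_{T'}(v')$; combined with the back direction this always supplies the required third gate. Putting the two cases together completes the proof.
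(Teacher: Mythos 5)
Your proposal is correct and follows essentially the same route as the paper's proof: the paper likewise notes that $T'$ is obtained by fully folding illegal turns, that any newly created vertex therefore has at least three gates (your divergence-in-$T_+$ argument for the back direction plus the two separating continuations), and that at images of old vertices the gate count cannot decrease (your inclusion $DF_T(\mathcal{D}_v)\subseteq DF_{T'}(\mathcal{D}_{v'})$ is a clean justification of this). Your two-case write-up is simply a more detailed version of the paper's terse argument, so there is nothing to fix.
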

\begin{proof}
Let $T'$ be a vertex of $C_{T,\mathcal{T}}$. Then $T'$ is obtained from $T$ by folding some collection of illegal turns in $T$ fully. 
Since each turn folded is folded fully (and by a $\La$-sometry), any new vertex created by the fold has at least 3 gates. 
Further, since each turn folded was illegal, the number of gates at a vertex could not have decreased. Since $T$ was fully preprincipal, each vertex of $T$ has at least 3 gates, and so each vertex of $T'$ has at least 3 gates, and thus $T'$ is also fully preprincipal.
\end{proof}

\begin{prop} \label{prop:branchedcubescoveraxisbundle}
Let $S \in \Av$. Then there is a fully preprincipal $T \in \Av$ with $S \in B_T$. 
In fact, we can choose $T$ to have the same local decomposition as $S$, and so that $S$ is not on a proper folding face of $B_T$.

In particular, by \Cref{lemma:branchedcubeboundary}, for each $\mG$, we have that $\mG\Av$ is the union of the interiors of the splitting faces of $B_T$, as $T$ ranges over all fully preprincipal weak train tracks whose local decomposition is $\mG$.
\end{prop}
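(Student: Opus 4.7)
The strategy is to realize $S$ as a $\La$-legal partial fold of a carefully chosen fully preprincipal weak train track via \Cref{P:5.4}. Let $\mG$ denote the local decomposition of $S$. I would apply \Cref{p:CutDecompositions}, after passing to a rotationless power of $\vphi$ if necessary, to produce a fully preprincipal train track representative $g\from\Gamma\to\Gamma$ whose associated weak train track $\widetilde{\Gamma}$ realizes the decomposition $\mG$. Using the volume scaling from \Cref{P:6.2}, I would choose $k$ large enough that $\vphi^k(S)$ has volume below the threshold $\veps$ of \Cref{P:5.4} for $g$. Since $\vphi$ preserves local decompositions, $\vphi^k(S)$ still has local decomposition $\mG$, so \Cref{P:5.4} applies and supplies a unique equivariant edge-isometry $h\from \widetilde{\Gamma}\to \vphi^k(S)$ with $g_\infty = f_{\vphi^k(S)}\circ h$. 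Because $h$ is a $\La$-isometry, any pair of directions it folds lies in a common gate of $\widetilde{\Gamma}$, so $h$ folds only illegal turns, giving $\vphi^k(S) \in B_{\widetilde{\Gamma}}$. Applying $\vphi^{-k}$, which takes branched cubes to branched cubes by $\vphi$-equivariance, I would set $T := \vphi^{-k}(\widetilde{\Gamma})$ to conclude $S \in B_T$, with $T$ fully preprincipal and having local decomposition $\mG$ as required.

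For the remaining claim that $S$ is not on a proper folding face of $B_T$, I would argue by contradiction: suppose some illegal turn $\tau=\{d_1,d_2\}$ at a vertex $u \in T$ has $x_\tau(S)=\ell(\tau)$. Then the fold $T \to S$ completely identifies the edges $e_1, e_2$ with initial directions $d_1, d_2$, merging their far endpoints $v_1, v_2$ into a single vertex $v\in S$. The complete identification forces $f_T(v_1) = f_T(v_2) = b$ for a common point $b \in T_+$; since $T$ is fully preprincipal, $v_1, v_2$ each have $\geq 3$ gates, so $b$ is a branch point of $T_+$ and both $v_1, v_2$ are principal. \Cref{lemma:SWGinIWG} then exhibits $SW(v_1;T)$ and $SW(v_2;T)$ as distinct pieces of the decomposition of $\widetilde{IW}_b(\vphi)$ induced by $T$, meeting in at most one cut vertex. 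After the merger, $SW(v;S)$ is a natural quotient of $SW(v_1;T) \cup SW(v_2;T)$, combining these two pieces into a single piece of $S$'s decomposition, making $S$'s local decomposition strictly coarser than $T$'s and contradicting that both equal $\mG$.

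The main obstacle is confirming that the above merger is a genuine coarsening at the level of $IW(\vphi) = \widetilde{IW}(\vphi)/F_r$, rather than mere identifications within a single $F_r$-orbit. The key point is that if $v_1, v_2$ shared an $F_r$-orbit, the full fold upstairs would descend to a rank-reducing fold on the quotient graph, pushing $S$ out of $CV_r$; so $v_1, v_2$ must represent distinct orbits and $SW(v_1), SW(v_2)$ contribute distinct pieces of $\mG$. The ``in particular'' consequence then follows by combining the main statement with \Cref{lemma:branchedcubeboundary}.
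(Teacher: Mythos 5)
Your first paragraph follows the paper's opening moves exactly (\Cref{p:CutDecompositions} to get a fully preprincipal $\widetilde{\Gamma}$ realizing $\mG$, then \Cref{P:6.2} and \Cref{P:5.4} to obtain a $\Lambda$-isometry $h\from\widetilde{\Gamma}\to\vphi^k(S)$), but the step ``$h$ folds only illegal turns, giving $\vphi^k(S)\in B_{\widetilde{\Gamma}}$'' is a genuine gap. Membership in $B_T$ does not mean ``$S$ is the image of some $\Lambda$-legal fold of $T$''; it means $S$ is obtained by folding each illegal turn $\tau$ of $T$ along initial segments of length \emph{at most} $\ell(\tau)$, in a single branched cube's worth of folding. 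The $\Lambda$-isometry $h$ supplied by \Cref{P:5.4} can fold far more than that: after a turn of $T$ is folded completely, new illegal turns appear at the newly created vertices and the folding can continue past the folding faces of $B_T$. In that situation $\vphi^k(S)$ lies in a later branched cube along the fold path, not in $B_{\widetilde{\Gamma}}$. This is exactly the difficulty the paper's proof confronts: it takes the fold path $\alpha$ from $T$ to $S$ given by \Cref{P:5.4} and, whenever $\alpha$ meets a proper folding face of some cube $C_{T,\mT}$, replaces $T$ by a vertex $T'$ of that cube (fully preprincipal by \Cref{lem:cubeverticesfpp}, and with the same local decomposition since it sits on a fold path from $T$ to $S$), then repeats; termination comes from volume monotonicity together with \Cref{lem:fpplocallyfinite}. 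Without some such iteration, your argument proves only that $S$ is reachable from $T$ by $\Lambda$-legal folding, which is weaker than $S\in B_T$.

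Your argument for the refinement ``$S$ is not on a proper folding face of $B_T$'' also rests on a false premise. If $x_\tau(S)=\ell(\tau)$ for a turn $\tau=\{d_1,d_2\}$, this does \emph{not} mean the edges $e_1,e_2$ are identified in their entirety with their far endpoints merged: $\ell(\tau)$ is by definition the length of the maximal initial segments identified by the $\Lambda$-isometry $f_T$, which is typically strictly less than $\ell(e_1),\ell(e_2)$, and a full fold of $\tau$ creates a new $\geq 3$-gate vertex in the middle of the edges (this is the content of \Cref{lem:cubeverticesfpp}) rather than merging $v_1$ with $v_2$. So the appeal to \Cref{lemma:SWGinIWG} and the claimed coarsening of the local decomposition do not get off the ground, and in any case lying on a folding face need not coarsen the local decomposition, so no contradiction with ``both equal $\mG$'' is available. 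In the paper this part of the statement is not proved by contradiction at all: it falls out of the same inductive modification, since the process stops precisely when the fold path from the current $T$ to $S$ avoids every proper folding face of $B_T$.
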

\begin{proof}
Let $R$ be the rotationless power of $\vphi$ and $\mG$ the local decomposition of $S$. By \Cref{p:CutDecompositions}, there is a fully preprincipal train track representative $g:\Gamma \to \Gamma$ for $\phi$ such that $T = \widetilde{\Gamma}$ has local decomposition $\mG$.

Using \Cref{P:6.2}, we can shift $T$ so that the $\veps$ requirement in \Cref{P:5.4} is satisfied and then we can use \Cref{P:5.4} to know that $S$ can be obtained from $T$ by folding. However, we are not done yet because the fold path $\alpha$ from $T$ to $S$ may pass through multiple branched cubes.

We modify the fold path inductively as follows:
If $\alpha$ does not meet a proper folding face of $B_T$, then $\alpha$ stays inside $B_T$.
Otherwise, if $\alpha$ meets a proper folding face of $B_T$, then $\alpha$ must meet a folding face of some cube $C_{T,\mT}$ at some point $S'$. 
We can choose a vertex $T' \neq T$ of $C_{T,\mT}$ so that there is a fold path from $T'$ to $S'$. We then modify $\alpha$ by replacing its initial segment from $T$ to $S'$ by the fold path from $T'$ to $S'$.

Since there is a fold path from $T$ to $T'$ and from $T'$ to $S$, the local decomposition of $T'$ is also $\mG$. For the same reason, $vol(T')<vol(T)$ and $vol(T')\geq vol(S)$. By \Cref{lem:fpplocallyfinite}, this shows the process terminates.
\end{proof}

\subsection{Intersections of the branched cubes} \label{subsec:axisbundlebranchedcubesintersect}

The goal of this subsection is to analyze the intersection of the branched cubes $B_T$.
The following construction will play a large role.

\begin{constr}[Peel-off] \label{constr:peeloff}
Let $T$ be an element of $\mathcal{A}_\phi$ and $v\in VT$.
Suppose there is a $d_0\in \mD(v)$ and disjoint nonempty subsets $D_1, D_2\subset \mD(v)$ with (see \Cref{fig:possiblesplit}, where the purple lines are $\Lambda$-leaves):
\begin{itemize}
    \item[a.] $\mD(v)=\{d_0\} \cup D_1 \cup D_2$.
    \item[b.] All $\Lambda$-leaves that pass through $v$ by entering at $d_0$ exit through $D_1$ or $D_2$.
    \item[c.] No $\Lambda$-leaves pass through $v$ by entering at $D_1$ and exiting through $D_2$.
    \item[d.] All $\Lambda$-leaves that pass through $v$ by entering at $D_1$ and exiting through $d_0$ travel along the same segment $I$ before meeting a vertex $w$ with $\geq 3$ gates.
\end{itemize}

In this case, we say $(D_1,D_2,I)$ is a \textbf{possible peel}. Note that, by irreducibility, (c)-(d) imply there are $\La$-leaves entering at each of $D_1$ and $D_2$ before passing through $I$.

We can define a fully preprincipal train track $T_I$ by detaching the directions in $D_1$ at $v$, attaching them to an endpoint of a copy of $I$, and attaching the other endpoint of the copy of $I$ to $w$.
We say that $T_I$ is obtained from $T$ by \textbf{peeling off} $D_1$ from $D_2$ at $v$ along $I$.
Note that there is a folding map $h_I:T_I \to T$ and the preimage of $I$ is the union of two segments $I_1$ and $I_2$ that meet at $w$. 

The special case when $v$ is a two-gate vertex will be particularly important in the following. 
In this case, note that we decrease the valence at a two-gate vertex when splitting from $T$ to $T_I$. 

We say that a fully preprincipal train track $T'$ is obtained by \textbf{completely peeling} $T$ if there is a sequence of peel-offs $T=T_0 \to T_1 \to ... \to T_m = T'$ at two-gate vertices.

\begin{figure}[H]
    \centering
    \selectfont\fontsize{6pt}{6pt}
    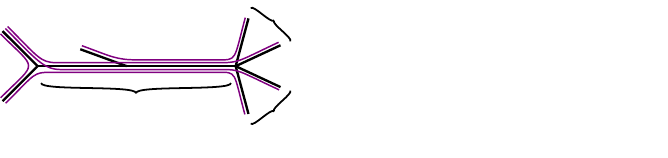
    \caption{Left: A possible peel $(D_1,D_2,I)$. Right: Peeling off $D_1$ from $D_2$ at $v$ along $I$.}
    \label{fig:possiblesplit}
\end{figure}
\end{constr}

\begin{ex} \label{ex:simplefoldpathsgivesplits}
Let $T$ be a point in a branched cube $B_{T'}$. Then $T$ lies in a cube $C_{T',\mathcal{T}}$.
Let $\tau = \{d_1,d_2\} \in \mathcal{T}$ be a turn such that $x_\tau(T) > 0$. Let $T_1$ be the point with the same coordinates as $T$ except that $x_\tau(T_1) = 0$.

Then $T$ can be obtained by first folding $T'$ to $T_1$ along the turns in $\mathcal{T} \backslash \{\tau\}$ then folding initial segments $I_1 \subset d_1$ and $I_2 \subset d_2$ of length $x_\tau$. Let $h:T' \to T$ be that folding map and $v_1$, $v_2$ the terminal points of $I_1$, $I_2$ respectively.
Let $I \subset T$ be the common image of $I_1$ and $I_2$. 
Let $v = h(v_1) = h(v_2)$ be the endpoint of $I$ other than $w$, and let $d_0$ be the direction at $v$ determined by $I$.
Let $D_1$ be the $h$-image of the directions at $v_1$ other than $I_1$, and let $D_2=\mD(v) \backslash (\{d_0\} \cup D_1)$.
Then $(D_1,D_2,I)$ is a possible peel and $T_I = T_1$.
\end{ex}

We have a criterion for determining whether we are in the situation of the example.

\begin{lem} \label{lemma:splitinsidebranchedcubecriterion}
Suppose $T \in B_{T'}$.
Recall from \Cref{df:axisbundlebranchedcube} that there is a canonical fold $h: T' \to T$.
A split train track $T_I$ lies in $B_{T'}$ if and only if there exists no possible peel $(D'_1,D'_2,I')$ where $h(I') \supset I$, and $h(D'_1) \subset D_1$, and $h(D'_2) \subset D_2$.
If $T_I\in B_{T'}$, then $T_I$ lies in a splitting face of $B_{T'}$.
\end{lem}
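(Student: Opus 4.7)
The plan is to recast $T_I \in B_{T'}$ as the question of whether the canonical fold $h:T' \to T$ factors as $h = h_I \circ h'$ for a fold $h': T' \to T_I$. The equivalence is direct: if $T_I \in B_{T'}$, then the canonical fold $T' \to T_I$ composed with $h_I$ is a fold $T' \to T$, which must equal $h$ by the uniqueness of rightmost folds (\Cref{lem:axisbundleinteriorrightmostfold}); conversely, the existence of $h'$ exhibits $T_I$ as a fold of $T'$.

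For the forward direction, suppose $T_I \in B_{T'}$ via some fold $h': T' \to T_I$, and suppose for contradiction there is a possible peel $(D'_1, D'_2, I')$ of $T'$ at $v'$ with the projection properties. The condition $h(D'_i) \subset D_i \subset \mD(v)$ forces $h(v') = v$. Since $h_I(h'(v')) = v$ and $h_I^{-1}(v) = \{v_1, v_2\}$, we have $h'(v') \in \{v_1, v_2\}$, say $v_1$. The directions at $v_1$ in $T_I$ are $D_1 \cup \{d_{I_1}\}$, which map under $Dh_I$ into $D_1 \cup \{d_0\}$, avoiding $D_2$. So $Dh = Dh_I \circ Dh'$ sends every direction at $v'$ into $D_1 \cup \{d_0\}$, contradicting $h(D'_2) \subset D_2$ with $D'_2 \neq \emptyset$.

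For the reverse direction, I would construct $h'$ directly. At each vertex $v' \in h^{-1}(v)$, the direction map $Dh$ is injective, so the directions at $v'$ partition by image into $d_0$-, $D_1$-, and $D_2$-mapping groups. If some $v'$ has directions in both the $D_1$- and $D_2$-mapping groups, then it carries a possible peel $(D'_1, D'_2, I')$ of $T'$ with the projection properties: take $D'_1, D'_2$ to be the respective direction groups, $d'_0$ the unique $d_0$-mapping direction at $v'$, and $I'$ the preimage of $I$ from $v'$ (which equals $I$ isometrically since $h$ maps vertices to vertices and $I$ has no interior vertex). Conditions (b)--(c) of the peel transfer from $v$ to $v'$ by lifting $\Lambda$-leaves through the $\Lambda$-isometry $h$, and (d) holds by construction. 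This would contradict the no-peel hypothesis, so every $v' \in h^{-1}(v)$ has its side-directions entirely in $D_1$ or entirely in $D_2$. Defining $h'(v')$ to be $v_1$ or $v_2$ accordingly, mapping preimage segments of $I$ to $I_1$ or $I_2$ correspondingly, and letting $h'$ agree with $h$ elsewhere, yields the desired fold with $h_I \circ h' = h$.

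The splitting face claim follows because $h'$ folds a proper subset of the turns $h$ folds, omitting precisely the turns identifying $D_1$-side with $D_2$-side directions at preimages of $v$. In the coordinates of the cube $C_{T',\mT}$ containing $T$, these ``cross'' coordinates of $T_I$ vanish, placing $T_I$ on a splitting face of $B_{T'}$. The main obstacle lies in the reverse direction, particularly verifying conditions (b)--(d) of the peel at $v'$ and handling any vertices in $h^{-1}(v)$ that might lack a $d_0$-mapping direction (which one expects to be ruled out by the fully preprincipal structure of $T'$, but which merits justification).
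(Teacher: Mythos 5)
Your forward direction (peel exists $\Rightarrow T_I \notin B_{T'}$) is a reasonable variant of the paper's argument, though note that your appeal to \Cref{lem:axisbundleinteriorrightmostfold} to get $h = h_I \circ h'$ needs the unproven compatibility $k_T^+ \circ h_I = k_{T_I}^+$; the paper instead phrases this step through leaf realizations. The genuine gap is in the reverse direction and the splitting-face claim, which is where the paper's proof does all its work. Your pivot is the assertion that ``the existence of $h'$ exhibits $T_I$ as a fold of $T'$,'' but being a fold image of $T'$ is far weaker than lying in $B_{T'}$: by \Cref{df:axisbundlebranchedcube}, $B_{T'}$ consists only of the trees $T'_{(\ell_\tau)}$ obtained by folding each illegal turn of $T'$ by an amount at most $\ell(\tau)$, i.e.\ first-generation folds, whereas essentially every deeper point of the axis bundle is also a fold image of $T'$. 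So constructing a fold $h'$ with $h_I\circ h' = h$ does not by itself place $T_I$ in $B_{T'}$; one must additionally show that the identifications made by $h'$ are exactly turn-foldings at vertices of $T'$ by admissible amounts, or (as the paper does) exhibit an explicit cube $C_{T',\mT_1\cup\{\tau_1,\dots,\tau_p\}\cup\mT_2}$ containing both $T$ and $T_I$. The latter is also what makes the splitting-face statement precise: your final step reads off ``cross coordinates'' of $T_I$ in the cube $C_{T',\mT}$ containing $T$, but the cross turns $\{d_1,d_2\}$ need not belong to $\mT$, in which case $x_{\{d_1,d_2\}}$ is a derived quantity given by a minimum over a path in $\mathfrak{G}_\mT$ (\Cref{eq:xnotcoord}) and cannot simply be ``set to zero.'' The paper's repeated slice-swaps via \Cref{lem:branchedcubesintersectinslices}, arranging that each $\tau_i$ lies in $\mT$ and that no edges of $\mathfrak{G}_\mT$ join $d_1$ to $G_2$ or $d_2$ to $G_1$, is precisely the missing content.

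A second, smaller gap is in your construction of the peel when some preimage point carries both $D_1$- and $D_2$-mapping directions. The relevant basepoints are the endpoints $v_J$ of the components $J$ of $h^{-1}(I)$, and while in the mixed case $v_J$ is forced to be a branch point, the other endpoint $w_J$ (mapping to $w$) need not be a vertex of $T'$, so the preimage of $I$ need not terminate at a $\geq 3$-gate vertex; one must extend the segment to the first vertex of $T'$ (using that $T'$ is fully preprincipal, so branch points have $\geq 3$ gates) to satisfy condition (d) of \Cref{constr:peeloff}. This is exactly why the lemma is stated with $h(I') \supset I$ rather than $h(I') = I$, and your claim that $I'$ ``equals $I$ isometrically since $h$ maps vertices to vertices'' elides it (nor is uniqueness of the $d_0$-mapping direction at $v'$ automatic, since $Dh$ need not be injective). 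These points are fixable, but as written the reverse implication and the splitting-face conclusion are not established.
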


\begin{proof}
Suppose $T_I$ lies in $B_{T'}$ and suppose there is a possible peel $(D'_1,D'_2,I')$ as in the lemma. Let $h'$ be the fold $h':T' \to T_I$ and $h_I$ the fold $h_I:T_I \to T$. Then the $h'$-image of the segment $I'$ has to be mapped via $h_I$ over $I$. But since there are $\Lambda$-leaves passing through $D'_1$ and $I'$, and $h_I$ is a $\Lambda$-isometry, $h'(I')$ must pass through $I_1$. Similarly, $h'(I')$ must pass through $I_2$. This contradicts $h:T' \to T$ being an isometry on $I'$. 
This argument shows that if a possible peel $(D'_1,D'_2,I')$ exists, then $T_I$ does not lie in $B_{T'}$.

Conversely, suppose $T_I\notin B_{T'}$. Then the preimage $h^{-1}(I)$ must be a union of edge segments, or since $T'$ is fully preprincipal, there would be a $\geq 3$-gate vertex in the interior of $h^{-1}(I)$, mapping to a $\geq 3$-gate vertex in the interior of $I$.
Consider the edge segments $J$ in $h^{-1}(I)$, having endpoints $v_J$, $w_J$ mapped to $v$, $w$ respectively. 
For each $J$, consider the leaves that pass through $v_J$ by entering through $J$. 
There are three cases (indicated in the upper left image of the figure):

\parpic[l]{\selectfont \fontsize{6pt}{6pt} 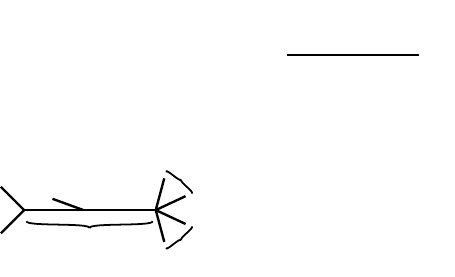}
(1) All leaves exit $v_J$ through a direction mapped by $h$ to $D_1$.

(2) All leaves exit $v_J$ through a direction direction mapped by $h$ to $D_2$.

(3) Some leaves exit $v_J$ through a direction mapped by $h$ to $D_1$, while some leaves exit $v_J$ through a direction mapped by $h$ to $D_2$.

If $J$ is of type (3), one can construct a possible peel by extending $J$ until it meets a vertex.
Thus we can assume each $J$ to be of type (1) or (2) as in the upper right image of the figure.

Let $w'_1,...,w'_m$ be the vertices of $T'$ mapped to $w$ by $h$.
Up to reindexing, suppose $w'_1,...,w'_p$ are the vertices that meet \emph{both} segments $J$ of type (1) \emph{and} of type (2). For each $i=1,...,p$, we choose a turn $\tau_i$ with one direction being a segment of type (1) and the other direction being a segment of type (2).

The goal of the rest of the proof is to locate a cube of the form $C_{T', \mT_1 \cup \{\tau_1,...,\tau_p\} \cup \mT_2}$ containing both $T$ and $T_I$, hence contradicting the assumption that $T_I \not\in B_{T'}$.
Toward this end, choose a cube $C_{T',\mT}$ containing $T$ and let $(x_\tau)$ be the coordinates of $T$ in that cube.
Temporarily suppressing the subscript $i$ for notational simplicity, suppose $\tau=\{d_1,d_2\}$, with $d_1$ determined by the segment of type (1) and $d_2$ determined by the segment of type (2).
Let $G$ be the gate containing $d_1$ and $d_2$. 
Let $G_1 = \{d \in G \mid x_{\{d,d_1\}} > x_{\{d_1,d_2\}}\}$ and $G_2 = \{d \in G \mid x_{\{d,d_1\}} \leq x_{\{d_1,d_2\}}\}$.
Thus $G = G_1 \sqcup G_2$, with $d_1 \in G_1$ and $d_2 \in G_2$.

There must be a path in $\mathfrak{G}_{\mT}$ from $d_1$ to $d_2$, or we would have $x_{\{d_1,d_2\}} = 0$ which is not the case since $\tau$ is folded. 
Recall from \Cref{eq:xnotcoord} that $x_{\{d_1,d_2\}}$ is the minimum of $x_\tau$ as $\tau$ ranges over edges of this path. Let $\{d'_1,d'_2\}$ be the edge such that $x_{\{d'_1,d'_2\}} = x_{\{d_1,d_2\}}$. Then 
\begin{align*}
x_{\{d'_1,d'_2\}} &= x_{\{d_1,d_2\}} \\
&\leq \min \{x_{\{d_1,d'_1\}}, x_{\{d'_1,d'_2\}}, x_{\{d'_2,d_2\}}\} & \text{by \Cref{eq:xnotcoord}}\\
&\leq \min \{x_{\{d_1,d'_1\}}, x_{\{d_2,d'_2\}}\}
\end{align*}
so by \Cref{lem:branchedcubesintersectinslices}, $T$ lies in $S_{\mathcal{T},\mathcal{T}'}$ where $\mathcal{T}' = (\mathcal{T} ~\backslash~ \{\{d'_1,d'_2\}\})\cup\{\{d_1,d_2\}\}$. 
(Here $\mathcal{T} \cap \mathcal{T}' = \mathcal{T} ~\backslash~ \{\{d'_1,d'_2\}\}$, so in the notation of \Cref{e:slice}, up to relabeling, we can take $d_1,d'_1 \in C_1$ and $d_2,d'_2 \in C_2$ with $[d'_1,d'_2] \in \mathfrak{G}_{\mT}$ and $[d_1,d_2] \in \mathfrak{G}_{\mT'}$.)
Thus, up to replacing $\mathcal{T}$ by $\mathcal{T}'$, we can assume $\tau = \{d_1,d_2\} \in \mathcal{T}$.

If $\mathfrak{G}_{\mT}$ contains an edge $\{d,d_1\}$ where $d \in G_2$ then, since $x_{\{d,d_1\}} \leq x_{\{d_1,d_2\}}$, we have $T\in S_{\mathcal{T},\mathcal{T}'}$ for $\mathcal{T}' = (\mathcal{T} ~\backslash~ \{\{d,d_1\}\})\cup \{\{d,d_2\}\}$ by \Cref{lem:branchedcubesintersectinslices}. 
(Here $\mathcal{T} \cap \mathcal{T}' = \mathcal{T} ~\backslash~ \{\{d,d_1\}\}$, so in the notation of \Cref{e:slice}, up to relabeling, we can take $d \in C_1$ and $d_1,d_2 \in C_2$ with $(d,d_1) \in \mT$ and $(d,d_2) \in \mT'$.)
Thus, up to replacing $\mathcal{T}$ by $\mathcal{T}'$, we can assume no edges connect $d_1$ and $G_2$. 

Similarly, if $\mathfrak{G}_{\mT}$ contains an edge $\{d,d_2\}$ where $d \in G_1$, then since $x_{\{d,d_1\}} > x_{\{d_1,d_2\}}$, we have $x_{\{d,d_2\}} = x_{\{d_1,d_2\}}$ by \Cref{eq:coordinateconstraint}. Thus, by the symmetric argument, we can assume no edges connect $d_2$ and $G_1$.

The conclusion is that $T \in C_{T', \mT_1 \cup \{\tau_1,...,\tau_p\} \cup \mT_2}$ where $\mT_j$ is an independent subset whose elements lie in $G_j$ (for some $i$), for $j=1,2$. Within this cube, the split train track $T_I$ is the point with the same coordinates as $T$ except $x_{\tau_i} = 0$ for each $i=1,...,p$. In particular $T_I$ lies in a splitting face of $B_{T'}$. Contradiction.
\end{proof}

This criterion allows us to show that each $T'$ has a unique complete peeling.

\begin{lem} \label{lem:completesplitinterior}
Each $T' \in \mathcal{A}_\phi$ has a unique complete peel. 
More specifically, if $T'$ lies in the interior of a splitting face of  $B_T$ then the complete peel of $T'$ is $T$.
\end{lem}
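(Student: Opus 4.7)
The existence of a fully preprincipal $T \in \Av$ with $T'$ in the interior of a splitting face of $B_T$ is provided by Proposition \ref{prop:branchedcubescoveraxisbundle}. The plan is to establish a key claim from which the whole lemma follows: if $T'$ lies in the interior of a splitting face of $B_T$ associated to a partition $\mathcal{P}$ of the gates of $T$, then any peel-off at a two-gate vertex of $T'$ yields a train track $T'_1$ still lying in the interior of a splitting face of $B_T$, for a strictly refined partition $\mathcal{P}_1$.

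Granted the key claim, induction on the complexity $\sum_{v,i,k}(|G_{v,i,k}| - 1)$ of the partition shows that any complete peel of $T'$ terminates at the splitting vertex of $B_T$, which is $T$ itself. This simultaneously gives existence (by carrying out any valid sequence of peels), the ``more specifically'' statement (the endpoint is $T$), and uniqueness (every choice of peels arrives at the same $T$).

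To establish the key claim, I first unpack the structure of $T'$: being in the interior of the splitting face for $\mathcal{P}$, $T'$ is obtained from $T$ by folding, within each multi-element piece $G_{v,i,k}$, all turns by a common positive length $\epsilon_{v,i,k}$. This produces exactly one two-gate vertex $w_{v,i,k}$ per multi-element piece, with the singleton gate $\{d_{\mathrm{back}}\}$ pointing back toward $v$ along a segment of length $\epsilon_{v,i,k}$ and the other gate containing the $|G_{v,i,k}|$ continuations; the remaining vertices of $T'$ still have at least three gates, since vertices away from the folds are unaffected and each $v$ of $T$ retains one gate per original $G_{v,i}$ (all pieces of $G_{v,i}$ mapping to the same direction in $T_+$ under $F_{T'}$). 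At each $w_{v,i,k}$, one checks that condition (c) of Construction \ref{constr:peeloff} forces $d_0 = d_{\mathrm{back}}$: any other choice would yield leaves entering the back direction and exiting into both $D_1$ and $D_2$, violating no-crossing. Hence the peel partition is $D_1 \sqcup D_2 = G_{v,i,k}$.

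Peeling with such a partition detaches $D_1$ from $w_{v,i,k}$ and reattaches it to a new vertex $v_{\mathrm{new}}$ connected to $v$ by a fresh copy of the length-$\epsilon_{v,i,k}$ segment. A direct comparison shows that this is the same train track as the fold $T \to T'_1$ determined by the refined partition $\mathcal{P}_1$ obtained from $\mathcal{P}$ by replacing $G_{v,i,k}$ with $D_1, D_2$, assigning fold length $\epsilon_{v,i,k}$ to both new pieces and leaving the other fold lengths unchanged. The chief obstacle is this final local comparison: identifying the surgery performed by the peel with the refined fold, and verifying that every fold-length coordinate of $T'_1$ in the new splitting face remains strictly positive so that $T'_1$ lies in the interior, not on a proper face. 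Once this is done, $\mathcal{P}_1$ is strictly finer than $\mathcal{P}$ (replacing one piece by two nonempty subpieces), the complexity strictly drops, and the induction closes.
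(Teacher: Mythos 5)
Your overall strategy is reasonable and in fact parallels the paper's (existence of the complete peel by peeling inside $B_T$ down to the splitting vertex; uniqueness by showing every peel at a two-gate vertex stays inside a splitting face of $B_T$). However, the proof of your ``key claim'' rests on a false structural description of the interior of a splitting face. You assert that such a point $T'$ is obtained from $T$ by folding all turns within each multi-element piece $G_{v,i,k}$ by a \emph{common} positive length $\epsilon_{v,i,k}$, producing exactly one two-gate vertex per piece. By \Cref{lemma:branchedcubeboundary}, the interior of the splitting face associated to a partition consists of the points with $x_{\{d_1,d_2\}}>0$ exactly when $d_1,d_2$ lie in the same piece and $x_\tau<\ell(\tau)$ for all $\tau$; within a piece these fold lengths need not be equal. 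For instance, for a piece $\{d_1,d_2,d_3\}$ the relevant branched cube is the two-dimensional one of \Cref{eg:2dimbranchedcube}, and an interior point can have $x_{12}=0.3$, $x_{13}=x_{23}=0.1$. Such a point has \emph{two} two-gate vertices strung along the folded segment, not one, and your ``equal-$\epsilon$'' picture describes only a measure-zero diagonal slice of the splitting face. Consequently, your classification of the possible peels at two-gate vertices, the ``direct comparison'' identifying a peel with the fold for the refined partition ``with fold length $\epsilon_{v,i,k}$ assigned to both new pieces'' (which does not even parse when fold lengths within a piece differ pair by pair), and the verification that the result lies in the \emph{interior} of the refined splitting face, are only carried out in this special case.

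In the general case, the peel segment $I$ of \Cref{constr:peeloff} must, by condition (d), be continued past any intermediate two-gate vertices until it reaches a vertex with at least three gates (an image of a vertex of $T$, since no turn is fully folded in the interior of a splitting face). One then has to argue, from the tree of partially folded segments inside a single piece, that peeling detaches one sub-bundle of original directions back to that vertex, hence strictly refines the partition while preserving the remaining within-piece coordinates $x_\tau$ (so that one stays in the interior of a splitting face of $B_T$ and your induction can close). This is exactly the step your proposal never confronts, because in your picture $I$ has no interior two-gate vertices. The claim itself is salvageable, but as written the argument for it does not go through; the paper avoids this local analysis by working with the coordinates $x_\tau$ directly, using \Cref{ex:simplefoldpathsgivesplits} (zeroing one coordinate at a time) for existence and \Cref{lemma:splitinsidebranchedcubecriterion} for the statement that every two-gate peel remains in a splitting face of $B_T$. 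You should either invoke those results or supply the missing analysis of the branched (non-diagonal) case.
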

\begin{proof} 
By \Cref{prop:branchedcubescoveraxisbundle}, we know that $T'$ lies in the interior of a splitting face of some $B_{T}$.
Then $T'$ lies in the interior of some cube $C_{T,\mathcal{T}}$.
If $x_\tau(T') > 0$, then we can run \Cref{ex:simplefoldpathsgivesplits} to peel to the point $T'_1$ with the same coordinates as $T'$, except that $x_\tau(T'_1) = 0$.
Repeating this argument inductively, we get to a complete peel, which must then be the splitting vertex of $B_{T}$, namely $T$.

Conversely, by \Cref{lemma:splitinsidebranchedcubecriterion}, each peel of $T'$ at a 2-gate vertex lies in a splitting face of $B_{T}$, for otherwise there is a possible peel at a 2-gate vertex of $T$, but since $T'$ is fully preprincipal these cannot exist.
\end{proof}

\begin{lem} \label{lem:completesplittingboundary}
Suppose $T'$ lies in the branched cube $B_T$. Then the complete peel of $T'$ lies in $B_T$ as well. 
\end{lem}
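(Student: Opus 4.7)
The plan is to argue by induction on the number of peel-offs in a complete peeling of $T'$. The base case is trivial: if $T'$ is already fully preprincipal, its complete peel equals $T'$, which lies in $B_T$ by hypothesis. For the inductive step, suppose that $T'$ admits a peel-off at a two-gate vertex $v$ with peel data $(D_1,D_2,I)$, producing $T'_I$. By the uniqueness part of \Cref{lem:completesplitinterior}, the complete peel of $T'_I$ coincides with that of $T'$ and requires one fewer peel-off; thus it suffices to establish $T'_I \in B_T$, after which the inductive hypothesis applied to $T'_I$ yields the conclusion.

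To verify $T'_I \in B_T$, I invoke \Cref{lemma:splitinsidebranchedcubecriterion} (applied with the roles of $T$ and $T'$ interchanged relative to its statement): it suffices to rule out any possible peel $(D'_1,D'_2,I')$ at a vertex $v' \in T$ satisfying $h(I')\supset I$, $h(D'_1)\subset D_1$, and $h(D'_2)\subset D_2$, where $h\colon T\to T'$ is the canonical fold. Since $D'_1$ and $D'_2$ are nonempty and consist of directions at $v'$, while $D_1, D_2 \subset \mD(v)$, the containments $h(D'_i)\subset D_i$ force $h(v')=v$. The identity $f_T = f_{T'}\circ h$ established in the proof of \Cref{lem:branchedcubesubsetaxisbundle} gives $Df_T(d) = Df_{T'}(Dh(d))$ for every $d \in \mD(v')$, so two directions at $v'$ share a gate if and only if their $Dh$-images share a gate at $v$. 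Consequently $Dh$ injects gates at $v'$ into gates at $v$, so the number of gates at $v$ is at least the number of gates at $v'$, which is at least three since $T$ is fully preprincipal. This contradicts the hypothesis that $v$ has exactly two gates, closing the inductive step.

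The main obstacle, as I see it, lies in recognizing that the matching conditions of \Cref{lemma:splitinsidebranchedcubecriterion}, once unpacked as statements about sets of directions at specific vertices, already pin down $h(v')=v$; once that identification is in hand, the gate-counting step is a direct consequence of the compatibility $f_T = f_{T'}\circ h$. The argument is insensitive to whether several vertices of $T$ collapse to $v$ under $h$ (which can happen when $h$ folds an entire edge), since the gate-count bound is obtained from the single preimage $v'$ appearing in the hypothetical possible peel.
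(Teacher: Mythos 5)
Your proposal is correct and follows essentially the same route as the paper: iterate along the peeling sequence and use \Cref{lemma:splitinsidebranchedcubecriterion} to rule out, at each peel-off at a two-gate vertex, any possible peel of the fully preprincipal $T$, since such a peel would be based at a vertex $v'$ with $h(v')=v$ and full preprincipality of $T$ is incompatible with $v$ having only two gates. The only difference is that you spell out the gate-count comparison via $f_T=f_{T'}\circ h$ (which the paper leaves implicit in the phrase ``a possible peel at a 2-gate vertex ... cannot exist''), and your induction bookkeeping should just take the peel-off to be the first step of a fixed complete peeling sequence, a trivial adjustment.
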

\begin{proof}
We apply \Cref{lemma:splitinsidebranchedcubecriterion} repeatedly. At every peeling at a 2-gate vertex, we stay in $B_{T'}$, for otherwise there is a possible peel at a 2-gate vertex of $T'$, but since $T'$ is fully preprincipal these cannot exist.
\end{proof}

\begin{prop} \label{prop:branchedcubedisjointint}
Distinct branched cubes $B_{T_1}$ and $B_{T_2}$ cannot intersect away from their folding faces.
\end{prop}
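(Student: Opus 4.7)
My plan is to derive this as a direct consequence of the uniqueness portion of \Cref{lem:completesplitinterior}, together with the boundary decomposition \Cref{lemma:branchedcubeboundary}. The strategy is to argue by contradiction: if $T' \in B_{T_1} \cap B_{T_2}$ lies away from the proper folding faces of both branched cubes, I will show that $T_1$ and $T_2$ must coincide.

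The steps are as follows. First, suppose $T' \in B_{T_1} \cap B_{T_2}$ with $T'$ not lying on any proper folding face of $B_{T_1}$. By \Cref{lemma:branchedcubeboundary}, the branched cube $B_{T_1}$ is the union of its proper folding faces together with the interiors of its splitting faces, so $T'$ must lie in the interior of some splitting face of $B_{T_1}$. Then \Cref{lem:completesplitinterior} identifies the complete peel of $T'$ as $T_1$. Analogously, if $T'$ is not on any proper folding face of $B_{T_2}$, then $T'$ lies in the interior of a splitting face of $B_{T_2}$, and the complete peel of $T'$ is $T_2$. Since \Cref{lem:completesplitinterior} gives that every element of $\Av$ has a \emph{unique} complete peel, we conclude $T_1 = T_2$, contradicting the hypothesis that $B_{T_1}$ and $B_{T_2}$ are distinct.

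I do not expect any real obstacle here, as the statement is essentially a repackaging of the existence-and-uniqueness content of \Cref{lem:completesplitinterior}. The only subtlety to check is the convention: when the proposition says ``away from their folding faces'', it should be read as away from the \emph{proper} folding faces (since every branched cube is trivially its own folding face associated to $\mT = \varnothing$), which is consistent with the usage in \Cref{lemma:branchedcubeboundary}. With that reading, no further combinatorial analysis of slice intersections (as in \Cref{lem:branchedcubesintersectinslices}) is needed, and the proof fits into one short paragraph.
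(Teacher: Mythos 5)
Your proof is correct and is essentially the paper's own argument: the paper likewise takes a point of the intersection off the folding faces, notes (implicitly via \Cref{lemma:branchedcubeboundary}) that it lies in the interior of a splitting face of each branched cube, and applies the uniqueness of the complete peel from \Cref{lem:completesplitinterior} to conclude $T_1 = T_2$. Your added remark about reading ``folding faces'' as \emph{proper} folding faces just makes explicit what the paper leaves tacit.
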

\begin{proof}
Assume otherwise, then there exists some $T$ lying in the interior of a splitting face of $B_{T_1}$ and that of $B_{T_2}$. Taking their complete peels, we get $T_1 = T_2$ by \Cref{lem:completesplitinterior}.
\end{proof}

\begin{prop} \label{prop:foldfacevertex}
Let $T'$ be a fully preprincipal weak train track on a folding face $F$ of $B_T$. Then $B_T \cap B_{T'}$ is a splitting face of $B_{T'}$ contained in $F$. See \Cref{fig:branchedcubefoldface}.
\end{prop}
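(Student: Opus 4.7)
The plan is to prove $B_T \cap B_{T'} \subseteq F$ and that $B_T \cap B_{T'}$ equals a natural splitting face $P$ of $B_{T'}$. For the first containment, I invoke the uniqueness of the rightmost fold from \Cref{lem:axisbundleinteriorrightmostfold}. Given $S \in B_T \cap B_{T'}$, let $h \colon T \to T'$ and $h' \colon T' \to S$ be the canonical folds. Both are $\Lambda$-legal with respect to the rightmost $\Lambda$-isometries on their codomains, so $h' \circ h$ is a $\Lambda$-legal fold satisfying $F_S \circ (h' \circ h) = F_T$. By \Cref{lem:axisbundleinteriorrightmostfold} it must equal the canonical fold $T \to S$ coming from $S \in B_T$. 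Since $h$ already fully folds each $\tau \in \mathcal{T}$, the coordinate $x_\tau(S)$ along the composite equals $\ell(\tau)$, so $S \in F$.

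To define $P$: for each gate $G'$ at a vertex $v'$ of $T'$, I partition $G'$ via the equivalence relation $\sim$ generated by declaring $d_1' \sim d_2'$ whenever $\{d_1', d_2'\}$ is the $h$-image of some illegal turn of $T$ at a vertex in $h^{-1}(v')$. Let $P$ denote the associated splitting face of $B_{T'}$.

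The inclusion $P \subseteq B_T \cap B_{T'}$ is relatively direct: any $S \in P$ is obtained from $T'$ by folding illegal turns $\{d_1', d_2'\}$ of $T'$ with $d_1' \sim d_2'$. By the definition of $\sim$, each such fold is realized by a combination of folds of illegal turns of $T$ at common preimage vertices. Performing these lifted folds in $T$ by compatible lengths and composing with $h$ realizes $S$ as a point of $B_T$ (necessarily on $F$ by the first containment).

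For $B_T \cap B_{T'} \subseteq P$, I argue by contradiction: suppose some $\tau' = \{d_1', d_2'\}$ with $d_1' \not\sim d_2'$ has $x_{\tau'}(S) > 0$. Then the composite canonical fold $h' \circ h$ identifies preimages of $d_1'$ and $d_2'$ along positive-length initial segments, which forces (via \Cref{eq:coordinateconstraint}) a chain of illegal turns of $T$ with positive $x_\tau(S)$ connecting these preimages. I then propagate the $\sim$-relation along this chain: each positive-length fold of an illegal turn of $T$ at a vertex in $h^{-1}(v')$ projects under $h$ to keep the corresponding directions $\sim$-related, ultimately yielding $d_1' \sim d_2'$ and contradicting the assumption. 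The main technical obstacle is this last step: carefully translating positive $B_T$-coordinates into positive $B_{T'}$-coordinates via $h$ and ensuring that no ``new'' illegal turn of $T'$ crossing a partition can be folded while remaining in $B_T$.
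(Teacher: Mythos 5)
Your first step (the containment $B_T \cap B_{T'} \subseteq F$ via \Cref{lem:axisbundleinteriorrightmostfold}) is fine and is essentially the paper's argument, made slightly more explicit. The problem is everything after that: the identification of $B_T \cap B_{T'}$ with a splitting face of $B_{T'}$ is the actual content of the proposition, and your proposal does not establish it. First, your partition is defined by the wrong data. You generate $\sim$ by $h$-images of illegal turns of $T$ at vertices in $h^{-1}(v')$; but the turns of $T'$ that can be folded while staying in $B_T$ also include the \emph{remnants} of partially folded turns of $T$: if $0 < x'_\tau < \ell(\tau)$ for $\tau = \{d_1,d_2\}$, the unfolded terminal pieces of $e_1,e_2$ give a turn of $T'$ at the far endpoint of the folded segment, a vertex whose relevant directions are images of germs at \emph{interior} points of edges of $T$, not images of an illegal turn at a vertex in the preimage. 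These remnant turns are exactly the turns $\tau^\circ$ the paper uses to build the partition, and your relation need not see them, so your $P$ can be strictly smaller than $B_T \cap B_{T'}$. Second, because you take the \emph{generated} equivalence relation, $P$ can also be too large: if $d_1' \sim d_3'$ only through a chain $d_1' \sim d_2' \sim d_3'$ whose two links come from turns of $T$ based at \emph{different} preimage vertices, then folding the single turn $\{d_1',d_3'\}$ of $T'$ by a small positive amount need not lie in $B_T$ — any system of folds of turns of $T$ identifying the relevant initial segments factors through the segments carrying $d_2'$, hence produces a tree in which $d_2'$ is folded in as well, a different point of $B_{T'}$. So the inclusion $P \subseteq B_T \cap B_{T'}$, which you call ``relatively direct,'' already fails for such generated pairs unless you rule this configuration out.

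This is precisely the issue your last paragraph concedes as ``the main technical obstacle,'' and it is where the paper does its real work: it takes $\mathcal{T}$ to be the set of turns $\tau$ of $T$ with $x'_\tau < \ell(\tau)$, associates to each the remnant turn $\tau^\circ$ of $T'$, and proves that the resulting set $\mathcal{T}^\circ$ is a disjoint union of \emph{complete} subgraphs of the gates of $T'$ — the key point being that if two remnant turns share a direction, then the four originating directions of $T$ must lie in a single gate at a single vertex of $T$ (otherwise their images could not pass through a common direction of $T'$), so the ``cross'' turn is again an illegal turn of $T$ and contributes the missing remnant. That transitivity statement is what makes the foldable turns into gate partitions and hence a genuine splitting face, and it simultaneously forecloses the bad chain configurations above. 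Without an argument of this kind — a precise description of which turns of $T'$ are foldable inside $B_T$ (using the coordinates $x'_\tau$ and \Cref{eq:coordinateconstraint}), together with a proof that they form disjoint complete graphs — the proposal does not prove the proposition; as written, both inclusions for your $P$ are open, and the second is asserted only as a hoped-for propagation argument.
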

\begin{proof}
For each point $T'' \in B_T \cap B_{T'}$, there is a fold path from $T' \in F$ to $T''$, hence $B_T \cap B_{T'}\subset F$. It
\parpic[r]{\selectfont\fontsize{16pt}{16pt} \resizebox{!}{1.6cm}{
\begingroup%
  \makeatletter%
  \providecommand\color[2][]{%
    \errmessage{(Inkscape) Color is used for the text in Inkscape, but the package 'color.sty' is not loaded}%
    \renewcommand\color[2][]{}%
  }%
  \providecommand\transparent[1]{%
    \errmessage{(Inkscape) Transparency is used (non-zero) for the text in Inkscape, but the package 'transparent.sty' is not loaded}%
    \renewcommand\transparent[1]{}%
  }%
  \providecommand\rotatebox[2]{#2}%
  \newcommand*\fsize{\dimexpr\f@size pt\relax}%
  \newcommand*\lineheight[1]{\fontsize{\fsize}{#1\fsize}\selectfont}%
  \ifx\svgwidth\undefined%
    \setlength{\unitlength}{297.635784bp}%
    \ifx\svgscale\undefined%
      \relax%
    \else%
      \setlength{\unitlength}{\unitlength * \real{\svgscale}}%
    \fi%
  \else%
    \setlength{\unitlength}{\svgwidth}%
  \fi%
  \global\let\svgwidth\undefined%
  \global\let\svgscale\undefined%
  \makeatother%
  \begin{picture}(1,0.24398338)%
    \lineheight{1}%
    \setlength\tabcolsep{0pt}%
    \put(0,0){\includegraphics[width=\unitlength,page=1]{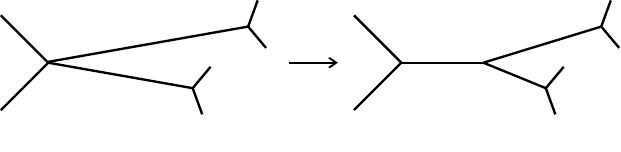}}%
    \put(0.23463357,0.19758848){\color[rgb]{0,0,0}\makebox(0,0)[lt]{\lineheight{1.25}\smash{\begin{tabular}[t]{l}$e_1$\end{tabular}}}}%
    \put(0.19286337,0.07404203){\color[rgb]{0,0,0}\makebox(0,0)[lt]{\lineheight{1.25}\smash{\begin{tabular}[t]{l}$e_2$\end{tabular}}}}%
    \put(0.8372841,0.19548334){\color[rgb]{0,0,0}\makebox(0,0)[lt]{\lineheight{1.25}\smash{\begin{tabular}[t]{l}$e'_1$\end{tabular}}}}%
    \put(0.79238845,0.07387345){\color[rgb]{0,0,0}\makebox(0,0)[lt]{\lineheight{1.25}\smash{\begin{tabular}[t]{l}$e'_2$\end{tabular}}}}%
    \put(0.18731677,0.00306035){\color[rgb]{0,0,0}\makebox(0,0)[lt]{\lineheight{1.25}\smash{\begin{tabular}[t]{l}$T$\end{tabular}}}}%
    \put(0.72163551,0.00276927){\color[rgb]{0,0,0}\makebox(0,0)[lt]{\lineheight{1.25}\smash{\begin{tabular}[t]{l}$T'$\end{tabular}}}}%
  \end{picture}%
\endgroup%
}}
\noindent remains to show that this intersection is a splitting face of $B_{T'}$.

Let $(x'_\tau)$ be the coordinates of $T'$ in $B_T$ and $\mathcal{T}$ the subset of $\mathcal{T}_0$ consisting of elements $\tau$ for which $x'_\tau < \ell(\tau)$. 
Each $\tau \in \mathcal{T}$ specifies an element $\tau^\circ$ of $\mathcal{T}'_0$, namely the unfolded portion of the directions in $\tau$.
More precisely, suppose $\tau = \{d_1,d_2\}$, and suppose $d_i$ is the germ of the edge $e_i$. Then since $x_{\tau} < \ell(\tau)$, there are some terminal segments $e'_1 \subset e_1$ and $e'_2 \subset e_2$ so that the images of $e'_1$ and $e'_2$ in $T'$ determine two directions $d'_1$ and $d'_2$ at a vertex. Moreover, $d'_1$ and $d'_2$ lie in the same gate, since $e'_1$ and $e'_2$ would be folded if one folds $\tau$ completely. The turn $\tau^\circ$ determined by $\tau$ is $\{d'_1,d'_2\}$.
Furthermore, $\ell(\tau^\circ)\leq\ell(\tau) - x'_\tau$. Let $\mathcal{T}^\circ \subset \mathcal{T}'_0$ denote the subset consisting of the $\tau^\circ$ arising as such.

We claim $\mathcal{T}^\circ$, when considered as a graph with vertex set $G'$, is a disjoint union of complete subgraphs of $\mathcal{T}'_0$. For this, it suffices to show that if $\{d'_0,d'_1\} \in \mathcal{T}^\circ$ and $\{d'_1,d'_2\} \in \mathcal{T}^\circ$, then $\{d'_0,d'_2\} \in \mathcal{T}^\circ$.
Let $\{d_0,d_1\} \in \mathcal{T}_0$ be a turn determining $\{d'_0,d'_1\}$ and $\{d_2,d_3\} \in \mathcal{T}_0$ a turn determining $\{d'_1,d'_2\}$. 
The directions $d_0,d_1,d_2,d_3$ must lie in the same gate of $T$, since otherwise $d_1$ and $d_2$ cannot pass through the same direction $d'_1$.
Thus $\{d_0,d_3\} \in \mathcal{T}_0$ and determines $\{d'_0,d'_2\} \in \mathcal{T}^\circ$, as desired. So the claim is proved.

By the claim, $\mathcal{T}^\circ$ determines a partition of $G'$, namely where two directions $d'_1,d'_2$ lie in the same subset if and only if $\{d'_1,d'_2\} \in \mathcal{T}^\circ$.
The intersection $B_T \cap B_{T'}$ is the splitting face associated to this partition.
\end{proof}

\begin{figure}[H]
    \centering
    \selectfont\fontsize{8pt}{8pt}
\begingroup%
  \makeatletter%
  \providecommand\color[2][]{%
    \errmessage{(Inkscape) Color is used for the text in Inkscape, but the package 'color.sty' is not loaded}%
    \renewcommand\color[2][]{}%
  }%
  \providecommand\transparent[1]{%
    \errmessage{(Inkscape) Transparency is used (non-zero) for the text in Inkscape, but the package 'transparent.sty' is not loaded}%
    \renewcommand\transparent[1]{}%
  }%
  \providecommand\rotatebox[2]{#2}%
  \newcommand*\fsize{\dimexpr\f@size pt\relax}%
  \newcommand*\lineheight[1]{\fontsize{\fsize}{#1\fsize}\selectfont}%
  \ifx\svgwidth\undefined%
    \setlength{\unitlength}{235.95743495bp}%
    \ifx\svgscale\undefined%
      \relax%
    \else%
      \setlength{\unitlength}{\unitlength * \real{\svgscale}}%
    \fi%
  \else%
    \setlength{\unitlength}{\svgwidth}%
  \fi%
  \global\let\svgwidth\undefined%
  \global\let\svgscale\undefined%
  \makeatother%
  \begin{picture}(1,0.38879148)%
    \lineheight{1}%
    \setlength\tabcolsep{0pt}%
    \put(0,0){\includegraphics[width=\unitlength,page=1]{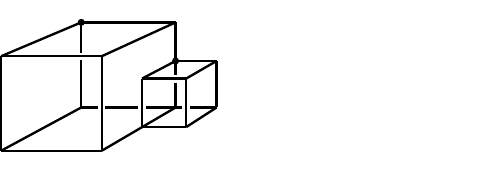}}%
    \put(0.16280154,0.36270099){\color[rgb]{0,0,0}\makebox(0,0)[lt]{\lineheight{1.25}\smash{\begin{tabular}[t]{l}$T$\end{tabular}}}}%
    \put(0.36904788,0.27724069){\color[rgb]{0,0,0}\makebox(0,0)[lt]{\lineheight{1.25}\smash{\begin{tabular}[t]{l}$T'$\end{tabular}}}}%
    \put(0,0){\includegraphics[width=\unitlength,page=2]{branchedcubefoldface.pdf}}%
    \put(0.71984881,0.36270099){\color[rgb]{0,0,0}\makebox(0,0)[lt]{\lineheight{1.25}\smash{\begin{tabular}[t]{l}$T$\end{tabular}}}}%
    \put(0.92609519,0.27724069){\color[rgb]{0,0,0}\makebox(0,0)[lt]{\lineheight{1.25}\smash{\begin{tabular}[t]{l}$T'$\end{tabular}}}}%
    \put(0.8679416,0.11201386){\color[rgb]{0,0,0}\makebox(0,0)[lt]{\lineheight{1.25}\smash{\begin{tabular}[t]{l}$x$\end{tabular}}}}%
    \put(0,0){\includegraphics[width=\unitlength,page=3]{branchedcubefoldface.pdf}}%
  \end{picture}%
\endgroup%

    \caption{If $T'$ be a fully preprincipal train track on a folding face of $B_T$, then $B_T \cap B_{T'}$ is a splitting face of $B_{T'}$, as in the left, and \emph{not} on the right.}
    \label{fig:branchedcubefoldface}
\end{figure}

\begin{prop} \label{prop:foldfacebranchedcubes}
Each folding face of a branched cube $B_T$ is a union of splitting faces of branched cubes. 
\end{prop}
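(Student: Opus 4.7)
The plan is to show that each folding face $F$ of $B_T$ equals $B_T\cap B_{T_0}$ for a carefully chosen fully preprincipal $T_0\in F$. Once this is established, \Cref{prop:foldfacevertex} applied to $T_0\in F$ immediately expresses this intersection as a splitting face of $B_{T_0}$, giving the desired decomposition as a one-element union.

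Let $\mT\subseteq\mT_0$ be the subset defining $F$, and view $\mT$ as a graph on directions weighted by $\ell(\tau)$. I would choose $\mT'\subseteq\mT$ to be a \emph{maximum-weight spanning forest} and take $T_0$ to be the vertex of $C_{T,\mT'}$ with $x_\tau(T_0)=\ell(\tau)$ for all $\tau\in\mT'$. Being a forest, $\mT'$ is independent, so \Cref{lem:cubeverticesfpp} guarantees $T_0$ is fully preprincipal. The key coordinate check is that $T_0\in F$: for $\tau\in\mT\setminus\mT'$ with endpoints joined by the path $p$ in $\mT'$, the maximum-weight property forces $\ell(\tau)$ to be the minimum weight on the cycle $p\cup\{\tau\}$, and combined with the ultrametric inequality of \Cref{lem:foldlengthconstraint} this yields $\min_{\sigma\in p}\ell(\sigma)=\ell(\tau)$; \Cref{eq:xnotcoord} then gives $x_\tau(T_0)=\min_{\sigma\in p}\ell(\sigma)=\ell(\tau)$.

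With $T_0\in F$ in hand, \Cref{prop:foldfacevertex} gives $B_T\cap B_{T_0}\subseteq F$. For the reverse inclusion, I would take $S\in F$ and factor the fold $T\to S$ through $T_0$: since $T\to S$ fully folds every $\tau\in\mT\supseteq\mT'$, it descends to a map $g\colon T_0\to S$. The map $g$ identifies only directions of $T_0$ with a common image under the $\Lambda$-isometry of $T_0$, i.e. directions lying in a common gate of $T_0$, and so is a fold along illegal turns of $T_0$; this exhibits $S\in B_{T_0}$. Thus $F\subseteq B_T\cap B_{T_0}$, and equality holds.

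The main obstacle is the choice of $\mT'$: an arbitrary spanning forest generally yields a $T_0$ in which some $\tau\in\mT\setminus\mT'$ is only partially folded, so $T_0\notin F$ and the rest of the argument collapses. The maximum-weight condition is exactly what makes the ``redundant'' folds in $\mT\setminus\mT'$ come out to full length. A secondary subtlety, harmless to the proof, is that $B_{T_0}$ can carry illegal turns of $T_0$ that do not come from $\mT_0$ (for instance at new vertices produced by the full fold of $\mT'$, where a continuation direction of a folded edge may join a gate inherited from $T$); these extra coordinates are precisely those that vanish on the splitting face $B_T\cap B_{T_0}=F$ of $B_{T_0}$.
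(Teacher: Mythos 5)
Your construction of the vertex $T_0$ is fine: taking a maximum-weight spanning forest $\mT'\subseteq\mT$ and fully folding it does produce, via the cycle property together with \Cref{lem:foldlengthconstraint} and \Cref{eq:xnotcoord}, a fully preprincipal point $T_0$ lying on the folding face $F$, and \Cref{prop:foldfacevertex} then gives $B_T\cap B_{T_0}\subseteq F$. The gap is the reverse inclusion. The step ``the map $g\colon T_0\to S$ identifies only directions in a common gate of $T_0$, and so is a fold along illegal turns of $T_0$; this exhibits $S\in B_{T_0}$'' does not follow: by \Cref{df:axisbundlebranchedcube}, $B_{T_0}$ consists only of the graphs obtained from $T_0$ by folding \emph{initial} segments at the illegal turns of $T_0$, by amounts bounded by the turn lengths measured in $T_0$. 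The factored map $g$ is a $\Lambda$-legal folding map, so it shows $S$ is reachable from $T_0$ by a fold path, but that fold path may have to pass through vertices of $T_0$ (and through intermediate fully preprincipal points), i.e.\ it may exit $B_{T_0}$ through its folding face. Concretely, if $\sigma=\{d_1,d_3\}\in\mT_0\setminus\mT$ has $\ell(\sigma)$ larger than the edge created by fully folding some $\tau\in\mT$ sharing the gate, then the point of $F$ where $\sigma$ is folded beyond that new vertex requires folding a turn that does not even exist in $T_0$; such $S$ lies in $F$ but not in $B_{T_0}$. This is exactly the ``Zenotic'' refinement described in the introduction, and it is why the proposition asserts only that $F$ is a \emph{union} of splitting faces: your intermediate claim $F=B_T\cap B_{T_0}$ (a single splitting face of a single branched cube) is strictly stronger and false in general --- \Cref{ex:2dimbundlebranching} exhibits a 2-dimensional branched cube with four fully preprincipal vertices on one folding face, which therefore cannot be a single splitting face, since the interior of a $1$-cube contains no fully preprincipal points.

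For comparison, the paper's proof works pointwise rather than with one global $T_0$: given $x\in F$, it takes the branched cube $B_{T'}$ whose splitting-face interior contains $x$ (so $T'$ is the complete peel of $x$, which lies in $B_T$ by \Cref{lem:completesplittingboundary}), shows by a minimality argument on folding faces that $T'$ in fact lies on $F$ itself, and then applies \Cref{prop:foldfacevertex} to get a splitting face $B_T\cap B_{T'}\subseteq F$ containing $x$. Running this over all $x\in F$ yields the (generally non-trivial) union. If you want to salvage your argument, you would need to iterate your construction within $F$ --- after reaching the folding face of $B_{T_0}$, restart from a new fully preprincipal vertex --- which essentially reproduces the paper's peeling argument.
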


\begin{proof}
Fix a folding face $F$.
By \Cref{prop:branchedcubescoveraxisbundle}, each point $x \in F$ lies in the interior of a splitting face $S$ of a branched cube $B_{T'}$. 
\Cref{lem:completesplittingboundary} implies that $T'$, being the complete peel of $x$, must lie in $B_T$.
Suppose $T'$ lies in a folding face $F'$ of $B_T$, and suppose $F'$ is minimal with respect to this property.
Since there is a fold path from $T'$ to $x$, we have that $F$ is a subfolding face of $F'$.

By \Cref{prop:foldfacevertex}, $B_T \cap B_{T'}$ is a splitting face $S'$ of $B_{T'}$ contained in $F'$.
Since $x \in B_T \cap B_{T'} = S'$, we have that $S$ is a subsplitting face of $S'$.

Suppose $F$ is a proper subset of $F'$. Then by the minimality of $F'$, we have $T' \in F' \backslash F$, which implies that the interiors of the subsplitting faces of $S' = B_T \cap B_{T'}$ cannot meet $F$, contradicting the choice of $S$ as a splitting face that contains $x$ in its interior.
Thus $F=F'$. In particular $T'$ lies in $F$, which implies that $S' = B_T \cap B_{T'}$ lies in $F$ as well.
This argument shows that $F$ is the union of such splitting faces $S'$.
\end{proof}

\subsection{Unique successor property} \label{subsec:axisbundleuniquesuccessor}

This subsection has a final combinatorial property of branched cubes.

\begin{prop} \label{prop:uniquesuccessor}
For each point $T$ in the axis bundle, the set of branched cubes $B_{T'}$ for which $T$ lies in the interior of a folding face of $B_{T'}$, once partially ordered by inclusion, has a unique maximal element.
\end{prop}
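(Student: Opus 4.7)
My plan is to explicitly construct the maximum element $T^*$ as a canonical ``maximal unfolding'' of $T$, obtained by iterated peel-offs (\Cref{constr:peeloff}) at all vertices of the current graph — extending beyond the two-gate restriction used in the complete peel of \Cref{lem:completesplitinterior}. Starting from $T_0 := T$, at each stage I locate any possible peel $(D_1,D_2,I)$ at any vertex of $T_i$, with the segment $I$ chosen maximal (ending at the first $\geq 3$-gate vertex $w$), and apply it to obtain $T_{i+1}$ with a canonical fold $T_{i+1}\to T_i$. Termination follows from \Cref{lem:fpplocallyfinite}, since each $T_i$ is a fully preprincipal weak train track lying in a bounded fundamental domain. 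Canonicity of the terminal $T^*$ (independence of the order of peels chosen) will be verified by showing that peels at distinct vertices commute and that two competing peels at the same vertex admit a common refinement by a further peel. Let $h^*:T^*\to T$ denote the composite fold.

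By construction, the peels correspond to a canonical subset $\mT^*$ of illegal turns of $T^*$ — those newly introduced by the peeling — and $h^*$ completely folds exactly the turns in $\mT^*$. This places $T$ on the folding face $F^*$ of $B_{T^*}$ associated to $\mT^*$. To see $T$ lies in the \emph{interior} of $F^*$ (i.e., not on any strictly smaller folding subface), it suffices to check that no illegal turn of $T^*$ outside $\mT^*$ is completely folded by $h^*$; such a turn would correspond to yet another possible peel at a vertex of $T$, contradicting the maximality of Step 1.

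For maximality, let $B_{T'}$ be any branched cube with $T$ in the interior of a folding face $F'$ of $B_{T'}$, and let $h':T'\to T$ be the fold that completely identifies the subset $\mT'$ of illegal turns of $T'$ defining $F'$. The key claim is the factorization $h^* = h'\circ g$ for some fold $g:T^*\to T'$: each $\tau'\in\mT'$ can be traced back to a specific peel in the construction of $T^*$, and performing exactly this sub-collection of peels first yields $T'$ as an intermediate stage $T^*\to T'\to T$. Given the factorization, $T'$ lies on a folding face of $B_{T^*}$, and by the argument in the proof of \Cref{prop:foldfacevertex}, $B_{T^*}\cap B_{T'}$ is the splitting face of $B_{T'}$ determined by the unfolded-portion set $\mT^\circ\subset\mT'_0$. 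Since $g$ folds nothing beyond the chosen sub-collection of peels, every illegal turn of $T'$ — each being the image under $g$ of a turn of $T^*$ left unfolded by $g$ — appears in $\mT^\circ$; the induced partition of the gates of $T'$ is therefore trivial, giving $B_{T^*}\cap B_{T'} = B_{T'}$, whence $B_{T'}\subset B_{T^*}$.

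The main obstacle is the factorization claim: matching each complete fold in $h'$ with a specific peel performed in Step 1, and then verifying that the remaining peels assemble into a valid fold $g:T^*\to T'$. This requires careful local bookkeeping at each vertex of $T$, in particular ensuring that multiple peels at the same vertex partition consistently into those performed to reach $T'$ and those additionally performed to reach $T^*$. I expect this is resolved by a local gate-tree analysis: the peels in Step 1 organize into a canonical tree of gate refinements at each vertex of $T$, and any other unfolding $T'$ must respect a sub-tree of this structure.
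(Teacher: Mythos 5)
Your plan has genuine gaps at its two load-bearing steps, and both are acknowledged rather than closed. First, the canonical ``maximal unfolding'' $T^*$ is not actually constructed: confluence of the iterated peeling (that peels at distinct vertices commute and competing peels at one vertex admit a common refinement) is only promised, and it is far from formal bookkeeping, since each peel changes the vertex set, the gate structure, and the collection of available peels, and peels at $\geq 3$-gate vertices are not covered by anything in \Cref{constr:peeloff}--\Cref{lem:completesplitinterior}, which only handle two-gate vertices. Your termination argument is also incorrect as stated: the intermediate graphs $T_i$ need not be fully preprincipal (the starting point $T$ is an arbitrary point of $\Av$, and peeling at one vertex does not repair two-gate vertices elsewhere), and \Cref{lem:fpplocallyfinite} counts fully preprincipal train tracks in a fundamental domain of the $\phi$-action; you give no reason the $T_i$ stay in a region of bounded volume (each peel strictly \emph{increases} volume), so some quantitative bound would have to be supplied. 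Second, and more seriously, the factorization claim $h^*=h'\circ g$ is essentially the proposition itself: once you know every competing unfolding $T'$ sits as an intermediate stage $T^*\to T'\to T$, the inclusion $B_{T'}\subset B_{T^*}$ is the routine part. You flag this as ``the main obstacle'' and offer only the expectation that a ``local gate-tree analysis'' resolves it; as written this begs the question. (There is also an unaddressed compatibility issue: the branched cubes and gate structures are defined via the \emph{rightmost} $\Lambda$-isometries, so your folds $g$ and $h'$ must be shown to intertwine $k^+_{T^*}$, $k^+_{T'}$, $k^+_T$ before the partition argument via \Cref{prop:foldfacevertex} applies.)

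For contrast, the paper avoids constructing a maximal unfolding altogether and argues by contradiction from two maximal elements $B_{T'_1},B_{T'_2}$: working in a maximal cube $C_{T'_2,\mT}$ containing $T$, either every coordinate peel of $T$ stays in $B_{T'_1}$ (forcing $B_{T'_2}$ onto a folding face of $B_{T'_1}$ via \Cref{prop:branchedcubedisjointint}, contradicting maximality of $B_{T'_2}$), or some possible peel of $T$ does not lie in $B_{T'_1}$, in which case \Cref{lemma:splitinsidebranchedcubecriterion} pulls it back to a possible peel of $T'_1$, producing a strictly larger cube $B_{T''}$ with $B_{T'_1}\subset B_{T''}$ by \Cref{prop:foldfacevertex}, contradicting maximality of $B_{T'_1}$. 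If you want to salvage your constructive approach, the substance you must add is exactly what this argument supplies: a mechanism (like \Cref{lemma:splitinsidebranchedcubecriterion}) for transporting an unfolding of $T$ performed in one cube to an unfolding of the splitting vertex of another cube, which is what makes the comparison of two candidate maximal elements possible.
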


\begin{proof}
Suppose otherwise that there are two maximal branched cubes $B_{T'_1}, B_{T'_2}$ for which $T$ lies in the interior of a folding face of $B_{T'_i}$. 
Suppose $T$ lies in a cube $C_{T'_2,\mathcal{T}} \subset B_{T'_2}$. Without loss of generality suppose $\mathcal{T}$ is a maximal independent set. As explained in \Cref{ex:simplefoldpathsgivesplits}, each coordinate $x_\tau$ gives a possible peel, and the act of peeling determines a split path. 
If all such split paths lie in $B_{T'_1}$, then the interior of $B_{T'_2}$ would meet $B_{T'_1}$.
But then by \Cref{prop:branchedcubedisjointint}, $B_{T'_2}$ lies on a folding face of $B_{T'_1}$, contradicting maximality of $B_{T'_2}$.
Thus some split path, determined by some possible peel $(D_1,D_2,I)$, does not lie in $B_{T'_1}$.

We apply \Cref{lemma:splitinsidebranchedcubecriterion} to $B_{T'_1}$ and the possible peel $(D_1,D_2,I)$ to obtain a possible peel $(D'_1,D'_2,I')$ of $T'_1$ which maps to $(D_1,D_2,I)$ in the sense of the lemma.
Let $T''$ be the train track obtained by splitting this possible peel.
Since $T'_1$ lies in a folding face of $B_{T''}$, \Cref{prop:foldfacevertex} states that $B_{T'_1} \cap B_{T''}$ is a splitting face of $B_{T'_1}$, so $T$ cannot lie in the interior of a folding face of $B_{T'_1}$ unless $B_{T'_1} \cap B_{T''} = B_{T'_1}$, see \Cref{fig:uniquesuccessor}.
This is equivalent to $B_{T'_1} \subset B_{T''}$, but this contradicts the maximality of $B_{T'_1}$.
\end{proof}

\begin{figure}
    \centering
    \selectfont\fontsize{7pt}{7pt}
\begingroup%
  \makeatletter%
  \providecommand\color[2][]{%
    \errmessage{(Inkscape) Color is used for the text in Inkscape, but the package 'color.sty' is not loaded}%
    \renewcommand\color[2][]{}%
  }%
  \providecommand\transparent[1]{%
    \errmessage{(Inkscape) Transparency is used (non-zero) for the text in Inkscape, but the package 'transparent.sty' is not loaded}%
    \renewcommand\transparent[1]{}%
  }%
  \providecommand\rotatebox[2]{#2}%
  \newcommand*\fsize{\dimexpr\f@size pt\relax}%
  \newcommand*\lineheight[1]{\fontsize{\fsize}{#1\fsize}\selectfont}%
  \ifx\svgwidth\undefined%
    \setlength{\unitlength}{134.26594892bp}%
    \ifx\svgscale\undefined%
      \relax%
    \else%
      \setlength{\unitlength}{\unitlength * \real{\svgscale}}%
    \fi%
  \else%
    \setlength{\unitlength}{\svgwidth}%
  \fi%
  \global\let\svgwidth\undefined%
  \global\let\svgscale\undefined%
  \makeatother%
  \begin{picture}(1,0.6643065)%
    \lineheight{1}%
    \setlength\tabcolsep{0pt}%
    \put(0,0){\includegraphics[width=\unitlength,page=1]{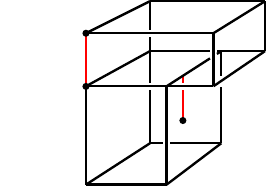}}%
    \put(0.20849971,0.35515874){\color[rgb]{0,0,0}\makebox(0,0)[lt]{\lineheight{1.25}\smash{\begin{tabular}[t]{l}$T'_1$\end{tabular}}}}%
    \put(0.20849971,0.53390863){\color[rgb]{0,0,0}\makebox(0,0)[lt]{\lineheight{1.25}\smash{\begin{tabular}[t]{l}$T''$\end{tabular}}}}%
    \put(0.67077407,0.19926989){\color[rgb]{0,0,0}\makebox(0,0)[lt]{\lineheight{1.25}\smash{\begin{tabular}[t]{l}$T$\end{tabular}}}}%
    \put(0.67186985,0.29428755){\color[rgb]{1,0,0}\makebox(0,0)[lt]{\lineheight{1.25}\smash{\begin{tabular}[t]{l}$(D_1,D_2,I)$\end{tabular}}}}%
    \put(-0.00338201,0.43974701){\color[rgb]{1,0,0}\makebox(0,0)[lt]{\lineheight{1.25}\smash{\begin{tabular}[t]{l}$(D'_1,D'_2,I')$\end{tabular}}}}%
  \end{picture}%
\endgroup%

    \caption{$B_{T'_1} \cap B_{T''}$ is a splitting face of $B_{T'_1}$, so $T$ cannot lie in the interior of a folding face of $B_{T'_1}$ unless $B_{T'_1} \cap B_{T''} = B_{T'_1}$.}
    \label{fig:uniquesuccessor}
\end{figure}

\section{Cubist complexes} \label{sec:cubistcomplex}

In this section we introduce the definition of a cubist complex. We choose to present the definition in a general abstract setting in anticipation of future applications. Because of this, we have to first introduce an abstract definition of a branched cube in \S \ref{subsec:branchedcubesaxiomdefn}. The definition of a cubist complex will then appear in \S \ref{subsec:cubistcomplexdefn}.

Using the properties established in the previous section, we then show the axis bundle is a cubist complex in \S \ref{subsec:axisbundleiscubistcomplex}. Finally, we define the cardiovascular system of a cubist complex and study its properties in \S \ref{subsec:cardiovascularsystem}.

\subsection{Abstract definition of branched cubes} \label{subsec:branchedcubesaxiomdefn}

We first present an abstract definition of branched cubes. 
The properties and terminology here are all motivated by the axis bundle setting of \Cref{sec:axisbundlebranchedcubes}.

Let $\mathcal{G}=\{G_1,...,G_m\}$ be a finite collection of finite sets.
We denote by $\Lambda^2 G_i$ the set of unordered pairs of distinct elements of $G_i$. (This notation is motivated from exterior products of vector spaces.) 
Let $\mathcal{T}_0 = \Lambda^2 G_1 \sqcup ... \sqcup \Lambda^2 G_m$. 
As in \Cref{sec:axisbundlebranchedcubes}, we can view each subset $\mathcal{T} \subset \mathcal{T}_0$ as a graph $\mathfrak{G}_{\mT}$ with vertex set $\sqcup_{i=1}^m G_i$. 
We say that $\mathcal{T}$ is \textbf{independent} if $\mathfrak{G}_{\mT}$ has no cycles.

Suppose we have a $\ell(\tau)\in\RR_{>0}$ associated to each $\tau \in \mathcal{T}_0$ satisfying \Cref{eq:foldlengthconstraint}. For each independent subset $\mathcal{T} \subset \mathcal{T}_0$, we define the \textbf{cube} $C_\mathcal{T}$ to be the metric space $\prod_{\tau \in \mathcal{T}} [0,\ell(\tau)]$. 
The coordinates of $C_\mathcal{T}$ are denoted $x^{(\mathcal{T})}_\tau$, for each $\tau \in \mathcal{T}$.

We extend the notation $x^{(\mathcal{T})}_{\{d_1,d_2\}}$ to make sense for each $\{d_1,d_2\} \in \mT_0$ as follows:
If there is a path in $\mathfrak{G}_{\mT}$ connecting $d_1$ to $d_2$, consider the shortest such path (which is unique since $\mathcal{T}$ has no cycles) and define $x^{(\mathcal{T})}_{\{d_1,d_2\}}$ as the minimum over $x^{(\mathcal{T})}_\tau$ as $\tau$ ranges over the edges of this path.
If $d_1=d_2$, then $x^{(\mathcal{T})}_{\{d_1,d_2\}} = \infty$.
Also, if $\tau \in \mathcal{T}$, then $x^{(\mathcal{T})}_\tau$ retains its original definition as a coordinate on $C_{\mathcal{T}}$.
If there is no path in $\mathfrak{\mathcal{T}}$ connecting $d_1$ to $d_2$, define $x^{(\mathcal{T})}_{\{d_1,d_2\}}=0$.

A \textbf{fold path} in $C_\mathcal{T}$ is an oriented path of the form $\alpha(s) = (\alpha_t(s))_{t \in \mathcal{T}}$ where each $\alpha_t$ is an nondecreasing function.
A \textbf{subcube} of $C_\mathcal{T}$ is a subset of the form $\prod_{\tau \in \mathcal{T}} [x'_\tau,x''_\tau]$ where $x'_\tau \leq x''_\tau$ for each $\tau$. 
Each subcube can be viewed as an isometrically embedded copy of some other cube such that the image of each folding path is a folding path.
As another subcube example, let $p_0\in C_\mathcal{T}(\ell_t)$ and then the set of $p\in C_\mathcal{T}$ for which there is a fold path from $p_0$ to $p$ is a subcube. We refer to this set as the \textbf{subcube determined by $p_0$}. 

Finally, we define the slice $S_{\mathcal{T},\mathcal{T}'} \subset C_{\mathcal{T}}$ for each ordered pair of independent subsets $\mathcal{T}, \mathcal{T}' \subset \mathcal{T}_0$:
Let $C_1,\dots ,C_k$ be the components of $\mathfrak{G}_{\mathcal{T} \cap \mathcal{T}'}$. 
Define $S_{\mathcal{T},\mathcal{T}'}$ to be the subset of $C_\mathcal{T}$ consisting of all points satisfying the following inequalities for each $i,j=1,\dots,k$:
$$\begin{cases}  
x^{(\mathcal{T})}_{\{d_i,d_j\}} \leq \min\{x^{(\mathcal{T})}_{\{d_i,d'_i\}},x^{(\mathcal{T})}_{\{d_j,d'_j\}}\} & \text{if } \exists ~\{d_i,d_j\} \in \mathcal{T}\text{ and } \{d'_i,d'_j\} \in \mathcal{T}' \text{ connecting } C_i ~\&~ C_j \\
x^{(\mathcal{T})}_{\{d_i,d_j\}} = 0 & \text{if } \exists ~\{d_i,d_j\} \in \mathcal{T}  \text{ but no } \tau' \in \mathcal{T}' \text{ connecting } C_i ~\&~ C_j .
\end{cases}$$

\begin{df} \label{defn:branchedcube}
A \textbf{branched cube} $B_\mathcal{G}$ associated to $\mathcal{G}$ is a metric space that is the union of cubes $C_\mathcal{T}$, as $\mathcal{T}$ ranges over all independent subsets of $\mathcal{T}_0$, for some choice of  $\ell(\tau)\in\RR_{>0}$ associated to the elements $\tau \in \mathcal{T}_0$ satisfying \Cref{eq:foldlengthconstraint}, so that for each pair of independent subsets $\mathcal{T}_1, \mathcal{T}_2 \subset \mathcal{T}_0$, we have 
$$C_{\mathcal{T}_1} \cap C_{\mathcal{T}_2} = S_{\mathcal{T}_1,\mathcal{T}_2} = S_{\mathcal{T}_2,\mathcal{T}_1},$$ with the function $x^{(\mathcal{T}_1)}_{\{d_i,d_j\}}$ identified with the function $x^{(\mathcal{T}_2)}_{\{d'_i,d'_j\}}$ for each $i,j$ for which there is a $\{d_i,d_j\} \in \mathcal{T}$ and a $\{d'_i,d'_j\} \in \mathcal{T}'$ connecting $C_i$ and $C_j$.
This ensures that the functions $x^{(\mathcal{T})}_\tau$, as $\mathcal{T}$ ranges over all independent subsets of $\mathcal{T}_0$, can be patched together into a function $x_\tau$ on $B_\mathcal{G}$.

The \textbf{splitting face} of $B_\mathcal{G}$ associated to partitions $G_i = G_{i,1} \sqcup ... \sqcup G_{i,m_i}$ is the subspace defined by $x_\tau = 0$ whenever $\tau \not\subset G_{i,m_i}$ for all $i$.
In particular, the \textbf{splitting vertex} of $B_G$ is the splitting face associated to the partition of each $G_i$ into one-element sets, defined by $x_\tau = 0$ for all $\tau$.
The \textbf{folding face} of $B_\mathcal{G}$ associated to an independent subset $\mathcal{T} \subset \mathcal{T}_0$ is the subspace defined by $x_\tau = \ell(\tau)$ for all $\tau \in \mathcal{T}$.

A \textbf{fold path} in $B_\mathcal{G}$ is an oriented path that is locally a fold path in each cube that it lies in.
Let $p_0$ be a point in $B_\mathcal{G}$. Consider the set of points $p$ in $B_\mathcal{G}$ for which there is a folding path from $p_0$ to $p$. This set is also the union of subcubes determined by $p_0$ in each cube that contains $p_0$. Note that this set is in general not a branched cube. We refer to it as the \textbf{generalized branched cube} determined by $p_0$. 
\end{df}

\begin{lem} \label{lem:branchcubesprop}
We have the following properties of branched cubes.
\begin{enumerate}
    \item A finite product of branched cubes is a branched cube. 
    \item Each splitting face of a branched cube is a branched cube.
    \item Each folding face of a branched cube is a branched cube.
\end{enumerate}
\end{lem}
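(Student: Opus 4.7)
The plan is to treat (1) and (2) as essentially bookkeeping from the axioms in \Cref{defn:branchedcube}, and to reserve the real work for (3). For (1), given branched cubes $B_{\mathcal{G}_1},B_{\mathcal{G}_2}$ with disjointly labeled collections, take $\mathcal{G}:=\mathcal{G}_1\sqcup\mathcal{G}_2$. Then $\mathcal{T}_0$ decomposes as $\mathcal{T}_{0,1}\sqcup\mathcal{T}_{0,2}$; every independent $\mathcal{T}\subset\mathcal{T}_0$ splits uniquely as $\mathcal{T}_1\sqcup\mathcal{T}_2$ with each $\mathcal{T}_i$ independent. One then checks $C_\mathcal{T}=C_{\mathcal{T}_1}\times C_{\mathcal{T}_2}$ and $S_{\mathcal{T},\mathcal{T}'}=S_{\mathcal{T}_1,\mathcal{T}'_1}\times S_{\mathcal{T}_2,\mathcal{T}'_2}$. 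Inequality~\eqref{eq:foldlengthconstraint} passes componentwise since any minimum-path argument involves directions in a single factor. Hence $B_{\mathcal{G}_1}\times B_{\mathcal{G}_2}$ carries the structure of $B_\mathcal{G}$.

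For (2), let $F$ be the splitting face of $B_\mathcal{G}$ associated to partitions $G_i=\bigsqcup_k G_{i,k}$ and set $\mathcal{G}':=\{G_{i,k}\}$. Its $\mathcal{T}_0'=\bigsqcup_{i,k}\Lambda^2 G_{i,k}$ is precisely the subset of $\mathcal{T}_0$ whose coordinates are not forced to vanish on $F$. Every independent $\mathcal{T}'\subset\mathcal{T}_0'$ is independent in $\mathcal{T}_0$, its cube $C_{\mathcal{T}'}$ lies in $F$, and lengths restrict. The slice inequalities on $B_{\mathcal{G}'}$ are the restrictions of those on $B_\mathcal{G}$, since the extra equations $x_\tau=0$ for $\tau\notin\mathcal{T}_0'$ are trivially satisfied on $F$, and \Cref{lemma:branchedcubeboundary} (read through \Cref{defn:branchedcube}) shows $F=\bigcup C_{\mathcal{T}'}$.

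The main content is (3). For the folding face $F_\mathcal{T}$ associated to an independent $\mathcal{T}\subset\mathcal{T}_0$, I would first set up the candidate new collection: let $\sim_\mathcal{T}$ be the equivalence relation on each $G_i$ generated by the edges in $\mathcal{T}\cap\Lambda^2 G_i$, and set $G_i':=G_i/\sim_\mathcal{T}$, $\mathcal{G}':=\{G_i'\}$. For $\{[d_1],[d_2]\}\in\Lambda^2 G_i'$ define
\[
\ell'(\{[d_1],[d_2]\}) := \max_{d_1'\in[d_1],\,d_2'\in[d_2]}\ell(\{d_1',d_2'\}).
\]
I would then construct a homeomorphism $\Phi\colon B_{\mathcal{G}'}\to F_\mathcal{T}$ cube-by-cube: given an independent $\mathcal{S}\subset\mathcal{T}_0'$, choose a lift $\widehat{\mathcal{S}}\subset\mathcal{T}_0$ by picking, for each $\{[d_1],[d_2]\}\in\mathcal{S}$, a representative pair attaining the max in $\ell'$. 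Since $\widehat{\mathcal{S}}$ pairs distinct $\sim_\mathcal{T}$-classes, the set $\widehat{\mathcal{S}}\cup\mathcal{T}$ remains independent in $\mathcal{T}_0$, and $C_{\widehat{\mathcal{S}}\cup\mathcal{T}}\cap F_\mathcal{T}$ is naturally an $|\mathcal{S}|$-dimensional cube onto which $\Phi$ sends $C_\mathcal{S}$ isometrically. Checking the lengths of corresponding edges match requires the $\mathcal{T}$-coordinates are saturated and the representative choice realizes the max.

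The hard part is verifying three things simultaneously. First, that $\ell'$ satisfies~\eqref{eq:foldlengthconstraint}: given classes $[d_0],\dots,[d_m]$, one applies~\eqref{eq:foldlengthconstraint} to a concatenation of max-realizing representative pairs with $\mathcal{T}$-paths inside classes, using the ultrametric-type consequence of~\eqref{eq:foldlengthconstraint} that forces the $\mathcal{T}$-path lengths to dominate the relevant $\ell'$-values. Second, that $\Phi$ is independent of the representative choices: two lifts $\widehat{\mathcal{S}},\widehat{\mathcal{S}}'$ of the same $\mathcal{S}$ produce cubes whose intersection with $F_\mathcal{T}$ coincides, which is exactly a special case of \Cref{lem:branchedcubesintersectinslices} once the constraints $x_\tau=\ell(\tau)$ for $\tau\in\mathcal{T}$ are imposed. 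Third, and most delicate, that the slice inequalities~\eqref{e:slice} transport correctly: the slice $S_{\mathcal{S},\mathcal{S}'}$ of $B_{\mathcal{G}'}$ must agree under $\Phi$ with $C_{\widehat{\mathcal{S}}\cup\mathcal{T}}\cap C_{\widehat{\mathcal{S}}'\cup\mathcal{T}}\cap F_\mathcal{T}$. This step is the main obstacle; I would handle it by translating the components of $\mathfrak{G}_{\mathcal{S}\cap\mathcal{S}'}$ back into components of $\mathfrak{G}_{(\widehat{\mathcal{S}}\cup\mathcal{T})\cap(\widehat{\mathcal{S}}'\cup\mathcal{T})}$ in $\mathcal{T}_0$ (the $\mathcal{T}$-edges merge $\sim_\mathcal{T}$-classes into single ambient components), and then matching the resulting inequalities coordinate by coordinate using~\eqref{eq:coordinateconstraint} evaluated on representative pairs.
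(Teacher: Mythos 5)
Parts (1) and (2) of your proposal are fine and essentially coincide with the paper's argument (the paper also reduces everything to a single gate via $B_{\mathcal{G}}\times B_{\mathcal{G}'}\cong B_{\mathcal{G}\sqcup\mathcal{G}'}$ and reads off splitting faces as products). The problem is (3), and it is not in the steps you flag as delicate but in the very first verification you list: the quotient lengths $\ell'$ defined by a maximum over representatives need \emph{not} satisfy \Cref{eq:foldlengthconstraint}. Take a single gate $G=\{a,b,b',c\}$ with $\ell(\{b,b'\})=1$ and $\ell(\{a,b\})=\ell(\{b',c\})=5$; then \Cref{eq:foldlengthconstraint} forces $\ell(\{a,b'\})=\ell(\{b,c\})=\ell(\{a,c\})=1$, and this data is admissible (it is the common-prefix pattern of four directions mapping into a tree). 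Folding $\mathcal{T}=\{\{b,b'\}\}$ gives classes $[b]=\{b,b'\}$, $[a]$, $[c]$ with $\ell'([a],[b])=\ell'([b],[c])=5$ but $\ell'([a],[c])=1$, so $\min\{5,5\}>1$ and \Cref{eq:foldlengthconstraint} fails for $\ell'$. The ``ultrametric-type consequence forcing the $\mathcal{T}$-path lengths to dominate the relevant $\ell'$-values'' is exactly what breaks: the within-class edge $\{b,b'\}$ has length $1$, far below the two maxima it would need to dominate. Hence your target $B_{\mathcal{G}'}$ is not a branched cube in the sense of \Cref{defn:branchedcube}, and the construction of $\Phi$ cannot get started.

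This is a structural obstruction rather than a fixable bookkeeping slip: after $\{b,b'\}$ is completely folded, the outside directions $a$ and $c$ can each still be folded by up to $5$ against the merged class, but along \emph{different} branches beyond the fold (one along the $b$-side, one along the $b'$-side), while $a$ and $c$ overlap each other only up to $1$; a single quotient direction carrying one length per class-pair cannot record this divergence. For instance, the point of the folding face at which only the disjoint turn $\{a,c\}$ is folded (by $1$) is not captured consistently by quotient coordinates plus the slice formulas \Cref{e:slice}. The paper's proof takes a genuinely different route that never merges directions: for a single turn $\{d_1,d_2\}$ it partitions $G$ into $G_1=\{d:\ell(\{d,d_1\})>\ell(\{d_1,d_2\})\}$ and $G_2=G\setminus G_1$, identifies the folding face with $B_{G_1}\times B_{G_2}$ using the argument of \Cref{lemma:splitinsidebranchedcubecriterion}, and then treats larger $\mathcal{T}$ by induction; your representative-lifting and slice-transport steps would have to be replaced by an argument of that kind. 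A smaller point: for well-definedness of $\Phi$ you cite \Cref{lem:branchedcubesintersectinslices}, which is a statement about the axis-bundle cubes; in the abstract setting the relevant fact is the intersection requirement built into \Cref{defn:branchedcube}.
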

\begin{proof}
For (1), we have $B_\mathcal{G} \times B_{\mathcal{G}'} \cong B_{\mathcal{G} \sqcup \mathcal{G}'}$.
Because of this, it suffices to show (2) and (3) in the case when $\mathcal{G}$ just contains one finite set $G$.

For (2), consider the splitting face $S$ associated to a partition $G = G_1 \sqcup ... \sqcup G_m$. Then for every independent subset $\mathcal{T} \subset \mathcal{T}_0$, we have $C_\mathcal{T} \cap S = C_{\mathcal{T}_1 \sqcup ... \sqcup \mathcal{T}_m} = C_{\mathcal{T}_1} \times ... \times C_{\mathcal{T}_m}$ where $\mathcal{T}_i$ is the subset of $\mathcal{T}$ consisting of elements lying within $G_i$, which is independent as a subset of $\Lambda^2 G_i$. 
Conversely, given independent subsets $\mathcal{T}_i \subset \Lambda^2 G_i$, we have that $\mathcal{T}_1 \sqcup ... \sqcup \mathcal{T}_m$ is an independent subset of $\mathcal{T}_0$.
Thus $S = \bigcup_{\mathcal{T}_1,...,\mathcal{T}_m} C_{\mathcal{T}_1} \times ... \times C_{\mathcal{T}_m} \cong B_{G_1} \times ... \times B_{G_m}$.

For (3), we first show this for a folding face $F$ associated to a single element $\{d_1,d_2\}$.
Let $G_1 = \{d \in G \mid \ell({\{d,d_1\})} > \ell({\{d_1,d_2\}})\}$ and $G_2 = \{d \in G \mid \ell({\{d,d_1\})} \leq \ell({\{d_1,d_2\}})\}$.
Thus $G = G_1 \sqcup G_2$, with $d_1 \in G_1$ and $d_2 \in G_2$.
The same argument as in \Cref{lemma:splitinsidebranchedcubecriterion} shows that $F \subset \bigcup_{\mathcal{T}_1,\mathcal{T}_2} C_{\mathcal{T}_1 \cup \{d_1,d_2\} \cup \mathcal{T}_2}$ where the union ranges over all independent subsets $\mathcal{T}_1 \subset \Lambda^2 G_1$ and $\mathcal{T}_2 \subset \Lambda^2 G_2$.
Now $C_{\mathcal{T}_1 \cup \{d_1,d_2\} \cup \mathcal{T}_2} \cap F \cong C_{\mathcal{T}_1} \times C_{\mathcal{T}_2}$. Thus $$F = \bigcup_{\mathcal{T}_1,\mathcal{T}_2} (C_{\mathcal{T}_1 \cup \{d_1,d_2\} \cup \mathcal{T}_2} \cap F) \cong \bigcup_{\mathcal{T}_1,\mathcal{T}_2} (C_{\mathcal{T}_1} \times C_{\mathcal{T}_2}) = B_{G_1} \times B_{G_2}.$$ 
Moreover, under this isomorphism, the maximum value of $x_\tau$ for $\tau \in \Lambda^2 G_i$ equals $\ell(\tau)$. Hence, for folding faces of $B_G$ associated to larger subsets, we can run this argument inductively.
\end{proof}

\subsection{The definition of a cubist complex} \label{subsec:cubistcomplexdefn}

\begin{df} \label{defn:cubistcomplex}
A \textbf{cubist complex} is an ordered pair $(X,\mathcal{B})$ where $X$ is a topological space and  $\mathcal{B}$ is a collection of subspaces, 
each homeomorphic to a branched cube, and satisfying each of (a)-(d):
\begin{enumerate}
    \item[a.] The space $X$ is the disjoint union of the interiors of the elements of $\mathcal{B}$.
    \item[b.] For each $B \in \mathcal{B}$, each splitting face of $B$ is an element of $\mathcal{B}$, while each folding face of $B$ is a union of elements of $\mathcal{B}$.
    \item[c.] For each $B_1, B_2 \in \mathcal{B}$, either $B_1 \cap B_2 = \varnothing$, or $B_1 \cap B_2 \in \mathcal{B}$ and is a sub-branched cube of $B_1$ and $B_2$.
    \item[d.] For each 0-dimensional branched cube $\{v\} \in \mathcal{B}$, the set of all $B\in\mathcal{B}$ for which $v$ lies in the interior of a folding face of $B$, once partially ordered by inclusion, has a unique maximal element $B(v)$. 
\end{enumerate}
\end{df}

\parpic[r]{\resizebox{!}{3cm}{
\begingroup%
  \makeatletter%
  \providecommand\color[2][]{%
    \errmessage{(Inkscape) Color is used for the text in Inkscape, but the package 'color.sty' is not loaded}%
    \renewcommand\color[2][]{}%
  }%
  \providecommand\transparent[1]{%
    \errmessage{(Inkscape) Transparency is used (non-zero) for the text in Inkscape, but the package 'transparent.sty' is not loaded}%
    \renewcommand\transparent[1]{}%
  }%
  \providecommand\rotatebox[2]{#2}%
  \newcommand*\fsize{\dimexpr\f@size pt\relax}%
  \newcommand*\lineheight[1]{\fontsize{\fsize}{#1\fsize}\selectfont}%
  \ifx\svgwidth\undefined%
    \setlength{\unitlength}{143.83106142bp}%
    \ifx\svgscale\undefined%
      \relax%
    \else%
      \setlength{\unitlength}{\unitlength * \real{\svgscale}}%
    \fi%
  \else%
    \setlength{\unitlength}{\svgwidth}%
  \fi%
  \global\let\svgwidth\undefined%
  \global\let\svgscale\undefined%
  \makeatother%
  \begin{picture}(1,1.0612978)%
    \lineheight{1}%
    \setlength\tabcolsep{0pt}%
    \put(0,0){\includegraphics[width=\unitlength,page=1]{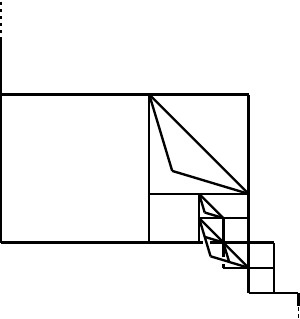}}%
  \end{picture}%
\endgroup%
}}
For a cubist complex $(X,\mathcal{B})$, $X$ carries a natural PL structure by declaring each branched cube in $\mathcal{B}$ to be PL-isomorphic to a branched cube.
We will always implicitly endow $X$ with this PL-structure.
In particular, an \textbf{isomorphism} of cubist complexes is a PL isomorphism sending branched cubes to branched cubes. Finally, whenever the collection $\mathcal{B}$ of branched cubes is clear, we simply refer to $X$ as a cubist complex.

An example of a cubist complex is shown to the right.
Note that the `local dimension' of a cubist complex is allowed to jump. In the image, the local dimension at some parts is $2$, while at other parts it is $1$.

Recall from the introduction the differences between cubist complexes and cube complexes. We make now item (3) from that comparison precise via the following lemma.

\begin{lem} \label{lem:zenotic}
Let $(X,\mathcal{B})$ be a cubist complex. Let $Y$ be a subspace of $X$ that is a union of elements in $\mathcal{B}$ and which is PL isomorphic to a generalized branched cube itself. Then for each 0-dimensional branched cube $\{v\} \in \mathcal{B}$, the generalized sub-branched cube of $Y$ determined by $v$ is a union of elements of $\mathcal{B}$.
\end{lem}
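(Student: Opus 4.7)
The plan is to show that for each $p \in Y_v$, the unique element $B_p \in \mathcal{B}$ containing $p$ in its interior (existence and uniqueness by axiom (a)) is entirely contained in $Y_v$. This will give $Y_v = \bigcup_{p \in Y_v} B_p$, exhibiting $Y_v$ as a union of elements of $\mathcal{B}$.

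First I would fix $p \in Y_v$ together with a fold path $\gamma$ in $Y$ from $v$ to $p$, and proceed by induction on the number of $\mathcal{B}$-elements traversed by $\gamma$. The key reduction is: if I can produce a fold path in $Y$ from $v$ to the splitting vertex $v_p$ of $B_p$, then fold paths starting at $v_p$ already fill all of $B_p$, so $B_p \subset Y_v$ as desired. To identify $v_p$, let $B_{\mathrm{prev}}$ be the $\mathcal{B}$-element immediately preceding $B_p$ along $\gamma$. By axiom (c), $F := B_{\mathrm{prev}} \cap B_p \in \mathcal{B}$; since $\gamma$ enters the interior of $B_p$ through $F$, the intersection $F$ is a splitting face of $B_p$, and the splitting vertex $v_p$ of $B_p$ coincides with the splitting vertex of $F$, hence lies in $F \cap B_{\mathrm{prev}}$. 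In the inductive step I would redirect $\gamma$ inside $B_{\mathrm{prev}}$ so that it exits into $B_p$ precisely at $v_p$: axiom (b) decomposes the relevant folding face of $B_{\mathrm{prev}}$ into splitting faces of neighboring branched cubes, one of which has $v_p$ as its splitting vertex, so the exit can be relocated to $v_p$; the inductive hypothesis applied to the redirected prefix then produces the desired fold path $v \to v_p$ in $Y$.

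The hard part will be making the redirection argument precise across all cubes traversed by $\gamma$: each modification of the exit point in one cube imposes a corresponding constraint on the entry point of that cube, which propagates backward along the chain, and one must check these constraints can be simultaneously satisfied. I expect axiom (d), applied to $v_p$ and analogously to the splitting vertices of the intermediate entry faces produced en route, to supply the crucial compatibility: axiom (d) yields a unique maximal branched cube $B(v_p) \supseteq B_p$ organizing how neighboring $\mathcal{B}$-elements fit together on the splitting side of $B_p$, and this rigidity should force the chain of local redirections to concatenate consistently into a single global fold path from $v$ to $v_p$.
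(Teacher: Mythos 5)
Your reduction is fine as far as it goes: by axiom (a) (plus (b), applied recursively to boundaries) it is indeed enough to show that whenever a point $p$ of the generalized sub-branched cube $Z$ determined by $v$ lies in the interior of $B_p\in\mathcal{B}$, the splitting vertex $v_p$ of $B_p$ is itself reachable from $v$ by a fold path in $Y$, since fold paths from $v_p$ sweep out all of $B_p$ and concatenation of fold paths is a fold path. But this reformulation is essentially equivalent to the lemma, and the step that carries all the content --- the ``redirection'' of $\gamma$ so that it exits $B_{\mathrm{prev}}$ exactly at $v_p$ --- is left as a hope rather than proved. The difficulty is not just bookkeeping: the entry point of $\gamma$ into $B_{\mathrm{prev}}$ need not lie below $v_p$ in the fold-path order on $B_{\mathrm{prev}}$, so the exit point cannot in general be relocated within $B_{\mathrm{prev}}$; and because of the Zenotic refinement in the folding direction, a fold path from $v$ to $v_p$ may have to traverse an entirely different chain of branched cubes than the one $\gamma$ uses, so backward-propagating constraints along the same chain need not be satisfiable. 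Axiom (d) does not obviously rescue this: it controls the single maximal cube having $v_p$ in the interior of a folding face, not the whole chain back to $v$. There are also two smaller unaddressed points: in the abstract setting axiom (c) only says $B_{\mathrm{prev}}\cap B_p$ is a sub-branched cube of both, so the claim that $\gamma$ enters $B_p$ through a \emph{splitting} face of $B_p$ (hence that $v_p$ lies in the intersection) needs an argument (this is the analogue of \Cref{prop:foldfacevertex}); and $\gamma$ may reach $p$ without ever entering the interior of $B_p$, namely when $p$ sits on a folding face of the last cube whose interior $\gamma$ crosses, so ``the element immediately preceding $B_p$'' is not always defined.

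For contrast, the paper avoids tracking any particular fold path: it inducts lexicographically on $(\dim Z, N)$, where $N$ is the number of elements of $\mathcal{B}$ meeting the interior of $Z$, takes the unique maximal-dimensional $B_0\in\mathcal{B}$ contained in $Y$ with splitting vertex $v$ (uniqueness is where the hypothesis that $Y$ is PL isomorphic to a generalized branched cube is used), and writes $Z=B_0\cup Z_1\cup\cdots\cup Z_m$, where the $Z_i$ are the generalized sub-branched cubes determined by the far endpoints $v_i$ of the $1$-dimensional splitting edges of $B_0$; each $Z_i$ either has smaller dimension or meets strictly fewer elements of $\mathcal{B}$, so induction applies. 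If you want to salvage your approach, you would need to prove a statement of that strength anyway (reachability of splitting vertices), so the global decomposition is the more economical route.
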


\begin{proof}
Let $N$ be the number of elements of $\mathcal{B}$ that intersect the interior of the sub-branched cube $Z$ of $Y$ 
determined by $v$.
We proceed by induction on $(\dim Z, N)$, lexicographically ordered.
If $\dim Z=0$, then $Z=\{v\}$ is a $0$-dimensional element of $\mathcal{B}$.

\parpic[r]{\selectfont\fontsize{8pt}{8pt}
\begingroup%
  \makeatletter%
  \providecommand\color[2][]{%
    \errmessage{(Inkscape) Color is used for the text in Inkscape, but the package 'color.sty' is not loaded}%
    \renewcommand\color[2][]{}%
  }%
  \providecommand\transparent[1]{%
    \errmessage{(Inkscape) Transparency is used (non-zero) for the text in Inkscape, but the package 'transparent.sty' is not loaded}%
    \renewcommand\transparent[1]{}%
  }%
  \providecommand\rotatebox[2]{#2}%
  \newcommand*\fsize{\dimexpr\f@size pt\relax}%
  \newcommand*\lineheight[1]{\fontsize{\fsize}{#1\fsize}\selectfont}%
  \ifx\svgwidth\undefined%
    \setlength{\unitlength}{141.24226728bp}%
    \ifx\svgscale\undefined%
      \relax%
    \else%
      \setlength{\unitlength}{\unitlength * \real{\svgscale}}%
    \fi%
  \else%
    \setlength{\unitlength}{\svgwidth}%
  \fi%
  \global\let\svgwidth\undefined%
  \global\let\svgscale\undefined%
  \makeatother%
  \begin{picture}(1,0.79467005)%
    \lineheight{1}%
    \setlength\tabcolsep{0pt}%
    \put(0,0){\includegraphics[width=\unitlength,page=1]{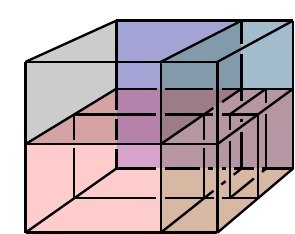}}%
    \put(0.02816206,0.59406878){\color[rgb]{0,0,0}\makebox(0,0)[lt]{\lineheight{1.25}\smash{\begin{tabular}[t]{l}$v$\end{tabular}}}}%
    \put(0.37422867,0.76198009){\color[rgb]{0,0,1}\makebox(0,0)[lt]{\lineheight{1.25}\smash{\begin{tabular}[t]{l}$v_3$\end{tabular}}}}%
    \put(-0.00321504,0.29411037){\color[rgb]{1,0,0}\makebox(0,0)[lt]{\lineheight{1.25}\smash{\begin{tabular}[t]{l}$v_1$\end{tabular}}}}%
    \put(-0.00321504,0.13480931){\color[rgb]{1,0,0}\makebox(0,0)[lt]{\lineheight{1.25}\smash{\begin{tabular}[t]{l}$Z_1$\end{tabular}}}}%
    \put(0.58662973,0.76198009){\color[rgb]{0,0,1}\makebox(0,0)[lt]{\lineheight{1.25}\smash{\begin{tabular}[t]{l}$Z_3$\end{tabular}}}}%
    \put(0.18013095,0.69711441){\color[rgb]{0,0,0}\makebox(0,0)[lt]{\lineheight{1.25}\smash{\begin{tabular}[t]{l}$B_0$\end{tabular}}}}%
    \put(0.67428064,0.60469543){\color[rgb]{0,0.50196078,0}\makebox(0,0)[lt]{\lineheight{1.25}\smash{\begin{tabular}[t]{l}$Z_2$\end{tabular}}}}%
    \put(0.49374036,0.60469548){\color[rgb]{0,0.50196078,0}\makebox(0,0)[lt]{\lineheight{1.25}\smash{\begin{tabular}[t]{l}$v_2$\end{tabular}}}}%
  \end{picture}%
\endgroup%
}
For the induction step, consider the elements of $\mathcal{B}$ contained in $Y$ that have splitting vertex at $v$, and let $B_0$ be the element among these of the highest dimension.
Here we use that $Y$ is PL isomorphic to a generalized branched cube to ensure that $B_0$ is uniquely defined.
Let $e_1,...,e_n$ be the collection of 1-dimensional splitting edges of $B_0$. Let $v_i$ be the endpoint of $e_i$ not $v$, and let $Z_i$ be the sub-branched cube of $Y$ determined by $v_i$. 
For each $i$, if $e_i$ is an entire splitting edge of $Z$, then $Z_i$ has lower dimension than $Z$, otherwise $Z_i$ intersects strictly less elements of $\mathcal{B}$ in their interior than $Z$ does, since $B_0$ intersects the interior of $Z$ but not that of $Z_i$. 
By our induction hypothesis, each $Z_i$ is a union of elements of $\mathcal{B}$. This implies $Z=B_0 \cup Z_1 \cup ... \cup Z_m$ is a union of elements of $\mathcal{B}$.
\end{proof}

We refer to the union of the $k$-dimensional branched cubes in a cubist complex $X$ as the \textbf{$k$-skeleton} of $X$.
We caution that the $k$-skeleton of $X$ as a cubist complex is different from the $k$-skeleton of $X$ when considered as a cell complex.
For example, if $B$ is a branched cube as in \Cref{eg:2dimbranchedcube}, then the 1-cell that the branching happens along does not lie in the 1-skeleton of $X$ as a cubist complex.

The $1$-skeleton of $X$ can be given the structure of a directed graph $\mathfrak{g}_X$ by orienting each 1-dimensional branched cube so that it is a folding path in each branched cube that contains it. 

The next two lemmas concern the behaviour of $\mathfrak{g}_X$ in each branched cube.

\begin{lem} \label{lemma:edgepathexists}
Let $(X,\mathcal{B})$ be a cubist complex. Let $Y$ be a subspace of $X$ that is a union of elements in $\mathcal{B}$ and which is PL-isomorphic to a generalized branched cube itself. Let $v$ be the splitting vertex of $Y$. Then for each vertex $w$ of $\mathfrak{g}_X$ lying in $Y$, there is a directed edge path in $\mathfrak{g}_X$ from $v$ to $w$. 
\end{lem}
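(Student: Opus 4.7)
The plan is to induct on the pair $(\dim Y, N)$ in lexicographic order, where $N$ counts the elements of $\mathcal{B}$ whose interior meets $Y$, paralleling the induction of \Cref{lem:zenotic}. The base case $\dim Y = 0$ forces $Y = \{v\} = \{w\}$ and the empty path suffices. For the inductive step, I reuse the setup from the proof of \Cref{lem:zenotic}: let $B_0$ be the unique maximal-dimensional element of $\mathcal{B}$ contained in $Y$ with splitting vertex $v$, let $e_1,\dots,e_n$ be the $1$-dimensional splitting edges of $B_0$, and let $v_i$ denote the non-$v$ endpoint of $e_i$. By axiom (b) each $e_i$ lies in $\mathcal{B}$, so $e_i$ is a directed edge in $\mathfrak{g}_X$ from $v$ to $v_i$. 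Letting $Z_i$ denote the sub-branched cube of $Y$ determined by $v_i$, the argument of \Cref{lem:zenotic} establishes that each $Z_i$ is a union of elements of $\mathcal{B}$, that $Y = B_0 \cup Z_1 \cup \cdots \cup Z_n$, and that each $Z_i$ is either of strictly smaller dimension than $Y$ or meets strictly fewer elements of $\mathcal{B}$ in their interiors.

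The key observation is that every vertex $w \neq v$ of $\mathfrak{g}_X$ in $Y$ lies in some $Z_i$. If $w$ already lies in some $Z_i$ there is nothing to check, so assume $w \in B_0$. Then $\{w\}$, being a $0$-dimensional element of $\mathcal{B}$ inside $B_0$, is a cube vertex of some cube $C_\mathcal{T} \subset B_0$, and hence each coordinate $x_\tau(w)$ lies in $\{0,\ell(\tau)\}$. Since $w \neq v$, some coordinate $x_{\tau^*}(w) = \ell(\tau^*)$ with $\tau^* \in \mathcal{T}$; the splitting edge of $B_0$ along $\tau^*$ is one of the $e_i$, and starting at its terminal vertex $v_i$ and folding along the remaining coordinates of $\mathcal{T}$ that are maximal at $w$ produces a fold path in $B_0 \subset Y$ from $v_i$ to $w$, placing $w \in Z_i$.

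With this in hand the inductive step concludes quickly: if $w = v$, the empty path works; otherwise $w \in Z_i$ for some $i$, so the induction hypothesis applied to $Z_i$ (with splitting vertex $v_i$ and target vertex $w$) produces a directed edge path in $\mathfrak{g}_X$ from $v_i$ to $w$ lying in $Z_i$, and prepending the directed edge $e_i$ gives the desired path from $v$ to $w$. The main subtlety I anticipate is the key observation above, which rests on identifying $0$-dimensional elements of $\mathcal{B}$ inside a single branched cube as cube vertices, so that any such vertex other than the splitting vertex has a maximal coordinate whose corresponding splitting edge of $B_0$ provides an entry point into the relevant $Z_i$.
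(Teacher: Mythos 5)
Your proposal is correct and follows essentially the same route as the paper: induction on $(\dim Y, N)$ in lexicographic order, using the decomposition $Y = B_0 \cup Z_1 \cup \dots \cup Z_m$ from \Cref{lem:zenotic} and prepending the splitting edge $e_i$ to a path obtained from the induction hypothesis on $Z_i$. The only difference is that you explicitly justify the step the paper leaves implicit — that a vertex $w \neq v$ of $\mathfrak{g}_X$ lying in $B_0$ is a cube vertex with some maximal coordinate and hence lies in some $Z_i$ — which is a reasonable filling-in rather than a divergence.
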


\begin{proof}
Let $N$ be the number of elements of $\mathcal{B}$ intersecting the interior of $Y$. We proceed by induction on $(\dim Y, N)$, lexicographically ordered.
For $\dim Y = 0,1$ the lemma is clear. For the induction step, define the sub-branched cube $B_0$, the vertices $v_i$, and the generalized sub-branched cubes $Z_i$ as in \Cref{lem:zenotic}.
If $w=v$, the lemma holds trivially.
Otherwise, since $Y=B_0 \cup Z_1 \cup ... \cup Z_m$, $w$ lies in some $Z_i$. By induction, there is a directed edge path from the splitting vertex $v_i$ to $w$. Concatenate with the edge from $v$ to $v_i$.
\end{proof}

Furthermore, we claim that a directed edge path as in \Cref{lemma:edgepathexists} is unique up to sweeping the path across 2-dimensional branched cubes. More precisely, suppose $B$ is a 2-dimensional branched cube. Suppose $\beta$ and $\beta'$ are directed edge paths in $\mathfrak{g}_X$ with a common initial vertex $v_1$ and a common terminal vertex $v_2$, and each of which is the concatenation of one 1-dimensional splitting face of $B$ and one 1-dimensional folding face of $B$. 
Then for each edge path $\alpha_1$ with terminal vertex $v_1$ and each edge path $\alpha_2$ with initial vertex $v_2$, we say that the directed edge paths $\alpha_1 * \beta * \alpha_2$ and $\alpha_1 * \beta' * \alpha_2$ are related by \textbf{sweeping across $B$}. 

\begin{lem} \label{lemma:edgepathunique}
Let $(X,\mathcal{B})$ be a cubist complex. Let $Y$ be a subspace of $X$ that is a union of elements in $\mathcal{B}$ and which is PL isomorphic to a generalized branched cube itself. Let $v$ be the splitting vertex of $Y$, and let $w$ be some vertex lying in $Y$. 
Suppose $\gamma$ and $\gamma'$ are directed edge paths in $\mathfrak{g}_X$ from $v$ to $w$. Then $\gamma$ and $\gamma'$ are related by sweeping across finitely many 2-dimensional branched cubes. 
\end{lem}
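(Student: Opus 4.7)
The plan is to induct on $(\dim Y, N)$ lexicographically, where $N$ is the number of elements of $\mathcal{B}$ meeting the interior of $Y$, mirroring the induction in \Cref{lemma:edgepathexists}. The base cases $\dim Y \leq 1$ are immediate, since directed edge paths in a $0$- or $1$-dimensional generalized branched cube from $v$ are unique. For the induction step, let $B_0$ be the unique highest-dimensional element of $\mathcal{B}$ in $Y$ with splitting vertex $v$ (as in \Cref{lem:zenotic}), with $1$-dimensional splitting faces $e_1, \dots, e_n$ and terminal vertices $v_1, \dots, v_n$; let $Z_i$ denote the generalized sub-branched cube of $Y$ determined by $v_i$.

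If $\gamma$ and $\gamma'$ both begin with $e_i$ for some $i$, their remainders are directed edge paths from $v_i$ to $w$ inside $Z_i$. Since $B_0$ meets the interior of $Y$ but not of $Z_i$, the complexity of $Z_i$ is strictly less than that of $Y$, and the induction hypothesis gives the result. Otherwise $\gamma = e_i \ast \tilde\gamma$ and $\gamma' = e_j \ast \tilde\gamma'$ with $i \neq j$. Let $\tau_i, \tau_j \in \mathcal{T}_0$ be the turns defining $e_i, e_j$, and let $B' \subset B_0$ be the $2$-dimensional sub-branched cube cut out by the splitting face of $B_0$ that groups only the directions involved in $\tau_i$ and $\tau_j$. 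Then $B'$ is a $2$-cube when $\tau_i$ and $\tau_j$ are disjoint, and is the triangle-like branched cube of \Cref{eg:2dimbranchedcube} when they share a direction. In either case $B'$ has a unique folding vertex $v^*$, with unique directed folding edges $f_i \colon v_i \to v^*$ and $f_j \colon v_j \to v^*$ inside $B'$, and the two paths $e_i \ast f_i$ and $e_j \ast f_j$ from $v$ to $v^*$ are related by a single sweep across a $2$-dimensional sub-branched cube of $B'$.

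The key intermediate step is to show that $w$ lies in the generalized sub-branched cube $Z_{v^*}$ determined by $v^*$. Granted this, pick a directed edge path $\rho \colon v^* \to w$ using \Cref{lemma:edgepathexists}. Then $f_i \ast \rho$ and $\tilde\gamma$ are both directed edge paths from $v_i$ to $w$ inside $Z_i$, so by the induction hypothesis applied inside $Z_i$ they are related by sweeps, giving $\gamma \sim e_i \ast f_i \ast \rho$. Symmetrically $\gamma' \sim e_j \ast f_j \ast \rho$, and appending $\rho$ to the basic sweep produces $e_i \ast f_i \ast \rho \sim e_j \ast f_j \ast \rho$. Concatenating these relations yields $\gamma \sim \gamma'$, completing the induction.

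The main obstacle is the intermediate step $w \in Z_{v^*}$. The plan is to track, for each of $\gamma$ and $\gamma'$, the last vertex lying in $B_0$; each such vertex is a common successor of $v_i$ and $v_j$ within $B_0$, and the coordinate structure of $B_0$ forces $x_{\tau_i}$ and $x_{\tau_j}$ to be at their maximum values there, hence to lie on the folding face through $v^*$. The unique-maximal-successor property of \Cref{prop:uniquesuccessor} then governs how the paths exit $B_0$ into neighbouring branched cubes, and a recursion on the sequence of cubes traversed propagates the inequality $v^* \leq w$ through $Y$.
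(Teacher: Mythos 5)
Your overall architecture coincides with the paper's proof: the same lexicographic induction on $(\dim Y, N)$, the same use of $B_0$, the $e_i$, $v_i$ and $Z_i$ from \Cref{lem:zenotic}, the same disposal of the common-initial-edge case by induction inside $Z_i$, and, in the mixed case, the same idea of a distinguished vertex $v^*$ (the paper's $u$) on a $2$-dimensional splitting face of $B_0$ containing $e_i$ and $e_j$, one sweep across that face, and a common tail $\rho$ to $w$. The problem is the step you yourself flag as the main obstacle, namely that $w$ lies in the generalized sub-branched cube $Z_{v^*}$ determined by $v^*$: this is left unproven, and the plan you sketch for it does not work as stated. The last vertex of $\gamma$ lying in $B_0$ is reachable from $v_i$, but there is no reason it should be ``a common successor of $v_i$ and $v_j$'' or have $x_{\tau_j}$ maximal there: $\gamma$ never passes near $v_j$, and it can exit $B_0$ through a folding face at a point where $x_{\tau_j}$ is still $0$ (for instance immediately after $e_i$, if an edge of $\mathfrak{g}_X$ not contained in $B_0$ becomes available at $v_i$). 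Only at the common endpoint $w$ are both coordinates forced up, and that is exactly what has to be proved. Likewise, \Cref{prop:uniquesuccessor} governs which branched cube one folds into next, but it does not by itself ``propagate the inequality $v^* \leq w$ through $Y$''; that sentence is a placeholder, not an argument.

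The paper closes this step much more directly, using the standing hypothesis that $Y$ is PL isomorphic to a generalized branched cube. Since $\gamma$ and $\gamma'$ are directed, hence fold, paths, $w$ is reachable from both $v_i$ and $v_j$, i.e. $w \in Z_i \cap Z_j$; and the generalized sub-branched cube determined by $u=v^*$ (the intersection point of the subcubes of the $2$-dimensional splitting face determined by $v_i$ and by $v_j$) is precisely $Z_i \cap Z_j$ -- reachability from $v_i$ forces $x_{\tau_i}$ to be maximal, so reachability from both forces both coordinates maximal, which is reachability from $u$. With that identity, \Cref{lemma:edgepathexists} applied to $Z_i \cap Z_j$ with splitting vertex $u$ produces the common tail, and the remainder of your argument (relating $\gamma$ to $e_i * f_i * \rho$ inside $Z_i$ by induction, symmetrically for $\gamma'$, and one sweep across the $2$-dimensional face) goes through exactly as in the paper. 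Two minor points: your $f_i$, $f_j$ are in general directed edge \emph{paths} rather than single edges of $\mathfrak{g}_X$, since the folding sides of $B'$ may be subdivided by adjacent branched cubes (this is compatible with the definition of sweeping, which only requires the two paths to traverse a splitting face and then a folding face of a $2$-dimensional branched cube, but it should be said); and the sweep is across $B'$ itself, not a proper sub-branched cube of it.
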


\begin{proof}
Define $N$ as in \Cref{lemma:edgepathexists}. We proceed by induction on $(\dim Y, N)$, lexicographically ordered.
For $\dim Y = 0,1$ the lemma is clear. For the induction step, define the sub-branched cube $B_0$, the edges $e_i$, the vertices $v_i$, and the generalized sub-branched cubes $Z_i$ as in \Cref{lem:zenotic}.
If $w=v$, then the lemma holds trivially.
Otherwise the initial edges of $\gamma$ and $\gamma'$ must each be one of the $e_i$.

We first assume the initial edges of $\gamma$ and $\gamma'$ are the same $e_i$ (see \Cref{fig:edgepathunique} left).
In this case, $\gamma = e_i * \alpha$ and $\gamma' = e_i * \alpha'$ for some directed edge paths $\alpha$ and $\alpha'$ in $Z_i$ from $v_i$ to $w$.
By induction, $\alpha$ and $\alpha'$ are related by sweeping across finitely many 2-dimensional branched cubes, so the same holds for $\gamma$ and $\gamma'$.

It remains to prove the lemma for one choice of $\gamma$ whose initial edge is $e_i$ and for one choice of $\gamma'$ whose initial edge is $e_{i'}$, for each pair $(i,i')$.
Note that there is a vertex $u$ on $B_0$ such that the sub-branched cube $Q$ determined by $u$ equals $Z_i \cap Z_{i'}$. 
More concretely, there is a 2-dimensional splitting face $F$ of $B_0$ for which $e_i$ and $e_{i'}$ are among the 1-dimensional splitting faces of $F$. The vertex $u$ can be characterized as the intersection between the subcubes of $F$ determined by $v_i$ and $v_{i'}$.

By \Cref{lemma:edgepathexists}, there is a directed edge path $\alpha$ from $u$ to $w$. Meanwhile, there is a directed edge path $\beta$ from $e_i$ to $u$ and a directed edge path $\beta'$ from $e_{i'}$ to $u$. See \Cref{fig:edgepathunique} right.
The edge paths $e_i * \beta * \alpha$ and $e_{i'} * \beta' * \alpha$ are related by sweeping across $F$.
\end{proof}

\begin{figure}
    \centering
    \selectfont\fontsize{8pt}{8pt}
    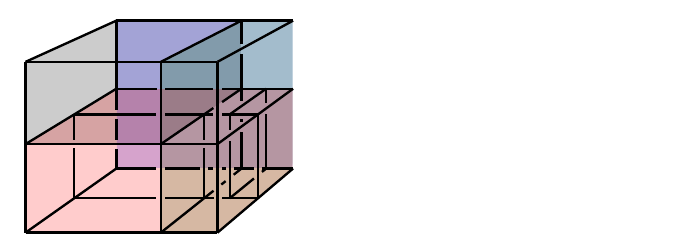
    \caption{For \Cref{lemma:edgepathunique}, we first argue the case when the initial edges of $\gamma$ and $\gamma'$ agree (left), we then argue one case when the initial edges of $\gamma$ and $\gamma'$ differ (right).}
    \label{fig:edgepathunique}
\end{figure}

Finally, we introduce the following condition, which will come into play in \Cref{subsec:cardiovascularsystem}.

\begin{df} \label{df:periodiccubistcomplex}
A \textbf{periodic cubist complex} is a connected cubist complex $(X,\mathcal{B})$ together with an isomorphism $\phi:X \to X$ such that the $\mathbb{Z}$-action generated by $\phi$ is free and cocompact. 
\end{df}

\subsection{The axis bundle is a cubist complex} \label{subsec:axisbundleiscubistcomplex}

Let $\vphi\in\out$ be nongeometric fully irreducible.
We briefly return to the axis bundle setting to show that it has a cubist complex structure.

By \Cref{sec:axisbundlebranchedcubes} there is a branched cube $B_T$ for each fully preprincipal $T \in \Av$.
\Cref{lem:branchcubesprop} implies each splitting face of $B_T$ is a branched cube.
Let $\mathcal{B}_\phi$ denote the collection of the $B_T$ and their splitting faces, as $T$ ranges over all fully preprincipal $T \in \Av$. More generally, given any local decomposition $\mG$ of $IW(\vphi)$, denote by $\mG \mathcal{B}_\phi$ the collection of the $B_T$ and their splitting faces, as $T$ ranges over all fully preprincipal elements at least as split as $\mG$. To distinguish a branched cube $B_T$ from a general element of $(\mG)\mathcal{B}_\phi$, which can be a proper splitting face of $B_T$, we call the former \textbf{primary branched cubes} in the following proof.

\begin{thm}\label{t:CubistDecomp}
Suppose that $r\geq 3$ and $\vphi\in\out$ is nongeometric fully irreducible.
Then $(\mathcal{A}_{\vphi}, \mathcal{B}_{\vphi})$ is a cubist complex.  
Furthermore, the action of $\phi$ on $\mathcal{A}_{\vphi}$ makes it into a periodic cubist complex.

More generally, for each local decomposition $\mG$, we have that $(\mG\mathcal{A}_{\vphi}, \mG\mathcal{B}_{\vphi})$ is a cubist complex, and the action of $\phi$ on $\mG\mathcal{A}_{\vphi}$ makes it into a periodic cubist complex.
\end{thm}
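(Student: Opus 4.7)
The plan is to verify the four axioms of Definition \ref{defn:cubistcomplex} for $(\mathcal{A}_\phi,\mathcal{B}_\phi)$, drawing on the structural results of Section \ref{sec:axisbundlebranchedcubes}, and then promote to the periodic setting via Proposition \ref{P:6.2}. Each primary branched cube $B_T$ is a branched cube in the abstract sense of Definition \ref{defn:branchedcube} by construction (its intersection law is Lemma \ref{lem:branchedcubesintersectinslices}), and its splitting faces are again branched cubes by Lemma \ref{lem:branchcubesprop}(2), so every element of $\mathcal{B}_\phi$ is a genuine branched cube.

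For axiom (a), Proposition \ref{prop:branchedcubescoveraxisbundle} covers $\mathcal{A}_\phi$ by primary branched cubes, and its second clause together with Lemma \ref{lemma:branchedcubeboundary} places each point in the interior of some splitting face of some $B_T$. The interiors are pairwise disjoint since $B_T$ is recovered from any such point as its unique complete peel (Lemma \ref{lem:completesplitinterior}), and within a fixed $B_T$ distinct splitting faces correspond to distinct partitions of the gates and hence have disjoint interiors. Axiom (b) is essentially by construction: the splitting faces of each primary branched cube lie in $\mathcal{B}_\phi$ by definition, and Proposition \ref{prop:foldfacebranchedcubes} expresses each folding face of $B_T$ as a union of splitting faces of other primary branched cubes (intersecting with a further splitting face of $B_T$ handles the splitting-face elements of $\mathcal{B}_\phi$ as well).

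For axiom (c), when $B_{T_1}$ and $B_{T_2}$ are distinct primary cubes, Propositions \ref{prop:branchedcubedisjointint} and \ref{prop:foldfacevertex} identify $B_{T_1}\cap B_{T_2}$ with a common splitting face of both. For two splitting-face elements $S_1\subset B_{T_1}$ and $S_2\subset B_{T_2}$: if $B_{T_1}=B_{T_2}$, then $S_1\cap S_2$ is the splitting face associated to the common refinement of the defining gate partitions; otherwise, setting $F=B_{T_1}\cap B_{T_2}$, the intersection $S_1\cap S_2=(S_1\cap F)\cap(S_2\cap F)$ is the meet of two splitting faces of $F$, hence again a splitting face of $F$ and therefore of both $B_{T_1}$ and $B_{T_2}$. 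Axiom (d) reduces to Proposition \ref{prop:uniquesuccessor}, after the observation that a splitting face $S$ of some $B_{T''}$ with $v$ interior to a folding face of $S$ forces $v$ to also be interior to a folding face of $B_{T''}$ itself (the defining conditions $x_\tau=\ell(\tau)$ are inherited), so the unique maximal primary cube dominates every splitting-face element as well.

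Finally, $\phi$ permutes fully preprincipal train tracks and illegal turns, so it permutes $\mathcal{B}_\phi$ and acts by a cubist isomorphism; freeness and cocompactness of the $\ZZ$-action come from Proposition \ref{P:6.2} and Lemma \ref{lem:fpplocallyfinite}, while connectedness of $\mathcal{A}_\phi$ is Proposition \ref{prop:stableaxisbundleconnected}. The $\mG$-case is identical: Lemma \ref{lem:branchedcubesubsetaxisbundle} guarantees $B_T\subset\mG\Av$ whenever $T\in\mG\Av$, and Proposition \ref{prop:branchedcubescoveraxisbundle} lets us choose $T$ with the same local decomposition as any given $S\in\mG\Av$, so the four axioms and the periodicity transfer verbatim. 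The main potential obstacle is the bookkeeping in axiom (c) across different primary cubes, but routing through the common splitting face $F$ reduces everything to intersections inside a single branched cube, where Lemma \ref{lem:branchcubesprop}(2) governs the behavior.
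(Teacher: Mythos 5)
Your overall strategy is the paper's own: verify \Cref{defn:cubistcomplex}(a)--(d) from \Cref{prop:branchedcubescoveraxisbundle}, \Cref{prop:branchedcubedisjointint}, \Cref{prop:foldfacebranchedcubes}, \Cref{prop:foldfacevertex} and \Cref{prop:uniquesuccessor}, with periodicity from \Cref{prop:stableaxisbundleconnected}, \Cref{P:6.2} and \Cref{lem:fpplocallyfinite}. The genuine gap is in your treatment of axiom (d). The ``observation'' that if $v$ lies in the interior of a folding face of a splitting face $S$ of $B_{T''}$ then $v$ lies in the interior of a folding face of $B_{T''}$ is false: the conditions $x_\tau=\ell(\tau)$ are inherited, so $v$ lies \emph{in} a folding face of $B_{T''}$, but interiority is not inherited, because a point of a proper splitting face also satisfies $x_\tau=0$ for every turn crossing the defining partition, and inside $B_{T''}$ these vanishing coordinates place $v$ on the splitting side of that folding face rather than in its interior. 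Concretely, take a gate $\{d_1,d_2,d_3\}$ with $\ell(\{d_1,d_2\})=\ell(\{d_1,d_3\})=\ell(\{d_2,d_3\})$, let $S=C_{T'',\{\{d_1,d_2\}\}}$ be the splitting edge on which only $\{d_1,d_2\}$ is folded, and let $v$ be its folding endpoint. Then $v$ is (the interior of) the $0$-dimensional folding face of $S$, but the only proper folding face of $B_{T''}$ through $v$ is the segment $\{x_{\{d_1,d_2\}}=\ell(\{d_1,d_2\}),\ x_{\{d_1,d_3\}}=x_{\{d_2,d_3\}}\}$, of which $v$ is the splitting endpoint, not an interior point. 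So your reduction of (d) to \Cref{prop:uniquesuccessor} fails exactly where (d) goes beyond the primary cubes: the elements of $\mathcal{B}_\phi$ that are proper splitting faces with $v$ in the interior of one of their folding faces are not shown to be dominated by, or even comparable with, the maximal primary cube (and such splitting faces do occur in the (d)-set while the ambient primary cube does not, as the example shows). The paper invokes \Cref{prop:uniquesuccessor} for (d) directly, without routing through this inheritance claim.

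Secondarily, in axiom (c) your description of $B_{T_1}\cap B_{T_2}$ for distinct primary cubes as ``a common splitting face of both'' is not what \Cref{prop:foldfacevertex} gives: the intersection is a splitting face of one cube and only a sub-branched cube sitting inside a folding face of the other; this asymmetry is precisely the cubist (as opposed to cubical) feature. Moreover, to apply \Cref{prop:foldfacevertex} you must first know that one of the two splitting vertices lies on a folding face of the other cube; this needs \Cref{lem:completesplitinterior} together with \Cref{lem:completesplittingboundary} (or the paper's decomposition of the intersection into its types (1) and (2), the latter handled via \Cref{prop:uniquesuccessor}), and your argument skips it. Likewise, the assertion that $S_1\cap F$ is a splitting face of $F$ does not follow from \Cref{lem:branchcubesprop}; it requires the comparison between the turns of $T_1$ and the residual turns $\tau^\circ$ of $T_2$ carried out in the proof of \Cref{prop:foldfacevertex}. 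These (c) points are repairable along the paper's lines, but as written the (c) bookkeeping is asserted rather than proved, and the (d) step rests on a false claim.
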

\begin{proof}
\Cref{defn:cubistcomplex}(a) follows from \Cref{prop:branchedcubescoveraxisbundle} and \Cref{prop:branchedcubedisjointint}. 
\Cref{defn:cubistcomplex}(b) follows from the definition of the branched cubes and \Cref{prop:foldfacebranchedcubes}.
\Cref{defn:cubistcomplex}(d) is a special case of \Cref{prop:uniquesuccessor}.

We now show \Cref{defn:cubistcomplex}(c).
Since branched cubes have disjoint interiors, two branched cubes $B_1$ and $B_2$ can only possibly intersect along a union of branched cubes, each of them being (1) a splitting face of $B_1$ and a branched cube in a folding face of $B_2$ or (2) a branched cube in the interior of a folding face of $B_1$ and of $B_2$.
If a branched cube of type (1) arises, \Cref{prop:foldfacevertex} implies $B_1 \cap B_2$ is a sub-branched cube. 
If a branched cube of type (2) arises, then by \Cref{prop:uniquesuccessor}, the splitting vertex of $B_1$ and $B_2$ coincide, so $B_1$ and $B_2$ are splitting faces of some primary branched cube $B_T$, in which case they intersect in a splitting face of $B_T$. 
(In fact, this argument shows there are never branched cubes of type (2).)

It remains to show that $(\mG \mathcal{A}_{\vphi},\phi)$ is a periodic cubist complex. Connectedness follows from \Cref{prop:stableaxisbundleconnected}. Freeness follows from \Cref{P:6.2}. Finally, cocompactness follows from \Cref{lem:fpplocallyfinite}.
\end{proof}

\subsection{Cardiovascular system} \label{subsec:cardiovascularsystem}

Let $(X,\mathcal{B})$ be a cubist complex. We define a directed graph $\mathfrak{c}_X$ as follows:
\begin{itemize}
    \item The vertex set of $\mathfrak{c}_X$ is the $0$-skeleton of $X$.
    \item For each vertex $v$, let $B(v)$ be the branched cube that is maximal with respect to the property that $v$ is contained in the interior of a folding face of $B(v)$, and let $S(v)$ be the splitting vertex of $B(v)$. By \Cref{defn:cubistcomplex}(4), $S(v)$ is well-defined. If $S(v) \neq v$, then we add a directed edge from $v$ to $S(v)$. (If $S(V) = v$ then we do not add any edges.)
\end{itemize}
\parpic[r]{\resizebox{!}{3.25cm}{
\begingroup%
  \makeatletter%
  \providecommand\color[2][]{%
    \errmessage{(Inkscape) Color is used for the text in Inkscape, but the package 'color.sty' is not loaded}%
    \renewcommand\color[2][]{}%
  }%
  \providecommand\transparent[1]{%
    \errmessage{(Inkscape) Transparency is used (non-zero) for the text in Inkscape, but the package 'transparent.sty' is not loaded}%
    \renewcommand\transparent[1]{}%
  }%
  \providecommand\rotatebox[2]{#2}%
  \newcommand*\fsize{\dimexpr\f@size pt\relax}%
  \newcommand*\lineheight[1]{\fontsize{\fsize}{#1\fsize}\selectfont}%
  \ifx\svgwidth\undefined%
    \setlength{\unitlength}{145.26045191bp}%
    \ifx\svgscale\undefined%
      \relax%
    \else%
      \setlength{\unitlength}{\unitlength * \real{\svgscale}}%
    \fi%
  \else%
    \setlength{\unitlength}{\svgwidth}%
  \fi%
  \global\let\svgwidth\undefined%
  \global\let\svgscale\undefined%
  \makeatother%
  \begin{picture}(1,1.05085458)%
    \lineheight{1}%
    \setlength\tabcolsep{0pt}%
    \put(0,0){\includegraphics[width=\unitlength,page=1]{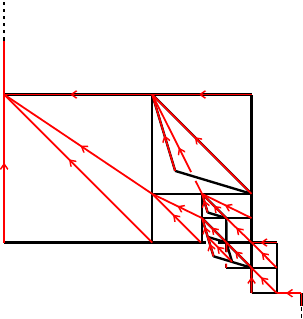}}%
  \end{picture}%
\endgroup%
}}
The graph $\mathfrak{c}_X$ is naturally seen as a subspace of $X$ by placing the directed edges along straight lines.
We refer to $\mathfrak{c}_X$ as the \textbf{cardiovascular system} of $X$.
See the right-hand image for an example of a cardiovascular system.

A crucial property of the cardiovascular system is each vertex having at most one outgoing edge, but possibly multiple incoming edges. 
In particular, each vertex $v$ has a unique `successor' $S(v)$.
Iteratively determining edges as such, we obtain a directed edge path $\gamma_v = (S^i(v))_{i \geq 0}$, which is a ray if each $S^{i+1}(v) \neq S^i(v)$, and is a finite path otherwise.
Conversely, each maximal directed edge path arises as such.

For the rest of this section, we restrict to the setting where $(X,\phi)$ is periodic (recall \Cref{df:periodiccubistcomplex}).
The aim is to deduce the cardiovascular system properties in this setting.

The cocompactness of the $\mathbb{Z}$-action implies each directed edge path $\gamma_v=(S^i(v))_{i \geq 0}$ is eventually periodic:

\begin{prop} \label{prop:rayseventuallyperiodic}
Let $(X, \phi)$ be a periodic cubist complex. For each vertex $v$, there exist $P,N \in \mathbb{Z}$, and an $i_0 \geq 0$, such that $S^{i+P}(v)=\phi^N(S^i(v))$ for all $i \geq i_0$. 
\end{prop}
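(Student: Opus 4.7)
The plan is to combine the cocompactness of the $\phi$-action with the observation that the successor map $S$ commutes with $\phi$, so that a single coincidence of $\phi$-orbits along the forward trajectory propagates to a genuine periodicity relation.

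First, I would verify that $S \circ \phi = \phi \circ S$ on the $0$-skeleton. Since $\phi$ is an isomorphism of cubist complexes, it sends $\mathcal{B}$ bijectively to itself, preserves interiors of branched cubes and of their folding faces, preserves splitting vertices, and preserves inclusion. Given a vertex $v$, the branched cube $\phi(B(v))$ contains $\phi(v)$ in the interior of a folding face and is maximal among such, so by the uniqueness clause of \Cref{defn:cubistcomplex}(d), $\phi(B(v)) = B(\phi(v))$. Taking splitting vertices yields $\phi(S(v)) = S(\phi(v))$, and iterating gives $\phi^N \circ S^j = S^j \circ \phi^N$ for all $j \geq 0$ and $N \in \ZZ$.

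Next, I would argue that the $\phi$-action on the $0$-skeleton has only finitely many orbits. The $0$-skeleton is discrete in $X$: each vertex $v$ lies only in the finitely many branched cubes that have $v$ as a corner, and in each such branched cube $v$ is isolated among the vertices. Hence the image of the $0$-skeleton in $X/\ZZ$ is a discrete subset of a compact space, so it is finite.

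Now I would apply pigeonhole to the trajectory $S^0(v), S^1(v), S^2(v), \ldots$: two of these terms must lie in the same $\phi$-orbit, say $S^{i_1}(v) = \phi^N(S^{i_0}(v))$ for some $0 \leq i_0 < i_1$ and $N \in \ZZ$. Setting $P = i_1 - i_0$, for each $i \geq i_0$ the commutativity from the first step yields
\[
S^{i+P}(v) = S^{i-i_0}\bigl(S^{i_1}(v)\bigr) = S^{i-i_0}\bigl(\phi^N(S^{i_0}(v))\bigr) = \phi^N\bigl(S^i(v)\bigr),
\]
as desired. The main (and only) non-formal step is the commutativity $S \circ \phi = \phi \circ S$, which falls out of axiom (d); the rest is a standard cocompactness/pigeonhole argument.
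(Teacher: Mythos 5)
Your proposal is correct and follows essentially the same route as the paper: finitely many $\langle\phi\rangle$-orbits of vertices by cocompactness, pigeonhole along the trajectory $(S^i(v))$, and the commutation $\phi\circ S = S\circ\phi$ to propagate the coincidence into the periodicity relation. The only difference is that you spell out the commutativity via the uniqueness in \Cref{defn:cubistcomplex}(d) and justify the finiteness of vertex orbits, details the paper simply asserts from $\phi$ preserving the cubist structure and cocompactness.
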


\begin{proof}
Since $\langle \phi \rangle$ acts cocompactly on $X$, the set $V$ of $\langle \phi \rangle$-orbits of vertices is finite. We define a map $\mathbb{Z}_{\geq 0} \to V$ by sending $i$ to the orbit of $S^i(v)$. By the pigeonhole principle, there are integers $i_1 > i_2$ such that $S^{i_1}(v)$ and $S^{i_2}(v)$ lie in the same orbit, i.e. there is some $N\in\mathbb{Z}$ such that $S^{i_1}(v) = \phi^N(S^{i_2}(v))$. 

Meanwhile, since $\phi$ preserves the cubist complex structure of $X$, we have $\phi(S(v))=S(\phi(v))$ for any vertex $v$. Applying this fact repeatedly, we deduce that, for each $i \geq i_2$,
\begin{center}
$S^{i+(i_1-i_2)}(v) = S^{i-i_2}(\phi^N(S^{i_2}(v))) = \phi^N(S^i(v)).$   
\end{center}
\end{proof}

Next, we show that the connectedness of $X$ implies that each directed path $\gamma_v=(S^i(v))_{i \geq 0}$ is actually a ray.
To this end, we introduce a measure of distance between vertices of $\mathfrak{c}_X$.

\begin{df}
Let $\alpha$ be an edge path in $\Gamma$. Note that $\alpha$ may not be a directed edge path, i.e. it can traverse some edges of $\Gamma$ in the opposite direction as their prescribed orientations.

We define the \textbf{combinatorial length} of $\alpha$ to be 
$$\min\{n \mid \alpha = \alpha_1 * ... * \alpha_n, \text{
for some monotone edge paths $\alpha_j$ each lying in a single branched cube
}\}$$
where by a monotone edge path, we mean an edge path $\beta$ where either $\beta$ or $-\beta$ is a directed edge path.

The \textbf{combinatorial distance} between two vertices $v_0$ and $v_1$ is the minimum combinatorial length of paths between them. It is straightforward to verify that the combinatorial distance is a metric.
\end{df}

\begin{lem} \label{lem:combindistone}
Let $v_0$ and $v_1$ be two vertices of $X$.
Suppose there is a directed edge path $\alpha$ from $v_0$ to $v_1$ that lies in one branched cube $B$. Then either $S(v_0)=v_1$, or there is a directed edge path $\beta$ from $v_1$ to $S(v_0)$ that lies in one branched cube, so that the $\mathfrak{c}_X$-edge from $v_0$ to $S(v_0)$ is homotopic to $\alpha * \beta$.
\end{lem}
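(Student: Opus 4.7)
The plan is to extend $\alpha$ by a directed edge path $\beta$ so that $\alpha * \beta$ and the cardiovascular edge from $v_0$ to $S(v_0)$ lie in a common containing branched cube $B_0$, and then to apply the contractibility of $B_0$ to obtain the path homotopy.

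First, I would identify $B_0$. The hypothesis is that $\alpha \subset B$, while the cardiovascular edge lies in $B(v_0)$ by definition, and both $B$ and $B(v_0)$ contain $v_0$. Applying axiom (c) of \Cref{defn:cubistcomplex}, the intersection $B \cap B(v_0)$ is a sub-branched cube of each, and combining this with the maximality in axiom (d) and the directional information encoded in $\alpha$, one should derive a containment: either $B \subseteq B(v_0)$ (in which case $B_0 := B(v_0)$) or $B(v_0) \subseteq B$ (in which case $B_0 := B$). In the nontrivial case $v_1 \neq S(v_0)$, I would then construct $\beta$ as a directed edge path from $v_1$ to $S(v_0)$ within a single sub-cube of $B_0$; this is possible because $v_1$ is a vertex of $B_0$ and $S(v_0)$ is the splitting vertex of $B(v_0) \subseteq B_0$, so the 1-skeleton of $B_0$ connects them by an edge path, and choosing the minimum-length one confines $\beta$ to a single cube (hence to one branched cube by \Cref{lem:branchcubesprop}). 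With $B_0$ and $\beta$ in place, both $\alpha * \beta$ and the straight cardiovascular edge from $v_0$ to $S(v_0)$ share endpoints and lie in $B_0$; since a branched cube deformation retracts onto its splitting vertex (and so is contractible), these two paths are path-homotopic.

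The hard part will be the containment step. The subtlety is that $v_0$ need not lie in the interior of a folding face of $B$ itself (even though $\alpha$ emanates from $v_0$ into $B$), so the maximality characterization of $B(v_0)$ from axiom (d) is not directly applicable to the pair $(B, B(v_0))$. The argument will have to extract the containment indirectly, exploiting the geometric information carried by the direction of $\alpha$ (which pinpoints $v_0$ as a distinguished vertex of the sub-cube of $B$ spanned by $\alpha$) together with the intersection property from axiom (c). A careful case analysis on whether $v_0$ sits on a splitting face, on a folding face, or at an intermediate position of $B$ should reduce the problem to one where the containment becomes transparent.
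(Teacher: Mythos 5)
Your endgame (two paths with common endpoints inside one contractible branched cube are homotopic) is the same as the paper's, but the two steps you yourself flag as the crux are genuine gaps, and they are exactly where the paper's proof does its work. First, the containment dichotomy ``either $B \subseteq B(v_0)$ or $B(v_0) \subseteq B$'' does not follow from \Cref{defn:cubistcomplex}(c)--(d) and is false in general: two branched cubes sharing the vertex $v_0$ need not be nested, since axiom (c) only makes $B \cap B(v_0)$ a common sub-branched cube, which can be just $\{v_0\}$ (for instance $B$ can have $v_0$ as its splitting vertex while $v_0$ sits on the folding side of $B(v_0)$), and axiom (d) only compares branched cubes having $v_0$ in the interior of a folding face, a property $B$ need not enjoy, as you yourself observe. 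Your proposed remedy, a case analysis on the position of $v_0$ in $B$, is left entirely unexecuted, so no single branched cube $B_0$ containing both $\alpha$ and the $\mathfrak{c}_X$-edge is ever produced, and without it the contractibility argument has nothing to act on. The paper instead reduces at the outset to the case $B = B(v_0)$, so that $S(v_0)$ is the splitting vertex of the one branched cube in play and $v_1$ lies in it.

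Second, even granting a common $B_0$, your construction of $\beta$ does not go through as described: connectivity of the 1-skeleton of $B_0$ only yields an edge path, not a \emph{directed} one, and the claim that a minimum-length edge path is confined to a single cube is asserted without argument (nor is it needed, since lying in the single branched cube $B_0$ would already suffice for the statement). The paper obtains the directed connecting path between $S(v_0)$ and $v_1$ from \Cref{lemma:edgepathexists}, its inductive lemma producing directed edge paths from the splitting vertex of a (generalized) branched cube to any vertex of it; you never invoke this lemma nor supply a substitute, and in your case $B(v_0) \subsetneq B_0 = B$ the vertex $S(v_0)$ is not even the splitting vertex of $B_0$, so no statement of that kind is available there. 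In short, the proposal is missing both the reduction to $B(v_0)$ and the appeal to \Cref{lemma:edgepathexists} on which the paper's short proof rests.
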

\begin{proof}
Without generality loss, we can assume $B$ is the maximal branched cube with respect to the property that $v_0$ is in the interior of a folding face of $B$, so $S(v_0)$ is the splitting vertex of $B$. 
Since $\alpha$ is in $B$, we have $v_1$ is in $B$. By \Cref{lemma:edgepathexists}, there is a directed edge path $\beta$ from $v_1$ to $S(v_0)$ lying in $B$.
Since $\alpha*\beta$ and the $\mathfrak{c}_X$-edge from $v_0$ to $S(v_0)$ are paths in $B$ with the same initial and terminal vertices, and $B$ is contractible, they are homotopic. 
\end{proof}

\begin{cor} \label{cor:combindistoneflowforward}
Suppose $v_0$, $v_1\in VX$. If the combinatorial distance between $v_0$ and $v_1$ is $1$, then either $v_0=S(v_1)$ or $v_1=S(v_0)$ or $S(v_0)=S(v_1)$, or the combinatorial distance between $S(v_0)$ and $S(v_1)$ is $\leq 1$.
\end{cor}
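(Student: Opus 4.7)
The plan is to reduce everything to two successive applications of \Cref{lem:combindistone}, treating the two orientations of the combinatorial distance-1 path separately.

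First I would unpack the hypothesis: a combinatorial distance of $1$ means there is a monotone edge path $\alpha$ between $v_0$ and $v_1$ lying in a single branched cube $B$. Up to relabeling, we may assume $\alpha$ is a directed edge path either from $v_0$ to $v_1$, or from $v_1$ to $v_0$. These two subcases are symmetric, so I would write the argument for the first and remark that the second follows by swapping roles.

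Suppose $\alpha$ is a directed edge path from $v_0$ to $v_1$ in $B$. Apply \Cref{lem:combindistone} to $\alpha$: either $v_1 = S(v_0)$, in which case we are done (the second alternative in the conclusion), or there is a directed edge path $\beta$ from $v_1$ to $S(v_0)$ lying in a single branched cube $B'$. If $\beta$ is the trivial path then $v_1 = S(v_0)$ again; otherwise we apply \Cref{lem:combindistone} a second time, now to $\beta$: either $S(v_1) = S(v_0)$ (the third alternative in the conclusion), or there is a directed edge path from $S(v_0)$ to $S(v_1)$ lying in a single branched cube, which exhibits $S(v_0)$ and $S(v_1)$ at combinatorial distance at most $1$ (the fourth alternative).

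The reversed orientation is handled identically after swapping the roles of $v_0$ and $v_1$: the first application of \Cref{lem:combindistone} produces either $v_0 = S(v_1)$ (the first alternative) or a one-cube directed edge path from $v_0$ to $S(v_1)$, and the second application then produces either $S(v_0) = S(v_1)$ or a one-cube directed edge path between $S(v_1)$ and $S(v_0)$. Since all four alternatives in the conclusion are symmetric in $v_0 \leftrightarrow v_1$, this completes the proof. No real obstacle is expected here; the corollary is essentially a bookkeeping consequence of \Cref{lem:combindistone}, and the only thing to be careful about is distinguishing the two possible orientations of the monotone path realizing the distance, and allowing for the edge case where one of the auxiliary paths $\beta$ is trivial.
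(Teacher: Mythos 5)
Your proposal is correct and matches the paper's proof essentially verbatim: the paper also relabels so that the directed path runs from $v_0$ to $v_1$ (justified, as you note, by the symmetry of the four alternatives under swapping $v_0$ and $v_1$), and then applies \Cref{lem:combindistone} twice to produce either $v_1=S(v_0)$, $S(v_0)=S(v_1)$, or a one-cube directed edge path from $S(v_0)$ to $S(v_1)$. The extra remark about a trivial $\beta$ is harmless bookkeeping and does not change the argument.
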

\begin{proof}
Up to switching $v_0$ and $v_1$, there is a directed edge path $\alpha_0$ from $v_0$ to $v_1$ lying in one branched cube. By \Cref{lem:combindistone},either $v_1=S(v_0)$, or there is a directed edge path $\beta_0$ from $v_1$ to $S(v_0)$ lying in one branched cube. By \Cref{lem:combindistone} again, either $S(v_0)=S(v_1)$, or there a directed edge path $\alpha_1$ from $S(v_0)$ to $S(v_1)$ lying in one branched cube.
\end{proof}

\begin{prop} \label{prop:raysarerays}
Let $(X, \phi)$ be a periodic cubist complex. For each vertex $v$, the directed edge path $\gamma_v=(S^i(v))_{i \geq 0}$ is a ray, i.e. $S^{i+1}(v) \neq S^i(v)$ for each $i \geq 0$.
\end{prop}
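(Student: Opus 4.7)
Suppose for contradiction that $S^{i+1}(v)=S^i(v)$ for some vertex $v$ and some $i\geq 0$; replacing $v$ by $S^i(v)$ we may assume $i=0$, so $S(v)=v$, equivalently $B(v)=\{v\}$. The strategy is to show this assumption is incompatible with the combinatorial structure around $v$, leveraging Definition~\ref{defn:cubistcomplex}, Lemma~\ref{lemma:edgepathexists}, and Proposition~\ref{prop:rayseventuallyperiodic}.

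The first step is to re-express the assumption: by the disjoint interior axiom~\ref{defn:cubistcomplex}(a), a vertex of $X$ lies in the interior of a unique element of $\mathcal{B}$, namely $\{v\}$, so $v$ lies in the interior of a folding face of a branched cube $B$ only when that folding face is the $0$-dimensional folding vertex of $B$. Hence $B(v)=\{v\}$ is equivalent to saying $v$ is never the folding vertex of any positive-dimensional branched cube, i.e., $v$ has no incoming edge in the directed $1$-skeleton $\mathfrak{g}_X$.

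The second step produces an outgoing edge at $v$. Since the $\mathbb{Z}$-action generated by $\phi$ is free and nontrivial, $X$ has more than one point; connectedness then forces $X$ to have positive dimension and prevents $\{v\}$ from being clopen, so $v$ lies on some positive-dimensional branched cube $B$. By the first step, $v$ cannot sit in the interior of a proper folding face of $B$, so by the abstract analog of Lemma~\ref{lemma:branchedcubeboundary} it lies in the interior of a splitting face of $B$, which by disjointness of interiors must be $\{v\}$; hence $v$ is the splitting vertex of $B$, and some $1$-dimensional splitting face of $B$ exhibits an outgoing $1$-cell $v \to u$ in $\mathfrak{g}_X$.

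The third step closes the contradiction in the generic case. The $1$-cell $v \to u$ has $u$ as its folding vertex, so by Definition~\ref{defn:cubistcomplex}(d) it is contained in the maximal branched cube $B(u)$, and in particular $v$ is a vertex of $B(u)$. Lemma~\ref{lemma:edgepathexists} applied to $B(u)$ yields a directed edge path in $\mathfrak{g}_X$ from the splitting vertex $S(u)$ of $B(u)$ to $v$ lying in $B(u)$. If this path is nontrivial, its terminal edge exhibits $v$ as the folding vertex of a positive-dimensional cube, directly contradicting the characterization in the first step.

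The main obstacle is the residual case in which the path of Step~3 is trivial, i.e.\ $v=S(u)$, where no incoming edge at $v$ is produced outright. To dispose of it the plan is to invoke Proposition~\ref{prop:rayseventuallyperiodic} applied to the ray $\gamma_u=(u,v,v,\dots)$: eventual periodicity gives a shift $N$ with $\phi^N(v)=v$, and freeness of the $\mathbb{Z}$-action forces $N=0$. One then exploits cocompactness together with the $\phi$-equivariance of the cubist structure and the uniqueness clause of Definition~\ref{defn:cubistcomplex}(d) to rule out such ``true'' $S$-cycles, for instance by iterating the preceding local picture across the $\phi$-orbit of $v$ and showing that the resulting collection of maximal branched cubes $\{\phi^n(B(u))\}$ cannot be packaged coherently with $v$ as a common sink. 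This last reduction, where the periodicity hypothesis enters essentially, is the most delicate piece of the argument.
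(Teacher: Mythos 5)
Your argument does not close. Steps 1--3 only establish a local picture: assuming $S(v)=v$, every outgoing $1$-cell $v\to u$ either produces an incoming edge at $v$ (contradiction) or satisfies $S(u)=v$. The residual case you flag is not a corner case --- it is exactly the situation one must rule out, namely that $v$ is a ``sink'' of the cardiovascular system into which nearby trajectories terminate, and your proposed mechanism for excluding it fails. \Cref{prop:rayseventuallyperiodic} applied to the stabilizing ray $\gamma_u=(u,v,v,\dots)$ is satisfied trivially with $N=0$; it does not furnish a nonzero $N$ with $\phi^N(v)=v$, so freeness gives no contradiction, and the ``iterate the local picture across the $\phi$-orbit'' step you defer as the delicate piece is precisely the heart of the matter, not a reduction you have carried out. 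The paper's proof supplies exactly this missing global ingredient: using \Cref{lem:combindistone} and \Cref{cor:combindistoneflowforward} it shows that if the trajectory of $v_0$ stabilizes at a point, then the trajectory of any vertex at combinatorial distance $1$ stabilizes at the \emph{same} point; propagating this along an edge path from $v_0$ to $\phi(v_0)$ (connectedness) and using $\phi$-equivariance of $S$, the common stabilization point is fixed by $\phi$, contradicting freeness. Without this propagation argument (or a substitute), the sink case remains open.

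Two secondary issues, both repairable but worth flagging. First, the justification in your Step 1 is false as stated: a vertex can perfectly well lie in the interior of a \emph{positive-dimensional} folding face of a branched cube, because folding faces are merely unions of elements of $\mathcal{B}$ (\Cref{defn:cubistcomplex}(b)), not elements themselves, so \Cref{defn:cubistcomplex}(a) does not apply to them; indeed this is the ``Zenotic'' refinement phenomenon, and it is how edges of $\mathfrak{c}_X$ with $\dim B(v)\geq 2$ arise. The implication you actually need --- $S(v)=v$ forces $v$ to have no incoming edge --- is true, but should be argued directly (an incoming $1$-cube $e$ has $v$ in the interior of its folding vertex, so $\{v\}$ is not maximal in the poset of \Cref{defn:cubistcomplex}(d), whence $B(v)\neq\{v\}$ and $S(v)\neq v$). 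Second, Step 2's appeal to \Cref{lemma:branchedcubeboundary} misses the possibility that $v$ lies in a proper folding face of $B$ without lying in its interior; one must recurse into faces (or choose $B$ more carefully) to conclude $v$ is a splitting vertex with an outgoing edge.
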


\begin{proof}
Suppose otherwise that there is a vertex $v_0$ for which $S^{i+1}(v_0) = S^i(v_0)$ for some $i \geq i_0$. Let $v_1$ be a vertex that is of combinatorial distance $1$ away from $v_0$. We claim that the directed edge path from $v_1$ stabilizes at the same point as that from $v_0$, i.e. $S^{i+1}(v_1) = S^i(v_1) = S^i(v_0)$ for $i \geq i_1$.

By \Cref{cor:combindistoneflowforward}, $S^{i_0}(v_0)$ and $S^{i_0}(v_1)$ have combinatorial distance at most $1$. If $S^{i_0}(v_0) = S^{i_0}(v_1)$ then our claim is clear. Otherwise there is a directed edge path $\alpha$ either from $S^{i_0}(v_0)$ to $S^{i_0}(v_1)$ or from $S^{i_0}(v_1)$ to $S^{i_0}(v_0)$ that lies in one branched cube. The former cannot be true since $S^{i_0}(v_0)=S^{i_0+1}(v_0)$.
But then, applying \Cref{lem:combindistone}, there is a directed edge path $\beta$ from $S^{i_0}(v_0)$ to $S^{i_0+1}(v_1)$, thus $S^{i_0+1}(v_0) = S^{i_0}(v_0) = S^{i_0+1}(v_1)$ since again $S^{i_0}(v_0)=S^{i_0+1}(v_0)$. 

Now let $\alpha$ be an edge path between $v_0$ and $\phi(v_0)$. Let $v_0$, $v_1$, ..., $v_m=\phi(v_0)$ be the sequence of vertices on $\alpha$. For each $i$, the combinatorial distance between $v_{i-1}$ and $v_i$ is $1$, hence by applying our claim in the first paragraph repeatedly, we have $S^i(\phi(v_0)) = S^i(v_0)$ for all large $i$. 
Thus $S^{i_0}(v_0) = S^i(v_0) = S^i(\phi(v_0)) = \phi(S^i(v_0)) = \phi(S^{i_0}(v_0))$, that is, $S^{i_0}(v_0)$ is a fixed point of $\phi$. This contradicts freeness of $\phi$.
\end{proof}

An \textbf{artery} of the cardiovascular system is a periodic directed edge path, i.e. directed edge path $A = (v_i)_{i \in \mathbb{Z}}$ for which there exist $P,N$ such that $v_{i+P}=\phi^N(v_i)$ for all $i$.
Then $P$ is the \textbf{period} of $A$ and $N$ the \textbf{order}.

\Cref{prop:raysarerays} implies that $N \neq 0$, since if $N=0$, we have that $S^P(v_i) = v_{i+P} = v_i$, and the directed edge path starting at $v_i$ is finite.
Thus we can define the \textbf{average period} of $A$ to be $\frac{P}{N}$.

\begin{prop} \label{prop:arteryexist}
Let $(X, \phi)$ be a periodic cubist complex. There is at least one, but finitely many arteries in the cardiovascular system of $X$.
\end{prop}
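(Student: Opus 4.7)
The plan is to split the proposition into existence and finiteness, reducing each to an elementary pigeonhole argument on the finite quotient graph $\overline{\mathfrak{c}_X} := \mathfrak{c}_X / \langle \phi \rangle$. Since the $\phi$-action on $X$ is free and cocompact, this quotient is a finite directed graph; and since every vertex of $\mathfrak{c}_X$ has a unique outgoing edge (by construction of $\mathfrak{c}_X$ via the successor map $S$, together with \Cref{prop:raysarerays} which guarantees that an outgoing edge actually exists), the same unique-outgoing-edge property holds in $\overline{\mathfrak{c}_X}$.

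For existence, I would pick any vertex $v$ and apply \Cref{prop:rayseventuallyperiodic} to obtain integers $i_0 \geq 0$, $P \geq 1$, and $N$ (nonzero, by the remark following the definition of an artery) such that $S^{i+P}(v) = \phi^N(S^i(v))$ for all $i \geq i_0$. Setting $u := S^{i_0}(v)$ and $w_j := S^j(u)$ for $j \geq 0$, I would then extend the sequence to negative indices by the forced rule $w_{j-P} := \phi^{-N}(w_j)$. Using $S \circ \phi = \phi \circ S$ (since $\phi$ is a cubist complex automorphism, so the successor map is $\phi$-equivariant), one checks that the relations $w_{j+P} = \phi^N(w_j)$ and $S(w_j) = w_{j+1}$ are consistent on all of $\mathbb{Z}$, so $(w_j)_{j \in \mathbb{Z}}$ is an artery.

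For finiteness, I would observe that every artery in $\mathfrak{c}_X$ projects to a directed cycle in $\overline{\mathfrak{c}_X}$. Conversely, in a finite directed graph where every vertex has a unique outgoing edge, following edges from any starting vertex must eventually enter a cycle, and any two cycles sharing a vertex share all subsequent vertices and hence coincide; thus $\overline{\mathfrak{c}_X}$ contains only finitely many cycles. A cycle of length $P$ in $\overline{\mathfrak{c}_X}$ lifts to arteries in $\mathfrak{c}_X$ by picking a lift of any vertex and iterating $S$ forward (and using the analogous backward extension as above). Different choices of lift produce $\phi$-translates of one another, and the relation $\phi^N(A) = A$ as a line cuts the $\langle \phi \rangle$-orbit of such lifts down to a finite set of size dividing $|N|$. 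Summing over the finitely many cycles yields finitely many arteries in total.

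The main obstacle I anticipate is the bookkeeping between ``arteries as indexed bi-infinite sequences'' and ``arteries as directed lines in $X$''. Existence is most naturally phrased in the sequence formulation, whereas finiteness only holds in the line formulation (the set of sequences is closed under index shifts and hence naively countably infinite). Once one commits to the line interpretation suggested by \Cref{thm:introcubistcomplex}(i), both halves become routine consequences of cocompactness and the unique-outgoing-edge property.
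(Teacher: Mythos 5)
Your proof is correct, and the existence half is essentially the paper's argument: both apply \Cref{prop:rayseventuallyperiodic} to a forward trajectory and then extend backwards by translating with $\phi^{-N}$, using the equivariance $S\circ\phi=\phi\circ S$. The finiteness half takes a genuinely different route. The paper never forms a quotient graph: it first shows that any two arteries meeting the same $\langle\phi\rangle$-orbit of vertices have equal order, deduces a uniform bound on orders, takes $N_0$ to be their least common multiple, and then pigeonholes arteries against the finitely many $\langle\phi^{N_0}\rangle$-orbits of vertices, using that two arteries through a common vertex coincide. You instead pass to the finite directed graph $\mathfrak{c}_X/\langle\phi\rangle$, in which every vertex has a unique outgoing edge (correctly invoking \Cref{prop:raysarerays} so that $S(v)\neq v$ and the edge exists), note that every artery projects onto one of the pairwise-disjoint directed cycles of this functional graph, and count lifts of each cycle. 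This buys a more transparent picture: finiteness reduces to the standard fact that a finite graph with unique outgoing edges has finitely many cycles, and freeness of the action enters only in bounding the number of distinct $\phi$-translates of an artery over a fixed cycle by a divisor of $|N|$. The one point you should spell out is the claim that all arteries over a given cycle are $\phi$-translates of a single one: this requires knowing that an artery is determined by its forward ray from any of its vertices (equivalently, two arteries sharing a vertex coincide), which holds because the backward extension $v_{i-P}=\phi^{-N}(v_i)$ is forced and is independent of the choice of periodicity data $(P,N)$, as a short computation with the two relations shows. The paper's proof uses, and similarly glosses, this same fact in its final sentence, so this is a line of justification to add rather than a gap.
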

\begin{proof}
To show that there is at least one artery, take some vertex $v$ and apply \Cref{prop:rayseventuallyperiodic} to the directed edge ray $r=(S^i(v))_{i \geq 0}$ to get values of $P,N,i_0$ so that $S^{i+P}(v)=\phi^N(S^i(v))$ for all $i \geq i_0$. 
For each $i$, we define a vertex $v_i$ by picking $k$ large enough so that $i+kP \geq i_0$, and setting $v_i = \phi^{-kN}(S^{i+kP}(v))$.
Note that if $k_1<k_2$ are integers such that $i+k_1P, i+k_2P \geq i_0$, then
\begin{align*}
\phi^{-k_2N}(S^{i+k_2P}(v)) = \phi^{-k_2N}(S^{i+(k_2-1)P+P}(v)) &= \phi^{-k_2N} \phi^N(S^{i+(k_2-1)P}(v)) \\
&= \phi^{-(k_2-1)N}(S^{i+(k_2-1)P}(v)) = ... = \phi^{-k_1N}(S^{i+k_1P}(v))
\end{align*}
so $v_i$ is well-defined.
The following similar computation
shows that $(v_i)_{i \in \mathbb{Z}}$ is an artery:
$$v_{i+P} = \phi^{-kN}(S^{i+P+kP}(v)) = \phi^{-(k-1)N}(S^{i+kP}(v)) = \phi^N(v_i).$$ 

To show finiteness, we first claim that there is a uniform bound on the order of the arteries in $X$:
Suppose two arteries $A$ and $A'$ pass through the same $\langle \phi \rangle$-orbit of vertices, say $v \in A$ and $v' \in A'$ where $v'=\phi^q(v)$. Then $A$ and $A'$ share the same order. Indeed, if $S^{i+P}(v)=\phi^N(S^i(v))$ then 
$$S^{i+P}(v')=S^{i+P}(\phi^q(v))=\phi^q(S^{i+P}(v))=\phi^{N+q}(S^i(v))=\phi^N(S^i(\phi^q(v)))=\phi^N(S^i(v')).$$
Thus, the order of $A'$ divides that of $A$. Symmetrically, the order of $A$ divides that of $A'$, so they must coincide.
Our claim now follows from there being only finitely many $\langle \phi \rangle$-orbits of vertices.

Let $N_0$ be the lowest common multiple of the orders of all arteries. Let $V_0$ be the finite set of $\langle \phi^{N_0} \rangle$-orbits of vertices. If there are more than $|V_0|$ arteries, then two of them must pass through the same $\langle \phi^{N_0} \rangle$-orbit of vertices, but then they would actually share some vertex, which would imply the two arteries coincide.
\end{proof}

By \Cref{lemma:edgepathexists}, each artery $A$ can be homotoped into the reverse of a directed edge line $L$ of the 1-skeleton $\Gamma$. Indeed, one can replace each edge $v \to S(v)$ by the reverse of a homotopic directed edge path within the same cube. In this context we call $L$ a \textbf{simple factorization} of $A$.
\Cref{lemma:edgepathunique} implies that any two simple factorizations of a common artery $A$ are related by sweeping across 2-dimensional branched cubes.
The last goal of this section is to show that any two simple factorizations of any two arteries also relate in this way.

Recall the notion of the combinatorial distance between two vertices.
We define the \textbf{combinatorial distance} between two arteries to be the minimum combinatorial distance between their vertices.

\begin{lem} \label{lem:distoneartery}
Let $(X, \phi)$ be a periodic cubist complex. Suppose $A$ and $A'$ are arteries combinatorial distance $1$ apart. Then $A$ and $A'$ have a common average period and they admit a common simple factorization. 
\end{lem}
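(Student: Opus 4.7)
The plan is to iterate the successor map $S$ starting from a distance-$1$ witness, build a two-dimensional strip of branched cubes linking $A$ and $A'$, and extract both conclusions from this strip.

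Choose $v \in A$ and $v' \in A'$ realizing combinatorial distance $1$, and set $v_i = S^i(v)$, $v'_i = S^i(v')$. Applying \Cref{cor:combindistoneflowforward} inductively shows that each $(v_i,v'_i)$ remains at combinatorial distance $\leq 1$: the three ``merge'' cases --- $v_i = S(v'_i)$, $v'_i = S(v_i)$, or $S(v_i)=S(v'_i)$ --- each place a common vertex on both arteries, and because forward iteration of $S$ is deterministic and arteries are $\phi$-periodic with well-defined backward extensions, a single shared vertex forces $A = A'$, contradicting the hypothesis. Hence $v_i \neq v'_i$ and distance exactly $1$ persists for all $i\geq 0$. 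Applying \Cref{lem:combindistone} iteratively then enhances this to a strip: at each step $i$ a single branched cube contains a distance-$1$ witness $\alpha_i$ between $v_i$ and $v'_i$ and a crossbeam $\beta_i$ linking $v'_i$ to $v_{i+1}$, with the cardiovascular edges $v_i \to v_{i+1}$ and $v'_i \to v'_{i+1}$ factoring as $\beta_i * \alpha_{i+1}$ and $\alpha_i * \beta_i$ respectively inside these cubes.

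For the common average period, I will use cocompactness of the $\phi$-action on $X\times X$: distance-$1$ pairs form finitely many $\langle\phi\rangle$-orbits, so the map $i \mapsto$ orbit of $(v_i,v'_i)$ has finite image. Pigeonhole produces $i_1>i_2$ and $N\in\ZZ$ with $(v_{i_1},v'_{i_1}) = \phi^N(v_{i_2},v'_{i_2})$. Writing $P = i_1-i_2$, the relation $\phi^N(v_{i_2}) = v_{i_2+P}$ implies $\phi^N(A) \cap A \neq \varnothing$; as above this forces $\phi^N(A)=A$, so $N$ is a multiple $qN_A$ of the minimal stabilizing power of $A$ and the shift equals $qP_A$, giving $P/N = P_A/N_A$. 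The same argument applied to $A'$ yields $P/N = P_{A'}/N_{A'}$, so the two average periods coincide.

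Finally, for the common simple factorization, the strip cubes realize, inside a single cube at each step, exactly the homotopy needed by \Cref{lemma:edgepathunique} to convert a simple factorization of $A$ into one that additionally visits the vertices of $A'$. Sweeping across the strip cubes produces a single $\Gamma$-directed line $L$ interleaving the two arteries in the pattern $\cdots\to v_{i+1}\to v'_{i+1}\to v_i\to v'_i\to\cdots$; reading only the $A'$-subsegments of $L$ exhibits it as a simple factorization of $A'$ as well. The backward extension of the strip to indices $i<0$ is forced by $\phi^M$-equivariance using the common average period just established, with $M$ a common multiple of $N_A$ and $N_{A'}$. The main obstacle I anticipate is the bookkeeping of $\mathfrak{g}_X$-orientations across the strip, ensuring that the locally factored paths at each step assemble into one globally $\Gamma$-directed line, rather than merely a sequence of segments homotopic to such a line.
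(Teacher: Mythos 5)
Your overall strategy is the paper's: pick a distance-one witness, iterate \Cref{lem:combindistone} to build the interleaved ladder of rungs $\alpha_i$ (from $v_i$ to $v'_i$) and crossbeams $\beta_i$ (from $v'_i$ to $v_{i+1}$), and extract both conclusions from that ladder. Your average-period argument is a correct minor variant: instead of the paper's pigeonhole on the translated rungs $\phi^{-qNP'}(\alpha_{qPP'})$ (all single-cube edge paths issuing from the fixed vertex $v$), you pigeonhole on $\langle\phi\rangle$-orbits of the pairs $(v_i,v'_i)$ and then use the fact that two arteries sharing a vertex coincide to get $\phi^N(A)=A$, $\phi^N(A')=A'$ with a common shift $P$; this does give $P/N$ as the average period of both, and the shared-vertex principle, while only sketched by you, is also invoked without proof by the paper (here and in \Cref{prop:arteryexist}), so I do not count it against you. (Small slips: your ``respectively'' swaps which of $\alpha_i*\beta_i$ and $\beta_i*\alpha_{i+1}$ factors which cardiovascular edge, and your claimed interleaving pattern runs in the opposite order to the line $\cdots\alpha_i*\beta_i*\alpha_{i+1}\cdots$ your own conventions produce.)

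The genuine gap is in the second conclusion, precisely at the step you wave through with ``the backward extension of the strip to indices $i<0$ is forced by $\phi^M$-equivariance.'' The ladder is constructed only forward, by induction, and each application of \Cref{lem:combindistone} (via \Cref{lemma:edgepathexists}) involves a non-canonical choice of path; nothing makes the resulting family $(\alpha_i,\beta_i)_{i\ge 0}$ equivariant under $\phi^M$, and your pigeonhole remembers only the vertex pairs, not the connecting rungs, so it does not give you a recurrent block of path data either. Consequently you have a common factorization of the forward halves of $A$ and $A'$ but no licensed way to continue it over all of $\mathbb{Z}$, which is what a simple factorization of the bi-infinite arteries requires. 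This is exactly what the paper's finer pigeonhole buys: finiteness of single-cube edge paths from $v$ yields $q_1<q_2$ with $\phi^{-q_1NP'}(\alpha_{q_1PP'})=\phi^{-q_2NP'}(\alpha_{q_2PP'})$, i.e.\ the rung itself recurs, and the block $*_{k}(\alpha_k*\beta_k)$ between the recurrences can then be repeated $\phi$-equivariantly in both directions to give the bi-infinite common factorization. Your argument can be repaired by pigeonholing on the $\langle\phi\rangle$-orbits of the rungs $\alpha_i$ as edge paths (or by a compactness/diagonal argument using local finiteness), but as written the construction of the common simple factorization is incomplete; note also that no ``sweeping'' in the sense of \Cref{lemma:edgepathunique} is needed for existence — that machinery belongs to \Cref{prop:arteryunique}, not here.
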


\begin{proof}
By definition, up to switching $A$ and $A'$, there are vertices $v \in A$ and $v' \in A'$, and a directed edge path $\alpha_0$ from $v$ to $v'$ that lies in one branched cube. By \Cref{lem:combindistone}, either $S(v)=v'$, or there is a directed edge path $\beta_0$ from $v'$ to $S(v)$ so that the $\mathfrak{c}_X$-edge from $v$ to $S(v)$ is homotopic to $\alpha_0 * \beta_0$. The former case cannot happen here or we would have $A=A'$. Now apply \Cref{lem:combindistone} to $\beta_0$ to obtain a directed edge path $\alpha_1$ from $S(v)$ to $S(v')$ so that the $\mathfrak{c}_X$-edge from $v'$ to $S(v')$ is homotopic to $\beta_0 * \alpha_1$. Repeating this argument, we have edge paths $\alpha_k$ from $S^k(v)$ to $S^k(v')$ and $\beta_k$ from $S^k(v')$ to $S^{k+1}(v)$ such that the $\mathfrak{c}_X$-edge from $S^k(v)$ to $S^{k+1}(v)$ is homotopic to $\alpha_k * \beta_k$ and the $\mathfrak{c}_X$-edge from $S^k(v')$ to $S^{k+1}(v')$ is homotopic to $\beta_k * \alpha_{k+1}$.

Let $P$ and $N$ be the period and order of $A$, and let $P'$ and $N'$ be the period and order of $A'$.
For each $q$, we have $\phi^{-qNP'}(\alpha_{qPP'})$ is an edge path from $\phi^{-qNP'}(S^{qPP'}(v)) = v$ to $\phi^{-qNP'}(S^{qPP'}(v')) = \phi^{q(PN'-NP')}(v')$ that lies in a branched cube. 
Since there are only finitely many such edge paths, for some $0 < q_1 < q_2$, we have $\phi^{-q_1NP'}(\alpha_{q_1PP'})=\phi^{-q_2NP'}(\alpha_{q_2PP'})$. In particular, $\phi^{q_1(PN'-NP')}(v') = \phi^{q_2(PN'-NP')}(v')$, thus $\phi^{(q_1-q_2)(PN'-NP')}(v')=v'$. 
Since $\phi$ cannot have fixed points, we have $PN'-NP'=0$, thus $\frac{P}{N}=\frac{P'}{N'}$.

Finally, the directed edge path $*_{j=-\infty}^{\infty} \phi^{j(q_2-q_1)NP'}(*_{k=q_1PP'}^{q_2PP'} (\alpha_k * \beta_k))$ is a common simple factorization for $A$ and $A'$.
\end{proof}

\begin{prop} \label{prop:arteryunique}
If $(X, \phi)$ is a periodic cubist complex and $A$, $A'$ are arteries, then $A$ and $A'$ have a common average period. Their simple factorizations relate by sweeping across 2-dimensional branched cubes.
\end{prop}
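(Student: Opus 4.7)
The plan is to reduce to \Cref{lem:distoneartery} by connecting $A$ and $A'$ through a finite chain of arteries that are pairwise at combinatorial distance at most one. By connectedness of $X$, pick vertices $v \in A$ and $v' \in A'$ together with an edge path between them, giving a sequence $v = w_0, w_1, \dots, w_m = v'$ of vertices of consecutive combinatorial distance $1$. For each $i$, \Cref{prop:rayseventuallyperiodic} together with \Cref{prop:raysarerays} guarantees that the directed ray $(S^j(w_i))_{j \geq 0}$ is eventually periodic under $\phi$, and the extraction procedure from the proof of \Cref{prop:arteryexist} then produces an artery $A_i$ whose vertex set contains the tail of this ray, with $A_0 = A$ and $A_m = A'$.

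Next, I would iteratively apply \Cref{cor:combindistoneflowforward} to show that the combinatorial distance between $S^j(w_{i-1})$ and $S^j(w_i)$ remains at most $1$ for every $j \geq 0$. Choosing $j$ large enough that $S^j(w_{i-1}) \in A_{i-1}$ and $S^j(w_i) \in A_i$ shows that consecutive arteries in the chain are at combinatorial distance at most $1$. \Cref{lem:distoneartery} (or triviality when the distance is zero) then produces a common simple factorization $L_i$ of $A_{i-1}$ and $A_i$ and equates their average periods; transitivity along the chain yields a common average period for $A$ and $A'$.

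For the sweeping statement, observe that for each $1 \le i \le m-1$, both $L_i$ and $L_{i+1}$ are simple factorizations of $A_i$. They coincide at the vertices of $A_i$, and between consecutive such vertices each gives a directed edge path lying in the common primary branched cube containing that cardio-edge; by \Cref{lemma:edgepathunique}, the two paths are related by sweeping across finitely many $2$-dimensional branched cubes. Assembling these local relations equivariantly using the common period of $A_i$, the bi-infinite lines $L_i$ and $L_{i+1}$ themselves are related by sweeping across $2$-dimensional branched cubes. Chaining across $i=0,\dots,m$ and combining with the analogous sweeping relation between any two simple factorizations of $A_0 = A$ (respectively of $A_m = A'$), one obtains the desired sweeping relation between any simple factorization of $A$ and any simple factorization of $A'$.

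The main obstacle, beyond the direct application of the preceding lemmas, is the bookkeeping in the final sweeping step: since each $L_i$ is a bi-infinite line but a single sweep alters only a bounded portion, one must exploit the $\phi$-equivariance provided by the common period to package infinitely many local sweeps into finitely many $\langle \phi \rangle$-orbits of sweeps. The key point is that the local sweeps supplied by \Cref{lemma:edgepathunique} between consecutive $A_i$-vertices can be chosen in a $\phi$-equivariant manner, so that the resulting global deformation from $L_i$ to $L_{i+1}$ is itself equivariant and thus supported on finitely many orbits of $2$-dimensional branched cubes.
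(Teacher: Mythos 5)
Your proposal is correct and follows essentially the same route as the paper: build a chain of arteries by flowing the vertices of a connecting edge path forward, use \Cref{cor:combindistoneflowforward} to see consecutive arteries are at combinatorial distance at most $1$, and then apply \Cref{lem:distoneartery} together with \Cref{lemma:edgepathunique} to chain common simple factorizations and sweeping relations. Your extra care in making the iteration of \Cref{cor:combindistoneflowforward} and the $\phi$-equivariant packaging of the local sweeps explicit only fills in details the paper leaves implicit.
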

\begin{proof}
We first claim there are arteries $A_0=A, A_1,...,A_n=A'$ such that $A_{i-1}$ and $A_i$ are of combinatorial distance $1$ apart for each $1\leq i\leq n$. To see this, let $\alpha$ be an edge path between $A$ and $A'$. Let $v_0$,...,$v_m$ be the sequence of vertices in $\alpha$, where $v_0 \in A$ and $v_m \in A'$. For each $i=0,...,m$, let $r_i$ be the $\mathfrak{c}_X$-ray starting at $v_i$. By \Cref{prop:rayseventuallyperiodic}, each $r_i$ eventually converges into an artery $A_i$. For each $i$, the combinatorial distance between $v_{i-1}$ and $v_i$ is $1$, hence by \Cref{cor:combindistoneflowforward}, either $A_{i-1}=A_i$ or the combinatorial distance between $A_{i-1}$ and $A_i$ is $1$. Hence it suffices to discard any repeated arteries. The proposition now follows from \Cref{lemma:edgepathunique} and \Cref{lem:distoneartery}.
\end{proof}

\section{Examples} \label{sec:ex}

We show several examples of cubist decompositions of axis bundles, including examples with unique and nonunique arteries, branching, and varying local dimension. Each example is described by a train track map, with vertices of the graph carrying it ``blown up'' (see \cite{PffAutomata}), indicating how edge images cross them.

\begin{ex}[Lone axes] \label{ex:loneaxis}
By \cite{loneaxes}, the axis bundle of an ageometric, fully irreducible $\phi \in \out$ is a line if and only if 
 the index satisfies $i(\phi) = \frac{3}{2}-r$ and no component of  ${IW}(\phi)$ has a cut vertex.
See \cite{automaton} and \cite{PffAutomata} for concrete examples. In this case, the cubist decomposition of $\mathcal{A}_\phi$ is a union of 1-cubes and 0-cubes. The cardiovascular system coincides with $\mathcal{A}_\phi$ (with the additional data of being oriented toward the splitting direction). Thus, there is exactly one artery and it coincides with the axis bundle. 
\end{ex}

\begin{ex}[Multiple arteries] \label{ex:2dimbundlemultiartery}
Define $\phi \in \outt$ by 
$\phi(a)= cbca$, and
$\phi(b) = cbc$, and
$\phi(c) = ac$.
This 
\parpic[r]{\selectfont\fontsize{9pt}{9pt} 
\begingroup%
  \makeatletter%
  \providecommand\color[2][]{%
    \errmessage{(Inkscape) Color is used for the text in Inkscape, but the package 'color.sty' is not loaded}%
    \renewcommand\color[2][]{}%
  }%
  \providecommand\transparent[1]{%
    \errmessage{(Inkscape) Transparency is used (non-zero) for the text in Inkscape, but the package 'transparent.sty' is not loaded}%
    \renewcommand\transparent[1]{}%
  }%
  \providecommand\rotatebox[2]{#2}%
  \newcommand*\fsize{\dimexpr\f@size pt\relax}%
  \newcommand*\lineheight[1]{\fontsize{\fsize}{#1\fsize}\selectfont}%
  \ifx\svgwidth\undefined%
    \setlength{\unitlength}{191.73688032bp}%
    \ifx\svgscale\undefined%
      \relax%
    \else%
      \setlength{\unitlength}{\unitlength * \real{\svgscale}}%
    \fi%
  \else%
    \setlength{\unitlength}{\svgwidth}%
  \fi%
  \global\let\svgwidth\undefined%
  \global\let\svgscale\undefined%
  \makeatother%
  \begin{picture}(1,0.32339498)%
    \lineheight{1}%
    \setlength\tabcolsep{0pt}%
    \put(0,0){\includegraphics[width=\unitlength,page=1]{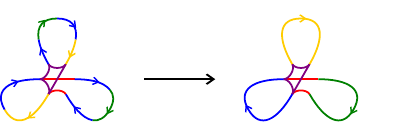}}%
    \put(0.73434892,0.29931404){\color[rgb]{1,0.8,0}\makebox(0,0)[lt]{\lineheight{1.25}\smash{\begin{tabular}[t]{l}$e_1$\end{tabular}}}}%
    \put(0.19683078,0.17421028){\color[rgb]{1,0.8,0}\makebox(0,0)[lt]{\lineheight{1.25}\smash{\begin{tabular}[t]{l}$e_1$\end{tabular}}}}%
    \put(0.08833064,0.00551598){\color[rgb]{1,0.8,0}\makebox(0,0)[lt]{\lineheight{1.25}\smash{\begin{tabular}[t]{l}$e_1$\end{tabular}}}}%
    \put(0.90395143,0.02259322){\color[rgb]{0,0.50196078,0}\makebox(0,0)[lt]{\lineheight{1.25}\smash{\begin{tabular}[t]{l}$e_2$\end{tabular}}}}%
    \put(0.28804816,0.01936858){\color[rgb]{0,0.50196078,0}\makebox(0,0)[lt]{\lineheight{1.25}\smash{\begin{tabular}[t]{l}$e_2$\end{tabular}}}}%
    \put(0.04135191,0.27152516){\color[rgb]{0,0.50196078,0}\makebox(0,0)[lt]{\lineheight{1.25}\smash{\begin{tabular}[t]{l}$e_2$\end{tabular}}}}%
    \put(0.5586778,0.02259322){\color[rgb]{0,0,1}\makebox(0,0)[lt]{\lineheight{1.25}\smash{\begin{tabular}[t]{l}$e_3$\end{tabular}}}}%
    \put(0.23596361,0.13753893){\color[rgb]{0,0,1}\makebox(0,0)[lt]{\lineheight{1.25}\smash{\begin{tabular}[t]{l}$e_3$\end{tabular}}}}%
    \put(0.19098019,0.27152516){\color[rgb]{0,0,1}\makebox(0,0)[lt]{\lineheight{1.25}\smash{\begin{tabular}[t]{l}$e_3$\end{tabular}}}}%
    \put(0.043705,0.17421671){\color[rgb]{0,0,1}\makebox(0,0)[lt]{\lineheight{1.25}\smash{\begin{tabular}[t]{l}$e_3$\end{tabular}}}}%
    \put(0.00517865,0.13509414){\color[rgb]{0,0,1}\makebox(0,0)[lt]{\lineheight{1.25}\smash{\begin{tabular}[t]{l}$e_3$\end{tabular}}}}%
    \put(0.13906946,0.01630838){\color[rgb]{0,0,1}\makebox(0,0)[lt]{\lineheight{1.25}\smash{\begin{tabular}[t]{l}$e_3$\end{tabular}}}}%
  \end{picture}%
\endgroup%
}
\noindent is Example 9.1 of \cite{PffAutomata}, where it is proved to be ageometric fully irreducible.
It is represented by the fully preprincipal train track map $g$ on the 3-petaled rose marked by $a = [e_1]$, $b = [e_2]$, and $c = [e_3]$, and defined by $g(e_1) = e_3 e_2 e_3 e_1$, and $g(e_2) = e_3 e_2 e_3$, and $g(e_3) = e_1 e_3$, as depicted to the right.

\begin{figure}[H]
    \centering
    \selectfont\fontsize{8pt}{8pt}
    \resizebox{!}{12cm}{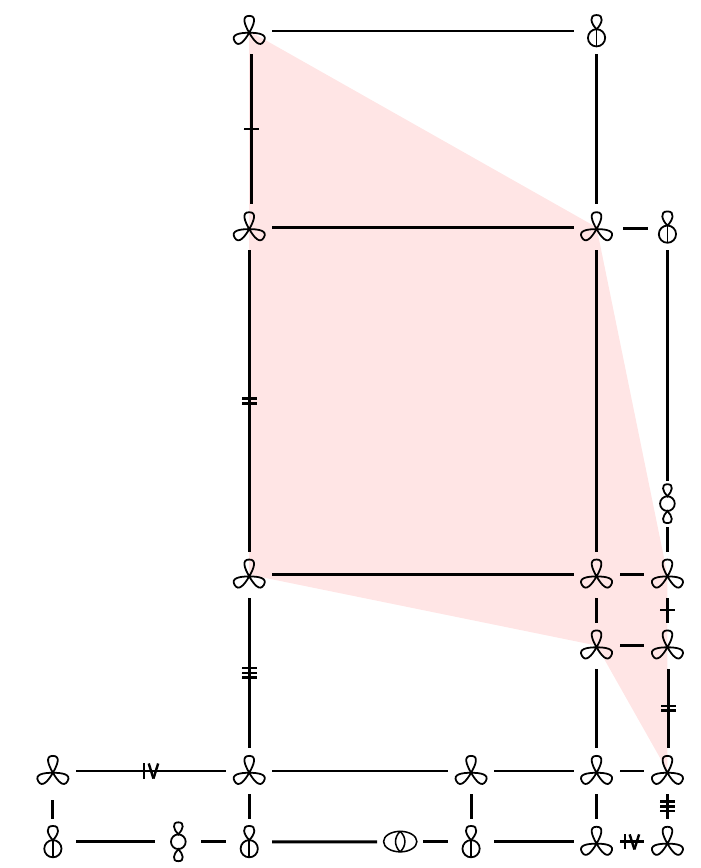}
    \caption{This is a fundamental domain of $\Av$ under the $\phi$-action, computed via the algorithm below. 1-cubes on the right are sent to the 1-cubes on the left by $\phi$ as indicated.}
    \label{fig:2dimbundlemultiartery}
\end{figure}

We computed the cubist decomposition of $\Av$ by the following algorithm:\\
1. Take $T \in \Av$ to be the fully preprincipal element that is the universal cover of the domain of the train track map above. Compute the branched cube $B_T \subset \Av$. Let $\mathcal{A}$ be the union of the $\phi$-translates of $B_T$.\\
2. For each fully preprincipal $T \in \mathcal{A} \subset \Av$, compute $B_T \subset \Av$. (There are infinitely many such $T$ but only finitely many $\phi$-orbits, so this is a finite time process.) 
\begin{itemize}
    \item[a.] If any such $B_T$ do not lie in $\mathcal{A}$, we add it to $\mathcal{A}$ and repeat this step. 
    \item[b.] If all such $B_T$ lie in $\mathcal{A}$, the algorithm terminates.
\end{itemize}
The output $\mathcal{A}$ of the algorithm is the axis bundle $\Av$: Suppose, for the sake of contradiction, there was a $T' \in \Av$ with $T' \not\in \mathcal{A}$. Then, as in the proof of \Cref{prop:branchedcubescoveraxisbundle}, there is a fold path $\alpha$ from a vertex $T$ of $\mathcal{A}$ to $T'$. Each time $\alpha$ exits a branched cube in $\mathcal{A}$ it enters another one (otherwise we could have added that branched cube to $\mathcal{A}$), so at the end of the path, $T'$ lies in $\mathcal{A}$.

\Cref{fig:2dimbundlemultiartery} depicts $\Av$. The length of each edge in the figure is the fold length, computed as follows:
The normalized eigenvector is $[0.45,0.29,0.26]$, so we start with $\ell(a)\approx0.45$, and $\ell(b)\approx0.29$, and $\ell(c)\approx0.26$. 
The vertical fold path from the node meeting the edges labeled III and IV on the left folds the turn $\{\bar{b}, \bar{c}\}$, so the fold length is $\approx 0.29(\frac{0.26}{0.26+0.29+0.26})$, or equivalently $\approx 0.26(\frac{0.26}{0.45+0.26})$. This is $\approx 0.095$. This says the lengths on the graph below are $\ell(a')\approx0.45$, and $\ell(b')\approx0.185$, and $\ell(c')\approx0.155$, and $\ell(d')\approx0.095$, where $d'$ is the edge created by the fold.
One then performs similar computations iteratively.

The cardiovascular system is drawn in red, with the arteries bold. In this example there are three arteries, demonstrating that axis bundles do not necessarily have a unique artery.
\end{ex}

\begin{ex}[Unique artery, branching] \label{ex:2dimbundlebranching}
Let $\phi \in \outt$ be defined by $\phi(a) = a\overline{c}\overline{a}$, and $\phi(b) = ac$, and
\parpic[r]{\selectfont\fontsize{9pt}{9pt} 
\begingroup%
  \makeatletter%
  \providecommand\color[2][]{%
    \errmessage{(Inkscape) Color is used for the text in Inkscape, but the package 'color.sty' is not loaded}%
    \renewcommand\color[2][]{}%
  }%
  \providecommand\transparent[1]{%
    \errmessage{(Inkscape) Transparency is used (non-zero) for the text in Inkscape, but the package 'transparent.sty' is not loaded}%
    \renewcommand\transparent[1]{}%
  }%
  \providecommand\rotatebox[2]{#2}%
  \newcommand*\fsize{\dimexpr\f@size pt\relax}%
  \newcommand*\lineheight[1]{\fontsize{\fsize}{#1\fsize}\selectfont}%
  \ifx\svgwidth\undefined%
    \setlength{\unitlength}{191.73688032bp}%
    \ifx\svgscale\undefined%
      \relax%
    \else%
      \setlength{\unitlength}{\unitlength * \real{\svgscale}}%
    \fi%
  \else%
    \setlength{\unitlength}{\svgwidth}%
  \fi%
  \global\let\svgwidth\undefined%
  \global\let\svgscale\undefined%
  \makeatother%
  \begin{picture}(1,0.328401)%
    \lineheight{1}%
    \setlength\tabcolsep{0pt}%
    \put(0,0){\includegraphics[width=\unitlength,page=1]{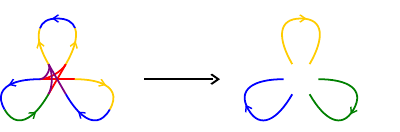}}%
    \put(0.7339123,0.30432004){\color[rgb]{1,0.8,0}\makebox(0,0)[lt]{\lineheight{1.25}\smash{\begin{tabular}[t]{l}$e_1$\end{tabular}}}}%
    \put(0.19835653,0.18369316){\color[rgb]{1,0.8,0}\makebox(0,0)[lt]{\lineheight{1.25}\smash{\begin{tabular}[t]{l}$e_1$\end{tabular}}}}%
    \put(0.24786861,0.14253456){\color[rgb]{1,0.8,0}\makebox(0,0)[lt]{\lineheight{1.25}\smash{\begin{tabular}[t]{l}$e_1$\end{tabular}}}}%
    \put(0.05150217,0.18368685){\color[rgb]{1,0.8,0}\makebox(0,0)[lt]{\lineheight{1.25}\smash{\begin{tabular}[t]{l}$e_1$\end{tabular}}}}%
    \put(0.14967812,0.005516){\color[rgb]{0,0,1}\makebox(0,0)[lt]{\lineheight{1.25}\smash{\begin{tabular}[t]{l}$e_3$\end{tabular}}}}%
    \put(0.00851023,0.14253118){\color[rgb]{0,0,1}\makebox(0,0)[lt]{\lineheight{1.25}\smash{\begin{tabular}[t]{l}$e_3$\end{tabular}}}}%
    \put(0.12236218,0.30288069){\color[rgb]{0,0,1}\makebox(0,0)[lt]{\lineheight{1.25}\smash{\begin{tabular}[t]{l}$e_3$\end{tabular}}}}%
    \put(0.90395154,0.02759912){\color[rgb]{0,0.50196078,0}\makebox(0,0)[lt]{\lineheight{1.25}\smash{\begin{tabular}[t]{l}$e_2$\end{tabular}}}}%
    \put(0.0909929,0.00551882){\color[rgb]{0,0.50196078,0}\makebox(0,0)[lt]{\lineheight{1.25}\smash{\begin{tabular}[t]{l}$e_2$\end{tabular}}}}%
    \put(0.55867757,0.02759912){\color[rgb]{0,0,1}\makebox(0,0)[lt]{\lineheight{1.25}\smash{\begin{tabular}[t]{l}$e_3$\end{tabular}}}}%
    \put(0,0){\includegraphics[width=\unitlength,page=2]{eg2dimbundlebranchingmap.pdf}}%
  \end{picture}%
\endgroup%
}
\noindent $\phi(c) = \overline{b}\overline{c}$. Then $\phi$ is represented by the train track map $g(e_1) = e_1 \overline{e_3} \overline{e_1}$, and $g(e_2) = e_1 e_3$, and $g(e_3) = \overline{e_2} \overline{e_3}$ on the 3-petaled rose depicted to the right. The marking is defined by $a = [e_1]$, $b = [e_2]$, $c = [e_3]$.
It is similarly straightforward to check $\phi$ is ageometric fully irreducible.

\begin{figure}[H]
    \centering
\selectfont\fontsize{8pt}{8pt}
    \resizebox{!}{10cm}{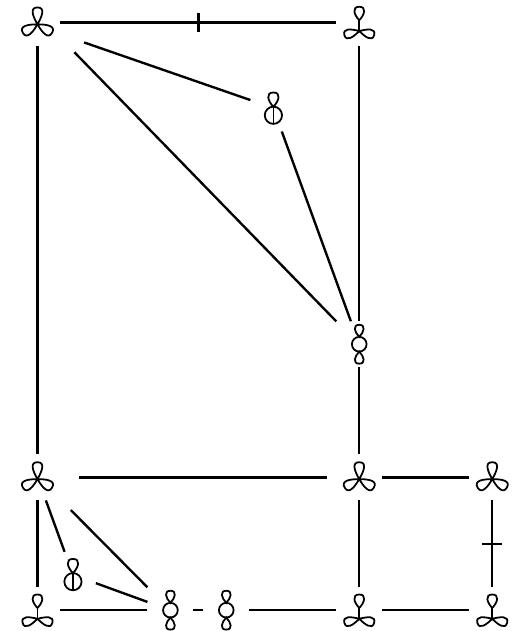}
    \caption{The axis bundle of the outer automorphism in \Cref{ex:2dimbundlebranching}. 
The 1-cube on the right is sent to the 1-cube on the \emph{top} by $\phi$ as indicated.}
    \label{fig:2dimbundlebranching}
\end{figure}

This example exhibits some phenomena that are different from \Cref{ex:2dimbundlemultiartery}:
\begin{enumerate}
    \item There is branching in the axis bundle $\mathcal{A}_\phi$. More precisely, there are two branched 2-cubes in $\mathcal{A}_\phi$, modulo the action of $\phi$.
    \item There is a (branched) 2-cube with 4 vertices on one of its folding faces. 
    \item There is a unique artery.
    \item The artery meets the boundary of $\mathcal{A}_\phi$.
\end{enumerate}
\end{ex}

\begin{ex}[Full \& stable axis bundles] \label{ex:2dimbundle1dimstable}
Define $\phi \in \outt$ by
$\phi(a) = ac$, and $\phi(b) = cbc$,
\parpic[r]{\selectfont\fontsize{9pt}{9pt} 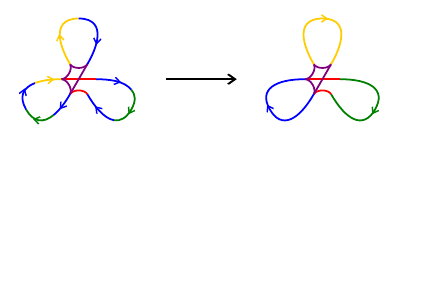}
\noindent and $\phi(c) = cbca$ and represented by the train track map $g(e_1) = e_1 e_3$, and $g(e_2) = e_3 e_2 e_3$, and $g(e_3) = e_3 e_2 e_3 e_1$ on the 3-petaled rose in the upper row of the image. The marking is defined by  $a = [e_1]$, $b = [e_2]$, $c = [e_3]$.

Straightforward computation shows this train track map has one PNP, namely $e_1*\overline{e''_2}$, where $e''_2$ is a suitable suffix of $e_2$. Collapsing this PNP yeilds the train track map in the bottom row: $g'(e_1) = e_1 e_3$, and $g'(e'_2) = e_3 e'_2$, and $g(e_3) = e_3 e'_2 e_1 e_3 e_1$.
The stable Whitehead graph of $g'$ is that of $g$, but with an additional edge.
One can see this train track map has no PNPs and apply \cite{IWGII,stablestrata}to show $\phi$ is ageometric fully irreducible.

\begin{figure}[H]
    \centering
    \selectfont\fontsize{8pt}{8pt}
    \resizebox{!}{12cm}{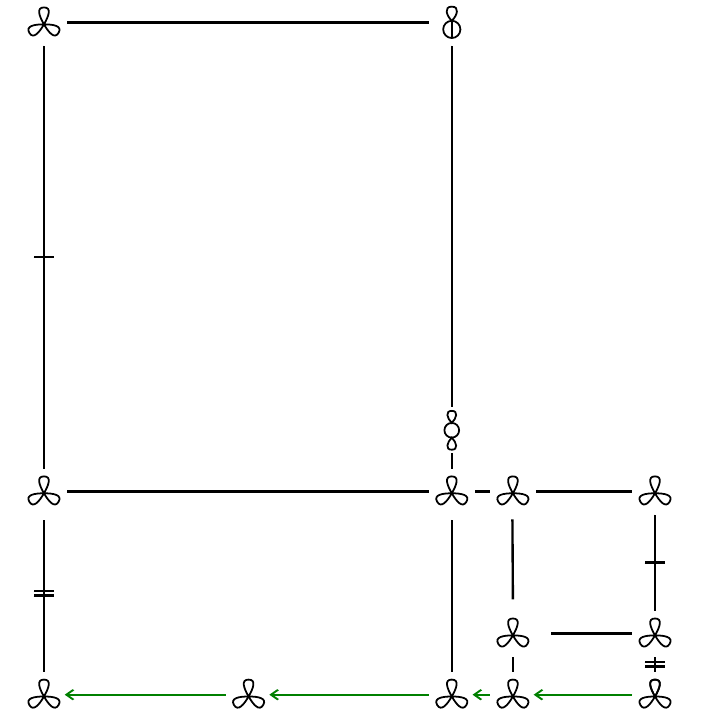}
    \caption{The axis bundle and stable axis bundle in \Cref{ex:2dimbundle1dimstable}. The 1-cubes on the right are sent to the 1-cubes on the left by $\phi$ as indicated.
The cardiovascular system of $\Av$ is drawn in red, with the arteries bold.}
    \label{fig:2dimbundle1dimstable}
\end{figure}

As reasoned above, the ideal Whitehead graph ${IW}(\phi)$ has a cut vertex. Hence, the stable axis bundle $\mathcal{SA}_\phi$ is a proper subset of $\Av$. 
In the figure, $\mathcal{SA}_\phi$ is the line on the bottom. 
The cardiovascular system of $\mathcal{SA}_\phi$ is drawn in green, with the arteries highlighted. This example demonstrates that the arteries of the stable axis bundle need not agree with the arteries of the full axis bundle.
\end{ex}

\begin{ex}[Non-constant local dimension] \label{ex:bottleneck}

Let $\phi \in \outt$ be defined by $\phi(a) = ac$, and $\phi(b) = ab$,
\parpic[r]{\selectfont\fontsize{9pt}{9pt} 
\begingroup%
  \makeatletter%
  \providecommand\color[2][]{%
    \errmessage{(Inkscape) Color is used for the text in Inkscape, but the package 'color.sty' is not loaded}%
    \renewcommand\color[2][]{}%
  }%
  \providecommand\transparent[1]{%
    \errmessage{(Inkscape) Transparency is used (non-zero) for the text in Inkscape, but the package 'transparent.sty' is not loaded}%
    \renewcommand\transparent[1]{}%
  }%
  \providecommand\rotatebox[2]{#2}%
  \newcommand*\fsize{\dimexpr\f@size pt\relax}%
  \newcommand*\lineheight[1]{\fontsize{\fsize}{#1\fsize}\selectfont}%
  \ifx\svgwidth\undefined%
    \setlength{\unitlength}{201.80779645bp}%
    \ifx\svgscale\undefined%
      \relax%
    \else%
      \setlength{\unitlength}{\unitlength * \real{\svgscale}}%
    \fi%
  \else%
    \setlength{\unitlength}{\svgwidth}%
  \fi%
  \global\let\svgwidth\undefined%
  \global\let\svgscale\undefined%
  \makeatother%
  \begin{picture}(1,0.17964375)%
    \lineheight{1}%
    \setlength\tabcolsep{0pt}%
    \put(0,0){\includegraphics[width=\unitlength,page=1]{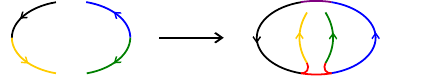}}%
    \put(0.65124309,0.08259615){\color[rgb]{1,0.8,0}\makebox(0,0)[lt]{\lineheight{1.25}\smash{\begin{tabular}[t]{l}$e_2$\end{tabular}}}}%
    \put(-0.00225016,0.00524075){\color[rgb]{1,0.8,0}\makebox(0,0)[lt]{\lineheight{1.25}\smash{\begin{tabular}[t]{l}$e_2$\end{tabular}}}}%
    \put(0.80909961,0.08259615){\color[rgb]{0,0.50196078,0}\makebox(0,0)[lt]{\lineheight{1.25}\smash{\begin{tabular}[t]{l}$e_3$\end{tabular}}}}%
    \put(0.22129786,0.11392055){\color[rgb]{0,0.50196078,0}\makebox(0,0)[lt]{\lineheight{1.25}\smash{\begin{tabular}[t]{l}$e_3$\end{tabular}}}}%
    \put(0.29662818,0.00524461){\color[rgb]{0,0.50196078,0}\makebox(0,0)[lt]{\lineheight{1.25}\smash{\begin{tabular}[t]{l}$e_3$\end{tabular}}}}%
    \put(0.90874458,0.08259615){\color[rgb]{0,0,1}\makebox(0,0)[lt]{\lineheight{1.25}\smash{\begin{tabular}[t]{l}$e_4$\end{tabular}}}}%
    \put(0.29662175,0.14736862){\color[rgb]{0,0,1}\makebox(0,0)[lt]{\lineheight{1.25}\smash{\begin{tabular}[t]{l}$e_4$\end{tabular}}}}%
    \put(0.0776864,0.11392092){\color[rgb]{0,0,1}\makebox(0,0)[lt]{\lineheight{1.25}\smash{\begin{tabular}[t]{l}$e_4$\end{tabular}}}}%
    \put(0.07768244,0.04441848){\color[rgb]{0,0,0}\makebox(0,0)[lt]{\lineheight{1.25}\smash{\begin{tabular}[t]{l}$e_1$\end{tabular}}}}%
    \put(0,0){\includegraphics[width=\unitlength,page=2]{egbottleneckmap.pdf}}%
    \put(0.55004852,0.08259739){\color[rgb]{0,0,0}\makebox(0,0)[lt]{\lineheight{1.25}\smash{\begin{tabular}[t]{l}$e_1$\end{tabular}}}}%
    \put(0.22129154,0.04441848){\color[rgb]{0,0,0}\makebox(0,0)[lt]{\lineheight{1.25}\smash{\begin{tabular}[t]{l}$e_1$\end{tabular}}}}%
    \put(-0.00224759,0.14736471){\color[rgb]{0,0,0}\makebox(0,0)[lt]{\lineheight{1.25}\smash{\begin{tabular}[t]{l}$e_1$\end{tabular}}}}%
  \end{picture}%
\endgroup%
}
\noindent and $\phi(c) = a\overline{b}c$. 
Then $\phi$ is represented by the train track map $g(e_1) = e_1e_2$, and $g(e_2) = e_1e_4$, $g(e_3) = e_1e_3$, and $g(e_4) = \overline{e_3}e_4$, illustrated to the right.
The marking of the train track is defined by $a = [e_1e_2], b = [e_1e_3], c = [e_1e_4]$.

A straightforward computation shows $\phi$ satisfies the \cite{IWGII,stablestrata} criterion, so is ageometric fully irreducible. \Cref{fig:egbottleneck} shows the cubist decomposition of $\Av$, where the 1-cube on the bottom is sent to the 1-cube on the top by $\phi$ as indicated.

\begin{figure}[ht!]
    \centering
    \selectfont\fontsize{8pt}{8pt}
    \resizebox{!}{12cm}{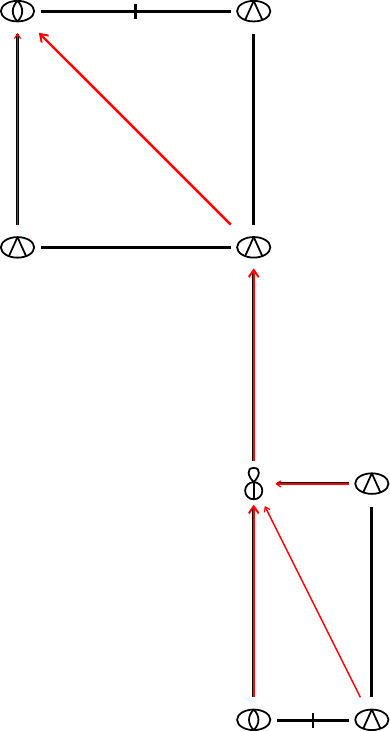}
    \caption{The axis bundle of the outer automorphism in \Cref{ex:bottleneck}.}
    \label{fig:egbottleneck}
\end{figure}

Note that the local dimension of $\Av$ is not constant: at some points it is $1$, while at other points it is $2$.

\end{ex}

\bigskip
\bigskip

\bibliography{PaperRefs} 

\bibliographystyle{alphaurl} 

\end{document}